    \renewcommand{\bibname}{References}
    \DeclareMathOperator*{\supp}{supp}
\DeclareMathOperator*{\range}{range}
\DeclareMathOperator*{\Id}{Id}
\DeclareMathOperator*{\Idg}{Id_\Gamma}
\DeclareMathOperator*{\argmin}{arg min}
\newcommand{\smnorm}[2]{{\bigl\Vert {#2} \bigr\Vert}_{#1}}
\newcommand{\term}{\emph}
\newcommand{\cnst}[1]{\mathrm{#1}}
\def \R {\mathbb{R}}
\def \C {\mathbb{C}}
\newcommand{\abs}[1]{\left\vert {#1} \right\vert}
\newcommand{\mtx}[1]{\bf{#1}}
\def \e {\varepsilon}
\newcommand{\Fee}{\Phi}
\newcommand{\adj}{*}
\newcommand{\restrict}[1]{\vert_{#1}}
\def \L {\Lambda}
\def \< {\langle}
\def \> {\rangle}
\def \^ {\widehat}
\def \conv {{\rm conv}}
\def \supp {{\rm supp}}
    \theoremstyle{plain}
\newtheorem{theorem}{Theorem}[section]
\newtheorem{proposition}[theorem]{Proposition}
\newtheorem{corollary}[theorem]{Corollary}
\newtheorem{lemma}[theorem]{Lemma}
\newtheorem{remark}[theorem]{Remark}
\newlength{\algorithmwidth}
\newcommand{\psinv}{\dagger}
    \theoremstyle{remark}
    \numberwithin{equation}{section}
    \numberwithin{figure}{section}
    \newcommand{\enorm}[1]{\norm{#1}_2}
\newcommand{\enormsq}[1]{\enorm{#1}^2}
\newcommand{\norm}[1]{\left\Vert {#1} \right\Vert}
\newcommand{\pnorm}[2]{\norm{#2}_{#1}}
\newcommand{\infnorm}[1]{\norm{#1}_{\infty}}
\newcommand{\pr}[2]{\langle {#1} , {#2} \rangle}
    \newcommand{\bigO}{\mathrm{O}}
    \newcommand{\coll}[1]{\mathscr{#1}}
\newcommand{\Cspace}[1]{\mathbb{C}^{#1}}
    \newcommand{\n}{\vspace{12pt}} 
            \newcommand{\abssq}[1]{{\abs{#1}}^2}                       
    \newcommand{\vct}{}
    \newcommand{\subjto}{\quad\text{subject to}\quad}
    \newcommand{\defby}{\overset{\mathrm{\scriptscriptstyle{def}}}{=}}
    \newcommand{\eps}{\varepsilon}
    \newcommand{\newchapter}[3] 
	{                           
        \chapter[#2]{#3}
        \chaptermark{#1}
        \thispagestyle{myheadings}
	}
\begin{document}
    \bibliographystyle{plain}

    \pagenumbering{roman}
    \pagestyle{plain}

    %
    %

    \singlespacing

    ~\vspace{-0.75in} 
    \begin{center}

        \begin{huge}
            Topics in Compressed Sensing
        \end{huge}\\\n
        By\\\n
        {\sc Deanna Needell}\\
        B.S. (University of Nevada, Reno) 2003\\
        M.A. (University of California, Davis) 2005\\\n
        DISSERTATION\\\n
        Submitted in partial satisfaction of the requirements for the degree of\\\n
        DOCTOR OF PHILOSOPHY\\\n
        in\\\n
        MATHEMATICS\\\n
        in the\\\n
        OFFICE OF GRADUATE STUDIES\\\n
        of the\\\n
        UNIVERSITY OF CALIFORNIA\\\n
        DAVIS\\\n\n
        
        Approved:\\\n\n
        
%
%

\underline{\quad\quad  Roman Vershynin  \quad\quad}\\
        ~Roman Vershynin (Co-Chair)\\\n\n
        
        \underline{\quad\quad  Thomas Strohmer  \quad\quad}\\
        ~Thomas Strohmer (Co-Chair)\\\n\n
        
        \underline{\quad\quad  Jes$\acute{\text u}$s DeLoera  \quad\quad}\\
        ~Jes$\acute{\text u}$s DeLoera\\

        \vfill
        
        Committee in Charge\\
        ~2009

    \end{center}

    \newpage

    %
    %
        

    ~\\[7.75in] 
    \centerline{
                \copyright\ Deanna Needell,
                            2009. All rights reserved.
               }
    \thispagestyle{empty}
    \addtocounter{page}{-1}

    \newpage

    %
    %
    
    \centerline{I dedicate this dissertation to all my loved ones, those with me today and those who have passed on.}
    
    \newpage

    \doublespacing

    %
    %
    
    \tableofcontents
    
    \newpage

    %
    %
    
    %

    ~\vspace{-1in} 
    \begin{flushright}
        \singlespacing
    \end{flushright}

    \centerline{\large Topics in Compressed Sensing}
    
    \centerline{\textbf{\underline{Abstract}}}
    
        
Compressed sensing has a wide range of applications that include error correction, imaging, radar and many more. Given a sparse signal in a high dimensional space, one wishes to reconstruct that signal accurately and efficiently from a number of linear measurements much less than its actual dimension. Although in theory it is clear that this is possible, the difficulty lies in the construction of algorithms that perform the recovery efficiently, as well as determining which kind of linear measurements allow for the reconstruction. 
There have been two distinct major approaches to sparse recovery that each present different benefits and shortcomings. The first, $\ell_1$-minimization methods such as Basis Pursuit, use a linear optimization problem to recover the signal. This method provides strong guarantees and stability, but relies on Linear Programming, whose methods do not yet have strong polynomially bounded runtimes. The second approach uses greedy methods that compute the support of the signal iteratively. These methods are usually much faster than Basis Pursuit, but until recently had not been able to provide the same guarantees. This gap between the two approaches was bridged when we developed and analyzed the greedy algorithm Regularized Orthogonal Matching Pursuit (ROMP). ROMP provides similar guarantees to Basis Pursuit as well as the speed of a greedy algorithm. Our more recent algorithm Compressive Sampling Matching Pursuit (CoSaMP) improves upon these guarantees, and is optimal in every important aspect. Recent work has also been done on a reweighted version of the $\ell_1$-minimization method that improves upon the original version in the recovery error and measurement requirements. These algorithms are discussed in detail, as well as previous work that serves as a foundation for sparse signal recovery.  
    
    \newpage

    %
    %

    \chapter*{\vspace{-1.5in}Acknowledgments}

        There were many people who helped me in my graduate career, and I would like to take this opportunity to thank them.

I am extremely grateful to my adviser, Prof. Roman Vershynin. I have been very fortunate to have an adviser who gave me the freedom to explore my area on my own, and also the incredible guidance to steer me down a different path when I reached a dead end. Roman taught me how to think about mathematics in a way that challenges me and helps me succeed. He is an extraordinary teacher, both in the classroom and as an adviser. 

My co-adviser, Prof. Thomas Strohmer, has been very helpful throughout my graduate career, and especially in my last year. He always made time for me to talk about ideas and review papers, despite an overwhelming schedule. His insightful comments and constructive criticisms have been invaluable to me. 

Prof. Jes$\acute{\text u}$s DeLoera has been very inspiring to me throughout my time at Davis. His instruction style has had a great impact on the way I teach, and I strive to make my classroom environment as enjoyable as he always makes his. 

Prof. Janko Gravner is an amazing teacher, and challenges his students to meet high standards. The challenges he presented me have given me a respect and fondness for the subject that still helps in my research today. 

I would also like to thank all the staff at UC Davis, and especially Celia Davis, Perry Gee, and Jessica Potts for all their help and patience. They had a huge impact on my success at Davis, and I am so grateful for them.

My friends both in Davis and afar have helped to keep me sane throughout the graduate process. Our time spent together has been a blessing, and I thank them for all they have done for me. I especially thank Blake for all his love and support. His unwaivering confidence in my abilities has made me a stronger person.

My very special thanks to my family. My father, George, has always provided me with quiet encouragement and strength. My mother, Debbie, has shown me how to persevere through life challenges and has always been there for me. My sister, Crystal, has helped me keep the joy in my life, and given me a wonderful nephew, Jack. My grandmother and late grandfather were a constant support and helped teach me the rewards of hard work.

Finally, thank you to the NSF for the financial support that helped fund parts of the research in this dissertation.
    
    \newpage

    %
    %

    \pagestyle{fancy}
    \pagenumbering{arabic}
       
    %
    %

    \newchapter{Introduction}{Introduction}{Introduction}
    \label{sec:Intro}

        
        %
    
        \section[Overview]{Overview}
        \label{sec:Intro:Overview}
        
            \subsection{Main Idea}
The phrase \textit{compressed sensing} refers to the problem of realizing a sparse input $\vct{x}$ using few linear measurements that possess some incoherence properties.  The field originated recently from an unfavorable opinion about the current signal compression methodology. The conventional scheme in signal processing, acquiring the entire signal and then compressing it, was questioned by Donoho \cite{DE03:Optimally}. Indeed, this technique uses tremendous resources to acquire often very large signals, just to throw away information during compression. The natural question then is whether we can combine these two processes, and directly sense the signal or its essential parts using few linear measurements. Recent work in compressed sensing has answered this question in positive, and the field continues to rapidly produce encouraging results.

The key objective in compressed sensing (also referred to as sparse signal recovery or compressive sampling) is to reconstruct a signal accurately and efficiently from a set of few non-adaptive linear measurements. Signals in this context are vectors, many of which in the applications will represent images. Of course, linear algebra easily shows that in general it is not possible to reconstruct an arbitrary signal from an incomplete set of linear measurements. Thus one must restrict the domain in which the signals belong. To this end, we consider \textit{sparse} signals, those with few non-zero coordinates.
It is now known that many signals such as real-world images or audio signals are sparse either in this sense, or with respect to a different basis. 

Since sparse signals lie in a lower dimensional space, one would think intuitively that they may be represented by few linear measurements. This is indeed correct, but the difficulty is determining in which lower dimensional subspace such a signal lies. That is, we may know that the signal has few non-zero coordinates, but we do not know which coordinates those are. It is thus clear that we may not reconstruct such signals using a simple linear operator, and that the recovery requires more sophisticated techniques. The compressed sensing field has provided many recovery algorithms, most with provable as well as empirical results.

There are several important traits that an optimal recovery algorithm must possess. The algorithm needs to be fast, so that it can efficiently recover signals in practice. Of course, minimal storage requirements as well would be ideal. The algorithm should provide uniform guarantees, meaning that given a specific method of acquiring linear measurements, the algorithm recovers all sparse signals (possibly with high probability). Ideally, the algorithm would require as few linear measurements as possible. Linear algebra shows us that if a signal has $s$ non-zero coordinates, then recovery is theoretically possible with just $2s$ measurements. However, recovery using only this property would require searching through the exponentially large set of all possible lower dimensional subspaces, and so in practice is not numerically feasible. Thus in the more realistic setting, we may need slightly more measurements. Finally, we wish our ideal recovery algorithm to be stable. This means that if the signal or its measurements are perturbed slightly, then the recovery should still be approximately accurate. This is essential, since in practice we often encounter not only noisy signals or measurements, but also signals that are not exactly sparse, but close to being sparse. For example, \textit{compressible} signals are those whose coefficients decay according to some power law. Many signals in practice are compressible, such as smooth signals or signals whose variations are bounded. 

\subsection{Problem Formulation}

Since we will be looking at the reconstruction of sparse vectors, we need a way to quantify the sparsity of a vector. We say that a $d$-dimensional signal $\vct{x}$ is $s$-sparse if it has $s$ or fewer non-zero coordinates:
$$
\vct{x} \in \R^d, \quad \|\vct{x}\|_0 := |\supp(\vct{x})| \leq s \ll d,
$$
where we note that $\|\cdot\|_0$ is a quasi-norm. For $ 1 \leq p < \infty $, we denote by $\|\cdot\|_p$ the usual $p$-norm,
$$
\|\vct{x}\|_p := \big(\sum_{i=1}^{d}|\vct{x}_i|^p\big)^{1/p},
$$
and $\|x\|_\infty = \max|x_i|$. 
In practice, signals are often encountered that are not exactly sparse, but whose coefficients decay rapidly. As mentioned, compressible signals are those satisfying a power law decay:
\begin{equation}\label{comp}
|\vct{x}^*_i| \leq Ri^{(-1/q)},
\end{equation}
where $\vct{x}^*$ is a non-increasing rearrangement of $\vct{x}$, $R$ is some positive constant, and $ 0 < q < 1 $. Note that in particular, sparse signals are compressible.

Sparse recovery algorithms reconstruct sparse signals from a small set of non-adaptive linear measurements. Each measurement can be viewed as an inner product with the signal $\vct{x}\in\R^d$ and some vector $\phi_i\in\R^d$ (or in $\C^d$)\footnote{Although similar results hold for measurements taken over the complex numbers, for simplicity of presentation we only consider real numbers throughout.}. If we collect $m$ measurements in this way, we may then consider the $m \times d$ measurement matrix $\Phi$ whose columns are the vectors $\phi_i$. We can then view the sparse recovery problem as the recovery of the $s$-sparse signal $\vct{x}\in\R^d$ from its measurement vector $\vct{u} = \Phi \vct{x} \in \R^m$. One of the theoretically simplest ways to recover such a vector from its measurements $\vct{u}=\Phi \vct{x}$ is to solve the $\ell_0$-minimization problem 
\begin{equation}\label{eq:ell0}
\min_{\vct{z}\in\R^d} \|\vct{z}\|_0 \quad \text{subject to} \quad \Phi \vct{z} = \vct{u}.
\end{equation}  
If $\vct{x}$ is $s$-sparse and $\Phi$ is one-to-one on all $2s$-sparse vectors, then the minimizer to~\eqref{eq:ell0} must be the signal $\vct{x}$. Indeed, if the minimizer is $\vct{z}$, then since $\vct{x}$ is a feasible solution, $\vct{z}$ must be $s$-sparse as well. Since $\Phi \vct{z} = \vct{u}$, $\vct{z} - \vct{x}$ must be in the kernel of $\Phi$. But $\vct{z} - \vct{x}$ is $2s$-sparse, and since $\Phi$ is one-to-one on all such vectors, we must have that $\vct{z}=\vct{x}$. Thus this $\ell_0$-minimization problem works perfectly in theory. However, it is not numerically feasible and is NP-Hard in general~\cite[Sec.~9.2.2]{M05:Data}.

Fortunately, work in compressed sensing has provided us numerically feasible alternatives to this NP-Hard problem. One major approach, Basis Pursuit, relaxes the $\ell_0$-minimization problem to an $\ell_1$-minimization problem. Basis Pursuit requires a condition on the measurement matrix $\Phi$ stronger than the simple injectivity on sparse vectors, but many kinds of matrices have been shown to satisfy this condition with number of measurements $m = s\log^{O(1)}d$. The $\ell_1$-minimization approach provides uniform guarantees and stability, but relies on methods in Linear Programming. Since there is yet no known strongly polynomial bound, or more importantly, no \textit{linear bound} on the runtime of such methods, these approaches are often not optimally fast.

The other main approach uses greedy algorithms such as Orthogonal Matching Pursuit~\cite{TG07:Signal-Recovery}, Stagewise Orthogonal Matching Pursuit~\cite{DTDS06:Sparse-Solution}, or Iterative Thresholding~\cite{FR07:Iterative, BD08:Iterative}. Many of these methods calculate the support of the signal iteratively. Most of these approaches work for specific measurement matrices with number of measurements $m = O(s\log d)$. Once the support $S$ of the signal has been calculated, the signal $x$ can be reconstructed from its measurements $\vct{u} = \Phi \vct{x} $ as $\vct{x} = (\Phi_S)^\psinv\vct{u}$, where $\Phi_S$ denotes the measurement matrix $\Phi$ restricted to the columns indexed by $S$ and $\psinv$ denotes the pseudoinverse. Greedy approaches are fast, both in theory and practice, but have lacked both stability and uniform guarantees. 
 
There has thus existed a gap between the approaches. The $\ell_1$-minimization methods have provided strong guarantees but have lacked in optimally fast runtimes, while greedy algorithms although fast, have lacked in optimal guarantees. We bridged this gap in the two approaches with our new algorithm Regularized Orthogonal Matching Pursuit (ROMP). ROMP provides similar uniform guarantees and stability results as those of Basis Pursuit, but is an iterative algorithm so also provides the speed of the greedy approach. Our next algorithm, Compressive Sampling Matching Pursuit (CoSaMP) improves upon the results of ROMP, and is the first algorithm in sparse recovery to be provably optimal in every important aspect.
        
        \section[Applications]{Applications}
        \label{sec:Intro:Applications}
            The sparse recovery problem has applications in many areas, ranging from medicine and coding theory to astronomy and geophysics. Sparse signals arise in practice in very natural ways, so compressed sensing lends itself well to many settings. Three direct applications are error correction, imaging, and radar.

	\subsection[Error Correction]{Error Correction}
          	\label{sec:Intro:Applications:Error}
          	When signals are sent from one party to another, the signal is usually encoded and gathers errors. Because the errors usually occur in few places, sparse recovery can be used to reconstruct the signal from the corrupted encoded data. This error correction problem is a classic problem in coding theory. Coding theory usually assumes the data values live in some finite field, but there are many practical applications for encoding over the continuous reals. In digital communications, for example, one wishes to protect results of onboard computations that are real-valued. These computations are performed by circuits that experience faults caused by effects of the outside world. This and many other examples are difficult real-world problems of error correction.
          	
          	 The error correction problem is formulated as follows. Consider a $m$-dimensional input vector $\vct{f}\in\R^m$ that we wish to transmit reliably to a remote receiver. In coding theory, this vector is referred to as the ``plaintext.'' We transmit the measurements $\vct{z} = A\vct{f}$ (or ``ciphertext'') where $A$ is the $d \times m$ measurement matrix, or the \textit{linear code}. It is clear that if the linear code $A$ has full rank, we can recover the input vector $\vct{f}$ from the ciphertext $\vct{z}$. But as is often the case in practice, we consider the setting where the ciphertext $z$ has been corrupted. We then wish to reconstruct the input signal $f$ from the corrupted measurements $\vct{z}' = A\vct{f} + \vct{e}$ where $\vct{e}\in\R^N$ is the sparse error vector. To realize this in the usual compressed sensing setting, consider a matrix $B$ whose kernel is the range of $A$. Apply $B$ to both sides of the equation $\vct{z}' = A\vct{f} + \vct{e}$ to get $B\vct{z}' = B\vct{e}$. Set $\vct{y} = B\vct{z}'$ and the problem becomes reconstructing the sparse vector $\vct{e}$ from its linear measurements $\vct{y}$. Once we have recovered the error vector $\vct{e}$, we have access to the actual measurements $A\vct{f}$ and since $A$ is full rank can recover the input signal $\vct{f}$. 
          	
        		\subsection[Imaging]{Imaging}
          	\label{sec:Intro:Applications:Imaging}
          	
          	Many images both in nature and otherwise are sparse with respect to some basis. Because of this, many applications in imaging are able to take advantage of the tools provided by Compressed Sensing.  
          	The typical digital camera today records every pixel in an image before compressing that data and storing the compressed image. Due to the use of silicon, everyday digital cameras today can operate in the megapixel range. A natural question asks why we need to acquire this abundance of data, just to throw most of it away immediately. This notion sparked the emerging theory of Compressive Imaging. In this new framework, the idea is to directly acquire random linear measurements of an image without the burdensome step of capturing every pixel initially. 
          	
          	Several issues from this of course arise. The first problem is how to reconstruct the image from its random linear measurements. The solution to this problem is provided by Compressed Sensing. The next issue lies in actually sampling the random linear measurements without first acquiring the entire image. Researchers~\cite{RICE08:Single} are working on the construction of a device to do just that. Coined the ``single-pixel'' compressive sampling camera, this camera consists of a digital micromirror device (DMD), two lenses, a \textit{single} photon detector and an analog-to-digital (A/D) converter. The first lens focuses the light onto the DMD. Each mirror on the DMD corresponds to a pixel in the image, and can be tilted toward or away from the second lens. This operation is analogous to creating inner products with random vectors. This light is then collected by the lens and focused onto the photon detector where the measurement is computed. This optical computer computes the random linear measurements of the image in this way and passes those to a digital computer that reconstructs the image. 
          	
          	Since this camera utilizes only one photon detector, its design is a stark contrast to the usual large photon detector array in most cameras. The single-pixel compressive sampling camera also operates at a much broader range of the light spectrum than traditional cameras that use silicon. For example, because silicon cannot capture a wide range of the spectrum, a digital camera to capture infrared images is much more complex and costly. 
          	
          	Compressed Sensing is also used in medical imaging, in particular with magnetic resonance (MR) images which sample Fourier coefficients of an image. MR images are implicitly sparse and can thus capitalize on the theories of Compressed Sensing. Some MR images such as angiograms are sparse in their actual pixel representation, whereas more complicated MR images are sparse with respect to some other basis, such as the wavelet Fourier basis. MR imaging in general is very time costly, as the speed of data collection is limited by physical and physiological constraints. Thus it is extremely beneficial to reduce the number of measurements collected without sacrificing quality of the MR image. Compressed Sensing again provides exactly this, and many Compressed Sensing algorithms have been specifically designed with MR images in mind~\cite{GGR95:Neuro, LDP07:MRI}.    
          		\subsection[Radar]{Radar}
          	\label{sec:Intro:Applications:Radar}
          	There are many other applications to compressed sensing (see~\cite{CSwebpage}), and one additional application is Compressive Radar Imaging. A standard radar system transmits some sort of pulse (for example a linear chirp), and then uses a matched filter to correlate the signal received with that pulse. The receiver uses a pulse compression system along with a high-rate analog to digital (A/D) converter. This conventional approach is not only complicated and expensive, but the resolution of targets in this classical framework is limited by the radar uncertainty principle. Compressive Radar Imaging tackles these problems by discretizing the time-frequency plane into a grid and considering each possible target scene as a matrix. If the number of targets is small enough, then the grid will be sparsely populated, and we can employ Compressed Sensing techniques to recover the target scene. See~\cite{BS07:radar, HS07:radar} for more details.

    %
    %

    \newchapter{Approaches}{Major Algorithmic Approaches}{Major Algorithmic Approaches}
    \label{sec:Approaches}
				Compressed Sensing has provided many methods to solve the sparse recovery problem and thus its applications. There are two major algorithmic approaches to this problem. The first relies on an optimization problem which can be solved using linear programming, while the second approach takes advantage of the speed of greedy algorithms. Both approaches have advantages and disadvantages which are discussed throughout this chapter along with descriptions of the algorithms themselves. First we discuss Basis Pursuit, a method that utilizes a linear program to solve the sparse recovery problem. 

        
    
        \section[Basis Pursuit]{Basis Pursuit}
        \label{sec:Approaches:Basis}
        Recall that sparse recovery can be formulated as the generally NP-Hard problem~\eqref{eq:ell0} to recover a signal $\vct{x}$. Donoho and his collaborators showed (see e.g. \cite{DS89:Uncertainty-Principles}) that for certain measurement matrices $\Phi$, this hard problem is equivalent to its relaxation,
\begin{equation}\label{eq:ell1}
\min_{\vct{z}\in\R^d} \|\vct{z}\|_1 \quad \text{subject to} \quad \Phi \vct{z} = \vct{u}.
\end{equation}
Cand\`es and Tao proved that for measurement matrices satisfying a certain quantitative property, the programs~\eqref{eq:ell0} and~\eqref{eq:ell1} are equivalent~\cite{CT05:Decoding}. 

			\subsection[Description]{Description}
    	\label{sec:Approaches:Basis:Description}
    	Since the problem~\eqref{eq:ell0} is not numerically feasible, it is clear that if one is to solve the problem efficiently, a different approach is needed. At first glance, one may instead wish to consider the mean square approach, based on the minimization problem, 
\begin{equation}\label{eq:ell2}
\min_{\vct{z}\in\R^d} \|\vct{z}\|_2 \quad \text{subject to} \quad \Phi \vct{z} = \vct{u}.
\end{equation}
    	Since the minimizer $\vct{x}^*$ must satisfy $\Phi \vct{x}^* = \vct{u} = \Phi \vct{x}$, the minimizer must be in the subspace $K \defby \vct{x} + \ker\Phi$. In fact, the minimizer $\vct{x}^*$ to~\eqref{eq:ell2} is the contact point where the smallest Euclidean ball centered at the origin meets the subspace $K$. As is illustrated in Figure~\ref{fig:mins}, this contact point need not coincide with the actual signal $\vct{x}$. This is because the geometry of the Euclidean ball does not lend itself well to detecting sparsity. 
    	
    	We may then wish to consider the $\ell_1$-minimization problem~\eqref{eq:ell1}. In this case, the minimizer $\vct{x}^*$ to~\eqref{eq:ell1} is the contact point where the smallest octahedron centered at the origin meets the subspace $K$. Since $\vct{x}$ is sparse, it lies in a low-dimensional coordinate subspace. Thus the octahedron has a wedge at $\vct{x}$ (see Figure ~\ref{fig:mins}), which forces the minimizer $\vct{x}^*$ to coincide with $\vct{x}$ for many subspaces $K$. 
    	
    	Since the $\ell_1$-ball works well because of its geometry, one might think to then use an $\ell_p$ ball for some $0 < p < 1$. The geometry of such a ball would of course lend itself even better to sparsity. Indeed, some work in compressed sensing has used this approach (see e.g. \cite{FL08:Sparsest,DG08:RIC}), however, recovery using such a method has not yet provided optimal results. The program~\eqref{eq:ell1} has the advantage over those with $p < 1$ because linear programming can be used to solve it. See Section~\ref{sec:Approaches:Basis:Linear} for a discussion on linear programming. Basis Pursuit utilizes the geometry of the octahedron to recover the sparse signal $\vct{x}$ using measurement matrices $\Phi$ that satisfy a deterministic property.

\begin{center}
 	\begin{figure}[ht] 
 	\centering
  \includegraphics[width=0.4\textwidth,height=1.6in]{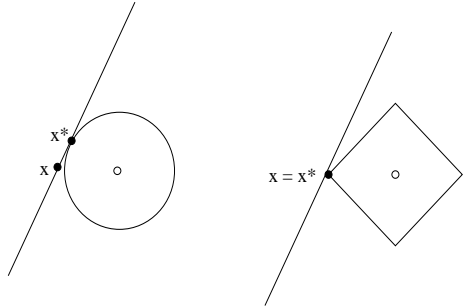}
  \caption{The minimizers to the mean square (left) and $\ell_1$ (right) approaches.}\label{fig:mins}
\end{figure} 
\end{center}
    	
    	\subsection[Restricted Isometry Condition]{Restricted Isometry Condition}
    	\label{sec:Approaches:Basis:Restricted}
    	As discussed above, to guarantee exact recovery of every $s$-sparse signal, the measurement matrix $\Phi$ needs to be one-to-one on all $2s$-sparse vectors. Cand\`es and Tao~\cite{CT05:Decoding} showed that under a slightly stronger condition, Basis Pursuit can recover every $s$-sparse signal by solving~\eqref{eq:ell1}. To this end, we say that the restricted isometry condition (RIC) holds with parameters $(r, \delta)$ if
    	\begin{equation}\label{eq:RIC}
    	(1 - \delta)\|\vct{x}\|_2 \leq \|\Phi \vct{x}\|_2 \leq (1 + \delta)\|\vct{x}\|_2
    	\end{equation}
    	holds for all $r$-sparse vectors $x$. Often, the quadratic form 
    	\begin{equation}\label{eq:RIC2}
    	(1 - \delta)\|\vct{x}\|_2^2 \leq \|\Phi \vct{x}\|_2^2 \leq (1 + \delta)\|\vct{x}\|_2^2
    	\end{equation}
    	is used for simplicity.  Often the notation $\delta_r$ is used to denote the smallest $\delta$ for which the above holds for all $r$-sparse signals.  Now if we require $\delta$ to be small, this condition essentially means that every subset of $r$ or fewer columns of $\Phi$ is approximately an orthonormal system. Note that if the restricted isometry condition holds with parameters $(2s, 1)$, then $\Phi$ must be one-to-one on all $s$-sparse signals. Indeed, if $\Phi \vct{x} = \Phi \vct{z}$ for two $s$-sparse vectors $\vct{x}$ and $\vct{z}$, then $\Phi (\vct{x}-\vct{z})=0$, so by the left inequality, $\|\vct{x}-\vct{z}\|_2 = 0$. 
    	To use this restricted isometry condition in practice, we of course need to determine what kinds of matrices have small restricted isometry constants, and how many measurements are needed. Although it is quite difficult to check whether a given matrix satisfies this condition, it has been shown that many matrices satisfy the restricted isometry condition with high probability and few measurements. In particular, it has been show that with exponentially high probability, random Gaussian, Bernoulli, and partial Fourier matrices satisfy the restricted isometry condition with number of measurements nearly linear in the sparsity level. 
    	\begin{description} \setlength{\itemsep}{0.5pc}
\item[Subgaussian matrices:]
			A random variable $X$ is subgaussian if $\mathbb{P}(|X|>t) \leq Ce^{-ct^2}$ for all $t > 0$ and some positive constants $C$, $c$. Thus subgaussian random variables have tail distributions that are dominated by that of the standard Gaussian random variable. Choosing $C=c=1$, we trivially have that standard Gaussian matrices (those whose entries are Gaussian) are subgaussian. Choosing $C=\frac{1}{e}$ and $c=1$, we see that Bernoulli matrices (those whose entries are uniform $\pm 1$) are also subgaussian. More generally, any bounded random variable is subgaussian. The following theorem proven in~\cite{MPJ06:Uniform} shows that any subgaussian measurement matrix satisfies the restricted isometry condition with number of measurements $m$ nearly linear in the sparsity $s$.
			\begin{theorem}[Subgaussian measurement matrices] Let $\Phi$ be a $m \times d$ subgaussian measurement matrix, and let $s \geq 1$, $0 < \delta < 1$, and $0 < \varepsilon < 0.5$. Then with probability $1 - \alpha$ the matrix $\frac{1}{\sqrt{m}}\Phi$ satisfies the restricted isometry condition with parameters $(s, \varepsilon)$ provided that the number of measurements $m$ satisfies
			$$
			m \geq \frac{Cs}{\varepsilon^2}\log\big(\frac{d}{\varepsilon^2 s}\big),
			$$  
			where $C$ depends only on $\alpha$ and other constants in the definition of subgaussian (for details on the dependence, see~\cite{MPJ06:Uniform}). 
			\end{theorem}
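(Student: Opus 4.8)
The plan is to follow the standard concentration-plus-net strategy: establish the isometry first for all vectors supported on one fixed index set, and then pay for every possible support by a final union bound. Write $A=\frac{1}{\sqrt m}\Phi$ and assume, as is implicit in the subgaussian normalization, that the entries have unit variance, so that $\mathbb{E}\,\|Ax\|_2^2=\|x\|_2^2$. By the homogeneity of~\eqref{eq:RIC2} it suffices to prove $\big|\,\|Ax\|_2^2-1\,\big|\le\varepsilon$ for every $s$-sparse unit vector $x$.

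First I would establish a pointwise concentration bound. For a fixed unit vector $x$, each coordinate $(\Phi x)_i$ is a linear combination of the independent subgaussian entries in row $i$, hence is itself subgaussian; consequently $\|Ax\|_2^2=\frac1m\sum_{i=1}^m(\Phi x)_i^2$ is an average of $m$ independent, mean-one, subexponential random variables. A Bernstein-type tail estimate then gives
$$
\mathbb{P}\Big(\big|\,\|Ax\|_2^2-1\,\big|>t\Big)\le 2e^{-c_0 m t^2},\qquad 0<t<1,
$$
where the constant $c_0$ depends only on the subgaussian parameters $C$ and $c$; the restriction $t<1$ is exactly what lets the subexponential Bernstein bound collapse to the Gaussian-type exponent $c_0 m t^2$.

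Next I would fix a support set $T\subseteq\{1,\dots,d\}$ with $|T|=s$ and cover the unit sphere of the coordinate subspace $\R^{T}$ by a $\rho$-net $\mathcal{N}$ of cardinality $|\mathcal{N}|\le(1+2/\rho)^{s}$, the usual volumetric bound. Applying the concentration estimate with $t=\varepsilon/2$ at each net point and taking a union bound shows that, outside an event of probability at most $2(1+2/\rho)^{s}e^{-c_0 m\varepsilon^2/4}$, every $y\in\mathcal{N}$ satisfies $\big|\,\|Ay\|_2^2-1\,\big|\le\varepsilon/2$. A routine approximation argument—bounding $\sup_{\|x\|_2=1}\big|\,\|Ax\|_2-1\,\big|$ over the sphere by its values on the net together with a geometric series in $\rho$—then upgrades this to $\big|\,\|Ax\|_2^2-1\,\big|\le\varepsilon$ for all unit $x$ supported on $T$, provided $\rho$ is a small absolute constant (so that the net entropy $s\log(1+2/\rho)$ is merely $O(s)$).

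Finally I would union bound over all $\binom{d}{s}\le(ed/s)^{s}$ choices of $T$, making the total failure probability at most
$$
\binom{d}{s}\,2(1+2/\rho)^{s}e^{-c_0 m\varepsilon^2/4}\le\exp\Big(s\log\tfrac{ed}{s}+s\log(1+\tfrac{2}{\rho})+\log 2-\tfrac{c_0}{4}\,m\varepsilon^2\Big).
$$
The main obstacle—and the step that fixes the measurement count—is calibrating the two competing exponential rates: the combinatorial entropy $s\log(d/s)$ of the supports and the net entropy $O(s)$ must be absorbed into the concentration exponent $c_0 m\varepsilon^2/4$, together with a $\log(1/\alpha)$ term to push the probability below $\alpha$. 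This balance forces $m\gtrsim \varepsilon^{-2}\big(s\log(d/s)+\log(1/\alpha)\big)$, which is near-linear in $s$. The threshold stated in the theorem is then immediate, since $\log\!\big(d/(\varepsilon^2 s)\big)=\log(d/s)+2\log(1/\varepsilon)\ge\log(d/s)$, so for a suitable $C=C(\alpha,c_0)$ any $m\ge\frac{Cs}{\varepsilon^2}\log\!\big(\frac{d}{\varepsilon^2 s}\big)$ comfortably exceeds the requirement above.
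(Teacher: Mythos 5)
Your argument is correct, but it is not the argument behind the theorem as it appears in the paper: the paper gives no proof at all, quoting the result directly from \cite{MPJ06:Uniform}, and the proof in that reference is genuinely different from yours. What you wrote is the elementary net argument (in the style of Baraniuk, Davenport, DeVore, and Wakin's ``simple proof of the RIP''): scalar Bernstein concentration for each fixed sparse unit vector, a constant-mesh net on the unit sphere of each coordinate subspace $\R^T$, and a union bound over the $\binom{d}{s}\le (ed/s)^s$ supports, with the measurement count emerging from balancing the entropy $s\log(d/s)+O(s)+\log(1/\alpha)$ against the concentration exponent $c_0 m\varepsilon^2$. Mendelson, Pajor, and Tomczak-Jaegermann instead control the supremum of the empirical process $\sup_x \bigl|\,\|Ax\|_2^2-\|x\|_2^2\,\bigr|$ over the whole class of sparse vectors at once, via subgaussian-process machinery (generic chaining / majorizing-measure and entropy estimates) rather than a support-by-support decomposition. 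The trade-off: your route is self-contained, needs only a one-dimensional Bernstein inequality plus counting, and already delivers the stated threshold $m\gtrsim \varepsilon^{-2}s\log\bigl(d/(\varepsilon^2 s)\bigr)$ with failure probability exponentially small in $m\varepsilon^2$; the chaining route is heavier but more general, applying to ensembles and index sets beyond coordinate-sparse vectors, and it is the form in which the dependence of $C$ on the subgaussian constants cited in the theorem is actually established. Two points you should make explicit if you write this up: the entries must be assumed independent and mean zero, not merely of unit variance, since otherwise the cross terms $\mathbb{E}[\Phi_{ij}\Phi_{ik}]$ do not vanish and $\mathbb{E}\|Ax\|_2^2\neq\|x\|_2^2$; and the absorption of the additive $\log(1/\alpha)$ term into a constant $C(\alpha)$ multiplying $s\varepsilon^{-2}\log\bigl(d/(\varepsilon^2 s)\bigr)$ uses that this last quantity is bounded below by a positive constant, which holds because $d\ge s$ and $\varepsilon<1/2$ force $\log\bigl(d/(\varepsilon^2 s)\bigr)\ge \log 4$.
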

			
\item[Partial bounded orthogonal matrices:] Let $\Psi$ be an orthogonal $d \times d$ matrix whose entries are bounded by $C/\sqrt{d}$ for some constant $C$. A $m \times d$ partial bounded orthogonal matrix is a matrix $\Phi$ formed by choosing $m$ rows of such a matrix $\Psi$ uniformly at random. Since the $d \times d$ discrete Fourier transform matrix is orthogonal with entries bounded by $1/\sqrt{d}$, the $m \times d$ random partial Fourier matrix is a partial bounded orthogonal matrix. The following theorem proved in~\cite{RV08:sparse} shows that such matrices satisfy the restricted isometry condition with number of measurements $m$ nearly linear in the sparsity $s$.

\begin{theorem}[Partial bounded orthogonal measurement matrices] Let $\Phi$ be a $m \times d$ partial bounded orthogonal measurement matrix, and let $s \geq 1$, $0 < \delta < 1$, and $0 < \varepsilon < 0.5$. Then with probability $1 - \alpha$ the matrix $\frac{d}{\sqrt{m}}\Phi$ satisfies the restricted isometry condition with parameters $(s, \varepsilon)$ provided that the number of measurements $m$ satisfies
$$
m \geq C\big(\frac{s\log d}{\varepsilon^2}\big)\log\big(\frac{s\log d}{\varepsilon^2}\big)\log^2 d,
$$
where $C$ depends only on the confidence level $\alpha$ and other constants in the definition of partial bounded orthogonal matrix (for details on the dependence, see~\cite{RV08:sparse}).

\end{theorem}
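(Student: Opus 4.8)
The plan is to bound the restricted isometry constant by the familiar two-stage scheme: first fix a support $T \subseteq \{1,\dots,d\}$ with $|T| \le s$ and control the normalized matrix on the coordinate subspace $\R^T$, then make the estimate uniform over all such supports. Let $\psi_1,\dots,\psi_d$ denote the rows of the orthogonal matrix $\Psi$ and let $R_i \in \R^T$ be the restriction of $\psi_i$ to the coordinates in $T$. Orthogonality of $\Psi$ gives $\sum_{i=1}^d R_i R_i^\adj = I_T$, and choosing the $m$ rows of $\Phi$ at random amounts to sampling indices $i_1,\dots,i_m$. With the normalization of the theorem the Gram operator on $\R^T$ is $\frac{d}{m}\sum_{k=1}^m R_{i_k} R_{i_k}^\adj$, whose expectation is exactly $I_T$, so that the $(s,\varepsilon)$ restricted isometry condition on $\R^T$ is precisely the statement
\begin{equation*}
\norm{ \frac{d}{m}\sum_{k=1}^m R_{i_k} R_{i_k}^\adj - I_T } \le \varepsilon .
\end{equation*}
This is the operator norm of an average of i.i.d., mean-zero, self-adjoint rank-one terms.

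The core of the argument is to control the expected deviation $E := \E\norm{\frac{d}{m}\sum_{k} R_{i_k} R_{i_k}^\adj - I_T}$. First I would symmetrize, replacing the centered sum by a Rademacher average and thereby eliminating the additive identity, reducing matters to bounding $\E\norm{\frac{d}{m}\sum_k \eta_k R_{i_k} R_{i_k}^\adj}$, where the $\eta_k$ are independent signs. The heart of the proof is then Rudelson's selection lemma (equivalently, the noncommutative Khintchine inequality), which estimates the expected norm of such a Rademacher chaos by a multiple of $\sqrt{\log s}\,\big(\max_k \norm{R_{i_k}}\big)\,\norm{\sum_k R_{i_k} R_{i_k}^\adj}^{1/2}$. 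Since the entries of $\Psi$ are bounded by $C/\sqrt d$ we have $\norm{R_i}^2 \le C^2 s/d$, and the last factor is itself controlled by $E$; this yields a bound on $E$ in terms of $E$ again. Solving the resulting self-improving inequality shows that $E \le \varepsilon$ as soon as $m \gtrsim \varepsilon^{-2} s \log s$ for a single fixed support.

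To promote this to a high-probability, support-uniform statement I would apply a concentration inequality for the operator norm of a sum of independent bounded rank-one matrices (a bounded-differences or Talagrand-type deviation bound, again using that each term has norm $O(s/d)$), obtaining an exponential tail for the deviation on a fixed $T$. A union bound over the $\binom{d}{s} \le (ed/s)^s$ possible supports then delivers the uniform condition, at the cost of an additional $s\log(ed/s)$ in the exponent. Collecting all the logarithmic contributions---the $\sqrt{\log s}$ supplied by Rudelson's lemma, the $s\log(d/s)$ from the union bound, and the logarithmic loss incurred when converting the expectation bound into an exponential tail---produces the nested-logarithm measurement requirement stated in the theorem. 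Alternatively, one can bypass the crude union bound and instead bound the supremum over all $s$-sparse unit vectors directly by a single chaining argument via Dudley's entropy integral, using covering-number estimates for the associated family of rank-one operators; this is the sharper route of the cited reference and is what yields the clean powers of $\log d$.

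The main obstacle is the estimate on the Rademacher chaos $\E\norm{\sum_k \eta_k R_{i_k} R_{i_k}^\adj}$. In the subgaussian case each row is itself a subgaussian vector, so a direct covering (net) argument together with a Bernstein inequality already controls the operator norm; here the rows are merely bounded, their concentration is far weaker, and the norm of the random sum cannot be handled by elementary net arguments. Rudelson's lemma---and the entropy and chaining machinery underlying it---is exactly what supplies this control, and it is precisely the source of the extra logarithmic factors that separate this result from the nearly linear bound available for subgaussian matrices.
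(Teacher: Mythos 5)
First, a point of reference: the thesis itself does not prove this theorem --- it is stated as a quotation from Rudelson and Vershynin \cite{RV08:sparse} --- so your sketch has to be measured against that argument. In outline you have reproduced its main ingredients correctly: the reduction to bounding $\norm{\tfrac{d}{m}\sum_k R_{i_k}R_{i_k}^\adj - I_T}$, symmetrization, Rudelson's selection lemma with the bound $\norm{R_i}^2 \le C^2 s/d$, and the self-improving inequality giving $m \gtrsim \varepsilon^{-2}s\log s$ for a single fixed support.

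The genuine gap is in how you pass from one support to all supports. Your primary route --- an exponential tail for each fixed $T$ followed by a union bound over the $\binom{d}{s} \le (ed/s)^s$ supports --- cannot produce the stated measurement bound. The strongest tail available for a fixed support (via matrix Bernstein or a Talagrand-type inequality, using that each term satisfies $\norm{\tfrac{d}{m}R_{i_k}R_{i_k}^\adj} \le C^2 s/m$ with a variance proxy of the same order) has exponent of order $m\varepsilon^2/s$. The union bound requires this exponent to exceed $s\log(ed/s)$, which forces $m \gtrsim \varepsilon^{-2} s^2 \log(d/s)$: \emph{quadratic} in $s$. This quadratic barrier is intrinsic to the union-bound strategy and is exactly what the theorem is designed to beat; no accounting of logarithmic factors rescues it. Consequently, the chaining argument you offer as an ``alternative'' is not optional --- it is the proof. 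In \cite{RV08:sparse} the uniformity over supports is built directly into the expectation bound: one proves a uniform version of Rudelson's lemma, controlling $\E \sup$ over all $s$-sparse unit vectors at once via Dudley's entropy integral, with the covering numbers of the relevant class of rank-one operators estimated by Maurey's empirical method. That is where the factors $\log(s\log d/\varepsilon^2)\log^2 d$ originate, the nested logarithm coming from the self-improving inequality applied to the uniform quantity, not from union-bound bookkeeping; the high-probability statement then follows from a deviation inequality for the supremum of the empirical process around its already-uniform mean, again not from a union bound over supports.
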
  

\end{description}

    	\subsection[Main Theorems]{Main Theorems}
    	\label{sec:Approaches:Basis:Main}
    	Cand\`es and Tao showed in~\cite{CT05:Decoding} that for measurement matrices that satisfy the restricted isometry condition, Basis Pursuit recovers all sparse signals exactly.  This is summarized in the following theorem.
    	\begin{theorem}[Sparse recovery under RIC \cite{CT05:Decoding}]  \label{recovery RIC}
  Assume that the measurement matrix $\Phi$ satisfies the
  restricted isometry condition with parameters $(3s, 0.2)$.
  Then every $s$-sparse vector $\vct{x}$ can be exactly recovered from 
  its measurements $\vct{u} = \Phi \vct{x}$ as a unique solution to the linear
  optimization problem \eqref{eq:ell1}.  
\end{theorem}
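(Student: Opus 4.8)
The plan is to run a null-space (cone) argument: let $\vct{x}^*$ be any minimizer of \eqref{eq:ell1}, set $\vct{h} \defby \vct{x}^* - \vct{x}$, and show $\vct{h} = 0$. Since both $\vct{x}$ and $\vct{x}^*$ are feasible, $\Phi\vct{h} = 0$, so $\vct{h} \in \ker\Phi$. Writing $T_0 \defby \supp(\vct{x})$ (so $|T_0| \le s$), the first step is to extract the \emph{cone constraint} from $\ell_1$-optimality. From $\|\vct{x}^*\|_1 \le \|\vct{x}\|_1$, splitting the norm over $T_0$ and $T_0^c$ and applying the triangle inequality on $T_0$ (where $\vct{x}$ lives), I expect to obtain $\|\vct{h}_{T_0^c}\|_1 \le \|\vct{h}_{T_0}\|_1$. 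This says the error $\vct{h}$ cannot spread its $\ell_1$-mass far outside the support of $\vct{x}$.

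Next I would control the tail of $\vct{h}$ off $T_0 \cup T_1$. Partition $T_0^c$ into blocks $T_1, T_2, \dots$, each of size $s$, ordered so that $T_1$ indexes the $s$ largest-magnitude entries of $\vct{h}$ on $T_0^c$, $T_2$ the next $s$, and so on. Because every coordinate in $T_j$ is at most the average magnitude on $T_{j-1}$, I get $\|\vct{h}_{T_j}\|_2 \le s^{-1/2}\|\vct{h}_{T_{j-1}}\|_1$; summing over $j \ge 2$ and using the cone constraint together with Cauchy--Schwarz ($\|\vct{h}_{T_0}\|_1 \le \sqrt{s}\,\|\vct{h}_{T_0}\|_2$) should yield $\sum_{j\ge 2}\|\vct{h}_{T_j}\|_2 \le \|\vct{h}_{T_0}\|_2 \le \|\vct{h}_{T_0\cup T_1}\|_2$.

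The core of the argument then uses $\Phi\vct{h}=0$, rewritten as $\Phi\vct{h}_{T_0\cup T_1} = -\sum_{j\ge 2}\Phi\vct{h}_{T_j}$. Taking the inner product with $\Phi\vct{h}_{T_0\cup T_1}$ bounds $\|\Phi\vct{h}_{T_0\cup T_1}\|_2^2$ below via the restricted isometry condition \eqref{eq:RIC2} (the vector $\vct{h}_{T_0\cup T_1}$ is $2s$-sparse, hence $3s$-sparse) and above by a sum of cross terms $\pr{\Phi\vct{h}_{T_0\cup T_1}}{\Phi\vct{h}_{T_j}}$. To handle these I first establish a \emph{near-orthogonality} estimate: for vectors $\vct{u},\vct{v}$ with disjoint supports whose union has size at most $3s$, the polarization identity $\pr{\Phi\vct{u}}{\Phi\vct{v}} = \tfrac{1}{4}(\|\Phi(\vct{u}+\vct{v})\|_2^2 - \|\Phi(\vct{u}-\vct{v})\|_2^2)$ combined with \eqref{eq:RIC2} gives $|\pr{\Phi\vct{u}}{\Phi\vct{v}}| \le \delta_{3s}\|\vct{u}\|_2\|\vct{v}\|_2$. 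Since $T_0\cup T_1$ (size $\le 2s$) and each $T_j$ (size $s$) are disjoint with union of size at most $3s$, applying this to every cross term and inserting the tail bound from the previous paragraph should give $(1-\delta_{3s})\|\vct{h}_{T_0\cup T_1}\|_2^2 \le \delta_{3s}\|\vct{h}_{T_0\cup T_1}\|_2^2$, that is $(1-2\delta_{3s})\|\vct{h}_{T_0\cup T_1}\|_2^2 \le 0$. With $\delta_{3s}=0.2 < \tfrac{1}{2}$ the coefficient $1-2\delta_{3s}$ is strictly positive, forcing $\vct{h}_{T_0\cup T_1}=0$; in particular $\vct{h}_{T_0}=0$, and then the cone constraint yields $\|\vct{h}_{T_0^c}\|_1\le\|\vct{h}_{T_0}\|_1=0$, so $\vct{h}=0$ and $\vct{x}^*=\vct{x}$, proving both exactness and uniqueness.

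The main obstacle I anticipate is arranging the bookkeeping so that the restricted isometry condition is only ever invoked on vectors of sparsity at most $3s$: this dictates the choice of blocks of size $s$ (so that $T_0\cup T_1$ has size $\le 2s$ and each cross term, $T_0\cup T_1$ against $T_j$, stays within $3s$), and it is the reason the hypothesis is stated with parameter $3s$ rather than, say, $2s$. The two quantitative engines---the near-orthogonality lemma and the magnitude-ordered tail bound---are individually routine, but closing the loop requires the final coefficient to come out strictly less than $1$; the generous gap between $\delta_{3s}=0.2$ and the threshold $\tfrac{1}{2}$ leaves ample room, so I do not expect the constants to be delicate here.
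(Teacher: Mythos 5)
Your proof is correct. Note, though, that the paper never actually proves Theorem~\ref{recovery RIC} itself --- it is quoted from \cite{CT05:Decoding} --- so the right internal comparison is Lemma~\ref{tinyLem} in the reweighted-$\ell_1$ section, which is precisely your argument run in the noisy setting: the same cone constraint extracted from $\ell_1$-optimality, the same size-$s$ blocks with the tail estimate $\|h_{T_j}\|_2 \le s^{-1/2}\|h_{T_{j-1}}\|_1$, and the same polarization-based near-orthogonality bound; there the identity $\Phi h = 0$ is replaced by $\|\Phi h\|_2 \le 2\varepsilon$, so one obtains the quantitative bound $\|h_{T_0\cup T_1}\|_2 \le \alpha\varepsilon + \rho s^{-1/2}\|h_{T_0^c}\|_1$ rather than $h=0$. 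The one structural difference is your cross-term bookkeeping: you pair the whole block $T_0\cup T_1$ against each $T_j$, which forces the RIC to be invoked at sparsity $3s$ and is exactly why the hypothesis $(3s,0.2)$ suffices for your argument; the paper (following Cand\`es \cite{Can08:Restricted-Isometry}) instead pairs $T_0$ and $T_1$ separately against $T_j$, paying a factor $\sqrt{2}$ in the cross-term constant (hence $\rho = \sqrt{2}\delta/(1-\delta)$) but never invoking the RIC beyond sparsity $2s$ --- that is what yields the sharpened hypothesis $(2s,\sqrt{2}-1)$ recorded in the remark after Theorem~\ref{stableBP2}. One small caveat worth making explicit: you apply the quadratic form \eqref{eq:RIC2} with constant $0.2$, whereas the theorem's hypothesis is stated in the linear form \eqref{eq:RIC}; converting, the quadratic constants become $1-(0.8)^2 = 0.36$ on the lower side and $(1.2)^2 - 1 = 0.44$ on the upper side, so the near-orthogonality constant becomes $\tfrac{1}{2}(0.36+0.44) = 0.4$ and your closing inequality reads $0.64\,\|h_{T_0\cup T_1}\|_2^2 \le 0.4\,\|h_{T_0\cup T_1}\|_2^2$ --- still a contradiction unless $h_{T_0\cup T_1}=0$, so the slack you anticipated does absorb the discrepancy, but the conversion should be stated rather than elided.
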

    	
    	 Note that these guarantees are \textit{uniform}.  Once the measurement matrix $\Phi$ satisfies the restricted isometry condition, Basis Pursuit correctly recovers \textit{all} sparse vectors.  
    	
    	As discussed earlier, exactly sparse vectors are not encountered in practice, but rather nearly sparse signals.  The signals and measurements are also noisy in practice, so practitioners seek algorithms that perform well under these conditions.  Cand\`es, Romberg and Tao showed in~\cite{CRT06:Stable} that a version of Basis Pursuit indeed approximately recovers signals contaminated with noise.  It is clear that in the noisy case, \eqref{eq:ell1} is not a suitable method since the exact equality in the measurements would be most likely unattainable.  Thus the method is modified slightly to allow for small perturbations, searching over all signals consistent with the measurement data.  Instead of \eqref{eq:ell1}, we consider the formulation
    	\begin{equation} \label{eqn:bp}
\min \| \vct{y} \|_1
\quad\subjto\quad
\|\Phi \vct{y} - \vct{u}\|_2\leq \eps.
\end{equation}
    	Cand\`es, Romberg and Tao showed that the program~\eqref{eqn:bp} reconstructs the signal with error at most proportional to the noise level.  First we consider exactly sparse signals whose measurements are corrupted with noise.  In this case, we have the following results from~\cite{CRT06:Stable}.
    	
    	\begin{theorem}[Stability of BP~\cite{CRT06:Stable}]\label{stableBP1}
    	Let $\Phi$ be a measurement matrix satisfying the restricted isometry condition with parameters $(3s, 0.2)$. Then for any $s$-sparse signal $\vct{x}$ and corrupted measurements $\vct{u} = \Phi\vct{x} + \vct{e}$ with $\|\vct{e}\|_2 \leq \eps$, the solution $\hat{\vct{x}}$ to~\eqref{eqn:bp} satisfies
    	$$
    	\|\hat{\vct{x}}-\vct{x}\|_2 \leq C_s\cdot\eps,
    	$$
    	where $C_s$ depends only on the RIC constant $\delta$.
    	\end{theorem}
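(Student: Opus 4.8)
The plan is to control the error vector $\vct{h} \defby \hat{\vct{x}} - \vct{x}$ directly, using both the $\ell_1$-optimality of $\hat{\vct{x}}$ and the restricted isometry condition. First I would note that $\vct{x}$ is itself feasible for~\eqref{eqn:bp}, since $\|\Phi\vct{x} - \vct{u}\|_2 = \|\vct{e}\|_2 \leq \eps$; hence optimality of $\hat{\vct{x}}$ gives $\|\hat{\vct{x}}\|_1 \leq \|\vct{x}\|_1$. Because both $\vct{x}$ and $\hat{\vct{x}}$ meet the constraint, the triangle inequality yields the \emph{tube} bound $\|\Phi\vct{h}\|_2 \leq \|\Phi\hat{\vct{x}}-\vct{u}\|_2 + \|\vct{u}-\Phi\vct{x}\|_2 \leq 2\eps$. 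Writing $T_0 \defby \supp(\vct{x})$ with $|T_0|\leq s$ and splitting $\|\vct{x}+\vct{h}\|_1 \leq \|\vct{x}\|_1$ over $T_0$ and its complement, the reverse triangle inequality produces the \emph{cone} constraint $\|\vct{h}_{T_0^c}\|_1 \leq \|\vct{h}_{T_0}\|_1$. These two facts — an image that is small and a vector whose mass concentrates on $T_0$ — are the entire input to the geometric argument.

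Next I would localize the tail. Sort the coordinates of $\vct{h}_{T_0^c}$ by decreasing magnitude and partition them into consecutive blocks $T_1, T_2, \dots$ of size $s$, and set $T_{01} \defby T_0\cup T_1$, so $|T_{01}|\leq 2s$. Since each entry on $T_j$ is dominated by the average of the entries on $T_{j-1}$, one has $\|\vct{h}_{T_j}\|_2 \leq s^{-1/2}\|\vct{h}_{T_{j-1}}\|_1$ for $j\geq 2$; summing and invoking the cone constraint together with Cauchy--Schwarz gives the key tail estimate $\sum_{j\geq 2}\|\vct{h}_{T_j}\|_2 \leq s^{-1/2}\|\vct{h}_{T_0^c}\|_1 \leq \|\vct{h}_{T_0}\|_2 \leq \|\vct{h}_{T_{01}}\|_2$. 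This reduces everything to bounding $\|\vct{h}_{T_{01}}\|_2$, because $\|\vct{h}\|_2 \leq \|\vct{h}_{T_{01}}\|_2 + \sum_{j\geq 2}\|\vct{h}_{T_j}\|_2 \leq 2\|\vct{h}_{T_{01}}\|_2$.

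To bound $\|\vct{h}_{T_{01}}\|_2$ I would work with the quadratic form~\eqref{eq:RIC2} (whose constant differs only by an absolute change from the stated one) and expand
\[
\|\Phi\vct{h}_{T_{01}}\|_2^2 = \langle \Phi\vct{h}_{T_{01}},\, \Phi\vct{h}\rangle - \sum_{j\geq 2}\langle \Phi\vct{h}_{T_{01}},\, \Phi\vct{h}_{T_j}\rangle .
\]
The first term is at most $\|\Phi\vct{h}_{T_{01}}\|_2\,\|\Phi\vct{h}\|_2 \leq (1+\delta)^{1/2}\|\vct{h}_{T_{01}}\|_2\cdot 2\eps$ by Cauchy--Schwarz and the tube bound. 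For each cross term, $\vct{h}_{T_{01}}$ and $\vct{h}_{T_j}$ have disjoint supports whose union has size at most $3s$ — which is precisely why the hypothesis is stated at order $3s$: a near-orthogonality lemma (proved from the parallelogram identity and~\eqref{eq:RIC2}) gives $|\langle\Phi\vct{h}_{T_{01}},\Phi\vct{h}_{T_j}\rangle| \leq \delta\,\|\vct{h}_{T_{01}}\|_2\|\vct{h}_{T_j}\|_2$, and summing against the tail estimate bounds the whole sum by $\delta\|\vct{h}_{T_{01}}\|_2^2$. Combining with the lower inequality $\|\Phi\vct{h}_{T_{01}}\|_2^2 \geq (1-\delta)\|\vct{h}_{T_{01}}\|_2^2$ and cancelling one factor of $\|\vct{h}_{T_{01}}\|_2$ leaves $(1-2\delta)\|\vct{h}_{T_{01}}\|_2 \leq 2(1+\delta)^{1/2}\eps$. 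The requirement $\delta < 1/2$, which the constant $0.2$ comfortably satisfies, then yields $\|\vct{h}_{T_{01}}\|_2 \leq C_s\eps$ and hence $\|\vct{h}\|_2 \leq 2C_s\eps$.

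The main obstacle is the cross-term estimate. A crude bound using only the upper restricted isometry inequality would replace $\delta$ by $(1+\delta)$ in the sum over blocks, leaving the coefficient $(1-\delta)-(1+\delta)=-2\delta<0$, so that the argument is vacuous for every positive $\delta$. The essential gain is the near-orthogonality lemma, which shows that images under $\Phi$ of disjointly supported sparse vectors are almost orthogonal with defect only $\delta$ rather than $1+\delta$; establishing this cleanly — and keeping the sparsity bookkeeping so that every vector appearing stays within order $3s$ — is where the real work lies. The remaining ingredients are triangle inequalities and the elementary sorting estimate.
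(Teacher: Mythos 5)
Your proof is correct, and it is essentially the standard Cand\`es--Romberg--Tao argument. One point of comparison: the paper does not actually prove Theorem~\ref{stableBP1} itself (it is quoted from the literature), but the same machinery appears later in the paper's proof of Lemma~\ref{tinyLem}, with one bookkeeping difference. You bound each cross term $\langle \Phi \vct{h}_{T_0\cup T_1}, \Phi \vct{h}_{T_j}\rangle$ in a single stroke, treating $\vct{h}_{T_0\cup T_1}$ as one $2s$-sparse vector against the $s$-sparse $\vct{h}_{T_j}$, which is exactly why you need the restricted isometry condition at order $3s$ --- in perfect agreement with the $(3s,0.2)$ hypothesis of the theorem. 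The paper's Lemma~\ref{tinyLem} instead splits the cross term into $\langle \Phi \vct{h}_{T_0}, \Phi \vct{h}_{T_j}\rangle$ and $\langle \Phi \vct{h}_{T_1}, \Phi \vct{h}_{T_j}\rangle$ so that every inner product involves only $2s$ coordinates; this costs a factor $\sqrt{2}$ (the constant $\rho = \sqrt{2}\delta/(1-\delta)$) but lets the hypothesis be stated at order $2s$, which is Cand\`es' sharpened version mentioned in the remark following Theorem~\ref{stableBP2}. Both routes are sound; yours buys a cleaner cross-term estimate at the price of a stronger (order-$3s$) isometry assumption, which is precisely what the theorem as stated provides. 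Your remark that working with the quadratic form~\eqref{eq:RIC2} only shifts the constant is also fine, since $C_s$ is permitted to depend on $\delta$.
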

    	
    	Note that in the noiseless case, this theorem is consistent with Theorem~\ref{recovery RIC}.  Theorem~\ref{stableBP1} is quite surprising given the fact that the matrix $\Phi$ is a wide rectangular matrix.  Since it has far more columns than rows, most of the singular values of $\Phi$ are zero.  Thus this theorem states that even though the problem is very ill-posed, Basis Pursuit still controls the error.  It is important to point out that Theorem~\ref{stableBP1} is fundamentally optimal.  This means that the error level $\eps$ is in a strong sense \textit{unrecoverable}.  Indeed, suppose that the support $S$ of $\vct{x}$ was known a priori.  The best way to reconstruct $\vct{x}$ from the measurements $\vct{u} = \Phi\vct{x} + \vct{e}$ in this case would be to apply the pseudoinverse $\Phi_S^\psinv \defby (\Phi_S^*\Phi_S)^{-1} \Phi_S^*$ on the support, and set the remaining coordinates to zero. That is, one would reconstruct $\vct{x}$ as
    	$$
    	\hat{\vct{x}} = \left\{ \begin{array}{lr}
    	\Phi^\psinv_S u & \text{on $S$}\\
    	 0 & \text{elsewhere}
    	 \end{array}\right.
    	$$
    	Since the singular values of $\Phi_S$ are controlled, the error on the support is approximately $\|\vct{e}\|_2 \leq \eps$, and the error off the support is of course zero.  This is also the error guaranteed by Theorem~\ref{stableBP1}.  Thus no recovery algorithm can hope to recover with less error than the original error introduced to the measurements.  
    	
    	Thus Basis Pursuit is stable to perturbations in the measurements of exactly sparse vectors. This extends naturally to the approximate recovery of nearly sparse signals, which is summarized in the companion theorem from~\cite{CRT06:Stable}.
    	
    	\begin{theorem}[Stability of BP II~\cite{CRT06:Stable}]\label{stableBP2}
    	Let $\Phi$ be a measurement matrix satisfying the restricted isometry condition with parameters $(3s, 0.2)$. Then for any \textit{arbitrary} signal and corrupted measurements $\vct{u} = \Phi\vct{x} + \vct{e}$ with $\|\vct{e}\|_2 \leq \eps$, the solution $\hat{\vct{x}}$ to~\eqref{eqn:bp} satisfies
    	$$
    	\|\hat{\vct{x}}-\vct{x}\|_2 \leq C_s\cdot\eps + C_s'\cdot\frac{\|\vct{x}-\vct{x}_s\|_1}{\sqrt{s}},
    	$$
      where $\vct{x}_s$ denotes the vector of the largest coefficients in magnitude of $\vct{x}$.
    	\end{theorem}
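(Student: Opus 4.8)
The plan is to follow the standard RIP cone-and-bucketing argument, extending the reasoning behind Theorem~\ref{stableBP1} so as to accommodate the tail term $\|x - x_s\|_1$. Write $h \defby \hat{x} - x$ for the error vector; the goal is to bound $\|h\|_2$. First I would record the two facts that drive everything. Since $\|\Phi x - u\|_2 = \|e\|_2 \le \eps$, the true signal $x$ is feasible for~\eqref{eqn:bp}, so minimality of $\hat{x}$ gives $\|\hat{x}\|_1 \le \|x\|_1$. And since both $\hat{x}$ and $x$ satisfy the constraint, the triangle inequality yields $\|\Phi h\|_2 \le \|\Phi\hat{x} - u\|_2 + \|\Phi x - u\|_2 \le 2\eps$.

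Next I would extract the cone constraint. Let $T_0$ be the support of the $s$ largest-magnitude entries of $x$, so that $x_{T_0} = x_s$ and $\|x_{T_0^c}\|_1 = \|x - x_s\|_1$. Expanding $\|x\|_1 \ge \|x + h\|_1$ over $T_0$ and $T_0^c$ gives the cone inequality $\|h_{T_0^c}\|_1 \le \|h_{T_0}\|_1 + 2\|x - x_s\|_1$. I would then bucket $T_0^c$ into successive index sets $T_1, T_2, \dots$ of size $s$, ordered by decreasing magnitude of the entries of $h$, and set $T_{01} \defby T_0 \cup T_1$. The comparison $\|h_{T_{j+1}}\|_2 \le s^{-1/2}\|h_{T_j}\|_1$ (each entry in $T_{j+1}$ is at most the average of $|h|$ on $T_j$) sums to $\sum_{j \ge 2}\|h_{T_j}\|_2 \le s^{-1/2}\|h_{T_0^c}\|_1$. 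Combining with the cone inequality and $\|h_{T_0}\|_1 \le \sqrt{s}\,\|h_{T_{01}}\|_2$ produces the key tail bound $\sum_{j \ge 2}\|h_{T_j}\|_2 \le \|h_{T_{01}}\|_2 + 2 s^{-1/2}\|x - x_s\|_1$.

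The heart of the argument is then to control $\|h_{T_{01}}\|_2$ via the restricted isometry condition. Writing $\Phi h_{T_{01}} = \Phi h - \sum_{j \ge 2}\Phi h_{T_j}$ and pairing with $\Phi h_{T_{01}}$, the left side is bounded below by $(1-\delta)\|h_{T_{01}}\|_2^2$ through the squared condition~\eqref{eq:RIC2} on the $2s$-element set $T_{01}$. On the right side, $|\langle \Phi h_{T_{01}}, \Phi h\rangle| \le \sqrt{1+\delta}\,\|h_{T_{01}}\|_2\cdot 2\eps$, while the cross terms are handled by the approximate-orthogonality estimate $|\langle \Phi h_{T_{01}}, \Phi h_{T_j}\rangle| \le \delta\,\|h_{T_{01}}\|_2\,\|h_{T_j}\|_2$ valid for disjoint index sets whose union has at most $3s$ elements; this is exactly where the parameter $(3s, 0.2)$ enters, forcing $\delta \le 0.2$. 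Dividing by $\|h_{T_{01}}\|_2$ and inserting the tail bound yields $(1 - 2\delta)\|h_{T_{01}}\|_2 \le 2\sqrt{1+\delta}\,\eps + 2\delta s^{-1/2}\|x - x_s\|_1$, and since $1 - 2\delta \ge 0.6 > 0$ this bounds $\|h_{T_{01}}\|_2$. Finally $\|h\|_2 \le \|h_{T_{01}}\|_2 + \sum_{j\ge2}\|h_{T_j}\|_2 \le 2\|h_{T_{01}}\|_2 + 2 s^{-1/2}\|x-x_s\|_1$, and substituting the bound for $\|h_{T_{01}}\|_2$ produces the claimed estimate with constants $C_s, C_s'$ depending only on $\delta$.

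I expect the main obstacle to be the inner-product step, specifically the approximate-orthogonality lemma for $\Phi$ restricted to disjoint supports together with the careful accounting of which restricted isometry constant ($\delta_{2s}$ versus $\delta_{3s}$) governs each term; getting the bucket-union sizes right is precisely what makes the $(3s, 0.2)$ hypothesis keep $1 - 2\delta$ bounded away from zero. The cone and bucketing bookkeeping is routine by comparison, and both Theorem~\ref{recovery RIC} and the exactly-sparse case of Theorem~\ref{stableBP1} fall out as the special cases $\eps = 0$ and $x = x_s$, which serves as a useful consistency check.
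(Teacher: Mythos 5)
Your proposal is correct and follows essentially the same cone-and-bucketing RIP argument that this paper itself reproduces in the proof of Lemma~\ref{tinyLem} in Section~\ref{sec:prfs} (the theorem here is only cited from \cite{CRT06:Stable}, and your argument is the standard proof of that result). The only cosmetic difference is that you bound the cross terms $\langle \Phi h_{T_{01}}, \Phi h_{T_j}\rangle$ in one shot via the approximate-orthogonality estimate with constant $\delta_{3s}$ --- which is exactly what the $(3s,0.2)$ hypothesis supports --- whereas the paper's Lemma~\ref{tinyLem} splits the inner product into $T_0$- and $T_1$-pieces so as to use only $\delta_{2s}$, at the cost of a factor $\sqrt{2}$.
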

    	
    	\begin{remark}
    	In~\cite{Can08:Restricted-Isometry}, Cand\`es sharpened Theorems~\ref{recovery RIC},~\ref{stableBP1}, and~\ref{stableBP2} to work under the restricted isometry condition with parameters $(2s, \sqrt{2}-1)$.
    	\end{remark}
    	
    	Theorem~\ref{stableBP2} says that for an arbitrary signal $\vct{x}$, Basis Pursuit approximately recovers its largest $s$ coefficients.  In the particularly useful case of compressible signals, we have that for signals $\vct{x}$ obeying~\eqref{comp}, the reconstruction satisfies 
    	\begin{equation}\label{BPcomp}
    	\|\hat{\vct{x}} - \vct{x}\|_2 \leq C_s\cdot\eps + C's^{-q+1/2},
    	\end{equation}
    	where $C'$ depends on the RIC constant and the constant $R$ in the compressibility definition~eqref{comp}.  We notice that for such signals we also have
    	$$
    	\|\vct{x}_s - \vct{x}\|_2 \leq C_R s^{-q+1/2},
    	$$
    	where $C_R$ depends on $R$.  Thus the error in the approximation guaranteed by Theorem~\ref{stableBP2} is comparable to the error obtained by simply selecting the $s$ largest coefficients in magnitude of a compressible signal.  So at least in the case of compressible signals, the error guarantees are again optimal.  See Section~\ref{summary} for a discussion of advantages and disadvantages to this approach. 
    	
    	\subsection[Numerical Results]{Numerical Results}
    	\label{sec:Approaches:Basis:Numerical}
    	Many empirical studies have been conducted using Basis Pursuit.  Several are included here, other results can be found in~\cite{CRT06:Stable,DT08:Counting,CT05:Decoding}.  The studies discussed here were performed in Matlab, with the help of $\ell_1$-Magic code by Romberg~\cite{L1Magic}.  The code is given in Appendix~\ref{app:code:BPcode}.  In all cases here, the measurement matrix is a Gaussian matrix and the ambient dimension $d$ is 256.  In the first study, for each trial we generated binary signals with support uniformly selected at random as well as an independent Gaussian matrix for many values of the sparsity $s$ and number of measurements $m$.  Then we ran Basis Pursuit on the measurements of that signal and counted the number of times the signal was recovered correctly out of $500$ trials.  The results are displayed in Figure~\ref{bpfig1}.  The $99\%$ recovery trend is depicted in Figure~\ref{bpfig2}.  This curve shows the relationship between the number of measurements $m$ and the sparsity level $s$ to guarantee that correct recovery occurs $99\%$ of the time. Note that by \textit{recovery}, we mean that the estimation error falls below the threshold of $10^{-5}$. Figure~\ref{bpfig3} depicts the recovery error of Basis Pursuit when the measurements were perturbed.  For this simulation, the signals were again binary (flat) signals, but Gaussian noise was added to the measurements.  The norm of the noise was chosen to be the constant $1/2$.  The last figure, Figure~\ref{bpfig4} displays the recovery error when Basis Pursuit is run on compressible signals.  For this study, the Basis Pursuit was run on signals whose coefficients obeyed the power law~\eqref{comp}.  This simulation was run with sparsity $s=12$, dimension $d=256$, and for various values of the compressibility constant $q$.  Note that the smaller $q$ is, the more quickly the coefficients decay.  
    	
    	\begin{figure}[htbp] 
  \includegraphics[width=0.8\textwidth,height=3.2in]{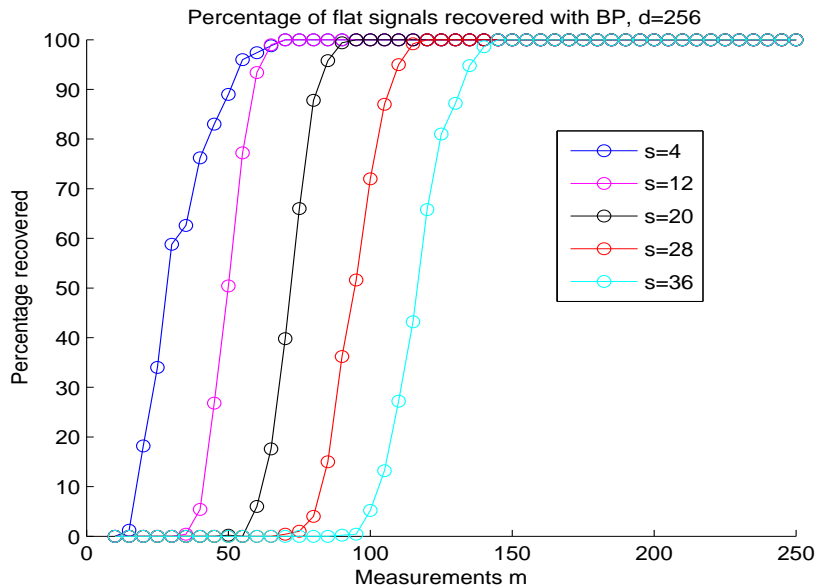}
  \caption{The percentage of sparse flat signals exactly recovered by Basis Pursuit as a function of the number of measurements $m$ in dimension $d=256$ for various levels of sparsity $s$.}\label{bpfig1}
\end{figure}

\begin{figure}[htbp] 
  \includegraphics[width=0.8\textwidth,height=3.2in]{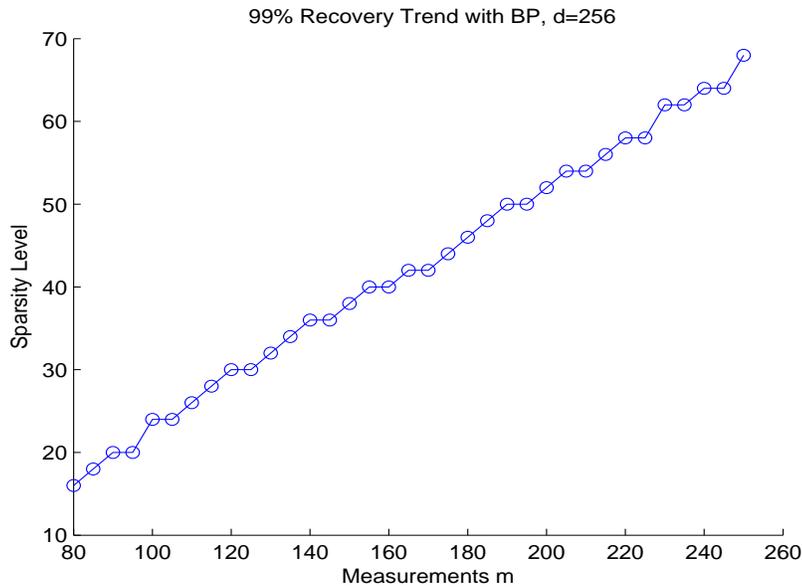}
  \caption{The $99\%$ recovery trend of Basis Pursuit as a function of the number of measurements $m$ in dimension $d=256$ for various levels of sparsity $s$.}\label{bpfig2}
\end{figure}

\begin{figure}[htbp] 
  \includegraphics[width=0.8\textwidth,height=3.2in]{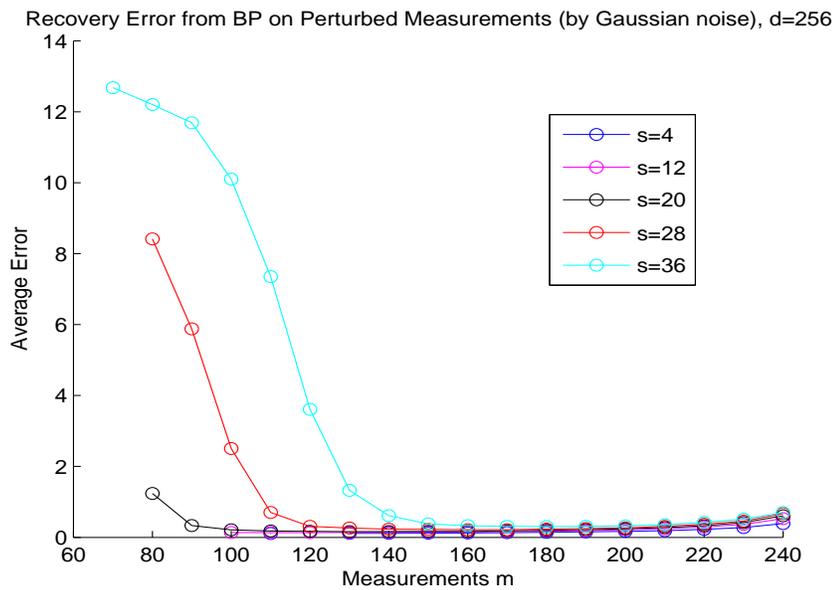}
  \caption{The recovery error of Basis Pursuit under perturbed measurements as a function of the number of measurements $m$ in dimension $d=256$ for various levels of sparsity $s$.}\label{bpfig3}
\end{figure}

\begin{figure}[htbp] 
  \includegraphics[width=0.8\textwidth,height=3.2in]{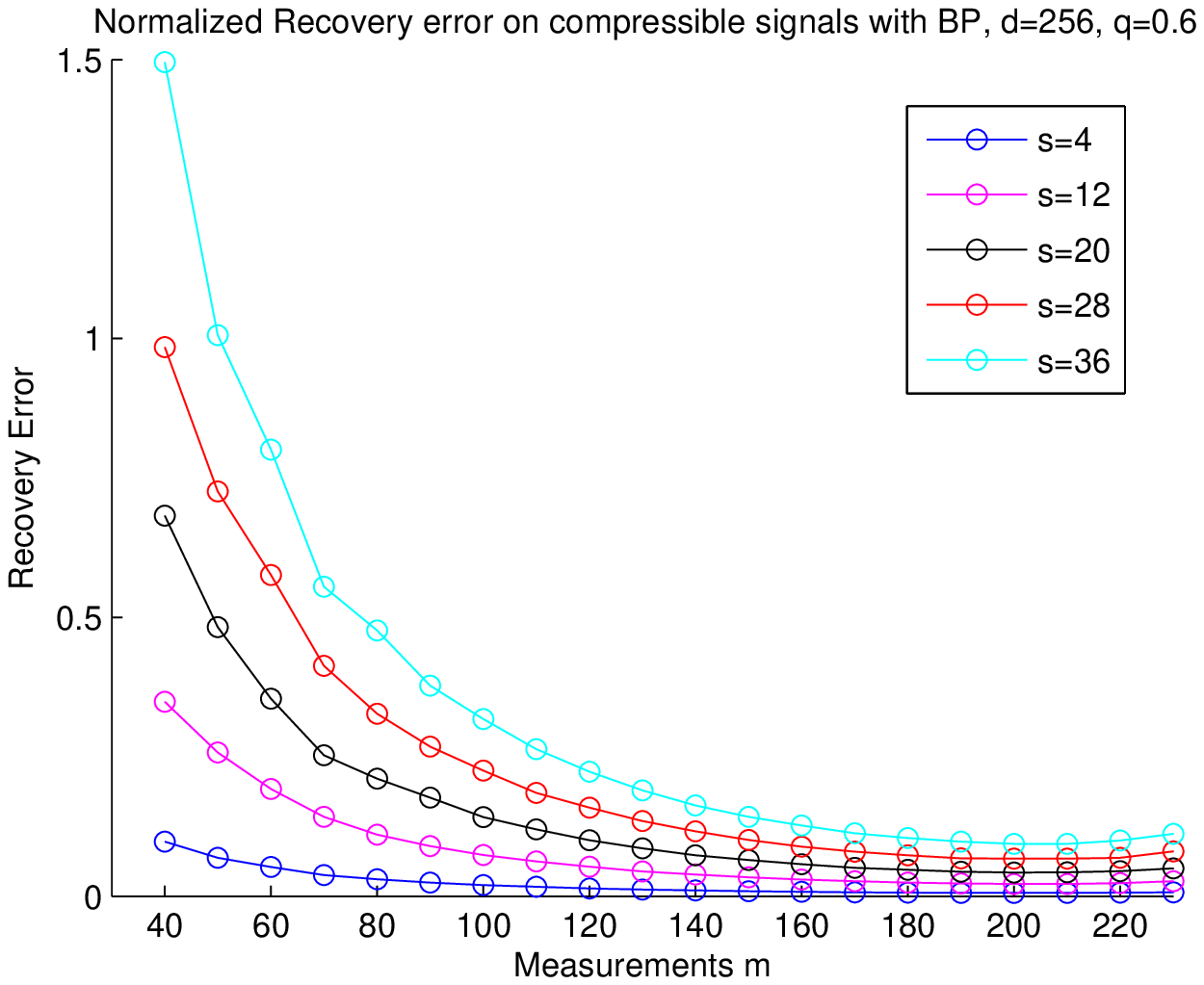}
  \caption{The normalized recovery error $\frac{\|x-\hat{x}\|_2}{\sqrt{s}\|x-x_s\|_1} $ of Basis Pursuit for compressible signals as a function of the number of measurements $m$ in dimension $d=256$ with compressibility $q=0.6$ for various levels of sparsity $s$.}\label{bpfig4}
\end{figure}
    	
    	 	\subsection[Linear Programming]{Linear Programming}
    	\label{sec:Approaches:Basis:Linear}
    	Linear programming is a technique for optimization of a linear objective function under equality and inequality constraints, all of which are linear~\cite{DT97:Linear}.  The problem~\eqref{eq:ell1} can be recast as a linear program whose objective function (to be minimized) is
$$
\sum_{i=1}^{d} t_i,
$$
with constraints
$$
-t_i \leq z_i \leq t_i, \quad \Phi z = u.
$$
    	
Viewed geometrically, the set of linear constraints, making up the feasible region, forms a convex polyhedron.  By the Karush–-Kuhn–-Tucker conditions~\cite{KT51:Nonlinear}, all local optima are also global optima.  If an optimum exists, it will be attained at a vertex of the polyhedron.  There are several methods to search for this optimal vertex.

One of the most popular algorithms in linear programming is the simplex algorithm, developed by George Dantzig~\cite{DT97:Linear}.  The simplex method begins with some admissible starting solution.  If such a point is not known, a different linear program (with an obvious admissible solution) can be solved via the simplex method to find such a point.  The simplex method then traverses the edges of the polytope via a sequence of pivot steps.  The algorithm moves along the polytope, at each step choosing the optimal direction, until the optimum is found.  Assuming that precautions against cycling are taken, the algorithm is guaranteed to find the optimum.  Although it's worst-case behavior is exponential in the problem size, it is much more efficient in practice.  Smoothed analysis has explained this efficiency in practice~\cite{R06:Beyond}, but it is still unknown whether there is a strongly polynomial bound on the runtime.  

The simplex algorithm traverses the polytope's edges, but an alternative method called the interior point method traverses the interior of the polytope~\cite{NN94:Interior-Point}.  One such method was proposed by Karmarkar and is an interior point projective method.  Recently, barrier function or path-following methods are being used for practical purposes.  The best bound currently attained on the runtime of an interior point method is $O(m^2d^{1.5})$.  Other methods have been proposed, including the ellipsoid method by Khachiyan which has a polynomial worst case runtime, but as of yet none have provided strongly polynomial bounds.
    	
    	\subsection[Summary]{Summary}\label{summary}
    	\label{sec:Approaches:Basis:Summary}
    	
 Basis Pursuit presents many advantages over other algorithms in compressed sensing.  Once a measurement matrix satisfies the restricted isometry condition, Basis Pursuit reconstructs \textit{all} sparse signals.  The guarantees it provides are thus uniform, meaning the algorithm will not fail for any sparse signal.  Theorem~\ref{stableBP2} shows that Basis Pursuit is also stable, which is a necessity in practice.  Its ability to handle noise and non-exactness of sparse signals makes the algorithm applicable to real world problems.  The requirements on the restricted isometry constant shown in Theorem~\ref{stableBP2} along with the known results about random matrices discussed in Section~\ref{sec:Approaches:Basis:Restricted} mean that Basis Pursuit only requires $O(s\log d)$ measurements to reconstruct $d$-dimensional $s$-sparse signals.  It is thought by many that this is the optimal number of measurements.
 
 Although Basis Pursuit provides these strong guarantees, its disadvantage is of course speed.  It relies on Linear Programming which although is often quite efficient in practice, has a polynomial runtime.  For this reason, much work in compressed sensing has been done using faster methods.  This approach uses greedy algorithms, which are discussed next.

        \section[Greedy Methods]{Greedy Methods}
        \label{sec:Appraoches:Greedy}
        An alternative approach to compressed sensing is the use of greedy algorithms.  Greedy algorithms compute the support of the sparse signal $\vct{x}$ iteratively.  Once the support of the signal is compute correctly, the pseudo-inverse of the measurement matrix restricted to the corresponding columns can be used to reconstruct the actual signal $\vct{x}$.  The clear advantage to this approach is speed, but the approach also presents new challenges.

   \subsection[Orthogonal Matching Pursuit]{Orthogonal Matching Pursuit}
   \label{sec:Approaches:Greedy:Orthogonal}
   
   One such greedy algorithm is Orthogonal Matching Pursuit (OMP), put forth by Mallat and his collaborators (see e.g.~\cite{MZ93:Matching-Pursuits}) and analyzed by Gilbert and Tropp~\cite{TG07:Signal-Recovery}.  OMP uses subGaussian measurement matrices to reconstruct sparse signals.  If $\Phi$ is such a measurement matrix, then $\Phi^*\Phi$ is in a loose sense close to the identity.  Therefore one would expect the largest coordinate of the observation vector $\vct{y} = \Phi^*\Phi \vct{x}$ to correspond to a non-zero entry of $\vct{x}$.  Thus one coordinate for the support of the signal $\vct{x}$ is estimated.  Subtracting off that contribution from the observation vector $\vct{y}$ and repeating eventually yields the entire support of the signal $\vct{x}$.  OMP is quite fast, both in theory and in practice, but its guarantees are not as strong as those of Basis Pursuit.
    	
    		\subsubsection[Description]{Description}
    		\label{sec:Approaches:Greedy:Orthogonal:Description}
    		The OMP algorithm can thus be described as follows:
    		
    		\bigskip
   \textsc{Orthogonal Matching Pursuit (OMP)}

\nopagebreak

\fbox{\parbox{\algorithmwidth}{
  \textsc{Input:} Measurement matrix $\Phi$, measurement vector $\vct{u}=\Phi \vct{x}$, sparsity level $s$
  
  \textsc{Output:} Index set $I \subset \{1,\ldots,d\}$

  \textsc{Procedure:}
  \begin{description}
    \item[Initialize] Let the index set $I = \emptyset$ and the residual $\vct{r} = \vct{u}$.\\
      Repeat the following $s$ times:
    \item[Identify] Select the largest coordinate $\lambda$ of $\vct{y}=\Phi^* \vct{r}$ in absolute value. Break ties lexicographically.
    \item[Update] Add the coordinate $\lambda$ to the index set: $I \leftarrow I \cup \{\lambda\}$, 
      and update the residual:
      $$
      \hat{\vct{x}} = \argmin_{\vct{z}} \|\vct{u} - \Phi|_I \vct{z}\|_2; \qquad \vct{r} = \vct{u} - \Phi \hat{\vct{x}}.
      $$
  \end{description}
 }}
 
 \bigskip 
 
  Once the support $I$ of the signal $\vct{x}$ is found, the estimate can be reconstructed as $\hat{\vct{x}} = \Phi_I^\psinv \vct{u}$, where recall we define the pseudoinverse by $\Phi_I^\psinv \defby (\Phi_I^*\Phi_I)^{-1} \Phi_I^*$.
  
  The algorithm's simplicity enables a fast runtime.  The algorithm iterates $s$ times, and each iteration does a selection through $d$ elements, multiplies by $\Phi^*$, and solves a least squares problem.  The selection can easily be done in $O(d)$ time, and the multiplication of $\Phi^*$ in the general case takes $O(md)$.  When $\Phi$ is an unstructured matrix, the cost of solving the least squares problem is $O(s^2d)$.  However, maintaining a QR-Factorization of $\Phi|_I$ and using the modified Gram-Schmidt algorithm reduces this time to $O(|I|d)$ at each iteration.  Using this method, the overall cost of OMP becomes $O(smd)$.  In the case where the measurement matrix $\Phi$ is structured with a fast-multiply, this can clearly be improved.     
    		
    		\subsubsection[Main Theorems and Results]{Main Theorems and Results}
    		\label{sec:Approaches:Greedy:Orthogonal:Main}
    		
    		Gilbert and Tropp showed that OMP correctly recovers a \textit{fixed} sparse signal with high probability. Indeed, in~\cite{TG07:Signal-Recovery} they prove the following.
    		\begin{theorem}[OMP Signal Recovery \cite{TG07:Signal-Recovery}]\label{OMPRecovery}
    		Fix $\delta\in(0, 0.36)$ and let $\Phi$ be an $m\times d$ Gaussian measurement matrix with $m \geq Cm\log(d/\delta)$.  Let $x$ be an $s$-sparse signal in $\R^d$.  Then with probability exceeding $1-2\delta$, OMP correctly reconstructs the signal $x$ from its measurements $\Phi x$.
    		\end{theorem}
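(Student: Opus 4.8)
The plan is to reduce exact signal recovery to correct support identification, and then to control, step by step, the probability that OMP's greedy selection picks a column inside the true support $T = \supp(\vct{x})$ (with $|T| = s$). First I would observe that if OMP chooses an index in $T$ at each of the $s$ iterations, then after $s$ steps the index set $I$ equals $T$; since a Gaussian $\Phi_T$ has full column rank almost surely, the least-squares step returns $\hat{\vct{x}} = \Phi_T^\psinv \vct{u} = \vct{x}$ exactly. So it suffices to bound the probability that OMP ever selects an index outside $T$. At the start of iteration $k$ the residual $\vct{r}_k = \vct{u} - \Phi \hat{\vct{x}}$ is the component of $\vct{u} = \Phi_T \vct{x}_T$ orthogonal to the already-chosen columns; provided every previous selection lay in $T$, this residual stays in $\range(\Phi_T)$. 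OMP makes a correct choice at step $k$ exactly when $\max_{\omega \in T} |\pr{\phi_\omega}{\vct{r}_k}| > \max_{\omega \notin T} |\pr{\phi_\omega}{\vct{r}_k}|$, i.e.\ when the greedy selection ratio $\rho(\vct{r}_k) := \max_{\omega\notin T}|\pr{\phi_\omega}{\vct{r}_k}| \, / \, \|\Phi_T^* \vct{r}_k\|_\infty$ is strictly below $1$.

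The key structural step --- and the main obstacle --- is to untangle the apparent circular dependence: the residuals are functions of \emph{all} selected columns, yet I want to treat the off-support columns as fresh Gaussians. I would resolve this by conditioning on $\Phi_T$ and on $\vct{x}$. Once these are fixed and we restrict attention to the event that all selections so far are correct, the residual sequence $\vct{r}_0 = \vct{u}, \vct{r}_1, \dots, \vct{r}_{s-1}$ is a \emph{deterministic} function of $\Phi_T$ alone and lies in $\range(\Phi_T)$. The columns $\{\phi_\omega : \omega \notin T\}$ are independent of $\Phi_T$, hence independent of every $\vct{r}_k$; conditionally, each inner product $\pr{\phi_\omega}{\vct{r}_k}$ with $\omega \notin T$ is therefore a mean-zero Gaussian of variance $\|\vct{r}_k\|_2^2$, and these are the only remaining source of randomness. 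Restricting to the ``all correct so far'' event is legitimate because a union bound over steps only needs to control the probability of the \emph{first} error.

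With independence in hand, the rest is two tail estimates. For the numerator, a Gaussian maximal inequality bounds $\max_k \max_{\omega\notin T} |\pr{\phi_\omega}{\vct{r}_k}| / \|\vct{r}_k\|_2$ by a constant multiple of $\sqrt{\log((d-s)/\delta)}$ across all $s$ steps and $d - s$ off-support columns, except with probability $\delta$. For the denominator, since $\vct{r}_k \in \range(\Phi_T)$ the restriction of $\Phi_T^*$ to that range has smallest singular value $\sigma_{\min}(\Phi_T)$, so $\|\Phi_T^* \vct{r}_k\|_\infty \geq \|\Phi_T^* \vct{r}_k\|_2 / \sqrt{s} \geq \sigma_{\min}(\Phi_T)\,\|\vct{r}_k\|_2/\sqrt{s}$; the standard lower bound on the least singular value of a Gaussian matrix (equivalently the restricted isometry condition for $\Phi_T$) gives $\sigma_{\min}(\Phi_T) \gtrsim \sqrt{m}$ when $s \ll m$, except with probability $\delta$. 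Combining the two, $\rho(\vct{r}_k) < 1$ for all $k$ as soon as $\sqrt{\log((d-s)/\delta)} \lesssim \sqrt{m/s}$, i.e.\ as soon as $m \geq C s \log(d/\delta)$. The two failure events each cost probability at most $\delta$, yielding overall success probability at least $1 - 2\delta$; the restriction $\delta < 0.36$ is what keeps the constants in the two tail bounds mutually compatible.
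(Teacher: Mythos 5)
Your proposal is correct, and it reconstructs essentially the argument of Tropp and Gilbert in \cite{TG07:Signal-Recovery}, which is the proof this thesis defers to (the theorem is stated here by citation only, with no proof given in the text). In particular, your central move --- replacing the actual residuals by the ``imaginary'' residuals generated by running OMP on $\Phi_T$ alone, so that conditional on $\Phi_T$ they are deterministic and independent of the off-support columns, and then combining a Gaussian maximal bound on $\max_{k}\max_{\omega\notin T}|\langle \phi_\omega, \vct{r}_k\rangle|$ with a least-singular-value bound on $\Phi_T$ --- is exactly the mechanism of the cited proof, including the final $1-2\delta$ accounting.
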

    		
    		Similar results hold when $\Phi$ is a subgaussian matrix.  We note here that although the measurement requirements are similar to those of Basis Pursuit, the guarantees are not uniform.  The probability is for a fixed signal rather than for all signals. The type of measurement matrix here is also more restrictive, and it is unknown whether OMP works for the important case of random Fourier matrices.

    			\subsubsection[Numerical Experiments]{Numerical Experiments}
    		\label{sec:Approaches:Greedy:Orthogonal:Numerical}
    		
    		Many empirical studies have been conducted to study the success of OMP.  One study is described here that demonstrates the relationship between the sparsity level $s$ and the number of measurements $m$. Other results can be found in~\cite{TG07:Signal-Recovery}.  The study discussed here was performed in Matlab, and is given in Appendix~\ref{app:code:OMPcode}.. In the study, for each trial I generated binary signals with support uniformly selected at random as well as an independent Gaussian measurement matrix, for many values of the sparsity $s$ and number of measurements $m$.  Then I ran OMP on the measurements of that signal and counted the number of times the signal was recovered correctly out of $500$ trials.  The results are displayed in Figure~\ref{ompfig1}.  The $99\%$ recovery trend is depicted in Figure~\ref{ompfig2}.  This curve shows the relationship between the number of measurements $m$ and the sparsity level $s$ to guarantee that correct recovery occurs $99\%$ of the time.  In comparison with Figures~\ref{bpfig1} and~\ref{bpfig2} we see that Basis Pursuit appears to provide stronger results empirically as well.    
    	
    	\begin{figure}[htbp] 
  \includegraphics[width=0.8\textwidth,height=3.2in]{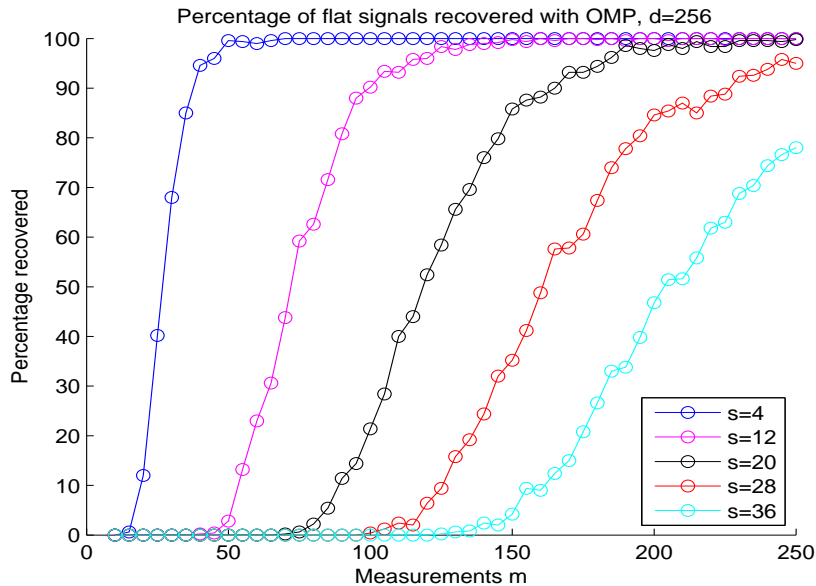}
  \caption{The percentage of sparse flat signals exactly recovered by OMP as a function of the number of measurements $m$ in dimension $d=256$ for various levels of sparsity $s$.}\label{ompfig1}
\end{figure}

\begin{figure}[htbp] 
  \includegraphics[width=0.8\textwidth,height=3.2in]{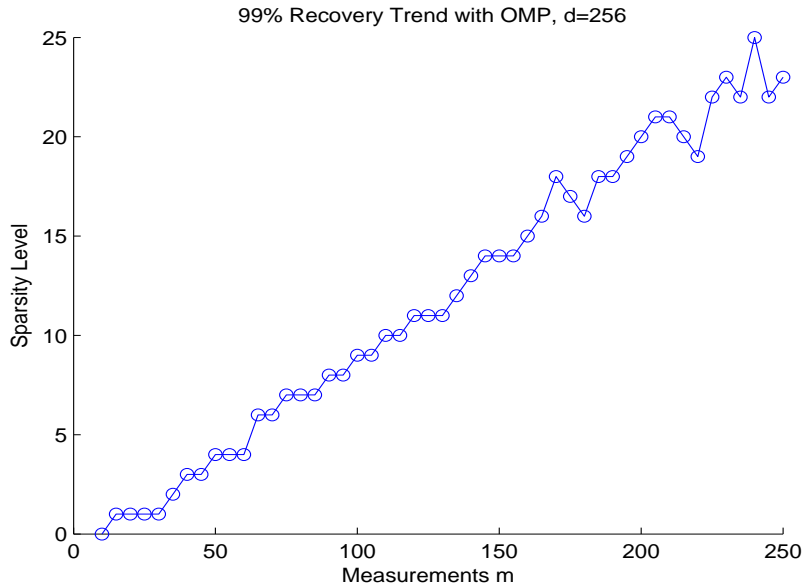}
  \caption{The $99\%$ recovery trend of OMP as a function of the number of measurements $m$ in dimension $d=256$ for various levels of sparsity $s$.}\label{ompfig2}
\end{figure}
	
    		\subsubsection[Summary]{Summary}
    		\label{sec:Approaches:Greedy:Orthogonal:Summary}
    		
    		It is important to note the distinctions between this theorem for OMP and Theorem~\ref{recovery RIC} for Basis Pursuit.  The first important difference is that Theorem~\ref{OMPRecovery} shows that OMP works only for the case when $\Phi$ is a Gaussian (or subgaussian) matrices, whereas Theorem~\ref{recovery RIC} holds for a more general class of matrices (those which satisfy the RIC).  Also, Theorem~\ref{recovery RIC} demonstrates that Basis Pursuit works correctly for \textit{all} signals, once the measurement matrix satisfies the restricted isometry condition.  Theorem~\ref{OMPRecovery} shows only that OMP works with high probability for each fixed signal.  The advantage to OMP however, is that its runtime has a much faster bound than that of Basis Pursuit and Linear Programming.
    	
  \subsection[Stagewise Orthogonal Matching Pursuit]{Stagewise Orthogonal Matching Pursuit}
  \label{sec:Approaches:Greedy:Stagewise}
  
  An alternative greedy approach, Stagewise Orthogonal Matching Pursuit (StOMP) developed and analyzed by Donoho and his collaborators~\cite{DTDS06:Sparse-Solution}, uses ideas inspired by wireless communications.  As in OMP, StOMP utilizes the observation vector $\vct{y} = \Phi^*\vct{u}$ where $\vct{u} = \Phi x$ is the measurement vector.  However, instead of simply selecting the largest component of the vector $\vct{y}$, it selects all of the coordinates whose values are above a specified threshold. It then solves a least-squares problem to update the residual. The algorithm iterates through only a fixed number of stages and then terminates, whereas OMP requires $s$ iterations where $s$ is the sparsity level. 
  	
    	\subsubsection[Description]{Description}
    		\label{sec:Approaches:Greedy:Stagewise:Description}
    		
    		The pseudo-code for StOMP can thus be described by the following.

    		\bigskip
   \textsc{Stagewise Orthogonal Matching Pursuit (StOMP)}
    		
    		\nopagebreak

\fbox{\parbox{\algorithmwidth}{
  \textsc{Input:} Measurement matrix $\Phi$, measurement vector $\vct{u}=\Phi \vct{x}$, 
  
  \textsc{Output:} Estimate $\hat{\vct{x}}$ to the signal $\vct{x}$

  \textsc{Procedure:}
  \begin{description}
    \item[Initialize] Let the index set $I = \emptyset$, the estimate $\hat{\vct{x}}=0$, and the residual $\vct{r} = \vct{u}$.\\
      Repeat the following until stopping condition holds:
    \item[Identify] Using the observation vector $\vct{y}=\Phi^* \vct{r}$, set
    $$
    J = \{j : |\vct{y}_j| > t_k\sigma_k\},
    $$
    where $\sigma_k$ is a formal noise level and $t_k$ is a threshold parameter for iteration $k$.
    \item[Update] Add the set $J$ to the index set: $I \leftarrow I \cup J$, 
      and update the residual and estimate:
     $$
     \hat{\vct{x}}|_I = (\Phi_I^*\Phi_I)^{-1}\Phi_I^* \vct{u}, \quad \vct{r} = \vct{u}-\Phi\hat{\vct{x}}.
      $$
  \end{description}
 }}
 
  \bigskip 
  
 				The thresholding strategy is designed so that many terms enter at each stage, and so that algorithm halts after a fixed number of iterations. The formal noise level $\sigma_k$ is proportional the Euclidean norm of the residual at that iteration. See~\cite{DTDS06:Sparse-Solution} for more information on the thresholding strategy.

    		\subsubsection[Main Results]{Main Results}
    		\label{sec:Approaches:Greedy:Stagewise:Main}
    		Donoho and his collaborators studied StOMP empirically and have heuristically derived results.  Figure 6 of~\cite{DTDS06:Sparse-Solution} shows results of StOMP when the thresholding strategy is to control the false alarm rate and the measurement matrix $\Phi$ is sampled from the uniform spherical ensemble.  The false alarm rate is the number of incorrectly selected coordinates (ie. those that are not in the actual support, but are chosen in the estimate) divided by the number of coordinates not in the support of the signal $\vct{x}$. The figure shows that for very sparse
signals, the algorithm recovers a good approximation to the signal. For less
sparse signals the algorithm does not.  The red curve in this figure is the graph of a heuristically derived
function which they call the Predicted Phase transition.  The simulation results and the predicted transition coincide reasonably well.  This thresholding method requires knowledge about the actual sparsity level $s$ of the signal $\vct{x}$. Figure 7 of~\cite{DTDS06:Sparse-Solution} shows similar results for a thresholding strategy that instead tries to control the false discovery rate.  The false discovery rate is the fraction of incorrectly selected coordinates within the estimated support. This method appears to provide slightly weaker results.  It appears however, that StOMP outperforms OMP and Basis Pursuit in some cases. 

Although the structure of StOMP is similar to that of OMP, because StOMP selects many coordinates at each state, the runtime is quite improved. Indeed, using iterative methods to solve the least-squares problem yields a runtime bound of $CNsd + O(d)$, where $N$ is the fixed number of iterations run by StOMP, and $C$ is a constant that depends only on the accuracy level of the least-squares problem.

    		    			\subsubsection[Numerical Experiments]{Numerical Experiments}
    		\label{sec:Approaches:Greedy:Stagewise:Numerical}
    		
    		A thorough empirical study of StOMP is provided in~\cite{DTDS06:Sparse-Solution}. An additional study on StOMP was conducted here using a thresholding strategy with constant threshold parameter.  The noise level $\sigma$ was proportional to the norm of the residual, as~\cite{DTDS06:Sparse-Solution} suggests.  StOMP was run with various sparsity levels and measurement numbers, with Gaussian measurement matrices for 500 trials.  Figure~\ref{stompfig1} depicts the results, and Figure~\ref{stompfig2} depicts the $99\%$ recovery trend.  Next StOMP was run in this same way but noise was added to the measurements. Figure~\ref{stompstab} displays the results of this study. Since the reconstructed signal is always sparse, it is not surprising that StOMP is able to overcome the noise level.  Note that these empirical results are not optimal because of the basic thresholding strategy.  See~\cite{DTDS06:Sparse-Solution} for empirical results using an improved thresholding strategy.
    		
    		    	\begin{figure}[htbp] 
  \includegraphics[width=0.8\textwidth,height=3.2in]{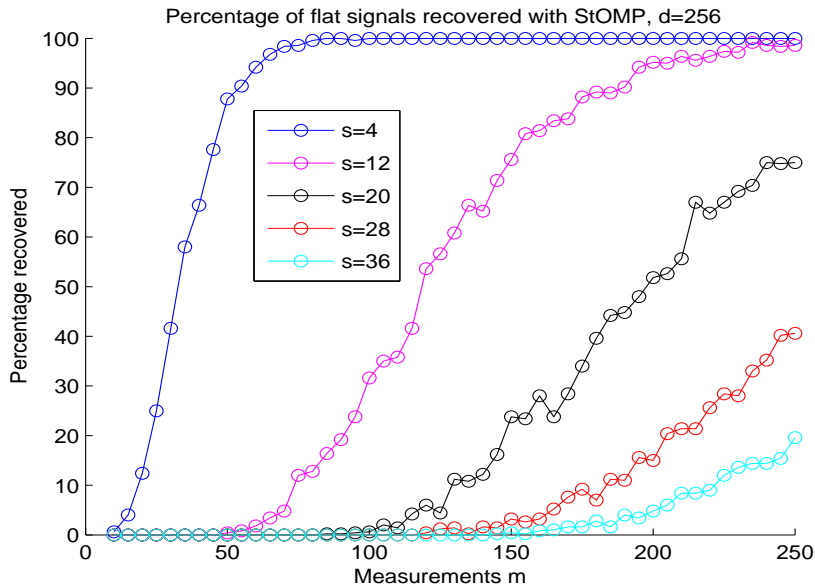}
  \caption{The percentage of sparse flat signals exactly recovered by StOMP as a function of the number of measurements $m$ in dimension $d=256$ for various levels of sparsity $s$.}\label{stompfig1}
\end{figure}

\begin{figure}[htbp] 
  \includegraphics[width=0.8\textwidth,height=3.2in]{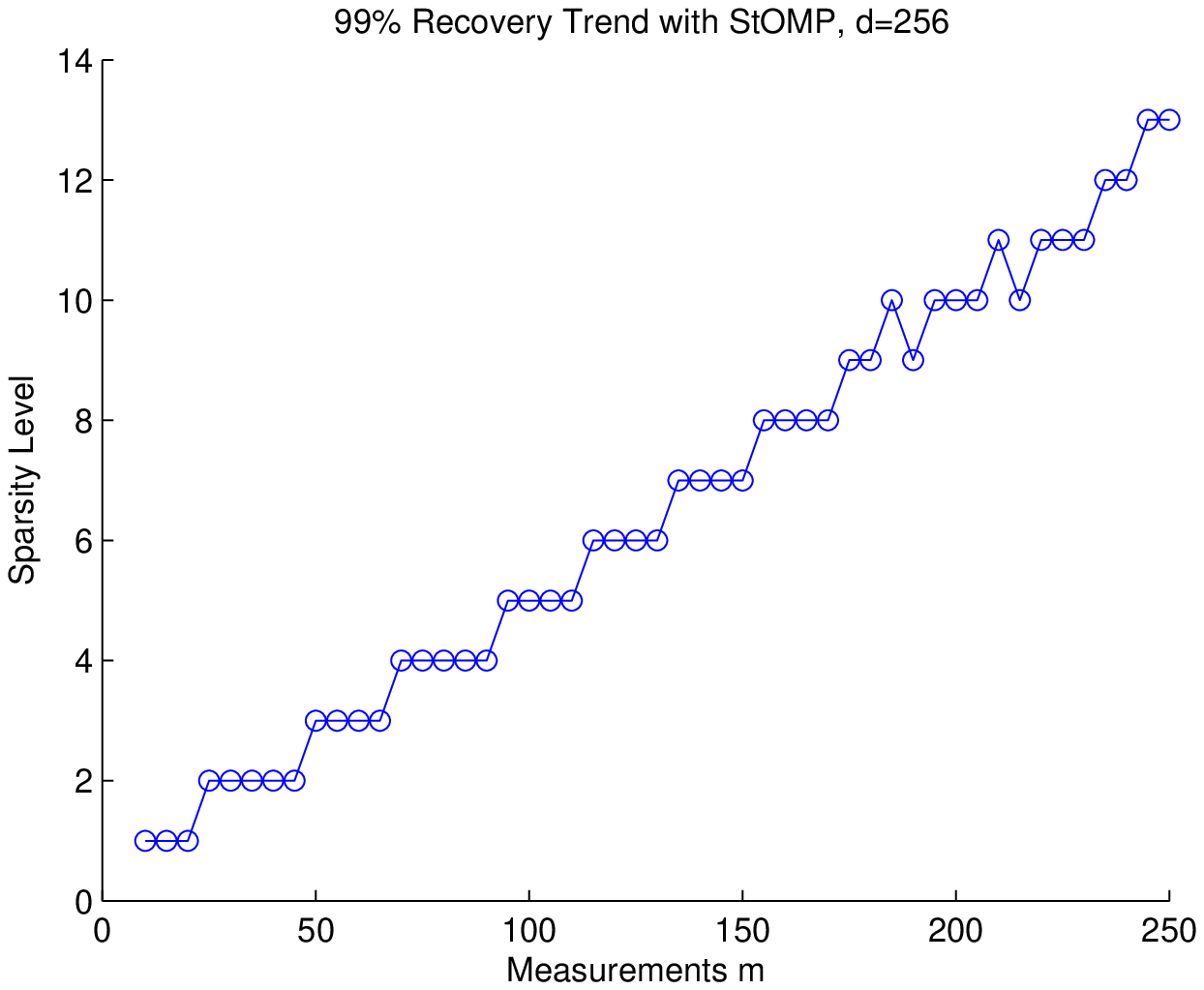}
  \caption{The $99\%$ recovery trend of StOMP as a function of the number of measurements $m$ in dimension $d=256$ for various levels of sparsity $s$.}\label{stompfig2}
\end{figure}
    		
    		\begin{figure}[htbp] 
  \includegraphics[width=0.8\textwidth,height=3.2in]{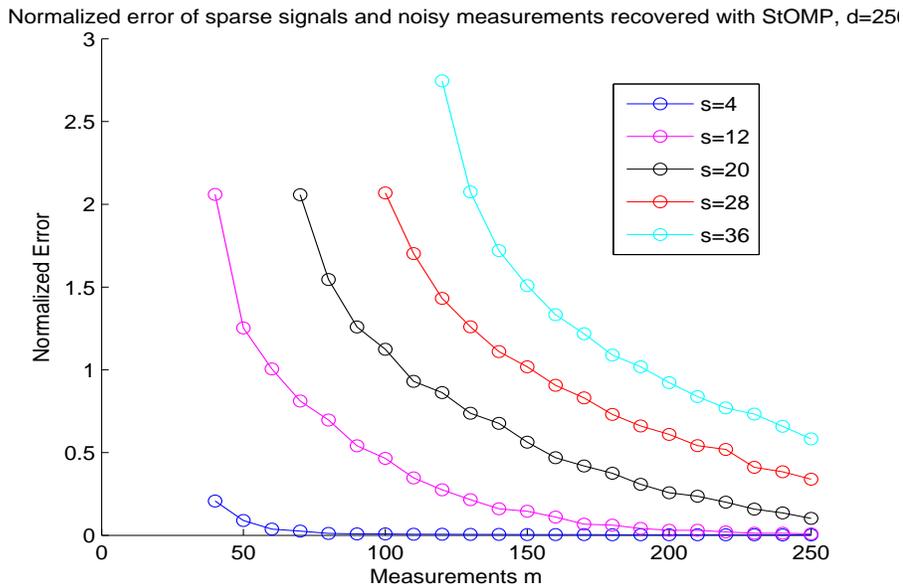}
  \caption{The normalized recovery error $\|\vct{x}-\hat{\vct{x}}\|_2/\|e\|_2$ of StOMP on sparse signals with noisy measurements $\Phi x + e$.}\label{stompstab}
\end{figure}
    		
    		    		\subsubsection[Summary]{Summary}
    		\label{sec:Approaches:Greedy:Stagewise:Summary}
    		The empirical results of StOMP in~\cite{DTDS06:Sparse-Solution} are quite promising, and suggest its improvement over OMP.  However, in practice, the thresholding strategy may be difficult and complicated to implement well.  More importantly, there are no rigorous results for StOMP available.  In the next subsection other greedy methods are discussed with rigorous results, but that require highly structured measurement matrices.

    	  \subsection[Combinatorial Methods]{Combinatorial Methods}
  \label{sec:Approaches:Greedy:Highly}
  
  The major benefit of the greedy approach is its speed, both empirically and theoretically.  There is a group of combinatorial algorithms that provide even faster speed, but that impose very strict requirements on the measurement matrix.  These methods use highly structured measurement matrices that support very fast reconstruction through group testing. The work in this area includes HHS pursuit~\cite{GSTV07:HHS}, chaining pursuit~\cite{GSTV07:Algorithmic}, Sudocodes~\cite{SBB06:Sudocodes}, Fourier sampling~\cite{GGIMS02:Near-Optimal-Sparse,GMS05:Improved} and some others by Cormode--Muthukrishnan~\cite{CM05:Combinatorial} and Iwen~\cite{I07:sub-linear}.  
   	
    	\subsubsection[Descriptions and Results]{Descriptions and Results}
    		\label{sec:Approaches:Greedy:Highly:Description}
    		
    		Many of the sublinear algorithms such as HHS pursuit, chaining pursuit and Sudocodes employ the idea of group testing.  Group testing is a method which originated in the Selective Service during World War II to test soldiers for Syphilis~\cite{D43:detection}, and now it appears in many experimental designs and other algorithms.  During this time, the Wassermann test~\cite{WNB06:syphilis} was used to detect the Syphilis antigen in a blood sample.  Since this test was expensive, the method was to sample a group of men together and test the entire pool of blood samples.  If the pool did not contain the antigen, then one test replaced many.  If it was found, then the process could either be repeated with that group, or each individual in the group could then be tested.  
    		
    		These sublinear algorithms in compressed sensing use this same idea to test for elements of the support of the signal $x$.  Chaining pursuit, for example, uses a measurement matrix consisting of a row tensor product of a bit test matrix and an isolation matrix, both of which are 0-1 matrices.  Chaining pursuit first uses bit tests to locate the positions of the large components of the signal $\vct{x}$ and estimate those values. Then the algorithm retains a portion of the coordinates that are largest magnitude and repeats.  In the end, those coordinates which appeared throughout a large portion of the iterations are kept, and the signal is estimated using these.  Pseudo-code is available in~\cite{GSTV07:Algorithmic}, where the following result is proved. 
    		
    			\begin{theorem}[Chaining pursuit~\cite{GSTV07:Algorithmic}]
    		With probability at least $1 - O(d^{-3})$, the $O(s\log^2d) \times d$ random measurement operator $\Phi$ has the
following property. For $\vct{x}\in\R^d$ and its measurements $u=\Phi\vct{x}$, the Chaining Pursuit algorithm produces a signal $\hat{\vct{x}}$ with at most $s$ nonzero entries. The output $\hat{\vct{x}}$ satisfies
$$
\|\vct{x}-\hat{\vct{x}}\|_1 \leq C(1+\log s)\|\vct{x}-\vct{x}_s\|_1.
$$
The time cost of the algorithm is $O(s\log^2s \log^2d)$. 
    		\end{theorem}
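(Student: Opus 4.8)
The plan is to analyze the three ingredients of Chaining Pursuit separately---the isolation (group-testing) structure, the bit-test identification step, and the iterative ``chaining'' that links recovered estimates across scales---and then to combine them through a single union bound together with a geometric accounting of the errors. Throughout I would fix the measurement operator as described: $\Phi$ is the row tensor product of a bit-test matrix $B$ and an isolation matrix $A$, where $A$ encodes a collection of random bipartite graphs with $d$ left vertices and a number of right vertices (buckets) proportional to $s$. The total row count then decomposes as $O(s)$ buckets $\times\,O(\log d)$ independent repetitions $\times\,O(\log d)$ bit tests, which is exactly the claimed $O(s\log^2 d)$.

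The first lemma to establish is a deterministic \textbf{identification lemma} for $B$: if a bucket contains exactly one coordinate of $\vct{x}$ whose magnitude dominates the total $\ell_1$ mass landing in that bucket, then the $O(\log d)$ bit-test values restricted to that bucket encode the binary expansion of the coordinate's index, so we recover its position exactly and its value up to an additive error proportional to the competing mass in the bucket. This uses only that the rows of $B$ are the indicator patterns of the $\log d$ binary digit positions. The heart of the argument is then the probabilistic \textbf{isolation lemma} for $A$: with the stated parameters, a random bipartite graph isolates a constant fraction of any designated set of $s$ ``heavy'' coordinates, meaning that for a constant fraction of them the assigned bucket contains no other heavy coordinate. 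I would prove this by a first-/second-moment union-bound computation over all $\binom{d}{s}$ possible supports---collisions among $s$ balls in $\Theta(s)$ bins are rare in expectation, and taking $O(\log d)$ independent repetitions drives the per-support failure probability below roughly $d^{-s}\cdot d^{-3}$ so the union bound over supports closes and leaves the global failure probability at $O(d^{-3})$. This is what makes the guarantee \emph{uniform} over all $\vct{x}$ rather than holding for a fixed signal.

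With isolation and identification in hand, I would set up the \textbf{chaining recursion}. The algorithm runs $T = O(\log s)$ iterations; writing $\vct{r}^{(t)}$ for the residual after iteration $t$ with $\vct{r}^{(0)} = \vct{x}$, each iteration identifies and estimates the currently isolated heavy coordinates, retains the largest of them, subtracts their estimated contribution, and halves the target number of survivors. The key inductive estimate is that each round recovers a constant fraction of the remaining heavy mass while injecting spurious error bounded by a constant multiple of the tail $\|\vct{x}-\vct{x}_s\|_1$; summing these contributions over the $O(\log s)$ rounds produces the factor $(1+\log s)$ in the bound $\|\vct{x}-\hat{\vct{x}}\|_1 \le C(1+\log s)\|\vct{x}-\vct{x}_s\|_1$. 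A final voting/persistence step keeps only coordinates surviving a large fraction of the rounds, which forces $\hat{\vct{x}}$ to have at most $s$ nonzeros. For the runtime, each round locates and estimates $O(s)$ isolated spikes from the $O(s\log^2 d)$ measured bits and performs a sort/retain bookkeeping step, costing $O(s\log s\log^2 d)$; multiplying by the $O(\log s)$ rounds gives the claimed $O(s\log^2 s\log^2 d)$.

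I expect the main obstacle to be the error accounting in the chaining recursion rather than the isolation probability. The delicate point is that a coordinate may fail to be isolated in one round yet be captured in a later one, and that imperfect value estimates from one round feed into the residual used by the next; one must show these errors telescope into a single $\log s$ factor instead of compounding geometrically. Making the induction hypothesis strong enough---simultaneously controlling the number of surviving large coordinates, the recovered mass, and the injected tail error at every scale---is the technical crux, and it is precisely here that the union bound over all supports must be threaded through every round at once so that the isolation guarantee can be reused across all $T$ iterations without a fresh failure probability at each step.
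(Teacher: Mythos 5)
You should first be aware that the dissertation contains no proof of this theorem at all: it is quoted as a known result, with the text immediately preceding it saying that pseudo-code and the proof are in the cited reference~\cite{GSTV07:Algorithmic}. So there is no in-paper argument to compare yours against; the only meaningful benchmark is the Gilbert--Strauss--Tropp--Vershynin proof itself. Measured against that, your sketch captures the correct architecture: the row tensor product of a bit-test matrix with random balls-in-bins isolation matrices, the deterministic identification lemma for an isolated dominant coordinate, a union bound over supports to make the guarantee uniform, $O(\log s)$ passes that halve the number of sought spikes, additive (rather than geometric) accumulation of estimation error producing the $(1+\log s)$ factor, and a persistence step enforcing $s$-sparsity of the output.

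Two points deserve a warning. First, you describe the operator as a single flat object ($O(s)$ buckets $\times$ $O(\log d)$ trials $\times$ $O(\log d)$ bit tests) reused in every round; in the actual construction the operator is hierarchical, with an independent sub-operator for each pass $j$ whose bucket count shrinks geometrically (roughly $s/2^{j}$ buckets), and this multi-scale chain is both what the name ``chaining'' refers to and what supplies each round with fresh randomness instead of measurements already correlated with the residual. Your substitute --- proving isolation uniformly over all $\binom{d}{s}$ supports so the same guarantee can be invoked at every round --- is the right spirit, but your failure-probability bookkeeping does not close as stated: with $O(\log d)$ trials, the probability that a \emph{fixed} spike fails to be isolated in most trials is only $d^{-O(1)}$, which cannot survive a spike-by-spike union bound over $d^{s}$ supports. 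The cited proof instead bounds the probability that a constant \emph{fraction} of the $s$ spikes fail simultaneously; that event has probability exponentially small in $s\log d$, which does beat $\binom{d}{s}$. With that repair, and with the hierarchy of sub-operators in place of a single reused one, your outline is a faithful reconstruction of the argument the dissertation points to but does not give.
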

    		
    		HHS Pursuit, a similar algorithm but with improved guarantees, uses a measurement matrix that consists again of two parts.  The first part is an identification matrix, and the second is an estimation matrix.  As the names suggest, the identification matrix is used to identify the location of the large components of the signal, whereas the estimation matrix is used to estimate the values at those locations.  Each of these matrices consist of smaller parts, some deterministic and some random.  Using this measurement matrix to locate large components and estimate their values, HHS Pursuit then adds the new estimate to the previous, and prunes it relative to the sparsity level.  This estimation is itself then sampled, and the residual of the signal is updated. See~\cite{GSTV07:HHS} for the pseudo-code of the algorithm.  Although the measurement matrix is highly structured, again a disadvantage in practice, the results for the algorithm are quite strong. Indeed, in~\cite{GSTV07:HHS} the following result is proved.
    		
    		\begin{theorem}[HHS Pursuit~\cite{GSTV07:HHS}]
    		Fix an integer $s$ and a number $\varepsilon\in (0, 1)$.
With probability at least 0.99, the random measurement matrix $\Phi$ (as described above)
	 has the following property. Let $\vct{x}\in\R^d$ and let $\vct{u} = 	\Phi\vct{x}$ be the measurement vector.
The HHS Pursuit algorithm produces a
signal approximation $\hat{\vct{x}}$ with $O(s/\varepsilon^2)$ nonzero entries. The
approximation satisfies
$$
\|\vct{x}-\hat{\vct{x}}\|_2 \leq \frac{\varepsilon}{\sqrt{s}}\|\vct{x}-\vct{x}_s\|_1,
$$
where again $\vct{x}_s$ denotes the vector consisting of the $s$ largest entries in magnitude of $\vct{x}$.
The number of measurements $m$ is proportional to $(s/\varepsilon^2)$ polylog$(d/\varepsilon)$, and HHS
Pursuit runs in time $(s^2/\varepsilon^4)$polylog$(d/\varepsilon)$. The algorithm
uses working space $(s/\varepsilon^2)$polylog$(d/\varepsilon)$, including storage of
the matrix 	$\Phi$.
    		\end{theorem}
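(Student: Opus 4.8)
The plan is to establish a one-step contraction for the residual and then iterate it logarithmically many times. Let $\vct{a}$ denote the algorithm's current approximation and $\vct{v} := \vct{x}-\vct{a}$ the residual signal; crucially, although $\vct{v}$ is unknown, its measurements $\Phi\vct{v} = \vct{u}-\Phi\vct{a}$ are available, which is what lets the identification and estimation blocks operate on the residual. Writing $\eta := \tfrac{1}{\sqrt{s}}\|\vct{x}-\vct{x}_s\|_1$ for the natural tail scale, the goal is the bound $\|\vct{x}-\hat{\vct{x}}\|_2 \leq \varepsilon\eta$. I would prove a progress lemma of the form $\|\vct{x}-\vct{a}'\|_2 \leq \tfrac12\|\vct{x}-\vct{a}\|_2 + C\varepsilon\eta$, where $\vct{a}'$ is the updated and pruned approximation. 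Iterating this $O(\log(\|\vct{x}\|_2/(\varepsilon\eta)))$ times, which is $O(\log d)$ after rescaling, drives the geometric term below $\varepsilon\eta$ and yields $\|\vct{x}-\hat{\vct{x}}\|_2 \leq C'\varepsilon\eta$; choosing the constants in the matrix construction then absorbs $C'$ into the stated bound.

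The progress lemma decomposes along the two blocks of $\Phi$. For the identification block I would show that, with high probability, its group-testing and bit-test design \emph{isolates} every residual coordinate whose magnitude exceeds a threshold proportional to $\eta$: the random assignment of coordinates to buckets places each such heavy hitter alone in a constant fraction of buckets, and a majority vote over the associated bit tests then recovers its index exactly. This produces a candidate set $T$ with $|T| = O(s/\varepsilon^2)$ that captures all but a small constant fraction of the $\ell_2$ energy of $\vct{v}$ sitting above the noise floor. I expect this isolation step to be the principal obstacle: the union bound must guarantee that a single random partition simultaneously isolates the heavy hitters of \emph{every} residual that can arise over all iterations, and controlling this event over all $\binom{d}{O(s)}$ possible supports at confidence $0.99$ is exactly what forces the polylogarithmic-in-$d$ overhead in the measurement count.

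For the estimation block I would invoke a restricted-isometry-type property in the sense of~\eqref{eq:RIC}: restricted to any index set of size $O(s/\varepsilon^2)$, the block behaves as an approximate isometry, so solving the induced least-squares (or median) problem on $T$ returns estimated values $\hat{\vct{v}}_T$ with $\|\vct{v}_T-\hat{\vct{v}}_T\|_2 \leq \tfrac14\|\vct{v}\|_2 + C\varepsilon\eta$, the $\varepsilon^{-2}$ oversampling being precisely what sharpens the estimation floor by the factor $\varepsilon$. Updating $\vct{a}' = \vct{a} + \hat{\vct{v}}_T$ and pruning $\vct{a}'$ to its $O(s/\varepsilon^2)$ largest entries keeps the support size controlled, and since pruning to the top coefficients changes the $\ell_2$ distance to a nearly sparse target by at most the tail term, the identification and estimation guarantees combine to give the claimed one-step contraction.

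Finally I would assemble the estimates. A union bound over the $O(\log d)$ iterations and over the identification and estimation events yields overall success probability at least $0.99$. The measurement count is the sum of the two block sizes, each $(s/\varepsilon^2)\,\mathrm{polylog}(d/\varepsilon)$; the running time is the per-iteration cost, dominated by the $O(s^2/\varepsilon^4)$ estimation solve together with sublinear bit-test decoding, multiplied by the logarithmically many iterations, giving $(s^2/\varepsilon^4)\,\mathrm{polylog}(d/\varepsilon)$; and the working space is dominated by storing $\Phi$, namely $(s/\varepsilon^2)\,\mathrm{polylog}(d/\varepsilon)$, as claimed.
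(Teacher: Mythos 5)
You should know at the outset that the paper offers no proof of this statement: it is quoted directly from~\cite{GSTV07:HHS}, and the surrounding text gives only a prose description of the algorithm (a two-part measurement matrix consisting of an identification block and an estimation block, and an iterative loop that identifies, estimates on the identified set, adds, prunes, and updates the residual). Your sketch correctly reconstructs that architecture, and your one-step contraction $\|\vct{x}-\vct{a}'\|_2 \leq \tfrac12\|\vct{x}-\vct{a}\|_2 + C\varepsilon\eta$ is the right shape of iteration invariant for this family of algorithms (compare Theorem~\ref{thm:cosamp-invar} for CoSaMP). So the comparison can only be between your sketch and the cited work, not against an in-paper argument.

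Two steps in your sketch are genuinely broken, however. First, uniformity: you propose to union-bound the heavy-hitter isolation event over the $\binom{d}{O(s)}$ possible supports, but the residual $\vct{v}=\vct{x}-\vct{a}$ is not a sparse vector --- the theorem quantifies over arbitrary $\vct{x}\in\R^d$, so $\vct{v}$ is an arbitrary vector plus an $O(s/\varepsilon^2)$-sparse one, and its heavy hitters sit inside an arbitrary tail. No union bound over finitely many supports can deliver the ``with probability $0.99$, for every $\vct{x}$'' quantifier order; what is required is that the single drawn matrix satisfy deterministic isolation and approximate-isometry properties strong enough to imply correct identification for \emph{every} vector that can ever arise as a residual, with randomness invoked only once to show such a draw exists. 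Second, the iteration count: $O(\log(\|\vct{x}\|_2/(\varepsilon\eta)))$ is not $O(\log d)$ ``after rescaling'' --- the ratio $\|\vct{x}\|_2/\eta$ is scale-invariant, so rescaling does nothing, and for an exactly $s$-sparse signal $\eta=0$, making your iteration count infinite, while the theorem still promises recovery (exact, in that case) within running time $(s^2/\varepsilon^4)\,\mathrm{polylog}(d/\varepsilon)$. Your recursion must be replaced by one whose termination is controlled by $s$, $d$, and $\varepsilon$ alone, independent of the signal's dynamic range; as it stands, neither the stated runtime nor the exactly sparse case follows from your argument.
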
 
    		
    	\begin{remark}This theorem presents guarantees that are stronger than those of chaining pursuit. Chaining pursuit, however, still provides a faster runtime.	\end{remark}
    	
    	There are other algorithms such as the Sudocodes algorithm that as of now only work in the noiseless, strictly sparse case.  However, these are still interesting because of the simplicity of the algorithm.  The Sudocodes algorithm is a simple two-phase algorithm.  In the first phase, an easily implemented avalanche bit testing scheme is applied iteratively to recover most of the coordinates of the signal $\vct{x}$.  At this point, it remains to reconstruct an extremely low dimensional signal (one whose coordinates are only those that remain).  In the second phase, this part of the signal is reconstructed, which completes the reconstruction.  Since the recovery is two-phase, the measurement matrix is as well.  For the first phase, it must contain a sparse submatrix, one consisting of many zeros and few ones in each row.  For the second phase, it also contains a matrix whose small submatrices are invertible.  The following result for strictly sparse signals is proved in~\cite{SBB06:Sudocodes}.
    		
    		\begin{theorem}[Sudocodes~\cite{SBB06:Sudocodes}]
    		Let $\vct{x}$ be an $s$-sparse signal in $\R^d$, and let the $m\times d$ measurement matrix $\Phi$ be as described above. Then with $m=O(s\log d)$, the Sudocodes algorithm exactly reconstructs the signal $\vct{x}$ with computational complexity just $O(s\log s\log d)$. 
    		
    		\end{theorem}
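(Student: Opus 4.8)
The plan is to treat the two phases separately, establish exact correctness and the cost bounds for each, and then combine. First I would fix notation for Phase~1, whose measurement matrix is the sparse $0$-$1$ submatrix: each Phase-1 measurement reads $\vct{u}_i = \sum_{j\in R_i}\vct{x}_j$, where $R_i$ is the set of columns in which row $i$ has a one. Two elementary observations drive the avalanche decoder, and I would record them as a lemma. If $\vct{u}_i = 0$, then because $\vct{x}$ has generic (continuously distributed) entries no nontrivial cancellation occurs, so $\vct{x}_j = 0$ for every $j\in R_i$; a single vanishing test thus certifies an entire block of coordinates as zero. Conversely, if in $R_i$ all but one coordinate have already been certified zero and $\vct{u}_i\neq 0$, then the surviving coordinate $j$ must carry the whole measurement, so $\vct{x}_j = \vct{u}_i$ is read off exactly. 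The genericity needed for the first observation is automatic in the strictly sparse model (it holds with probability one) and can be arranged for any fixed signal by an arbitrarily small perturbation.

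Next I would describe and analyze the avalanche as a peeling process on the bipartite graph between the $s$ nonzero coordinates and the Phase-1 measurements: initialize every coordinate as unknown, then repeatedly zero out blocks hit by a vanishing measurement and read off ``singleton'' measurements, until nothing more is resolvable. The central claim, which I expect to be the main obstacle, is the probabilistic statement that with $m_1 = O(s\log d)$ sparse rows of suitably tuned weight the peeling resolves all but a small (bounded, with high probability) number of the nonzero coordinates, uniformly over all $s$-sparse signals. I would prove this by a balls-in-bins / random-hypergraph argument: choosing the row weight so that each nonzero coordinate is the unique nonzero in some test with constant probability guarantees that a positive fraction is peeled off in each round, and a concentration estimate makes the residual set shrink. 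A union bound over the $\binom{d}{s}$ possible supports, whose logarithm is $O(s\log d)$, is then absorbed by taking $m_1$ a large enough multiple of $s\log d$. The oversampling factor $\log d$ enters precisely here, to beat the union bound over supports.

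After Phase~1 only a small set $T$ of coordinates, of size bounded by a constant with high probability, remains undetermined, and crucially its location is now known. I would then invoke the Phase-2 matrix, whose defining property is that every sufficiently small column-submatrix is invertible. Subtracting the already-recovered contributions from the Phase-2 measurements leaves a determined system $\Phi^{(2)}_T\,\vct{x}_T = \vct{u}^{(2)} - \Phi^{(2)}\hat{\vct{x}}$ on the $|T|$ unknowns, solved exactly by the pseudoinverse $(\Phi^{(2)}_T)^{\psinv}$. Since $|T|$ is small, the number of Phase-2 rows needed to keep all such submatrices invertible is lower-order compared with $m_1$, so the total count is $m = m_1 + m_2 = O(s\log d)$.

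Finally I would bound the runtime. Each sparse Phase-1 row carries $O(\log d)$ ones and each coordinate participates in $O(\log d)$ rows; maintaining a priority queue of newly resolvable measurements, every resolution touches only a bounded amount of structure, and since at most $s$ coordinates are ever declared nonzero the avalanche costs $O(s\log s\log d)$, where the $\log s$ is the per-operation queue overhead over the active set of size $O(s)$. Phase~2 solves a system of bounded size and contributes lower-order cost. Combining the correctness of both phases with these counts gives exact reconstruction of every $s$-sparse $\vct{x}$ from $m = O(s\log d)$ measurements in time $O(s\log s\log d)$, as claimed. The delicate point throughout is the peeling analysis: controlling the residual unresolved set uniformly over all supports is where the real work lies, and where the $\log d$ oversampling is indispensable.
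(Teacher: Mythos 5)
A preliminary remark: the dissertation does not actually prove this theorem; it is quoted from~\cite{SBB06:Sudocodes} with only a prose description of the two-phase algorithm, so your proposal can only be judged against that description and the cited argument. Your skeleton --- Phase 1 as a peeling/avalanche decoder on a sparse $0$-$1$ matrix driven by the two rules (a zero measurement certifies a whole block of zeros, a singleton measurement reads off a value exactly), Phase 2 as a small exactly-invertible linear solve on the remaining unknown coordinates, plus the per-operation runtime accounting --- is faithful to how Sudocodes works.

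The genuine gap is in your probabilistic analysis of Phase 1. You claim the peeling succeeds \emph{uniformly over all $s$-sparse signals} and that the $\log d$ oversampling is there ``to beat the union bound over supports.'' Both halves of this are wrong. First, no uniform guarantee is possible for this decoder: for a signal with cancelling entries (say $x_1 = 1$, $x_2 = -1$), any Phase-1 row covering both nonzeros and no others reads zero, and your first rule then wrongly certifies two nonzero coordinates as zero. The no-cancellation property is a condition on the \emph{values} of the signal, holding with probability one for each fixed signal with continuously distributed entries but failing for adversarial ones; it is a per-signal, measure-zero-exception condition that cannot be rescued by a union bound over the $\binom{d}{s}$ supports, which only addresses the location uncertainty. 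Nor does your perturbation remark help: exactly recovering a perturbed signal is not exactly recovering $\vct{x}$. This is precisely why the theorem (and the surrounding text of the dissertation, which stresses that Sudocodes works only in the noiseless, strictly sparse case) is a for-each-signal statement in which the randomness lives in $\Phi$ and the signal is fixed first. Second, once uniformity is dropped, the union bound is not where $\log d$ comes from. The standard accounting is coupon-collector style: a random sparse row of weight roughly $d/s$ misses the $s$-element support with constant probability and then certifies roughly $d/s$ zero coordinates at once, so certifying all $d-s$ zeros, and likewise isolating each nonzero as a singleton, forces $\Theta(s\log d)$ rows. Your proposal as written therefore needs its central lemma replaced: state the Phase-1 guarantee for a fixed signal with generic nonzero values, and derive $m_1 = O(s\log d)$ from this coverage/isolation requirement rather than from a union over supports.
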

    	
    	The Sudocodes algorithm cannot reconstruct noisy signals because of the lack of robustness in the second phase.  However, work on modifying this phase to handle noise is currently being done.  If this task is accomplished Sudocodes would be an attractive algorithm because of its sublinear runtime and simple implementation.

    		\subsubsection[Summary]{Summary}
    		\label{sec:Approaches:Greedy:Highly:Summary}
    		
    		Combinatorial algorithms such as HHS pursuit provide sublinear time recovery with optimal error bounds and optimal number of measurements.  Some of these are straightforward and easy to implement, and others require complicated structures.  The major disadvantage however is the structural requirement on the measurement matrices. Not only do these methods only work with one particular kind of measurement matrix, but that matrix is highly structured which limits its use in practice.  There are no known sublinear methods in compressed sensing that allow for unstructured or generic measurement matrices.



    %
    %

    \newchapter{Contributions}{Contributions}{Contributions}
    \label{sec:New}

        
    
        \section[Regularized Orthogonal Matching Pursuit]{Regularized Orthogonal Matching Pursuit}
        \label{sec:New:Regularized}

As is now evident, the two approaches to compressed sensing each presented disjoint advantages and challenges.  While the optimization method provides robustness and uniform guarantees, it lacks the speed of the greedy approach.  The greedy methods on the other hand had not been able to provide the strong guarantees of Basis Pursuit.  This changed when we developed a new greedy algorithm, Regularized Orthogonal Matching Pursuit~\cite{NV07:Uniform-Uncertainty}, that provided the strong guarantees of the optimization method.  This work bridged the gap between the two approaches, and provided the first algorithm possessing the advantages of both approaches.

\subsection[Description]{Description}
    		\label{sec:New:Regularized:Description}
    		
    		Regularized Orthogonal Matching Pursuit (ROMP) is a greedy algorithm, but will correctly recover any sparse signal using any measurement matrix that satisfies the Restricted Isometry Condition~\eqref{eq:RIC}.  Again as in the case of OMP, we will use the observation vector $\Phi^* \Phi \vct{x}$ as a good local approximation to the $s$-sparse signal $\vct{x}$.  Since the Restricted Isometry Condition guarantees that every $s$ columns of $\Phi$ are close to an orthonormal system, we will choose at each iteration not just one coordinate as in OMP, but up to $s$ coordinates using the observation vector.  It will then be okay to choose some incorrect coordinates, so long as the number of those is limited.  To ensure that we do not select too many incorrect coordinates at each iteration, we include a regularization step which will guarantee that each coordinate selected contains an even share of the information about the signal.  The ROMP algorithm can thus be described as follows:
    		
    		\bigskip
   \textsc{Regularized Orthogonal Matching Pursuit (ROMP)~\cite{NV07:Uniform-Uncertainty}}

\nopagebreak

\fbox{\parbox{\algorithmwidth}{
  \textsc{Input:} Measurement matrix $\Phi$, measurement vector $\vct{u}=\Phi \vct{x}$, sparsity level $s$
  
  \textsc{Output:} Index set $I \subset \{1,\ldots,d\}$, reconstructed vector $\hat{\vct{x}} = \vct{w}$

  \textsc{Procedure:}
  \begin{description}
    \item[Initialize] Let the index set $I = \emptyset$ and the residual $\vct{r} = \vct{u}$.\\
      Repeat the following steps until $\vct{r} = 0$:
    \item[Identify] Choose a set $J$ of the $s$ biggest coordinates in magnitude 
      of the observation vector $\vct{y} = \Phi^*\vct{r}$, or all of its nonzero coordinates, 
      whichever set is smaller.
    \item[Regularize] Among all subsets $J_0 \subset J$ with comparable coordinates:
      $$
      |\vct{y}(i)| \leq 2|\vct{y}(j)| \quad \text{for all } i,j \in J_0,
      $$
      choose $J_0$ with the maximal energy $\|\vct{y}|_{J_0}\|_2$.
    \item[Update] Add the set $J_0$ to the index set: $I \leftarrow I \cup J_0$, 
      and update the residual:
      $$
      \vct{w} = \argmin_{\vct{z} \in \R^I} \|\vct{u} - \Phi \vct{z}\|_2; \qquad \vct{r} = \vct{u} - \Phi \vct{w}.
      $$
  \end{description}
 }}
 
 \bigskip 
 
 \begin{remarks}
 
 {\bf 1. }  We remark here that knowledge about the sparsity level $s$ is required in ROMP, as in OMP.  There are several ways this information may be obtained.  Since the number of measurements $m$ is usually chosen to be $O(s\log d)$, one may then estimate the sparsity level $s$ to be roughly $m/\log d$.  An alternative approach would be to run ROMP using various sparsity levels and choose the one which yields the least error $\|\Phi \hat{\vct{x}} - \Phi \vct{x}\|$ for outputs $\hat{\vct{x}}$. Choosing testing levels out of a geometric progression, for example, would not contribute significantly to the overall runtime.
 
 {\bf 2. }  Clearly in the case where the signal is not exactly sparse and the signal and measurements are corrupted with noise, the algorithm as described above will never halt.  Thus in the noisy case, we simply change the halting criteria by allowing the algorithm iterate at most $s$ times, or until $|I| \geq s$.  We show below that with this modification ROMP approximately reconstructs arbitrary signals.  
 \end{remarks}
    		
    		\subsection[Main Theorems]{Main Theorems}
    		\label{sec:New:Regularized:Main}
    		
    	In this section we present the main theorems for ROMP.  We prove these theorems in Section~\ref{sec:New:Regularized:Proof}.	When the measurement matrix $\Phi$ satisfied the Restricted Isometry Condition, ROMP exactly recovers all sparse signals.  This is summarized in the following theorem from~\cite{NV07:Uniform-Uncertainty}.
    		
    		\begin{theorem}[Exact sparse recovery via ROMP~\cite{NV07:Uniform-Uncertainty}]\label{T:main}
  Assume a measurement matrix $\Phi$ satisfies the Restricted Isometry Condition 
  with parameters $(2s, \e)$ for $\e = 0.03 / \sqrt{\log s}$. 
  Let $\vct{x}$ be an $s$-sparse vector in $\R^d$ with measurements $\vct{u} = \Phi \vct{x}$. 
  Then ROMP in at most $s$ iterations outputs a set $I$ such that 
  $$
  \supp(x) \subset I \quad \text{and} \quad |I| \leq 2s.
  $$
\end{theorem}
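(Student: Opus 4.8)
The plan is to reduce the global statement to a per-iteration invariant and then iterate. Write $S = \supp(x)$, and at the start of a given iteration let $I$ be the current index set and $\vct{w}$ the least-squares estimate supported on $I$; set $\vct{v} = \vct{x} - \vct{w}$, so that the residual is $\vct{r} = \Phi\vct{v}$ and the observation vector is $\vct{y} = \Phi^*\Phi\vct{v}$. The invariant I want for each iteration (before $S \subseteq I$) is twofold: \textbf{(progress)} the regularized set $J_0$ contains at least one coordinate of $S$, and \textbf{(error control)} at least half of $J_0$ lies in $S$, i.e. $|J_0 \cap S| \ge \tfrac12|J_0|$. Granting this, the theorem follows by bookkeeping: because $\vct{w}$ is a least-squares fit on $I$, the residual is orthogonal to the columns of $\Phi$ indexed by $I$, so $\vct{y}$ vanishes on $I$ and each $J_0$ is disjoint from the current $I$; hence the correct coordinates collected over iterations are distinct and total at most $s$, which by the error-control bound forces $|I| \le 2s$ throughout, while the progress bound guarantees all of $S$ is found within $s$ iterations.

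\textbf{Two supporting lemmas.} The invariant rests on two lemmas. First, \emph{regularization}: any vector restricted to $\le s$ coordinates can be split, by grouping coordinates into dyadic magnitude bands, into $O(\log s)$ classes of comparable coordinates, so the most energetic class $J_0$ retains a $1/O(\log s)$ fraction of the energy, i.e. $\norm{\vct{y}|_{J_0}}_2 \ge \frac{c}{\sqrt{\log s}}\norm{\vct{y}|_J}_2$. Second, \emph{near-orthogonality from the RIC}: the restricted isometry condition \eqref{eq:RIC} with parameter $2s$ implies that on index sets of size $\le 2s$ the map $\Phi^*\Phi$ acts almost as the identity; concretely, for disjoint $A,B$ with $|A|+|B|\le 2s$ one has $\norm{(\Phi^*\Phi\,\vct{z})|_A}_2 \le \e\,\norm{\vct{z}}_2$ whenever $\vct{z}$ is supported on $B$, and $\norm{(\Phi^*\Phi\,\vct{z})|_A - \vct{z}|_A}_2 \le \e\,\norm{\vct{z}}_2$ when $\vct{z}$ is supported on $A$.

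\textbf{Analyzing the iteration.} With these in hand, I analyze the step in two moves. Since $J$ takes the $s$ largest entries of $\vct{y}$ and the unfound support $S_0 = S\setminus I$ has size $\le s$, we get $\norm{\vct{y}|_J}_2 \ge \norm{\vct{y}|_{S_0}}_2$; combined with near-orthogonality (noting $\vct{v}$ agrees with $x$ on $S_0$, since $\vct{w}$ vanishes there), this shows $\norm{\vct{y}|_J}_2$, hence after regularization $\norm{\vct{y}|_{J_0}}_2$, is a definite fraction (of order $1/\sqrt{\log s}$) of the remaining signal energy $\norm{x|_{S_0}}_2$. On the other hand, the energy $\vct{y}$ places on the erroneous coordinates $J_0\setminus S$ is an off-support quantity controlled purely by $\e$ via near-orthogonality. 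Because all coordinates inside $J_0$ are within a factor two of one another, a small error energy relative to the total $J_0$ energy translates into a small \emph{count} of erroneous coordinates; the choice $\e = 0.03/\sqrt{\log s}$ is calibrated so the $\sqrt{\log s}$ loss from regularization is absorbed and the erroneous fraction stays below one half, yielding both halves of the invariant.

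\textbf{Main obstacle.} The hard part will be the error-control half: quantitatively bounding $\norm{\vct{y}|_{J_0\setminus S}}_2$ and converting it into the count bound $|J_0\cap S|\ge \tfrac12|J_0|$. The difficulty is twofold. One must keep every application of the RIC confined to index sets of size $\le 2s$ even though $\vct{v}$ itself may be supported on as many as $3s$ coordinates, so the comparisons must be localized to small unions such as $S_0\cup J_0$ rather than to all of $\supp\vct{v}$; and one must track constants tightly enough that the regularization loss and the RIC error combine to leave a margin strictly below $\tfrac12$. Everything else — the orthogonality of the residual, the distinctness of newly found coordinates, and the final counting — is routine once the per-iteration invariant is secured.
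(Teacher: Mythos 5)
Your high-level architecture is exactly the paper's: the same iteration invariant ($J_0$ nonempty, disjoint from $I$, and $|J_0\cap S|\ge\tfrac12|J_0|$), the same dyadic-band regularization lemma with its $1/\sqrt{\log s}$ energy loss, the same reduction-by-bookkeeping to the theorem, and the same contradiction-by-counting at the end. But there is a genuine gap, and it sits precisely at the step you flagged as the main obstacle: your stated near-orthogonality lemma, which requires the two index sets to have combined size $\le 2s$, cannot control the energy that $\vct{y}=\Phi^*\Phi\vct{v}$ places on $\Lambda = J_0\setminus S$. The set $\Lambda$ is disjoint from $\supp(\vct{v})\subset S\cup I$, but $|S\cup I|$ can already be $2s$ (the induction only gives $|S\cup I|\le 2s$, not better), so $|\supp(\vct{v})\cup\Lambda|$ can reach $3s$ and the RIC at sparsity $2s$ does not apply. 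Splitting $\vct{v}=\vct{x}_0+\vct{z}$ with $\vct{z}$ supported on $I$ does not rescue this: the term $(\Phi^*\Phi \vct{x}_0)|_\Lambda$ is fine (union of supports $\le 2s$), but $(\Phi^*\Phi\vct{z})|_\Lambda$ hits the same $3s$ barrier since $|I|$ alone can be $2s$. "Localizing to small unions such as $S_0\cup J_0$" cannot work as stated, because $\vct{y}$ genuinely depends on the least-squares component $\vct{v}|_I$, whose contribution must be bounded somehow.

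The missing idea is to control that contribution in measurement space rather than coefficient space. The paper proves two facts from the RIC: (i) the orthogonal projections $P_F$, $P_{E_0}$ onto the disjoint column spans $F=\range(\Phi_I)$ and $E_0=\range(\Phi_{S\setminus I})$ satisfy $\|P_F P_{E_0}\|_{2\to 2}\le 2.2\e$ (legitimate because $|I\cup(S\setminus I)|=|S\cup I|\le 2s$), and (ii) $\|(\Phi^*\vct{z})|_T\|_2\le(1+\e)\|\vct{z}\|_2$ for \emph{arbitrary} $\vct{z}\in\R^m$ and any $|T|\le 2s$. Fact (i) shows the residual $\vct{r}=P_{F^\perp}\vct{u}$ is close to $\vct{u}_0=\Phi\vct{x}_0$: indeed $\vct{u}_0-\vct{r}=P_F P_{E_0}\vct{u}_0$, so $\|\vct{u}_0-\vct{r}\|_2\le 2.2\e\|\vct{u}_0\|_2$. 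Fact (ii) then transfers this to the proxy: $\|(\vct{y}-\vct{y}_0)|_T\|_2\le(1+\e)\|\vct{r}-\vct{u}_0\|_2\le 2.4\e\|\vct{x}_0\|_2$ for every $|T|\le 2s$, where $\vct{y}_0=\Phi^*\Phi\vct{x}_0$. The crucial point is that (ii) imposes no disjointness or union-size condition — the contaminating vector $\vct{r}-\vct{u}_0$ is treated as a generic small vector in $\R^m$, not as $\Phi$ applied to something sparse — and this is what breaks the $3s$ barrier. With this comparison in hand, every remaining estimate involves only the $s$-sparse $\vct{x}_0$ against sets of size $\le s$, your counting argument closes ($\|\vct{y}|_\Lambda\|_2\le 4.43\e\|\vct{x}_0\|_2$ versus $\|\vct{y}|_\Lambda\|_2>\frac{1}{7\sqrt{\log s}}\|\vct{x}_0\|_2$), and the calibration $\e=0.03/\sqrt{\log s}$ yields the contradiction.
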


\begin{remarks}
  {\bf 1. } Theorem~\ref{T:main} shows that ROMP provides {\em exact recovery} of sparse signals.
  Using the index set $I$, one can compute the signal $\vct{x}$ from its measurements $\vct{u} = \Phi \vct{x}$ 
  as $\vct{x} = (\Phi_I)^{-1} \vct{u}$,
  where $\Phi_I$ denotes the measurement matrix $\Phi$ restricted to the columns
  indexed by $I$.

  {\bf 2.} Theorem~\ref{T:main} provides {\em uniform guarantees} of sparse recovery, meaning that once
  the measurement matrix $\Phi$ satisfies the Restricted Isometry Condition,
  ROMP recovers {\em every} sparse signal from its measurements. 
  Uniform guarantees such as this are now known to be impossible for OMP~\cite{R08:Impossibility}, and finding a version of OMP providing uniform guarantees was previously
  an open problem~\cite{TG07:Signal-Recovery}. 
  Theorem~\ref{T:main} shows that ROMP solves this problem.
    
  {\bf 3. } Recall from Section~\ref{sec:Approaches:Basis:Restricted} that random Gaussian, Bernoulli and partial Fourier matrices
  with number of measurements $m$ almost linear in the sparsity $s$, satisfy the Restricted Isometry Condition.
  It is still unknown whether OMP works at all with partial Fourier
  measurements, but ROMP gives sparse recovery 
  for these measurements, and with uniform guarantees. 
  
  {\bf 4. } In Section~\ref{sec:New:Regularized:Implementation} we explain how the identification and regularization steps of ROMP can easily be performed efficiently. 
  In Section ~\ref{sec:New:Regularized:Implementation} we show that the running time of ROMP is comparable to that of OMP in theory, and is better in practice.
  
\end{remarks}

Theorem~\ref{T:main} shows ROMP works correctly for signals which are exactly sparse.  However, as mentioned before, ROMP also performs well for signals and measurements which are corrupted with noise.  This is an essential property for an algorithm to be realistically used in practice.  The following theorem from~\cite{NV07:ROMP-Stable} shows that ROMP approximately reconstructs sparse signals with noisy measurements.  Corollary~\ref{T:stabsig} shows that ROMP also approximately reconstructs \textit{arbitrary} signals with noisy measurements.

\begin{theorem}[Stability of ROMP under measurement perturbations~\cite{NV07:ROMP-Stable}]\label{T:stability}
  Let $\Phi$ be a measurement matrix satisfying the Restricted Isometry Condition 
  with parameters $(4s, \e)$ for $\e = 0.01 / \sqrt{\log s}$. 
  Let $\vct{x}\in\R^d$ be an $s$-sparse vector. 
  Suppose that the measurement vector $\Phi \vct{x}$ becomes corrupted, so that we consider
  $\vct{u} = \Phi \vct{x} + \vct{e}$ where $\vct{e}$ is some error vector. 
  Then ROMP produces a good approximation $\hat{\vct{x}}$ to $\vct{x}$:
  $$
  \|\vct{x} - \hat{\vct{x}}\|_2 \leq 104 \sqrt{\log s}\|e\|_2.
  $$  
\end{theorem}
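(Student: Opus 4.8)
The plan is to lift the noiseless analysis behind Theorem~\ref{T:main} to the corrupted setting by tracking, at every iteration, an additive error term proportional to $\|\vct{e}\|_2$. Write $S = \supp(\vct{x})$, and at a generic iteration let $I$ be the current index set, $\vct{w} = \argmin_{\vct{z}\in\R^I}\|\vct{u}-\Phi\vct{z}\|_2$ the least-squares update, and $\vct{r} = \vct{u}-\Phi\vct{w}$ the residual, so that $\vct{y} = \Phi^*\vct{r}$ is the observation vector. The first step is to record the two elementary consequences of the Restricted Isometry Condition~\eqref{eq:RIC} that will carry the noise: for any set $T$ with $|T|\le 4s$ one has $\|\Phi_T^*\vct{e}\|_2 \le (1+\e)\|\vct{e}\|_2$, and the least-squares residual $\vct{r}$ is orthogonal to the range of $\Phi_I$, so $\|\vct{r}\|_2$ is comparable to $\|\Phi(\vct{x}-\vct{w})\|_2$ up to the noise. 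These let me decompose $\vct{y}$ on the as-yet-unrecovered support $S\setminus I$ into a ``signal'' part, governed by the RIC exactly as in the noiseless proof, plus a ``noise'' part of size at most $(1+\e)\|\vct{e}\|_2$.

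Second, I would establish the noisy analogues of the Identification and Regularization lemmas. The Identification step still guarantees that the set $J$ of the $s$ largest coordinates of $\vct{y}$ captures a constant fraction of $\|\vct{y}|_{S\setminus I}\|_2$, now minus an error of order $\|\vct{e}\|_2$; the Regularization step, because it partitions $J$ into at most $O(\log s)$ dyadic levels of comparable magnitude, retains at least a $\sim 1/\sqrt{\log s}$ fraction of that energy. This is precisely where the factor $\sqrt{\log s}$ enters, and why the hypothesis is calibrated with $\e = 0.01/\sqrt{\log s}$: the regularization loss must be dominated by the RIC being that much closer to an isometry. The conclusion of this step is a per-iteration dichotomy: either the remaining signal energy $\|\vct{x}|_{S\setminus I}\|_2$ has already dropped to the noise level $O(\|\vct{e}\|_2)$, or the regularized set $J_0$ consists predominantly of correct coordinates (at least half of its energy comes from $S$) and genuine progress is made.

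Third, I would run this dichotomy over the at most $s$ permitted iterations. As long as the first alternative has not triggered, each iteration adds $\Omega(\|\vct{x}|_{S\setminus I}\|_2 / \sqrt{\log s})$-worth of correct support and strictly shrinks $S\setminus I$, so after at most $s$ steps the algorithm produces an index set $I$ with $|I|\le 2s$ on which the residual, and hence $\|\vct{x}|_{S\setminus I}\|_2$, has fallen to $O(\sqrt{\log s}\,\|\vct{e}\|_2)$. The final step converts this into the stated bound on $\hat{\vct{x}} = \vct{w} = \Phi_I^\psinv\vct{u}$: since $\vct{x}-\hat{\vct{x}}$ is supported on $I\cup S$ with $|I\cup S|\le 4s$, applying the RIC gives $\|\vct{x}-\hat{\vct{x}}\|_2 \le (1-\e)^{-1}\|\Phi(\vct{x}-\hat{\vct{x}})\|_2$, and $\|\Phi(\vct{x}-\hat{\vct{x}})\|_2 \le \|\vct{r}\|_2 + \|\vct{e}\|_2$ is controlled by the accumulated noise, yielding the constant $104\sqrt{\log s}$ after bookkeeping.

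The main obstacle I anticipate is the per-iteration stability lemma in the second step: in the noiseless proof the regularization step is exactly what certifies that the selected coordinates are ``mostly correct,'' and one must show that the noise perturbation of $\vct{y}$ cannot corrupt this certification, i.e.\ that coordinates entering $J_0$ purely because of $\vct{e}$ carry negligible energy compared to the true support coordinates. Propagating this guarantee through all $s$ iterations without the $\|\vct{e}\|_2$ error terms compounding geometrically, and pinning down the constants precisely enough to land at $\e = 0.01/\sqrt{\log s}$ and a clean factor of $104\sqrt{\log s}$, is the delicate part.
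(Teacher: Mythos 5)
Your proposal follows essentially the same route as the paper's proof: your per-iteration dichotomy is exactly the paper's Stable Iteration Invariant (Theorem~\ref{T:itSt}) --- either at least half of the newly selected coordinates lie in $\supp(\vct{x})$, or $\|\vct{x}|_{\supp(\vct{x})\setminus I}\|_2 \leq 100\sqrt{\log s}\,\|\vct{e}\|_2$ already holds --- and it is established through the same noisy analogues of the residual-approximation, observation-approximation, energy-localization, and regularization lemmas, with the final conversion to $\|\vct{x}-\hat{\vct{x}}\|_2$ done via least-squares optimality and the RIC just as you outline. The only slips are cosmetic: the threshold in your second step should read $O(\sqrt{\log s}\,\|\vct{e}\|_2)$ rather than $O(\|\vct{e}\|_2)$ (you state it correctly in step three), and the degenerate case $J_0=\emptyset$ is absorbed by noting that the regularization lower bound then forces $\|\vct{x}|_{\supp(\vct{x})\setminus I}\|_2$ down to the noise level, which the paper treats as its Case 3.
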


\begin{corollary}[Stability of ROMP under signal perturbations~\cite{NV07:ROMP-Stable}]\label{T:stabsig}
  Let $\Phi$ be a measurement matrix satisfying the Restricted Isometry Condition 
  with parameters $(8s, \e)$ for $\e = 0.01 / \sqrt{\log s}$. 
  Consider an arbitrary vector $\vct{x}$ in $\R^d$.
 Suppose that the measurement vector $\Phi \vct{x}$ becomes corrupted, 
  so we consider $\vct{u} = \Phi \vct{x} + \vct{e}$ where $\vct{e}$ is some error vector. 
  Then ROMP produces a good approximation $\hat{\vct{x}}$ to $\vct{x}_{2s}$:
  \begin{equation}\label{boundsig}
    \|\hat{\vct{x}} - \vct{x}_{2s}\|_2 
    \leq 159 \sqrt{\log 2s} \Big( \|e\|_2 + \frac{\|\vct{x}-\vct{x}_s\|_1}{\sqrt{s}} \Big).
  \end{equation}  
\end{corollary}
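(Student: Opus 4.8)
The plan is to reduce this signal-perturbation statement to the already-established measurement-perturbation statement, Theorem~\ref{T:stability}, by folding the non-sparse tail of $\vct{x}$ into the noise vector. Write $\vct{x} = \vct{x}_{2s} + (\vct{x} - \vct{x}_{2s})$, where $\vct{x}_{2s}$ is the best $2s$-term approximation, so that the measurements split as $\vct{u} = \Phi \vct{x}_{2s} + \vct{e}'$ with effective error $\vct{e}' := \vct{e} + \Phi(\vct{x}-\vct{x}_{2s})$. Since $\vct{x}_{2s}$ is genuinely $2s$-sparse, and the hypothesis supplies the Restricted Isometry Condition at parameters $(8s, \e) = (4\cdot 2s, \e)$ — precisely what Theorem~\ref{T:stability} needs to run at sparsity level $2s$ — I may apply that theorem to $\vct{x}_{2s}$ with noise $\vct{e}'$. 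This gives, via the triangle inequality,
\begin{equation*}
\|\hat{\vct{x}} - \vct{x}_{2s}\|_2 \leq 104\sqrt{\log 2s}\,\|\vct{e}'\|_2 \leq 104\sqrt{\log 2s}\,\big(\|\vct{e}\|_2 + \|\Phi(\vct{x}-\vct{x}_{2s})\|_2\big).
\end{equation*}
Everything then comes down to controlling $\|\Phi(\vct{x}-\vct{x}_{2s})\|_2$, the action of $\Phi$ on the tail.

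The tail $\vct{x}-\vct{x}_{2s}$ is not sparse, so \eqref{eq:RIC} does not apply to it directly; the standard device is a shelling argument. I would partition the tail coordinates into consecutive blocks $T_1, T_2, \dots$ ordered by decreasing magnitude, apply the upper inequality in \eqref{eq:RIC} to each block, and sum:
\begin{equation*}
\|\Phi(\vct{x}-\vct{x}_{2s})\|_2 \leq \sum_{k\geq 1}\|\Phi\vct{x}|_{T_k}\|_2 \leq (1+\e)\sum_{k\geq 1}\|\vct{x}|_{T_k}\|_2.
\end{equation*}
The leading block contributes at most $\|\vct{x}-\vct{x}_{2s}\|_2$, while every later block satisfies $\|\vct{x}|_{T_k}\|_2 \leq \sqrt{s}\,\|\vct{x}|_{T_k}\|_\infty \leq \|\vct{x}|_{T_{k-1}}\|_1/\sqrt{s}$, since each of its entries is dominated by the average of the preceding block; summing the shifted series bounds $\sum_{k\geq 2}\|\vct{x}|_{T_k}\|_2$ by $\|\vct{x}-\vct{x}_{2s}\|_1/\sqrt{s}$. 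Hence $\|\Phi(\vct{x}-\vct{x}_{2s})\|_2 \lesssim \|\vct{x}-\vct{x}_{2s}\|_2 + \|\vct{x}-\vct{x}_{2s}\|_1/\sqrt{s}$.

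It remains to convert these $2s$-tail quantities into the single $s$-tail quantity $\|\vct{x}-\vct{x}_s\|_1/\sqrt{s}$ of \eqref{boundsig}. Let $\vct{x}^*$ be the non-increasing rearrangement. For $i > 2s$ the bound $\vct{x}^*_i \leq \|\vct{x}-\vct{x}_s\|_1/(i-s)$ (each of the $i-s$ entries ranked between $s+1$ and $i$ is at least $\vct{x}^*_i$) yields, after $\sum_{i>2s}(i-s)^{-2}\leq 1/s$, the comparison $\|\vct{x}-\vct{x}_{2s}\|_2 \leq \|\vct{x}-\vct{x}_s\|_1/\sqrt{s}$; and trivially $\|\vct{x}-\vct{x}_{2s}\|_1 \leq \|\vct{x}-\vct{x}_s\|_1$. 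Feeding these two facts into the previous estimate collapses $\|\Phi(\vct{x}-\vct{x}_{2s})\|_2$ to a constant multiple of $\|\vct{x}-\vct{x}_s\|_1/\sqrt{s}$, and substituting back into the first display (absorbing $1+\e \approx 1$) produces the claimed form $\|\hat{\vct{x}}-\vct{x}_{2s}\|_2 \leq 159\sqrt{\log 2s}\big(\|\vct{e}\|_2 + \|\vct{x}-\vct{x}_s\|_1/\sqrt{s}\big)$.

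I expect the main obstacle to be this middle estimate — obtaining a bound on $\|\Phi(\vct{x}-\vct{x}_{2s})\|_2$ that is sharp enough in its constant. The reduction to Theorem~\ref{T:stability} and the $\ell_2$-to-$\ell_1$ tail comparison are conceptually routine, but the shelling must be carried out with care about the block sizes and the order at which \eqref{eq:RIC} is invoked, and the sharp version of the tail comparison (improving the factor toward $\tfrac{1}{2\sqrt{s}}$) must be used, so that the accumulated constant, together with the $104$ inherited from Theorem~\ref{T:stability}, lands below the target $159$. That constant bookkeeping is where the real work lies.
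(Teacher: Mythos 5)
Your proposal follows the paper's own proof essentially step for step: the paper likewise writes $\vct{u} = \Phi\vct{x}_{2s} + \bigl(\Phi(\vct{x}-\vct{x}_{2s}) + \vct{e}\bigr)$, applies Theorem~\ref{T:stability} at sparsity level $2s$ (using the $(8s,\e)$ hypothesis) to obtain $\|\hat{\vct{x}} - \vct{x}_{2s}\|_2 \leq 104\sqrt{\log 2s}\,\bigl(\|\Phi(\vct{x}-\vct{x}_{2s})\|_2 + \|\vct{e}\|_2\bigr)$, bounds $\|\Phi(\vct{x}-\vct{x}_{2s})\|_2 \leq (1+\e)\bigl(\|\vct{x}-\vct{x}_{2s}\|_2 + \|\vct{x}-\vct{x}_{2s}\|_1/\sqrt{s}\bigr)$ by citing Lemma 29 of~\cite{GSTV07:HHS} (which your shelling argument re-derives correctly), and finishes with an $\ell_2$-to-$\ell_1$ tail comparison. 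The only place your write-up falls short of the stated constant is the last step: your rearrangement-series argument gives $\|\vct{x}-\vct{x}_{2s}\|_2 \leq \|\vct{x}-\vct{x}_s\|_1/\sqrt{s}$ with constant $1$, which accumulates to roughly $104\cdot 2(1+\e) \approx 210 > 159$, and it does not obviously sharpen to the needed factor $\tfrac{1}{2}$. The paper gets that factor from Lemma~\ref{L:ve}, whose proof is a different (and very short) argument: normalize $\|v\|_1 = d$, observe $\|v-v_s\|_2 \leq \sqrt{d-s}$, and apply AM--GM to get $\|v-v_s\|_2\sqrt{s} \leq \|v\|_1/2$; applied to $v = \vct{x}-\vct{x}_s$ this yields $\|\Phi(\vct{x}-\vct{x}_{2s})\|_2 \leq 1.5(1+\e)\|\vct{x}-\vct{x}_s\|_1/\sqrt{s}$ and hence the final constant $104\cdot 1.5(1+\e) \leq 159$. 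You correctly flagged that the sharp comparison is what is needed, so supplying that two-line AM--GM step is all that separates your proposal from a complete proof.
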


\begin{remarks}

{\bf 1. } In the noiseless case, Theorem~\ref{T:stability} coincides with Theorem~\ref{T:main} in showing exact recovery.

{\bf 2. } Corollary~\ref{T:stabsig} still holds (with only the constants changed) when the term $\vct{x}_{2s}$ is replaced by $\vct{x}_{(1+\delta)s}$ for any $\delta > 0$.  This is evident by the proof of the corollary given below.

  {\bf 2. } Corollary~\ref{T:stabsig} also implies the following bound on the entire signal $\vct{x}$: 
    \begin{equation}\label{vbound}
      \|\hat{\vct{x}} - \vct{x}\|_2 
      \leq 160 \sqrt{\log 2s} \Big( \|e\|_2 + \frac{\|\vct{x}-\vct{x}_s\|_1}{\sqrt{s}} \Big).
    \end{equation}
    Indeed, we have
    \begin{align*}
    \|\hat{\vct{x}} - \vct{x}\|_2 &\leq \|\hat{\vct{x}} - \vct{x}_{2s}\|_2 + \|\vct{x} - \vct{x}_{2s}\|_2\\
    &\leq 159 \sqrt{\log 2s} \Big( \|e\|_2 + \frac{\|\vct{x}-\vct{x}_s\|_1}{\sqrt{s}} \Big) + \|(\vct{x} - \vct{x}_s) - (\vct{x} - \vct{x}_s)_s\|\\
    &\leq 159 \sqrt{\log 2s} \Big( \|e\|_2 + \frac{\|\vct{x}-\vct{x}_s\|_1}{\sqrt{s}} \Big) + \frac{\|\vct{x}-\vct{x}_s\|_1}{\sqrt{s}}\\
    &\leq 160 \sqrt{\log 2s} \Big( \|e\|_2 + \frac{\|\vct{x}-\vct{x}_s\|_1}{\sqrt{s}} \Big),
    \end{align*}
    where the first inequality is the triangle inequality, the second uses Corollary~\ref{T:stabsig} and the identity $\vct{x} - \vct{x}_{2s} = (\vct{x} - \vct{x}_s) - (\vct{x} - \vct{x}_s)_s$, and third uses Lemma~\ref{L:ve} below.
      
  {\bf 3. } The error bound for Basis Pursuit given in Theorem~\ref{stableBP2}, is similar except for the logarithmic factor.  We again believe this to be an artifact of our proof, and our empirical results in Section~\ref{sec:New:Regularized:Numerical} show that ROMP indeed provides much better results than the corollary suggests.

  {\bf 4. } In the case of noise with Basis Pursuit, the problem~\eqref{eqn:bp} needs to be solved, which requires knowledge about the noise vector $\vct{e}$. ROMP requires no such knowledge.
  
  {\bf 5. } If instead one wished to compute a $2s$-sparse approximation to the signal, one may just retain the $2s$ largest coordinates of the reconstructed vector $\hat{\vct{x}}$.  In this case Corollary~\ref{T:stabsig} implies the following:
  \begin{corollary}\label{C:napprox}Assume a measurement matrix $\Phi$ satisfies the Restricted Isometry Condition 
  with parameters $(8s, \e)$ for $\e = 0.01 / \sqrt{\log s}$. 
  Then for an arbitrary vector $\vct{x}$ in $\R^d$, 

    $$
    \|\vct{x}_{2s} - \hat{\vct{x}}_{2s}\|_2 \leq 477 \sqrt{\log 2s}\Big( \|\vct{e}\|_2 + \frac{\|\vct{x}-\vct{x}_{s}\|_1}{\sqrt{s}}\Big).
    $$
\end{corollary}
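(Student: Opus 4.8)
The plan is to obtain this directly from Corollary~\ref{T:stabsig} (equivalently, from its consequence~\eqref{vbound}), using nothing more than the triangle inequality together with the defining property of the best $2s$-term approximation. Write $A := \|\vct{e}\|_2 + \frac{\|\vct{x}-\vct{x}_s\|_1}{\sqrt{s}}$ for the quantity appearing on the right-hand side, so that Corollary~\ref{T:stabsig} reads $\|\hat{\vct{x}} - \vct{x}_{2s}\|_2 \le 159\sqrt{\log 2s}\,A$. The hypotheses of the present corollary (RIC with parameters $(8s,\e)$, $\e = 0.01/\sqrt{\log s}$) are \emph{identical} to those of Corollary~\ref{T:stabsig}, so that estimate is available without any additional work, and everything will be expressed through the single already-controlled quantity $\|\hat{\vct{x}} - \vct{x}_{2s}\|_2$.

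First I would split the target quantity through the reconstruction $\hat{\vct{x}}$ by the triangle inequality,
\begin{equation*}
\|\vct{x}_{2s} - \hat{\vct{x}}_{2s}\|_2 \le \|\vct{x}_{2s} - \hat{\vct{x}}\|_2 + \|\hat{\vct{x}} - \hat{\vct{x}}_{2s}\|_2 .
\end{equation*}
The first term is exactly the quantity bounded by Corollary~\ref{T:stabsig}. For the second term, the key observation is that $\hat{\vct{x}}_{2s}$, being the restriction of $\hat{\vct{x}}$ to its $2s$ largest coordinates, is the \emph{best} $2s$-term approximation to $\hat{\vct{x}}$ in the Euclidean norm: for every $2s$-sparse vector $\vct{v}$ one has $\|\hat{\vct{x}} - \hat{\vct{x}}_{2s}\|_2 \le \|\hat{\vct{x}} - \vct{v}\|_2$. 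Applying this with the genuinely $2s$-sparse comparison vector $\vct{v} = \vct{x}_{2s}$ gives $\|\hat{\vct{x}} - \hat{\vct{x}}_{2s}\|_2 \le \|\hat{\vct{x}} - \vct{x}_{2s}\|_2$. Substituting back bounds the whole left-hand side by $2\|\hat{\vct{x}} - \vct{x}_{2s}\|_2$, and Corollary~\ref{T:stabsig} closes the estimate with a constant of order $2\cdot 159$. The stated value $477 = 3\cdot 159$ is a loose numerical bound that comfortably absorbs the extra slack one would incur by instead routing the first term through the full-signal estimate~\eqref{vbound} and the tail bound $\|\vct{x}-\vct{x}_{2s}\|_2 \le \|\vct{x}-\vct{x}_s\|_1/\sqrt{s}$ supplied by Lemma~\ref{L:ve}.

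I do not anticipate a genuine obstacle, since this is a soft consequence of an already-proved stability estimate rather than a new argument about ROMP itself. The one point that requires care is recognizing that the comparison vector fed into the best-approximation inequality must itself be $2s$-sparse: choosing $\vct{x}_{2s}$ (and \emph{not} the full, non-sparse signal $\vct{x}$) is precisely what allows both terms in the triangle-inequality split to be reduced to the single controlled quantity $\|\hat{\vct{x}} - \vct{x}_{2s}\|_2$, after which the numerical constant is the only thing left to track.
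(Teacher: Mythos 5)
Your proof is correct, and it takes a genuinely different route from the paper's. The paper reduces the claim to showing $\|\vct{x}_{2s} - \hat{\vct{x}}_{2s}\|_2 \leq 3\|\vct{x}_{2s} - \hat{\vct{x}}\|_2$ via a support decomposition: writing $S$ and $T$ for the supports of $\vct{x}_{2s}$ and $\hat{\vct{x}}_{2s}$, it splits the error into the part on $T$ and the part on $S\setminus T$, and controls the latter using the counting identity $|S\setminus T| = |T\setminus S|$ together with the fact that every coordinate of $\hat{\vct{x}}$ on $T$ dominates every coordinate of $\hat{\vct{x}}$ on $T^c$ in magnitude; this yields the factor $3$ and hence the constant $477 = 3\cdot 159$. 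You instead invoke the best-approximation property of hard thresholding directly -- $\hat{\vct{x}}_{2s}$ is the closest $2s$-sparse vector to $\hat{\vct{x}}$ in $\ell_2$, and $\vct{x}_{2s}$ is an admissible competitor -- which collapses the argument to a single triangle inequality and gives the sharper factor $2$, i.e.\ a constant $318 \leq 477$, under identical hypotheses. Your route is both shorter and quantitatively better; the paper's coordinate-level argument uses the same underlying majorization fact but pays an extra term for splitting $b$ across $S\setminus T$ and $T\setminus S$. It is worth noting that your trick is precisely the ``pruning'' argument the thesis itself uses later in the CoSaMP analysis (Lemma~\ref{lem:pruning}), so in hindsight it is the more natural proof; the one point you rightly flag -- that the competitor fed to the best-approximation inequality must be $2s$-sparse, so it must be $\vct{x}_{2s}$ and not $\vct{x}$ -- is indeed the only place the argument could go wrong, and you handle it correctly.
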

		This corollary is proved in Section~\ref{sec:New:Regularized:Proof}.
		
  {\bf 6. } As noted earlier, a special class of signals are compressible signals~\eqref{comp}, and for these it is straightforward to see that \eqref{vbound} gives us the following error bound for ROMP:
  $$
  \|\vct{x} - \hat{\vct{x}}\|_2 \le C'_q \frac{\sqrt{\log s}}{s^{q-1/2}} + C''\sqrt{\log s}\|e\|_2.
  $$
   As observed in~\cite{CRT06:Stable}, this bound is optimal (within the logarithmic
    factor), meaning no algorithm can perform fundamentally better.
    
    \end{remarks}

    			\subsection[Proofs of Theorems]{Proofs of Theorems}
    		\label{sec:New:Regularized:Proof}
    		In this section we include the proofs of Theorems~\ref{T:main} and~\ref{T:stability} and Corollaries~\ref{T:stabsig} and~\ref{C:napprox}.  The proofs presented here originally appeared in~\cite{NV07:Uniform-Uncertainty} and~\cite{NV07:ROMP-Stable}.
    		
    		    		\subsubsection[Proof of Theorem~\ref{T:main}]{Proof of Theorem~\ref{T:main}}\label{sec:New:Regularized:Proof:orig}
    		
    		We shall prove a stronger version of Theorem~\ref{T:main}, which states that 
{\em at every iteration} of ROMP, at least $50\%$ of the newly selected coordinates
are from the support of the signal $x$. 

\begin{theorem}[Iteration Invariant of ROMP] \label{T:it}
  Assume $\Phi$ satisfies the Restricted Isometry Condition 
  with parameters $(2s, \e)$ for $\e = 0.03 / \sqrt{\log s}$. 
  Let $x \ne 0$ be a $s$-sparse vector with measurements $u = \Phi x$. 
  Then at any iteration of ROMP, after the regularization step, we have
  $J_0 \ne \emptyset$, $J_0 \cap I = \emptyset$ and 
  \begin{equation}                  \label{J support}
    |J_0 \cap \supp(x)| \geq \frac{1}{2}|J_0|.
  \end{equation}
  In other words, at least $50\%$ of the coordinates in the newly selected set $J_0$ 
  belong to the support of $x$.
\end{theorem}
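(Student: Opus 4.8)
The plan is to prove the statement by induction on the ROMP iterations, handling the three assertions in increasing order of difficulty. The two set-theoretic claims are quick. Since the residual $\vct{r}$ produced by the least-squares update is orthogonal to every column of $\Phi$ indexed by $I$, the observation vector $\vct{y} = \Phi^*\vct{r}$ vanishes on $I$; hence the identification set $J$ (the largest coordinates of $\vct{y}$) is disjoint from $I$, and so is $J_0\subseteq J$. For $J_0\ne\emptyset$ it suffices to show $\vct{y}\ne 0$, which will follow from the energy lower bound below once we observe that $S_0:=\supp(x)\setminus I$ is nonempty at any iteration with $\vct{r}\ne 0$: if $\supp(x)\subseteq I$, then injectivity of $\Phi_I$ (a consequence of the RIC) forces $\vct{r}=0$ and the algorithm halts.

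The heart of the matter is the $50\%$ estimate, and the first thing I would set up are the two tools that drive it. First, from the Restricted Isometry Condition I would record the standard consequences that $(1-\e)\|a\|_2\le\|\Phi_T a\|_2\le(1+\e)\|a\|_2$ for $|T|\le 2s$ and the approximate orthogonality bound $|\langle \Phi_A a, \Phi_B b\rangle| \le 2.03\,\e\,\|a\|_2\|b\|_2$ for disjoint $A,B$ with $|A\cup B|\le 2s$. Second, I would prove a regularization lemma: any vector supported on at most $s$ coordinates contains a subset of comparable coordinates carrying at least a $1/(2.5\log s)$ fraction of its energy (group the coordinates dyadically; only $O(\log s)$ groups matter, since the remaining small coordinates contribute negligible energy). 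This is exactly where the factor $\sqrt{\log s}$ in $\e = 0.03/\sqrt{\log s}$ originates.

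With these in hand, the argument is an energy comparison on $J_0 = (J_0\cap \supp(x))\sqcup B$, where $B:=J_0\setminus\supp(x)$. The crucial algebraic observation is that the residual is an orthogonal projection, $\vct{r} = P\,\Phi_{S_0}x_{S_0}$, where $P$ projects onto $\range(\Phi_I)^\perp$ (the part of $\vct{u}$ lying in $\range(\Phi_I)$, which absorbs $\Phi x$ restricted to $\supp(x)\cap I$, is annihilated by $P$). From the identity $\langle \vct{y}|_{S_0}, x_{S_0}\rangle = \|\vct{r}\|_2^2$ I would derive the lower bound $\|\vct{y}|_{S_0}\|_2 \ge (1 - O(\e))\|x_{S_0}\|_2$, using the projection together with approximate orthogonality to show $\Phi_{S_0}x_{S_0}$ is nearly orthogonal to $\range(\Phi_I)$; then, since $J$ captures the $s$ largest coordinates and $|S_0|\le s$, regularization yields $\|\vct{y}|_{J_0}\|_2 \gtrsim \|x_{S_0}\|_2/\sqrt{\log s}$. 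For the bad set, writing $\vct{y}|_B = \Phi_B^*\Phi_{S_0}x_{S_0} - \Phi_B^* P_{\range(\Phi_I)}\Phi_{S_0}x_{S_0}$ and bounding each piece gives $\|\vct{y}|_B\|_2 \le O(\e)\|x_{S_0}\|_2$. Finally, because all coordinates of $J_0$ are within a factor of $2$ in magnitude, counting reduces to energy: $|B|/|J_0| \le 4\,\|\vct{y}|_B\|_2^2/\|\vct{y}|_{J_0}\|_2^2$, and the two bounds force the right-hand side below $1/2$ precisely when $\e \lesssim 1/\sqrt{\log s}$, which the choice $\e = 0.03/\sqrt{\log s}$ guarantees.

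The main obstacle, and the reason the induction is essential, is that these RIC applications involve the index set $I$, whose size grows toward $2s$, so a priori $|S_0\cup I|$ and $|B\cup I|$ could reach $3s$ and fall outside the reach of the $(2s,\e)$ condition. This is resolved by the inductive hypothesis itself: if at least half of every previously selected block lies in $\supp(x)$, then the number of incorrect indices in $I$ is at most $|I\cap\supp(x)|$, whence $|S_0\cup I| = |I\setminus\supp(x)| + |\supp(x)| \le 2s$, bringing the key inner products back within scope; the stray term $\Phi_B^* P_{\range(\Phi_I)}\Phi_{S_0}x_{S_0}$ I would instead control crudely by $\|\Phi_B\|\,\|P_{\range(\Phi_I)}\Phi_{S_0}x_{S_0}\|_2$, whose second factor is $O(\e)\|x_{S_0}\|_2$ by the same near-orthogonality. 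Keeping careful track of these constants so that the final inequality closes with the stated $0.03$ is the one genuinely delicate bookkeeping step.
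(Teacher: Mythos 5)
Your proposal is correct and follows essentially the same route as the paper's proof: the same induction keeping $|\supp(x)\cup I|\le 2s$, the same RIC consequences (near-orthogonality of $\range(\Phi_I)$ and $\range(\Phi_{\supp(x)\setminus I})$), the same regularization lemma supplying the $\sqrt{\log s}$ factor, and the same good/bad energy comparison on $J_0$ closed by the comparability-of-coordinates counting argument. The only cosmetic difference is that you lower-bound $\|y|_{S_0}\|_2$ via the identity $\langle y|_{S_0}, x_{S_0}\rangle = \|r\|_2^2$ and Cauchy--Schwarz, where the paper instead uses its ``local approximation'' consequence of the RIC and transfers from $y_0=\Phi^*u_0$ to $y$; both give the same $(1-O(\varepsilon))\|x_0\|_2$ bound.
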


In particular, at every iteration ROMP finds at least one new coordinate 
in the support of the signal $x$. Coordinates outside the support can also be found, 
but \eqref{J support} guarantees that the number of such ``false'' coordinates 
is always smaller than those in the support. 
This clearly implies Theorem~\ref{T:main}.

\medskip

Before proving Theorem~\ref{T:it} we explain how the Restricted Isometry Condition 
will be used in our argument. RIC is necessarily a local principle, which concerns 
not the measurement matrix $\Phi$ as a whole, but its submatrices of $s$ columns. 
All such submatrices $\Phi_I$, $I \subset \{1,\ldots,d\}$, $|I| \le s$ are almost isometries.
Therefore, for every $s$-sparse signal $x$, the observation vector 
$y = \Phi^*\Phi x$ approximates $x$ locally, when restricted to a set of 
cardinality $s$. The following proposition formalizes these local properties
of $\Phi$ on which our argument is based.

\begin{proposition}[Consequences of Restricted Isometry Condition]\label{P:cons} 
  Assume a measurement matrix $\Phi$ satisfies the Restricted Isometry Condition 
  with parameters $(2s, \e)$. Then the following holds.
  \begin{enumerate}
    \item {\em (Local approximation)} 
      For every $s$-sparse vector $x \in \R^d$ 
      and every set $I \subset \{1, \ldots, d\}$, $|I| \le s$, 
      the observation vector $y = \Phi^* \Phi x$ satisfies 
      $$
      \|y|_I - x|_I\|_2 \le 2.03 \e \|x\|_2.
      $$
    \item {\em (Spectral norm)} 
      For any vector $z \in \R^m$
      and every set $I \subset \{1, \ldots, d\}$, $|I| \le 2s$, we have 
      $$
      \|(\Phi^*z)|_I\|_2 \leq (1+\e)\|z\|_2.
      $$
    \item {\em (Almost orthogonality of columns)} 
      Consider two disjoint sets $I,J \subset \{1, \ldots, d\}$, $|I \cup J| \le 2s$.
      Let $P_I, P_J$ denote the orthogonal projections in $\R^m$
      onto $\range(\Phi_I)$ and $\range(\Phi_J)$, respectively. Then 
      $$
      \|P_I P_J\|_{2\rightarrow 2} \leq 2.2 \e.
      $$
  \end{enumerate}
\end{proposition}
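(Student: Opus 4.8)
The plan is to deduce all three statements from a single fact about the $2s$-column submatrices of $\Phi$: if $T \subset \{1,\dots,d\}$ with $|T| \le 2s$, then squaring the Restricted Isometry Condition~\eqref{eq:RIC} gives $(1-\e)^2\|v\|_2^2 \le \langle \Phi_T^*\Phi_T v, v\rangle \le (1+\e)^2\|v\|_2^2$ for every $v$ supported on $T$. Since $\Phi_T^*\Phi_T - \Id$ is symmetric, its operator norm is the largest Rayleigh quotient in absolute value, so
$$\|\Phi_T^*\Phi_T - \Id\|_{2\to 2} \le \max\{(1+\e)^2 - 1,\ 1 - (1-\e)^2\} = 2\e + \e^2 \le 2.03\,\e,$$
the last step using $\e \le 0.03$. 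The upper half of~\eqref{eq:RIC} likewise gives $\|\Phi_T\|_{2\to 2} \le 1+\e$. These two estimates do almost all of the work.

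For the local approximation bound I would set $S = \supp(x)$ and $T = I \cup S$, so $|T| \le 2s$. As $x$ is supported on $T$, we have $\Phi x = \Phi_T(x|_T)$, and restricting $y = \Phi^*\Phi x$ to $I \subset T$ yields $y|_I = \big(\Phi_T^*\Phi_T(x|_T)\big)|_I$. Subtracting $x|_I$ and using that coordinate restriction does not increase the $\ell_2$ norm,
$$\|y|_I - x|_I\|_2 \le \|(\Phi_T^*\Phi_T - \Id)(x|_T)\|_2 \le \|\Phi_T^*\Phi_T - \Id\|_{2\to 2}\,\|x\|_2 \le 2.03\,\e\,\|x\|_2 .$$
The spectral norm bound is immediate: $(\Phi^*z)|_I = \Phi_I^* z$, and since $|I| \le 2s$ we have $\|\Phi_I^*\|_{2\to 2} = \|\Phi_I\|_{2\to 2} \le 1+\e$, so $\|(\Phi^*z)|_I\|_2 \le (1+\e)\|z\|_2$.

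The almost-orthogonality estimate is where the real work lies, and I expect it to be the main obstacle, since the norm of a product of projections must be translated into a statement about inner products of columns. I would first record the identity
$$\|P_I P_J\|_{2\to 2} = \sup\Big\{\tfrac{|\langle a,b\rangle|}{\|a\|_2\|b\|_2} : a \in \range(\Phi_I),\ b \in \range(\Phi_J),\ a,b \ne 0\Big\},$$
which follows by maximizing $\|P_I P_J w\|_2$ first over the target subspace and then over $w$. Writing $a = \Phi_I p$, $b = \Phi_J q$ with $p,q$ supported on the disjoint sets $I,J$, I would bound the cross term $\langle a,b\rangle = \langle \Phi p, \Phi q\rangle$ by polarization: applying~\eqref{eq:RIC} to $p \pm q$ (which is $2s$-sparse with $\|p\pm q\|_2^2 = \|p\|_2^2 + \|q\|_2^2$) gives
$$4\langle \Phi p, \Phi q\rangle = \|\Phi(p+q)\|_2^2 - \|\Phi(p-q)\|_2^2 \le \big[(1+\e)^2 - (1-\e)^2\big]\big(\|p\|_2^2 + \|q\|_2^2\big),$$
with the matching lower bound by symmetry, so $|\langle \Phi p, \Phi q\rangle| \le \e(\|p\|_2^2 + \|q\|_2^2)$. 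The target inequality is invariant under $p \mapsto tp$, $q \mapsto t^{-1}q$, so I may assume $\|p\|_2 = \|q\|_2$ and conclude $|\langle \Phi p, \Phi q\rangle| \le 2\e\,\|p\|_2\|q\|_2$. Finally the lower Restricted Isometry bounds $\|a\|_2 \ge (1-\e)\|p\|_2$ and $\|b\|_2 \ge (1-\e)\|q\|_2$ convert this into
$$\frac{|\langle a,b\rangle|}{\|a\|_2\|b\|_2} \le \frac{2\e}{(1-\e)^2} \le 2.2\,\e,$$
valid for $\e \le 0.03$, and taking the supremum gives $\|P_I P_J\|_{2\to 2} \le 2.2\,\e$. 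The only delicate points are justifying the projection-norm identity and checking the constant $2\e/(1-\e)^2 \le 2.2\,\e$; everything else is a direct use of~\eqref{eq:RIC}.
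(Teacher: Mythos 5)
Your proof is correct, and for Parts 1 and 2 it coincides with the paper's argument: the paper likewise reduces Part 1 to the Gram-matrix deviation bound $\|\Phi_\Gamma^\adj\Phi_\Gamma - \Id_\Gamma\|_{2\to 2} \le (1+\e)^2 - 1 \le 2.03\,\e$ on $\Gamma = I\cup\supp(x)$, and Part 2 to $\|\Phi Q_I\|_{2\to 2}\le 1+\e$. The only genuine divergence is inside Part 3. After the same reduction of $\|P_IP_J\|_{2\to 2}$ to the supremum of $|\langle a,b\rangle| / (\|a\|_2\|b\|_2)$ over the two ranges (which the paper asserts as an equivalence without the one-line justification you supply), the paper bounds the cross term by writing $a = \Phi_K p$, $b = \Phi_K q$ with $K = I\cup J$, using disjointness to get $\langle p,q\rangle = 0$, and then invoking the already-established deviation bound: $|\langle a,b\rangle| = |\langle(\Phi_K^\adj\Phi_K - \Id)p,\, q\rangle| \le 2.03\,\e\,\|p\|_2\|q\|_2$. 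You instead bound $\langle \Phi p,\Phi q\rangle$ directly by polarization, applying the Restricted Isometry Condition to the $2s$-sparse vectors $p\pm q$ and using the rescaling invariance to reach $|\langle\Phi p,\Phi q\rangle|\le 2\e\,\|p\|_2\|q\|_2$; both routes then divide by the lower bounds $\|a\|_2\ge(1-\e)\|p\|_2$ and $\|b\|_2\ge(1-\e)\|q\|_2$. Your polarization step is self-contained (it never reuses the Part 1 machinery) and gives the marginally better intermediate constant $2\e$ in place of $2.03\,\e$, so the final bound $2\e/(1-\e)^2\le 2.2\,\e$ holds with more room; the paper's version has the economy of recycling a bound already proved, which is what its reference to ``the proof of Part 2'' (really the computation from Part 1) is doing. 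Note also that both your argument and the paper's tacitly require $\e$ to be small (roughly $\e\le 0.03$) for the numerical constants $2.03$ and $2.2$ to be valid, which is harmless since the proposition is only ever applied with $\e \le 0.03/\sqrt{\log s}$.
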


\begin{proof}

{\sc Part 1.}
Let $\Gamma = I\cup\supp(x)$, so that $|\Gamma| \leq 2s$. 
Let $\Idg$ denote the identity operator on $\R^\Gamma$. 
By the Restricted Isometry Condition,
$$
\|\Phi_\Gamma^*\Phi_\Gamma - \Idg\|_{2\rightarrow 2} 
= \sup_{w \in \R^\Gamma, \, \|w\|_2 = 1} \big| \|\Phi_\Gamma w\|_2^2 - \|w\|_2^2 \big| 
\le (1+\e)^2 - 1
\le 2.03\e.
$$
Since $\supp(x) \subset \Gamma$, we have
$$
\|y|_\Gamma - x|_\Gamma\|_2 
= \|\Phi_\Gamma^*\Phi_\Gamma x - \Id_\Gamma x\|_2 
\le 2.03\e \|x\|_2.
$$   
The conclusion of Part~1 follows since $I\subset \Gamma$.

\medskip

{\sc Part 2.} Denote by $Q_I$ the orthogonal projection in $\R^d$ 
onto $\R^I$. Since $|I| \le 2s$, the Restricted Isometry Condition yields
$$    
\|Q_I\Phi^*\|_{2\rightarrow 2} = \|\Phi Q_I\|_{2 \rightarrow 2} \leq 1 + \e.
$$
This yields the inequality in Part 2. 

\medskip

{\sc Part 3.} 
The desired inequality is equivalent to:
$$
\frac{|\langle x, y\rangle|}{\|x\|_2\|y\|_2} \leq 2.2\e \qquad \text{for all } x \in \range(\Phi_I), \; y \in \range(\Phi_J).
$$
Let $K = I \cup J$ so that $|K| \leq 2s$. For any $x \in \range(\Phi_I), y \in \range(\Phi_J)$, there are $a, b$ so that
$$
x = \Phi_K a, \; y = \Phi_K b, \qquad a \in \R^I, \; b \in \R^J.
$$
By the Restricted Isometry Condition,
$$
\|x\|_2 \geq (1-\e)\|a\|_2, \; \|y\|_2 \geq (1-\e)\|b\|_2.
$$
By the proof of Part 2 above and since $\langle a b \rangle = 0$, we have
$$
|\langle x, y\rangle| = |\langle (\Phi_K^* \Phi_K - \Idg)a, b \rangle| \leq 2.03\e\|a\|_2\|b\|_2.
$$
This yields  
$$
\frac{|\langle x, y\rangle|}{\|x\|_2\|y\|_2} \leq \frac{2.03\e}{(1-\e)^2} \leq 2.2\e,
$$
which completes the proof.
\end{proof}

\bigskip

We are now ready to prove Theorem \ref{T:it}.

\medskip

The proof is by induction on the iteration of ROMP.
The induction claim is that for all previous iterations, the set of newly chosen 
indices $J_0$ is nonempty, disjoint from the set of previously chosen indices $I$, 
and \eqref{J support} holds.
 
Let $I$ be the set of previously chosen indices at the start of a given iteration.
The induction claim easily implies that 
\begin{equation}					\label{suppv I}
  |\supp(x) \cup I| \le 2s.    
\end{equation}
Let $J_0$, $J$, be the sets found by ROMP in the current iteration. 
By the definition of the set $J_0$, it is nonempty.

Let $r \ne 0$ be the residual at the start of this iteration. 
We shall approximate $r$ by a vector in $\range(\Phi_{\supp(x) \setminus I})$.
That is, we want to approximately realize the residual $r$  
as measurements of some signal which lives on the still unfound
coordinates of the the support of $x$. 
To that end, we consider the subspace
$$
H := \range (\Phi_{\supp(x) \cup I})
$$
and its complementary subspaces
$$
F := \range (\Phi_I), \quad 
E_0 := \range (\Phi_{\supp(x) \setminus I}).
$$
The Restricted Isometry Condition in the form of Part~3 of Proposition~\ref{P:cons} 
ensures that $F$ and $E_0$ are almost orthogonal. Thus $E_0$ is close to 
the orthogonal complement of $F$ in $H$,
$$
E := F^{\perp}\cap H.
$$
\begin{center}
\begin{figure}[ht] \label{fig:cap}
  \includegraphics[scale=0.6]{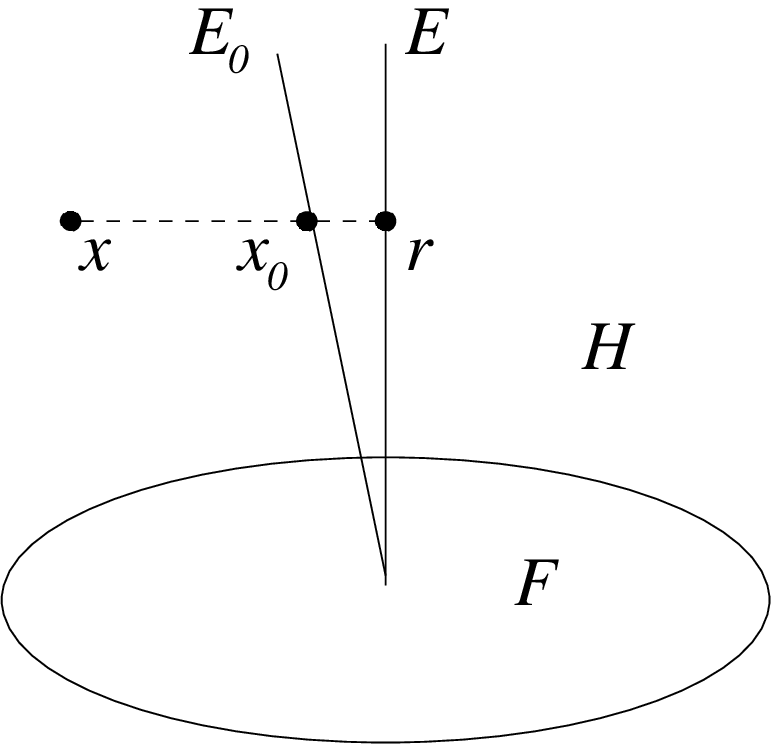}
\end{figure}
\end{center}

We will also consider the signal we seek to identify at the current iteration, 
its measurements, and its observation vector:
\begin{equation}            \label{v0 x0}
  x_0 := x|_{\supp(x) \setminus I}, \quad 
  u_0 := \Phi x_0 \in E_0, \quad y_0 := \Phi^*u_0.
\end{equation}

Lemma~\ref{L:uj} will show that $\|(y-y_0)|_T\|_2$ for any small enough subset $T$ is small, and Lemma~\ref{C:uj0} will show that $\|y|_{J_0}\|_2$ is not too small. First, we show that the residual $r$ has a simple description:

\begin{lemma}[Residual]     \label{residual}
  Here and thereafter, let $P_L$ denote the orthogonal projection in $\R^m$ 
  onto a linear subspace $L$. Then
  $$
  r = P_E u.
  $$
\end{lemma}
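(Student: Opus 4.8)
The plan is to read the residual directly off the Update step of ROMP and then recognize the resulting expression as the projection onto $E$. First I would recall that in the Update step the algorithm sets $w = \argmin_{z \in \R^I}\|u - \Phi z\|_2$ and $r = u - \Phi w$. As $z$ ranges over $\R^I$, the vector $\Phi z$ ranges over all of $F = \range(\Phi_I)$, so minimizing the Euclidean distance from $u$ to this subspace forces $\Phi w = P_F u$, the orthogonal projection of $u$ onto $F$. Hence
$$
r = u - P_F u = P_{F^\perp} u .
$$
The lemma therefore reduces to the identity $P_{F^\perp} u = P_E u$, and the real task is to explain why projecting onto all of $F^\perp$ agrees, for this particular $u$, with projecting onto the smaller subspace $E = F^{\perp}\cap H$.

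The key observation is that $u$ lives in $H$. Indeed $u = \Phi x$ with $\supp(x) \subset \supp(x)\cup I$, so $u \in \range(\Phi_{\supp(x)}) \subset \range(\Phi_{\supp(x)\cup I}) = H$. Moreover $F = \range(\Phi_I) \subset H$ because $I \subset \supp(x)\cup I$. Thus $F$ is a subspace of $H$, and within $H$ we have the orthogonal decomposition $H = F \oplus E$ with $E = F^{\perp}\cap H$.

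With these facts in hand I would finish by a uniqueness-of-orthogonal-decomposition argument. Write $u = P_F u + r$. Here $P_F u \in F$, while $r = u - P_F u$ is a difference of two elements of $H$, hence $r \in H$; it is also in $F^{\perp}$ by construction, so $r \in F^{\perp}\cap H = E$. Since $E \perp F$, this exhibits $u$ as the sum of an $F$-component and an $E$-component, which is exactly the orthogonal decomposition of $u$ across $H = F \oplus E$. By uniqueness of that decomposition, the $E$-component of $u$ is $P_E u$, and therefore $P_E u = r$, as claimed.

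I do not expect a serious obstacle here, since the content is essentially linear algebra. The one point requiring care — and the only place the structure of the problem enters — is verifying that $u \in H$, so that the vector $P_{F^{\perp}} u$ actually lands inside $H$ rather than sticking out of it; this is precisely what legitimizes replacing $F^{\perp}$ by the smaller subspace $E$. Everything else is the standard fact that, for a subspace $F$ of $H$, projecting a vector of $H$ onto $F^{\perp}$ coincides with projecting it onto $F^{\perp}\cap H$.
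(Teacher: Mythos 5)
Your proof is correct and follows essentially the same route as the paper's: identify $r = P_{F^\perp}u$ from the least-squares update, note $u \in H$, and use the orthogonal decomposition $H = F \oplus E$ to conclude $r = P_E u$. You simply make explicit two steps the paper asserts without comment (that the $\argmin$ step yields $\Phi w = P_F u$, and that $u$ indeed lies in $H$), which is fine.
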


\begin{proof}
By definition of the residual in the algorithm,
$r = P_{F^\perp} u$. Since $u \in H$, we conclude from the orthogonal 
decomposition $H = F + E$ that $u = P_F u + P_E u$. Thus
$r = u - P_F u = P_E u$.
\end{proof}

To guarantee a correct identification of $x_0$, we first state
two approximation lemmas that reflect in two different ways the fact 
that subspaces $E_0$ and $E$ are close to each other.
This will allow us to carry over information from $E_0$ to $E$.

\begin{lemma}[Approximation of the residual]\label{C:proj}
  We have
  $$
  \|u_0 - r\|_2 \leq 2.2 \e \|u_0\|_2.
  $$
\end{lemma}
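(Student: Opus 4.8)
The plan is to reduce the claim to a single application of Part~3 of Proposition~\ref{P:cons} (almost orthogonality of columns), applied to the disjoint sets $I$ and $\supp(x)\setminus I$. The starting point is to decompose the measurement vector. Writing $x = x|_{\supp(x)\cap I} + x|_{\supp(x)\setminus I}$ and applying $\Phi$, I get $u = \Phi x = f + u_0$, where $f := \Phi(x|_{\supp(x)\cap I}) \in \range(\Phi_I) = F$ and $u_0 = \Phi x_0 \in E_0$ as in~\eqref{v0 x0}. In particular $u_0 \in H$.

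Next I would invoke Lemma~\ref{residual}, which gives $r = P_E u$. Since $E = F^\perp \cap H \subset F^\perp$, the projection $P_E$ annihilates the component $f \in F$, so $r = P_E u = P_E u_0$. Thus $u_0 - r = u_0 - P_E u_0$, and the whole statement becomes a bound on how far $u_0$ sits from the subspace $E$.

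The key geometric observation is that, because $u_0 \in H$ and $E$ is precisely the orthogonal complement of $F$ inside $H$, the orthogonal decomposition $H = F \oplus E$ gives $u_0 = P_F u_0 + P_E u_0$; hence $u_0 - P_E u_0 = P_F u_0$. Therefore $\|u_0 - r\|_2 = \|P_F u_0\|_2$. Since $u_0 \in E_0$ means $P_{E_0} u_0 = u_0$, I can write $P_F u_0 = P_F P_{E_0} u_0$ and estimate $\|P_F u_0\|_2 \le \|P_F P_{E_0}\|_{2\rightarrow 2}\,\|u_0\|_2$.

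Finally, the induction hypothesis~\eqref{suppv I} guarantees $|\supp(x)\cup I| \le 2s$, so the disjoint sets $I$ and $\supp(x)\setminus I$ have union of size at most $2s$, and Part~3 of Proposition~\ref{P:cons} applies to the projections onto $F = \range(\Phi_I)$ and $E_0 = \range(\Phi_{\supp(x)\setminus I})$, yielding $\|P_F P_{E_0}\|_{2\rightarrow 2} \le 2.2\e$. Combining the two displays gives $\|u_0 - r\|_2 \le 2.2\e\|u_0\|_2$. I expect the only subtle point to be the two subspace decompositions: verifying that $P_E$ kills the $F$-component of $u$, and that $u_0 - P_E u_0 = P_F u_0$, both rely crucially on $u_0 \in H$ together with $E = F^\perp \cap H$. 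Once these are in place, the estimate is immediate from Part~3.
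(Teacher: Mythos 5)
Your proposal is correct and follows essentially the same route as the paper's proof: the same decomposition $u = \Phi(x-x_0) + u_0$ with $\Phi(x-x_0)\in F$, the same use of Lemma~\ref{residual} and the orthogonal splitting $H = F + E$ to get $u_0 - r = P_F u_0 = P_F P_{E_0} u_0$, and the same final appeal to Part~3 of Proposition~\ref{P:cons} via the cardinality bound~\eqref{suppv I}. The only difference is that you spell out more explicitly why $P_E$ annihilates the $F$-component and why $u_0 - P_E u_0 = P_F u_0$, which the paper leaves implicit.
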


\begin{proof}
By definition of $F$, we have 
$u - u_0 = \Phi(x - x_0) \in F$. 
Therefore, by Lemma~\ref{residual},
$r = P_{E}u = P_{E}u_0$, and so
$$
u_0 - r = u_0 - P_Eu_0 = P_Fu_0 = P_FP_{E_0}u_0.
$$
Now we use Part 3 of Proposition~\ref{P:cons} for the sets $I$ and $\supp(x) \setminus I$
whose union has cardinality at most $2s$ by \eqref{suppv I}. It follows that
$\|P_FP_{E_0}u_0\|_2 \le 2.2 \e \|u_0\|_2$ as desired.
\end{proof}

\begin{lemma}[Approximation of the observation]\label{L:uj}
  Consider the observation vectors 
  $y_0 = \Phi^*u_0$ and $y = \Phi^*r$. Then for any set $T \subset \{1, \ldots, d\}$ with $|T| \le 2s$, we have
  $$
  \|(y_0 - y)|_T\|_2 \leq 2.4 \e \|x_0\|_2.
  $$
\end{lemma}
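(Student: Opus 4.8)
The plan is to realize the difference $y_0 - y$ as the image under $\Phi^*$ of the already-controlled difference $u_0 - r$, and then to chain together three facts that are all available from the earlier results: the boundedness of $z \mapsto (\Phi^* z)|_T$ for $|T| \le 2s$ (Part~2 of Proposition~\ref{P:cons}), the closeness of the residual $r$ to the idealized measurement $u_0$ (Lemma~\ref{C:proj}), and the comparability of $u_0$ to $x_0$ furnished by the Restricted Isometry Condition.

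First I would note that, since $y_0 = \Phi^* u_0$ and $y = \Phi^* r$, we have $y_0 - y = \Phi^*(u_0 - r)$, so restricting to $T$ gives $(y_0 - y)|_T = (\Phi^*(u_0 - r))|_T$. Because $|T| \le 2s$, Part~2 of Proposition~\ref{P:cons} applies to the vector $z = u_0 - r \in \R^m$ and yields
$$
\|(y_0 - y)|_T\|_2 \le (1+\e)\|u_0 - r\|_2.
$$
Next I would invoke Lemma~\ref{C:proj} to bound $\|u_0 - r\|_2 \le 2.2\,\e\,\|u_0\|_2$, and then apply the Restricted Isometry Condition to the vector $x_0 = x|_{\supp(x)\setminus I}$, which is supported on a set of size at most $s$, to obtain $\|u_0\|_2 = \|\Phi x_0\|_2 \le (1+\e)\|x_0\|_2$. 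Composing the three inequalities gives
$$
\|(y_0 - y)|_T\|_2 \le (1+\e)^2 \cdot 2.2\,\e \cdot \|x_0\|_2.
$$

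The only remaining step is numerical: I need $(1+\e)^2 \cdot 2.2 \le 2.4$, which holds because $\e = 0.03/\sqrt{\log s}$ is small (any $\e$ up to roughly $0.044$ suffices). This is the sole place where the smallness of $\e$ is used, and it is a routine estimate rather than a genuine difficulty. In fact the whole argument is just a short composition of previously established bounds; the one point to verify carefully is that every estimate is applied to a set of cardinality at most $2s$ — in particular the spectral-norm bound on $T$ and the RIC bound on $x_0$ — so that the Restricted Isometry Condition with parameters $(2s, \e)$ stays in force throughout. There is no real obstacle here beyond bookkeeping of the constants.
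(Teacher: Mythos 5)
Your proof is correct and follows essentially the same route as the paper's: both arguments chain Part~2 of Proposition~\ref{P:cons} (applied to $u_0 - r$ on the set $T$), Lemma~\ref{C:proj}, and the Restricted Isometry Condition applied to the $s$-sparse vector $x_0$. The only cosmetic difference is bookkeeping of constants — the paper rounds $2.2\e(1+\e)$ to $2.3\e$ before the final factor of $(1+\e)$, while you keep $(1+\e)^2 \cdot 2.2\e$ and verify the numerical bound at the end — and your observation that $\e \lesssim 0.044$ suffices matches the paper's implicit requirement.
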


\begin{proof}
Since $u_0 = \Phi x_0$, we have by Lemma~\ref{C:proj} 
and the Restricted Isometry Condition that
$$
\|u_0 - r\|_2 
\le 2.2 \e \|\Phi x_0\|_2 
\le 2.2 \e (1+\e) \|x_0\|_2 
\le 2.3 \e \|x_0\|_2.
$$
To complete the proof, it remains to apply Part 2 of Proposition~\ref{P:cons},
which yields 
$\|(y_0 - y)|_T\|_2 \le (1 + \e)\|u_0 - r\|_2$.
\end{proof}

We next show that the energy (norm) of $y$ when restricted to $J$, and furthermore to 
$J_0$, is not too small. By the approximation lemmas, this will yield that ROMP 
selects at least a fixed percentage of energy of the still unidentified part of the signal. 
By the regularization step of ROMP, since all selected coefficients have comparable
magnitudes, we will conclude that not only a portion of energy
but also of the {\em support} is selected correctly. This 
will be the desired conclusion.

\begin{lemma}[Localizing the energy]\label{C:uj}
  We have $\|y|_J\|_2 \ge 0.8 \|x_0\|_2$.
\end{lemma}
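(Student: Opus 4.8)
The plan is to bound $\|y|_J\|_2$ from below by first controlling $y$ on exactly the coordinates we wish to recover, namely $S_0 := \supp(x)\setminus I = \supp(x_0)$, and then to exploit that $J$ collects the $s$ largest coordinates of $y$. Since $x$ is $s$-sparse we have $|S_0|\le s$, which is precisely the size budget that will let $J$ dominate $S_0$.

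First I would lower-bound $\|y_0|_{S_0}\|_2$. Because $x_0$ is $s$-sparse and $|S_0|\le s$, Part~1 (local approximation) of Proposition~\ref{P:cons}, applied to the signal $x_0$ and the set $S_0$, gives $\|y_0|_{S_0}-x_0\|_2\le 2.03\e\|x_0\|_2$, where I have used $x_0|_{S_0}=x_0$. By the triangle inequality this yields $\|y_0|_{S_0}\|_2\ge (1-2.03\e)\|x_0\|_2$.

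Next I would carry this estimate over from $y_0$ to the true observation vector $y$. Applying Lemma~\ref{L:uj} with $T=S_0$ (legitimate since $|S_0|\le 2s$) gives $\|(y_0-y)|_{S_0}\|_2\le 2.4\e\|x_0\|_2$, so after a second triangle inequality $\|y|_{S_0}\|_2\ge (1-4.43\e)\|x_0\|_2$.

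The final, and really the only conceptual, step is the combinatorial comparison $\|y|_J\|_2\ge \|y|_{S_0}\|_2$. This is where the definition of $J$ enters: $J$ consists of the $s$ largest entries of $y$ in magnitude (or all nonzero entries, if there are fewer than $s$), and since $|S_0|\le s$ the set $J$ captures at least as much $\ell_2$ energy as the size-$\le s$ set $S_0$ — among all index sets of size at most $s$, the top-$s$ set is extremal for $\|y|_{\cdot}\|_2$. Combining the three steps gives $\|y|_J\|_2\ge (1-4.43\e)\|x_0\|_2$, and it then remains to check that the prescribed $\e=0.03/\sqrt{\log s}$ makes $1-4.43\e\ge 0.8$, i.e. $\e\le 0.045$, which holds for $s\ge 2$. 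I expect the main obstacle to be stating the extremality of $J$ cleanly and confirming that the accumulated constant $2.03+2.4=4.43$ is small enough against the target $0.8$; everything else is a routine chain of triangle inequalities resting on Lemma~\ref{L:uj} and Proposition~\ref{P:cons}.
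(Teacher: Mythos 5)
Your proposal is correct and follows essentially the same argument as the paper's proof: Part~1 of Proposition~\ref{P:cons} on the set $S=\supp(x)\setminus I$, Lemma~\ref{L:uj} to pass between $y_0$ and $y$, the maximality of $J$, and the same arithmetic $1-(2.03+2.4)\e\ge 0.8$. The only (immaterial) difference is the order: you transfer from $y_0$ to $y$ on $S$ and then invoke the extremality of $J$ for $y$, while the paper invokes the maximality first and transfers on $J$; both rest on the identical three ingredients.
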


\begin{proof}
Let $S$ = $\supp(x) \setminus I$.
Since $|S| \leq s$, the maximality property of $J$ in the algorithm
implies that 
$$
\|y_0|_J\|_2 \geq \|y_0|_S\|_2.
$$
Furthermore, since $x_0|_S = x_0$, by Part 1 of Proposition~\ref{P:cons} we have
$$
\|y_0|_S\|_2 \geq (1 - 2.03\e)\|x_0\|_2.
$$
Putting these two inequalities together and using
Lemma~\ref{L:uj}, we conclude that
$$
\|y|_J\|_2 \ge (1 - 2.03\e)\|x_0\|_2 - 2.4\e\|x_0\|_2 \ge 0.8 \|x_0\|_2.
$$
This proves the lemma.
\end{proof}

We next bound the norm of $y$ restricted to the smaller set $J_0$. 
We do this by first noticing a general property of regularization:

\begin{lemma}[Regularization]\label{L:reg}
  Let $v$ be any vector in $\R^m$, $m > 1$.
  Then there exists a subset $A \subset \{1, \ldots, m\}$ 
  with comparable coordinates:
  \begin{equation}\label{E:comp}
    |v(i)| \le 2|v(j)| \quad \text{for all $i, j \in A$,}
  \end{equation}
  and with big energy:
  \begin{equation}\label{E:big} 
    \|v|_A\|_2 \ge \frac{1}{2.5\sqrt{\log m}}\|v\|_2.
  \end{equation}
\end{lemma}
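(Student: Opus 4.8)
The plan is to prove Lemma~\ref{L:reg} by a dyadic bucketing argument together with a single pigeonhole step. The trivial case $v = 0$ is handled by taking $A$ to be any singleton, so I assume $v \neq 0$ and set $M := \max_{1 \le i \le m} |v(i)| > 0$. The guiding idea is to sort the coordinates into geometrically spaced bins according to the size of $|v(i)|$: within any one bin the largest and smallest magnitudes will differ by a factor of at most $2$, so the comparability requirement \eqref{E:comp} holds automatically for whichever single bin I end up selecting. All of the real work then goes into exhibiting one bin that captures a $\frac{1}{2.5\sqrt{\log m}}$-fraction of the energy, which is exactly \eqref{E:big}.

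First I would discard the coordinates of tiny magnitude. Since $\|v\|_2 \ge M$, the coordinates $i$ with $|v(i)| \le M/m$ contribute total energy at most $m\,(M/m)^2 = M^2/m \le \|v\|_2^2/m$, which is at most $\tfrac12\|v\|_2^2$ because $m \ge 2$. Hence the \emph{large} coordinates, those with $M/m < |v(i)| \le M$, together carry at least $(1 - 1/m)\|v\|_2^2 \ge \tfrac12\|v\|_2^2$ of the energy. I then partition these large coordinates into the dyadic bins
\[
A_k := \{\, i : M\,2^{-(k+1)} < |v(i)| \le M\,2^{-k} \,\}, \qquad k = 0, 1, 2, \ldots.
\]
Because every large coordinate satisfies $|v(i)| > M/m$, a bin $A_k$ can be nonempty only when $2^k < m$, so at most $\log_2 m + 1$ of the bins contain large coordinates.

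Finally I apply pigeonhole: among these at most $\log_2 m + 1$ bins, at least one bin $A := A_{k^*}$ must satisfy
\[
\|v|_A\|_2^2 \;\ge\; \frac{1}{\log_2 m + 1}\Big(1 - \frac1m\Big)\|v\|_2^2 .
\]
This bin is the desired set: its coordinates all lie in the single dyadic window $(M\,2^{-(k^*+1)}, M\,2^{-k^*}]$, so any two of them differ in magnitude by a factor at most $2$, which is \eqref{E:comp}; and taking square roots in the displayed bound yields \eqref{E:big} provided the constant $2.5$ is large enough. The only genuinely fiddly part is this last constant check: converting $\log_2 m = \log m / \log 2$ into the natural logarithm of the statement and verifying that $\frac{1 - 1/m}{\log_2 m + 1} \ge \frac{1}{6.25\,\log m}$ for every integer $m \ge 2$. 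This inequality is tightest at $m = 2$, where it holds with a small margin, and becomes comfortable as $m$ grows. Thus the combinatorial skeleton is entirely routine, and the value $2.5$ is precisely what is needed to absorb the base-change factor $1/\log 2$ together with the energy lost to the discarded small coordinates.
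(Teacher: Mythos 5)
Your proposal is correct and follows essentially the same argument as the paper's proof: a dyadic decomposition of the coordinates by magnitude, truncation of the coordinates lying a factor $m$ below the top scale (which costs at most half the energy), and a pigeonhole over the remaining $O(\log m)$ bins, each of which automatically has comparable coordinates. The only cosmetic difference is that you anchor the bins at $\|v\|_\infty$ while the paper anchors them at $\|v\|_2$, and your closing constant check (tightest at $m=2$) matches what the paper's choice of $k_0 = \lceil \log m \rceil + 1$ accomplishes.
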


\begin{proof}
We will construct at most $O(\log m)$ subsets $A_k$ with comparable coordinates 
as in (\ref{E:comp}), and such that at least one of these sets will have 
large energy as in (\ref{E:big}). 

Let $v = (v_1, \ldots, v_m)$, and consider a partition of $\{1, \ldots, m\}$ 
using sets with comparable coordinates:
$$
A_k := \{i : 2^{-k}\|v\|_2 < |v_i| \leq 2^{-k+1}\|v\|_2 \}, \qquad k=1, 2, \ldots 
$$
Let $k_0 = \left\lceil \log m \right\rceil + 1$, so that 
$|v_i| \leq \frac{1}{m}\|v\|_2$ for all $i \in A_k$, $k > k_0$.
Then the set $U = \bigcup_{k \le k_0} A_k$ contains most of the energy of $v$:
$$
\|v|_{U^c}\|_2 \le \big(m(\frac{1}{m}\|v\|_2)^2\big)^{1/2} = \frac{1}{\sqrt{m}}\|y\|_2
\le \frac{1}{\sqrt{2}}\|y\|_2.
$$
Thus
$$
\big( \sum_{k\leq k_0}\|v|_{A_k}\|_2^2 \big)^{1/2} 
= \|v|_U\|_2 = \big(\|v\|_2^2 - \|v|_{U^c}\|_2^2\big)^{1/2}
\ge \frac{1}{\sqrt{2}} \|v\|_2.
$$
Therefore there exists $k \leq k_0$ such that
$$
\|v|_{A_k}\|_2 \ge \frac{1}{\sqrt{2k_0}} \|v\|_2 
\ge \frac{1}{2.5\sqrt{\log m}}\|v\|_2,
$$
which completes the proof.
\end{proof}

In our context, Lemma~\ref{L:reg} applied to the vector $y|_J$ along with 
Lemma~\ref{C:uj} directly implies:

\begin{lemma}[Regularizing the energy]\label{C:uj0} 
  We have
  $$
  \|y|_{J_0}\|_2 \ge \frac{0.32}{\sqrt{\log s}}\|x_0\|_2.
  $$
\end{lemma}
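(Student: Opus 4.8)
The plan is to chain the two preceding lemmas together through the maximality that is built directly into the regularization step of ROMP. First I would apply the Regularization Lemma (Lemma~\ref{L:reg}) to the vector $y|_J$, which lives in $\R^{|J|}$. Since the identification step of the algorithm selects at most $s$ coordinates we have $|J| \le s$, so the lemma furnishes a subset $A \subset J$ whose coordinates are comparable in the sense of \eqref{E:comp} and which carries a fixed fraction of the energy of $y|_J$:
$$
\|y|_A\|_2 \ge \frac{1}{2.5\sqrt{\log |J|}}\|y|_J\|_2 \ge \frac{1}{2.5\sqrt{\log s}}\|y|_J\|_2,
$$
where the second inequality uses $|J| \le s$, so that enlarging the logarithm in the denominator only weakens the bound.

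The key observation is that $A$ is precisely the kind of set over which ROMP optimizes: by construction $J_0$ is chosen to have the maximal energy $\|y|_{J_0}\|_2$ among all subsets of $J$ with comparable coordinates. Since $A$ is one such candidate, the maximality of $J_0$ yields $\|y|_{J_0}\|_2 \ge \|y|_A\|_2$. Combining this with the display above and then invoking the energy lower bound $\|y|_J\|_2 \ge 0.8\,\|x_0\|_2$ from Lemma~\ref{C:uj}, I would obtain
$$
\|y|_{J_0}\|_2 \ge \frac{1}{2.5\sqrt{\log s}}\|y|_J\|_2 \ge \frac{0.8}{2.5\sqrt{\log s}}\|x_0\|_2 = \frac{0.32}{\sqrt{\log s}}\|x_0\|_2,
$$
which is exactly the claimed estimate, with the constant arising as $0.8/2.5 = 0.32$.

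There is essentially no hard step here; the statement is a direct composition of Lemmas~\ref{L:reg} and~\ref{C:uj}. The one point that requires care is the bookkeeping that identifies the abstract ``comparable coordinates'' condition of Lemma~\ref{L:reg} with the regularization step of the algorithm, so that the maximality of $J_0$ may legitimately be used to dominate $\|y|_A\|_2$. I would also briefly dispose of the degenerate case $|J| = 1$, which falls outside the hypothesis $m > 1$ of Lemma~\ref{L:reg}: there $J_0 = J$ trivially, and $\|y|_{J_0}\|_2 = \|y|_J\|_2 \ge 0.8\,\|x_0\|_2$ already dominates the right-hand side for $s \ge 2$.
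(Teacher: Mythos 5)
Your proposal is correct and is exactly the argument the paper intends: the paper states that Lemma~\ref{L:reg} applied to $y|_J$ together with Lemma~\ref{C:uj} ``directly implies'' the claim, and your write-up simply fills in the details of that composition (the maximality of $J_0$ in the regularization step, the monotonicity $|J|\le s$ in the logarithm, and the constant $0.8/2.5 = 0.32$). The attention to the degenerate case $|J|=1$ is a nice touch beyond the paper's terse treatment, but it does not change the route.
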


\medskip

We now finish the proof of Theorem~\ref{T:it}. 

To show the first claim, that $J_0$ is nonempty, we note that 
$x_0 \ne 0$. Indeed, otherwise by \eqref{v0 x0} we have $I \subset \supp(x)$, 
so by the definition of the residual in the algorithm, we would have $r = 0$
at the start of the current iteration, which is a contradiction. 
Then $J_0 \ne \emptyset$ by Lemma~\ref{C:uj0}.

The second claim, that $J_0 \cap I = \emptyset$, is also simple. 
Indeed, recall that by the definition of the algorithm, 
$r = P_{F^\perp} \in F^\perp = (\range(\Phi_I))^\perp$. 
It follows that the observation vector $y = \Phi^* r$  
satisfies $y|_I = 0$. Since by its definition the set $J$ contains only 
nonzero coordinates of $y$ we have $J \cap I = \emptyset$.
Since $J_0 \subset J$, the second claim $J_0 \cap I = \emptyset$ follows. 

The nontrivial part of the theorem is its last claim, inequality \eqref{J support}.
Suppose it fails. Namely, suppose that 
$|J_0 \cap \supp(x)| < \frac{1}{2}|J_0|$, 
and thus
$$
|J_0 \backslash \supp(x)| > \frac{1}{2}|J_0|.
$$
Set $\Lambda = J_0\backslash\supp(x)$. 
By the comparability property of the coordinates in $J_0$ 
and since $|\Lambda| > \frac{1}{2}|J_0|$, there is a fraction of energy 
in $\Lambda$:
\begin{equation}\label{E:ubig} 
  \|y|_{\Lambda}\|_2 > \frac{1}{\sqrt{5}}\|y|_{J_0}\|_2 
  \ge \frac{1}{7\sqrt{\log s}}\|x_0\|_2, 
\end{equation}
where the last inequality holds by Lemma~\ref{C:uj0}.

On the other hand, we can approximate $y$ by $y_0$ as
\begin{equation}                \label{u u0}
  \|y|_{\Lambda}\|_2 
  \le \|y|_{\Lambda} - y_0|_{\Lambda}\|_2 + \|y_0|_{\Lambda}\|_2.
\end{equation}
Since $\L \subset J$ and using Lemma~\ref{L:uj}, we have
$$
\|y|_{\Lambda} - y_0|_{\Lambda}\|_2 \le 2.4\e\|x_0\|_2
$$
Furthermore, by definition \eqref{v0 x0} of $x_0$, we have $x_0|_\Lambda = 0$. 
So, by Part 1 of Proposition~\ref{P:cons}, 
$$
\|y_0|_{\Lambda}\|_2 \le 2.03 \e \|x_0\|_2.
$$
Using the last two inequalities in \eqref{u u0}, we conclude that 
$$
\|y|_{\Lambda}\|_2 \le 4.43 \e \|x_0\|_2.
$$
This is a contradiction to~(\ref{E:ubig}) 
so long as $\e \leq
 0.03 /  \sqrt{\log s}$. 
This proves Theorem~\ref{T:it}.
\qed
    		
    		\subsubsection[Proof of Theorem~\ref{T:stability}]{Proof of Theorem~\ref{T:stability}}\label{sec:New:Regularized:Proof:main}
    		
    		The proof of Theorem~\ref{T:stability} parallels that of Theorem~\ref{T:main}.  We begin by showing that at every iteration of ROMP, either at least $50\%$ of the selected coordinates from that iteration
are from the support of the actual signal $v$, or the error bound already holds. This directly implies Theorem~\ref{T:stability}.

\begin{theorem}[Stable Iteration Invariant of ROMP] \label{T:itSt}
  Let $\Phi$ be a measurement matrix satisfying the Restricted Isometry Condition 
  with parameters $(4s, \e)$ for $\e = 0.01 / \sqrt{\log s}$. 
  Let $x$ be a non-zero $s$-sparse vector with measurements $u = \Phi x + e$. 
  Then at any iteration of ROMP, after the regularization step where $I$ is the current chosen
  index set, we have $J_0 \cap I = \emptyset$ and (at least) one of the following:
  \renewcommand{\labelenumi}{{\normalfont (\roman{enumi})}}
  \begin{enumerate}
  \item \label{J supportSt}
  $|J_0 \cap \supp(v)| \geq \frac{1}{2}|J_0|$;
  \item \label{error} $\|x|_{\supp(x)\backslash I}\|_2 \leq 100 \sqrt{\log s}\|e\|_2$.
  \end{enumerate} 
\end{theorem}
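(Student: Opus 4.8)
The plan is to mirror the proof of Theorem~\ref{T:it} step by step, carrying the error vector $e$ through every estimate as an additive term, and to extract the dichotomy only at the very end: whenever the contribution of $e$ fails to dominate, the noiseless argument survives and gives~(i), and otherwise the one surviving inequality is precisely the error bound~(ii). Concretely, I would keep the same geometric scaffolding: set $x_0 = x|_{\supp(x)\setminus I}$, $u_0 = \Phi x_0 \in E_0$, $y_0 = \Phi^* u_0$, and form $H = \range(\Phi_{\supp(x)\cup I})$, $F = \range(\Phi_I)$, $E_0 = \range(\Phi_{\supp(x)\setminus I})$, $E = F^\perp \cap H$. The claim $J_0 \cap I = \emptyset$ is unaffected by noise: the least-squares update still forces $r = P_{F^\perp} u \in F^\perp$, so $y|_I = \Phi_I^* r = 0$ and $J$ (hence $J_0 \subset J$) avoids $I$.

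The first substantive change is in the residual. Writing $u = u_0 + (\Phi x - u_0) + e$ and noting $\Phi x - u_0 = \Phi(x|_{\supp(x)\cap I}) \in F$, the projection kills the middle term, giving $r = P_E u_0 + P_{F^\perp} e$. Thus the stable analogue of Lemma~\ref{C:proj} reads $\|u_0 - r\|_2 = \|P_F P_{E_0} u_0 - P_{F^\perp} e\|_2 \le 2.2\e\|u_0\|_2 + \|e\|_2$, using Part~3 of Proposition~\ref{P:cons} and $\|P_{F^\perp}\|_{2\to 2} \le 1$. Applying Part~2 as in Lemma~\ref{L:uj} then yields, for every index set $T$ within the RIC range, $\|(y_0 - y)|_T\|_2 \le (1+\e)\|u_0 - r\|_2 \le 2.4\e\|x_0\|_2 + (1+\e)\|e\|_2$, where $\|u_0\|_2 \le (1+\e)\|x_0\|_2$ was used. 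Propagating this through the energy-localization lemma (whose first ingredient $\|y_0|_J\|_2 \ge \|y_0|_S\|_2 \ge (1-2.03\e)\|x_0\|_2$ is noiseless) gives $\|y|_J\|_2 \ge 0.8\|x_0\|_2 - C\|e\|_2$, and the regularization lemma~\ref{L:reg}, a purely deterministic statement, upgrades this to $\|y|_{J_0}\|_2 \ge \frac{c}{\sqrt{\log s}}\|x_0\|_2 - C'\|e\|_2$. The enlargement of the RIC parameters from $(2s,\e)$ to $(4s,\e)$ is exactly what keeps every index set appearing above (which now may be as large as $\supp(x)\cup I\cup J$ with $|I|$ up to $2s$) inside the isometry range.

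The endgame is the dichotomy. I would assume~(ii) fails, i.e. $\|e\|_2 < \frac{1}{100\sqrt{\log s}}\|x_0\|_2$, and show~(i) must then hold. Suppose instead $|J_0 \cap \supp(x)| < \frac{1}{2}|J_0|$ and set $\Lambda = J_0 \setminus \supp(x)$, so $|\Lambda| > \frac{1}{2}|J_0|$. Comparability of the coordinates of $y$ on $J_0$ and the regularized-energy lower bound give $\|y|_\Lambda\|_2 > \frac{1}{\sqrt 5}\|y|_{J_0}\|_2 \ge \frac{c'}{\sqrt{\log s}}\|x_0\|_2 - C''\|e\|_2$, while $x_0|_\Lambda = 0$ together with Part~1 and the observation estimate give the matching upper bound $\|y|_\Lambda\|_2 \le \|(y-y_0)|_\Lambda\|_2 + \|y_0|_\Lambda\|_2 \le (2.4\e + 2.03\e)\|x_0\|_2 + (1+\e)\|e\|_2$. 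Feeding $\e = 0.01/\sqrt{\log s}$ into both sides, the $\e\|x_0\|_2$ contributions are absorbed, and the two inequalities become compatible only if $\|x_0\|_2 \le 100\sqrt{\log s}\|e\|_2$ — contradicting the assumption that~(ii) fails. Hence~(i) holds, and since $x_0 = x|_{\supp(x)\setminus I}$ the case $x_0 = 0$ is harmlessly swept into~(ii).

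The main obstacle will be the constant bookkeeping rather than any conceptual leap: every estimate now carries a parasitic $\|e\|_2$ term, and I must verify that with the sharpened choice $\e = 0.01/\sqrt{\log s}$ all the $\e\|x_0\|_2$ contributions shrink enough to be swallowed while the accumulated coefficient on $\|e\|_2$ lands at exactly the threshold $100\sqrt{\log s}$ that makes~(ii) the clean complement of~(i). Getting this constant to close (and later chaining it into the $104\sqrt{\log s}$ of Theorem~\ref{T:stability}) is the delicate part; the geometry and the projection identities are otherwise essentially identical to the noiseless case.
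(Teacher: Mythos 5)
Your proposal is correct and follows essentially the same route as the paper's proof: the identical subspace decomposition $H, F, E_0, E$, the same chain of lemmas (residual formula $r = P_E u_0 + P_{F^\perp}e$, approximation of residual and observation, energy localization, regularization) with the noise $\|e\|_2$ carried as an additive term, and the same endgame comparing the lower bound on $\|y|_\Lambda\|_2$ against the upper bound $4.43\e\|x_0\|_2 + (1+\e)\|e\|_2$. Your contrapositive framing of the dichotomy (assume (ii) fails, derive (i)) is logically identical to the paper's (if the contradiction does not fire, the choice $\e = 0.01/\sqrt{\log s}$ forces $\|x_0\|_2 \leq 100\sqrt{\log s}\,\|e\|_2$), and the inductive bound $|I| \leq 2s$ you invoke implicitly is exactly what the paper establishes by assuming (i) held at all prior iterations.
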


We show that the Iteration Invariant implies Theorem~\ref{T:stability} by examining the three possible cases:

{\bf Case 1: (ii) occurs at some iteration.} We first note that since $|I|$ is nondecreasing, if (ii) occurs at some iteration, then it holds for all subsequent iterations. To show that this would then imply Theorem~\ref{T:stability}, 
we observe that by the Restricted Isometry Condition and since $|\supp(\hat{x})| \leq |I| \leq 3s$,
$$
(1-\e)\|\hat{x}-x\|_2 - \|e\|_2 \leq \|\Phi \hat{x} - \Phi x - e\|_2.
$$

Then again by the Restricted Isometry Condition and definition of $\hat{x}$, 
$$
\|\Phi \hat{x} - \Phi x - e\|_2 \leq \|\Phi (x|_I) - \Phi x - e\|_2 \leq (1 + \e)\|x|_{\supp(x)\backslash I}\|_2+\|e\|_2.
$$
Thus we have that
$$
\|\hat{x}-x\|_2 \leq \frac{1+\e}{1-\e}\|x|_{\supp(x)\backslash I}\|_2 + \frac{2}{1-\e}\|e\|_2.
$$
Thus (ii) of the Iteration Invariant would imply Theorem~\ref{T:stability}. 

{\bf Case 2: (i) occurs at every iteration and $J_0$ is always non-empty.} In this case, by (i) and the fact that $J_0$ is always non-empty, the algorithm identifies at least one element of the support in every iteration. Thus if the algorithm runs $s$ iterations or until $|I| \geq 2s$, it must be that $\supp(x) \subset I$, meaning that $x|_{\supp(x)\backslash I} = 0$.  Then by the argument above for Case 1, this implies Theorem~\ref{T:stability}.

{\bf Case 3: (i) occurs at each iteration and $J_0 = \emptyset$ for some iteration.} By the definition of $J_0$, if $J_0 = \emptyset$ then $y = \Phi^* r = 0$ for that iteration. By definition of $r$, this must mean that 
$$
\Phi^*\Phi(x-w) + \Phi^* e = 0.
$$
This combined with Part 1 of Proposition~\ref{P:cons} below (and its proof, see~\cite{NV07:Uniform-Uncertainty}) applied with the set $I' = \supp(x)\cup I$ yields
$$
\|x-w+(\Phi^* e)|_{I'}\|_2 \leq 2.03\e\|x-w\|_2.
$$
Then combinining this with Part 2 of the same Proposition, we have
$$
\|x-w\|_2 \leq 1.1\|e\|_2.
$$
Since $x|_{\supp(x)\backslash I} = (x-w)|_{\supp(x)\backslash I}$, this means that the error bound (ii) must hold, so by Case 1 
this implies Theorem~\ref{T:stability}.


\medskip

We now turn to the proof of the Iteration Invariant, Theorem~\ref{T:itSt}. 
We prove Theorem~\ref{T:itSt} by inducting on each iteration of ROMP.
We will show that at each iteration the set of chosen 
indices is disjoint from the current set $I$ of indices, 
and that either (i) or (ii) holds. Clearly if (ii) held in 
a previous iteration, it would hold in all future iterations. Thus we may assume that 
(ii) has not yet held. Since (i) has held at each previous iteration, we must have 
\begin{equation}\label{I2n}|I|\leq 2s.\end{equation}
 
Consider an iteration of ROMP, and let $r \ne 0$ be the residual at the start of that iteration. Let $J_0$ and $J$ be the sets found by ROMP in this iteration.
As in \cite{NV07:Uniform-Uncertainty}, we consider the subspace
$$
H := \range (\Phi_{\supp(v) \cup I})
$$
and its complementary subspaces
$$
F := \range (\Phi_I), \quad 
E_0 := \range (\Phi_{\supp(v) \setminus I}).
$$
Part~3 of Proposition~\ref{P:cons} states that the subspaces $F$ and $E_0$ are nearly orthogonal. For this reason we consider the subspace:
$$
E := F^{\perp}\cap H.
$$

First we write the residual $r$ in terms of projections onto these subspaces.

\begin{lemma}[Residual]     \label{residualSt}
  Here and onward, denote by $P_L$ the orthogonal projection in $\R^m$ 
  onto a linear subspace $L$. Then the residual $r$ has the following form:
  $$
  r = P_E \Phi x + P_{F^\perp}e.
  $$
\end{lemma}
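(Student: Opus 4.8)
The plan is to mimic the proof of the noiseless Residual lemma (Lemma~\ref{residual}), tracking the extra noise term that now appears. The starting point is the definition of the residual inside ROMP: at this iteration $w$ minimizes $\|u - \Phi z\|_2$ over $z \in \R^I$, so $\Phi w$ is exactly the orthogonal projection of $u$ onto $F = \range(\Phi_I)$. Hence $r = u - \Phi w = P_{F^\perp} u$, just as in the exact case, except that now $u = \Phi x + e$ rather than $u = \Phi x$.

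First I would split the residual by linearity of the projection:
$$
r = P_{F^\perp} u = P_{F^\perp}\Phi x + P_{F^\perp} e.
$$
This already isolates the noise contribution $P_{F^\perp} e$, which is precisely the second term in the claimed identity, so it requires no further manipulation.

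Next I would treat the signal contribution $P_{F^\perp}\Phi x$. Since $\supp(x) \subset \supp(x) \cup I$, the vector $\Phi x$ lies in $H = \range(\Phi_{\supp(x)\cup I})$. The subspace $E = F^\perp \cap H$ is by construction the orthogonal complement of $F$ inside $H$, and $F \subset H$, so $H = F \oplus E$ is an orthogonal decomposition. For any vector of $H$ --- in particular $\Phi x$ --- projecting off $F$ therefore lands exactly in $E$, i.e. $P_{F^\perp}\Phi x = P_E \Phi x$. Substituting this into the display above yields $r = P_E \Phi x + P_{F^\perp} e$, as desired.

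There is essentially no hard step here; the argument is a one-line linearity split followed by the observation that $P_{F^\perp}$ and $P_E$ agree on $H$. The only points needing care are the orthogonality of the decomposition $H = F \oplus E$, which is immediate from $E := F^\perp \cap H$ together with $F \subset H$, and the membership $\Phi x \in H$, which follows from $x$ being supported inside $\supp(x)\cup I$. It is worth noting that this identity is exactly what drives the rest of the stability analysis: the term $P_E \Phi x$ plays the role that $P_E u$ played in the exact case, while the additive noise term $P_{F^\perp} e$ is carried separately and later controlled by $\|e\|_2$ in the approximation lemmas.
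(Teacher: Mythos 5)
Your proposal is correct and takes essentially the same route as the paper's own proof: both use the definition of the residual to write $r = P_{F^\perp}(\Phi x + e)$, split by linearity, and then note that $P_{F^\perp}\Phi x = P_E \Phi x$ because $\Phi x \in H$ and $H = F + E$ is an orthogonal decomposition. The extra detail you supply (why the least-squares step makes $\Phi w = P_F u$, and why $E = F^\perp \cap H$ gives an orthogonal splitting) is exactly what the paper leaves implicit.
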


\begin{proof}
By definition of the residual $r$ in the ROMP algorithm,
$r = P_{F^\perp} u = P_{F^\perp}(\Phi x + e)$. To complete the proof we need that
$P_{F^\perp}\Phi x = P_E \Phi x$. This follows from the orthogonal decomposition $H=F+E$ and 
the fact that $\Phi x \in H$. 
\end{proof}

Next we examine the missing portion of the signal as well as its measurements:
\begin{equation}            \label{v0 x0St}
  x_0 := x|_{\supp(x) \setminus I}, \quad 
  u_0 := \Phi x_0 \in E_0.
\end{equation}

In the next two lemmas we show that the subspaces $E$ and $E_0$ are indeed close. 

\begin{lemma}[Approximation of the residual]\label{C:projSt}
  Let $r$ be the residual vector and $u_0$ as in~\eqref{v0 x0St}. Then 
  $$
  \|u_0 - r\|_2 \leq 2.2 \e \|u_0\|_2 + \|e\|_2.
  $$
\end{lemma}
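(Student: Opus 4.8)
The plan is to mirror the noiseless argument of Lemma~\ref{C:proj}, carrying the extra error term $e$ along as a harmless additive piece. Starting from the description of the residual in Lemma~\ref{residualSt}, namely $r = P_E \Phi x + P_{F^\perp} e$, I would first dispose of the signal part exactly as in the exact-recovery case, and then control the noise part using the contractivity of orthogonal projections.

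First I would rewrite $\Phi x$ in terms of $u_0$. Since $x - x_0 = x|_{\supp(x) \cap I}$ is supported on $I$, its image $\Phi(x - x_0)$ lies in $F = \range(\Phi_I)$. Because $E = F^\perp \cap H \subset F^\perp$, applying $P_E$ annihilates this piece, so $P_E \Phi x = P_E u_0$. Substituting into Lemma~\ref{residualSt} gives $r = P_E u_0 + P_{F^\perp} e$, and hence
$$
u_0 - r = (u_0 - P_E u_0) - P_{F^\perp} e.
$$

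Next I would identify the first term. Since $u_0 \in E_0 \subset H$ and $H = F \oplus E$ orthogonally, we have $u_0 - P_E u_0 = P_F u_0 = P_F P_{E_0} u_0$. The almost-orthogonality of columns (Part~3 of Proposition~\ref{P:cons}, applied to the disjoint sets $I$ and $\supp(x) \setminus I$, whose union has cardinality at most $3s \le 4s$ since $|I| \le 2s$ by \eqref{I2n}) bounds $\|P_F P_{E_0} u_0\|_2 \le 2.2\e \|u_0\|_2$. For the noise term, orthogonal projections are contractions, so $\|P_{F^\perp} e\|_2 \le \|e\|_2$. Combining these via the triangle inequality yields
$$
\|u_0 - r\|_2 \le 2.2\e \|u_0\|_2 + \|e\|_2,
$$
as claimed.

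The argument is essentially bookkeeping, so there is no deep obstacle; the one place requiring care is the verification that $P_E \Phi x = P_E u_0$, i.e. that the portion of the signal already captured by $I$ contributes nothing after projecting onto $E$, together with the attendant check that the relevant index set still has cardinality within the range where Proposition~\ref{P:cons} applies (here via the $(4s, \e)$ version of the Restricted Isometry Condition, since $|I| \le 2s$). Once those are in place, the noise enters only through the contraction $P_{F^\perp}$ and produces exactly the additive $\|e\|_2$ term, which is the sole difference from the noiseless Lemma~\ref{C:proj}.
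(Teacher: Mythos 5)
Your proposal is correct and takes essentially the same route as the paper's own proof: both start from Lemma~\ref{residualSt}, use $\Phi(x-x_0)\in F$ to replace $P_E\Phi x$ by $P_E u_0$, write $u_0 - r = P_F P_{E_0}u_0 - P_{F^\perp}e$, and bound the first term by $2.2\e\|u_0\|_2$ via Part~3 of Proposition~\ref{P:cons} (the sets $I$ and $\supp(x)\setminus I$ having union of cardinality at most $3s$ by \eqref{I2n}) and the second by contractivity of the projection. If anything, your write-up is more careful than the paper's on the one point it glosses over, namely that Proposition~\ref{P:cons} is being invoked in its $(4s,\e)$ form so that sets of combined cardinality $3s$ are admissible.
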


\begin{proof}
Since $x - x_0$ has support in $I$, we have 
$\Phi x - u_0 = \Phi(x - x_0) \in F$. 
Then by Lemma~\ref{residual},
$r = P_{E}\Phi x + P_{F^\perp}e = P_{E}u_0 + P_{F^\perp}e$. Therefore,
$$
\|x_0 - r\|_2 = \|x_0 - P_E x_0 - P_{F^\perp}e\|_2 \leq \|P_Fx_0\|_2 + \|e\|_2. 
$$
Note that by \eqref{I2n}, the union of the sets $I$ and $\supp(x) \setminus I$ has cardinality no greater than $3s$. Thus by Part 3 of Proposition~\ref{P:cons}, we have
$$
\|P_Fu_0\|_2 + \|e\|_2 = \|P_F P_{E_0}u_0\|_2 + \|e\|_2 \le 2.2 \e \|u_0\|_2 + \|e\|_2.
$$
\end{proof}

\begin{lemma}[Approximation of the observation]\label{L:ujSt}
  Let $y_0 = \Phi^*u_0$ and $y = \Phi^*r$. Then for any set $T \subset \{1, \ldots, d\}$ with $|T|\leq 3s$,
  $$
  \|(y_0 - y)|_T\|_2 \leq 2.4 \e \|x_0\|_2 + (1+\e)\|e\|_2.
  $$
\end{lemma}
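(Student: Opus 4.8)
The plan is to follow the proof of the non-stable Lemma~\ref{L:uj} line for line, the only change being that the stable residual estimate of Lemma~\ref{C:projSt} now carries an additive term $\|e\|_2$, which will simply propagate through to the final bound. The two ingredients are exactly as before: the spectral-norm bound (Part~2 of Proposition~\ref{P:cons}) applied to $\Phi^*(u_0 - r)$, together with the control of $\|u_0 - r\|_2$ just established.

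First I would start from Lemma~\ref{C:projSt}, which gives $\|u_0 - r\|_2 \le 2.2\e\|u_0\|_2 + \|e\|_2$. Since $x_0 = x|_{\supp(x)\setminus I}$ is $s$-sparse and $\Phi$ satisfies the Restricted Isometry Condition, we have $\|u_0\|_2 = \|\Phi x_0\|_2 \le (1+\e)\|x_0\|_2$, so that $\|u_0 - r\|_2 \le 2.2\e(1+\e)\|x_0\|_2 + \|e\|_2 \le 2.3\e\|x_0\|_2 + \|e\|_2$, the last step absorbing the factor $(1+\e)$ into the constant for the small $\e$ in force.

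Next I would apply Part~2 of Proposition~\ref{P:cons} to the vector $z = u_0 - r$ and the index set $T$. Because $y_0 - y = \Phi^*(u_0 - r)$, this yields $\|(y_0 - y)|_T\|_2 = \|(\Phi^*(u_0-r))|_T\|_2 \le (1+\e)\|u_0 - r\|_2$. Substituting the estimate above and expanding, $(1+\e)\big(2.3\e\|x_0\|_2 + \|e\|_2\big) = 2.3\e(1+\e)\|x_0\|_2 + (1+\e)\|e\|_2 \le 2.4\e\|x_0\|_2 + (1+\e)\|e\|_2$, where the final inequality uses $2.3(1+\e)\le 2.4$, valid for $\e = 0.01/\sqrt{\log s}$. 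This is precisely the claimed bound.

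The computation is routine, so there is no genuine obstacle; the one point requiring care is the cardinality bookkeeping. Part~2 of Proposition~\ref{P:cons} is stated under the RIC with parameters $(2s,\e)$ and so nominally applies to sets of size at most $2s$, whereas here $|T| \le 3s$. This is legitimate precisely because Theorem~\ref{T:itSt} assumes the stronger condition with parameters $(4s,\e)$: reading the proposition with $s$ replaced by $2s$ extends its conclusions to all sets of size up to $4s$, which comfortably covers $|T| \le 3s$ (and likewise justifies the $s$-sparse application of the RIC to $x_0$). One need only confirm that the constants $2.2$, $2.3$, $2.4$ absorb the factors of $(1+\e)$ as indicated, which they do for any $\e$ below roughly $0.04$.
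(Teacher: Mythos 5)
Your proposal is correct and follows exactly the paper's own argument: Lemma~\ref{C:projSt} combined with the RIC bound $\|\Phi x_0\|_2 \le (1+\e)\|x_0\|_2$ to get $\|u_0 - r\|_2 \le 2.3\e\|x_0\|_2 + \|e\|_2$, then Part~2 of Proposition~\ref{P:cons} to pass to $\|(y_0-y)|_T\|_2 \le (1+\e)\|u_0-r\|_2$ and absorb the constants. Your explicit justification of the cardinality bookkeeping (the $(4s,\e)$ hypothesis of Theorem~\ref{T:itSt} covering sets of size $3s$) is a point the paper leaves implicit, but it is the same proof.
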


\begin{proof}
By Lemma~\ref{C:projSt} 
and the Restricted Isometry Condition we have
$$
\|u_0 - r\|_2 
\le 2.2 \e \|\Phi x_0\|_2 + \|e\|_2 
\le 2.2 \e (1+\e) \|x_0\|_2 + \|e\|_2 
\le 2.3 \e \|x_0\|_2 + \|e\|_2.
$$
Then by Part 2 of Proposition~\ref{P:cons} we have the desired result,
$$\|(y_0 - y)|_T\|_2 \le (1 + \e)\|u_0 - r\|_2.$$
\end{proof}

The result of the theorem requires us to show that we correctly gain a portion of the support of the signal $x$. To this end, we first show that ROMP correctly chooses a portion of the energy. The regularization step will then imply that the support is also selected correctly. We thus next show that the energy of $y$ when restricted to the sets $J$ and $J_0$ is sufficiently large.


\begin{lemma}[Localizing the energy]\label{C:ujSt}
  Let $y$ be the observation vector and $x_0$ be as in~\eqref{v0 x0St}. Then $\|y|_J\|_2 \ge 0.8 \|x_0\|_2 - (1+\e)\|e\|_2$.
\end{lemma}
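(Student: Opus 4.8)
The plan is to prove this exactly as in the noiseless \emph{Localizing the energy} estimate, Lemma~\ref{C:uj}, and simply carry the additional measurement error through each step. Write $S = \supp(x)\setminus I$, so that the vector $x_0 = x|_{\supp(x)\setminus I}$ from~\eqref{v0 x0St} is supported on $S$, satisfies $x_0|_S = x_0$, and has $|S|\le s$. The goal is a lower bound on $\|y|_J\|_2$, where $J$ is the set of the $s$ largest coordinates of the genuine observation vector $y=\Phi^*r$ produced by the Identify step.

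First I would invoke the maximality of $J$. Since $J$ collects the $s$ largest coordinates of $y$ (or all of its nonzero coordinates), and $|S|\le s$, we get $\|y|_J\|_2 \ge \|y|_S\|_2$. It is essential to use maximality with respect to the \emph{true} observation $y$ here, since $y$ is the only vector the algorithm actually sees; the comparison vector $y_0=\Phi^*u_0$ is brought in afterwards by approximation. Passing from $y$ to $y_0$ on the set $S$ via the triangle inequality gives $\|y|_S\|_2 \ge \|y_0|_S\|_2 - \|(y_0-y)|_S\|_2$. The first term is handled by the local-approximation estimate, Part~1 of Proposition~\ref{P:cons}: because $x_0|_S = x_0$, one has $\|y_0|_S\|_2 \ge (1-2.03\e)\|x_0\|_2$. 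The second term is precisely what Lemma~\ref{L:ujSt} controls, applied with $T=S$ (legitimate since $|S|\le s\le 3s$), yielding $\|(y_0-y)|_S\|_2 \le 2.4\e\|x_0\|_2 + (1+\e)\|e\|_2$.

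Combining these three bounds gives $\|y|_J\|_2 \ge (1-2.03\e-2.4\e)\|x_0\|_2 - (1+\e)\|e\|_2 = (1-4.43\e)\|x_0\|_2 - (1+\e)\|e\|_2$. For the prescribed $\e = 0.01/\sqrt{\log s}$ the factor satisfies $1-4.43\e \ge 0.8$, which delivers the claimed inequality $\|y|_J\|_2 \ge 0.8\|x_0\|_2 - (1+\e)\|e\|_2$. There is no genuine obstacle: the argument is structurally identical to the noiseless case, with the noise entering only through the additive $(1+\e)\|e\|_2$ term inherited from Lemma~\ref{L:ujSt}, which rides along untouched. The one point demanding care is the maximality step — one must compare $\|y|_J\|_2$ to $\|y|_S\|_2$ using the observed vector $y$ and only then swap in $y_0$ through the approximation lemma, rather than asserting maximality for $y_0$ directly.
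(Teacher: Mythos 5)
Your proposal is correct and follows essentially the same route as the paper's proof: maximality of $J$ with respect to the observed vector $y$ (giving $\|y|_J\|_2 \ge \|y|_S\|_2$ for $S = \supp(x)\setminus I$), then Lemma~\ref{L:ujSt} applied with $T=S$ to pass to $y_0$, then Part~1 of Proposition~\ref{P:cons}, and finally the numerical bound $1 - 4.43\e \ge 0.8$ for $\e = 0.01/\sqrt{\log s}$. Your remark about applying maximality to $y$ rather than $y_0$ is exactly how the paper's noisy proof proceeds, so there is nothing to fix.
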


\begin{proof}
Let $S$ = $\supp(x) \setminus I$ be the missing support.
Since $|S| \leq s$, by definition of $J$ in the algorithm, we have 
$$
\|y|_J\|_2 \geq \|y|_S\|_2.
$$
By Lemma~\ref{L:ujSt}, 
$$
\|y|_S\|_2 \geq \|y_0|_S\|_2 - 2.4\e\|x_0\|_2 - (1+\e)\|e\|_2.
$$
Since $x_0|_S = x_0$, Part 1 of Proposition~\ref{P:cons} implies
$$
\|y_0|_S\|_2 \geq (1 - 2.03\e)\|x_0\|_2.
$$
These three inequalities yield
$$
\|y|_J\|_2 \ge (1 - 2.03\e)\|x_0\|_2 - 2.4\e\|x_0\|_2 - (1+\e)\|e\|_2 \ge 0.8 \|x_0\|_2  - (1+\e)\|e\|_2.
$$
This completes the proof.
\end{proof}


\begin{lemma}[Regularizing the energy]\label{C:uj0St} 
  Again let $y$ be the observation vector and $x_0$ be as in~\eqref{v0 x0St}. Then
  $$
  \|y|_{J_0}\|_2 \ge \frac{1}{4\sqrt{\log s}}\|x_0\|_2 - \frac{\|e\|_2}{2\sqrt{\log s}}.
  $$
\end{lemma}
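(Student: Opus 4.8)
The plan is to follow the exact-recovery argument of Lemma~\ref{C:uj0} almost verbatim, replacing the clean energy bound of Lemma~\ref{C:uj} by its stable counterpart Lemma~\ref{C:ujSt}. The two ingredients are the abstract regularization estimate of Lemma~\ref{L:reg} and the defining maximality of $J_0$ in the ROMP algorithm.

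First I would apply Lemma~\ref{L:reg} to the vector $y|_J$, regarded as a vector indexed by $J$. Since ROMP selects at most $|J| \le s$ coordinates, the lemma produces a subset $A \subset J$ with comparable coordinates satisfying
$$
\|y|_A\|_2 \ge \frac{1}{2.5\sqrt{\log |J|}}\|y|_J\|_2 \ge \frac{1}{2.5\sqrt{\log s}}\|y|_J\|_2,
$$
the last step using $|J| \le s$. Because the regularization step of ROMP chooses $J_0$ to have maximal energy among all comparable-coordinate subsets of $J$, and $A$ is one such subset, we obtain $\|y|_{J_0}\|_2 \ge \|y|_A\|_2 \ge \frac{1}{2.5\sqrt{\log s}}\|y|_J\|_2$.

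Next I would substitute the localization bound $\|y|_J\|_2 \ge 0.8\|x_0\|_2 - (1+\e)\|e\|_2$ from Lemma~\ref{C:ujSt}. Since $\frac{1}{2.5\sqrt{\log s}}$ is positive, the inequality is preserved under multiplication, yielding
$$
\|y|_{J_0}\|_2 \ge \frac{0.32}{\sqrt{\log s}}\|x_0\|_2 - \frac{1+\e}{2.5\sqrt{\log s}}\|e\|_2.
$$
It then remains only to absorb the constants into the stated form: $0.32 \ge \frac{1}{4}$ gives the first term, while $\frac{1+\e}{2.5} \le \frac{1}{2}$ holds because $\e = 0.01/\sqrt{\log s}$ sits comfortably below $0.25$, giving the claimed error coefficient $\frac{1}{2\sqrt{\log s}}$.

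There is no serious obstacle here; the entire content resides in the two lemmas already established. The only points requiring attention are the constant bookkeeping—verifying that $0.32$ exceeds $1/4$ and that the error coefficient $(1+\e)/2.5$ stays below $1/2$—and the minor boundary remark that when $|J| = 1$ the regularization is trivial, since then $J_0 = J$ and the bound holds directly. I would also note that the displayed inequality chain remains valid even when the right-hand side of Lemma~\ref{C:ujSt} is negative, because it is then a trivial lower bound for the nonnegative quantity $\|y|_J\|_2$ and stays valid after scaling by the positive factor $\frac{1}{2.5\sqrt{\log s}}$.
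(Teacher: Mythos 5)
Your proposal is correct and follows essentially the same route as the paper's own proof: apply Lemma~\ref{L:reg} to $y|_J$, invoke the maximality of $J_0$ in the regularization step to get $\|y|_{J_0}\|_2 \ge \frac{1}{2.5\sqrt{\log s}}\|y|_J\|_2$, and then substitute the bound of Lemma~\ref{C:ujSt}. The paper compresses the constant bookkeeping ($0.32 \ge 1/4$ and $(1+\e)/2.5 \le 1/2$) into the phrase ``this implies the claim,'' so your explicit verification of those numerics, and of the edge cases $|J|=1$ and a possibly negative right-hand side, is just a more careful write-up of the identical argument.
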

\begin{proof}
By Lemma~\ref{L:reg} applied to the vector $y|_J$, we have
$$
\|y|_{J_0}\|_2 \geq \frac{1}{2.5\sqrt{\log s}}\|y|_J\|_2.
$$
Along with Lemma~\ref{C:ujSt} this implies the claim.
\end{proof}
\medskip

We now conclude the proof of Theorem~\ref{T:itSt}. The claim that $J_0 \cap I = \emptyset$  
follows by the same arguments as in \cite{NV07:Uniform-Uncertainty}. 


It remains to show its last claim, that either (i) or (ii) holds.
Suppose (i) in the theorem fails. That is, suppose 
$|J_0 \cap \supp(x)| < \frac{1}{2}|J_0|$, 
which means
$$
|J_0 \backslash \supp(x)| > \frac{1}{2}|J_0|.
$$
Set $\Lambda = J_0\backslash\supp(x)$. 
By the definition of $J_0$ in the algorithm 
and since $|\Lambda| > \frac{1}{2}|J_0|$, we have by Lemma~\ref{C:uj0St},
\begin{equation}\label{E:ubigSt} 
  \|y|_{\Lambda}\|_2 > \frac{1}{\sqrt{5}}\|y|_{J_0}\|_2 
  \ge \frac{1}{4\sqrt{5\log s}}\|x_0\|_2 - \frac{\|e\|_2}{2\sqrt{5\log s}}. 
\end{equation}

Next, we also have
\begin{equation}                \label{u u0St}
  \|y|_{\Lambda}\|_2 
  \le \|y|_{\Lambda} - y_0|_{\Lambda}\|_2 + \|y_0|_{\Lambda}\|_2.
\end{equation}
Since $\L \subset J$ and $|J| \leq s$, by Lemma~\ref{L:ujSt} we have
$$
\|y|_{\Lambda} - y_0|_{\Lambda}\|_2 \le 2.4\e\|x_0\|_2 + (1+\e)\|e\|_2.
$$
By the definition of $x_0$ in \eqref{v0 x0St}, it must be that $x_0|_\Lambda = 0$. 
Thus by Part 1 of Proposition~\ref{P:cons}, 
$$
\|y_0|_{\Lambda}\|_2 \le 2.03 \e \|x_0\|_2.
$$
Using the previous inequalities along with \eqref{u u0St}, we deduce that 
$$
\|y|_{\Lambda}\|_2 \le 4.43 \e \|x_0\|_2 + (1+\e)\|e\|_2.
$$
This is a contradiction to~(\ref{E:ubigSt}) 
whenever 
$$
\e \leq \frac{0.02}{\sqrt{\log s}} - \frac{\|e\|_2}{\|x_0\|_2}.
$$ 
If this is true, then indeed (i) in the theorem must hold. If
it is not true, then by the choice of $\e$, this implies that
$$
\|x_0\|_2 \leq 100\|e\|_2\sqrt{\log s}.
$$
This proves Theorem~\ref{T:itSt}.\qed
    		
    		    		\subsubsection[Proof of Corollary~\ref{T:stabsig}]{Proof of Corollary~\ref{T:stabsig}}\label{sec:New:Regularized:Proof:cor1}
    		    		\begin{proof}
We first partition $x$ so that $u = \Phi x_{2s} + \Phi (x-x_{2s}) + e$. Then since $\Phi$ satisfies the Restricted Isometry Condition with parameters $(8s, \e)$, by Theorem~\ref{T:stability} and the triangle inequality,
\begin{equation}\label{primebnd}
\|x_{2s}-\hat{x}\|_2 \leq 104\sqrt{\log 2s}(\|\Phi (x-x_{2s})\|_2+\|e\|_2),
\end{equation}
The following lemma as in \cite{GSTV07:HHS} relates the $2$-norm of a vector's tail to its $1$-norm. An application of this lemma combined with \eqref{primebnd} will prove Corollary~\ref{T:stabsig}.

\begin{lemma}[Comparing the norms]\label{L:ve}
Let $v\in\R^d$, and let $v_T$ be the vector of the $T$ largest coordinates in absolute value from $v$.  Then
$$
\|v-v_T\|_2 \leq \frac{\|v\|_1}{2\sqrt{T}}.
$$
\end{lemma}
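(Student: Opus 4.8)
The plan is to reduce the statement to an inequality about a single non-increasing nonnegative sequence, and then to recover the sharp constant $1/2$ by balancing the $\ell_1$-mass of the ``head'' against that of the ``tail'' using the arithmetic--geometric mean inequality. First I would pass to the non-increasing rearrangement of the absolute values, writing $v^* = (v^*_1, v^*_2, \ldots)$ with $v^*_1 \ge v^*_2 \ge \cdots \ge 0$. Since $v_T$ retains exactly the $T$ largest coordinates in magnitude, we have $\|v - v_T\|_2^2 = \sum_{i > T} (v^*_i)^2$, while $\|v\|_1 = \sum_{i \ge 1} v^*_i$. Thus it suffices to prove $\sum_{i>T}(v^*_i)^2 \le \frac{1}{4T}\big(\sum_{i\ge 1} v^*_i\big)^2$.

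The argument then rests on two elementary monotonicity bounds followed by AM--GM. Because the sequence is non-increasing, every tail entry is dominated by the threshold value: $v^*_i \le v^*_{T+1}$ for all $i > T$, so that
\[
\sum_{i>T}(v^*_i)^2 \le v^*_{T+1} \sum_{i>T} v^*_i .
\]
Next, again by monotonicity, the threshold $v^*_{T+1}$ is no larger than the average of the $T$ largest entries: $T\, v^*_{T+1} \le \sum_{i=1}^{T} v^*_i$, i.e.\ $v^*_{T+1} \le \frac{1}{T}\sum_{i\le T} v^*_i$. Writing $H = \sum_{i\le T} v^*_i$ for the head mass and $L = \sum_{i>T} v^*_i$ for the tail mass, combining the two bounds gives $\sum_{i>T}(v^*_i)^2 \le \frac{1}{T} H L$.

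Finally I would invoke $HL \le \big(\frac{H+L}{2}\big)^2 = \frac{\|v\|_1^2}{4}$, since $H + L = \|v\|_1$. This yields $\sum_{i>T}(v^*_i)^2 \le \frac{\|v\|_1^2}{4T}$, and taking square roots gives exactly $\|v - v_T\|_2 \le \frac{\|v\|_1}{2\sqrt{T}}$.

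The only delicate point---and the step I expect to be the main obstacle---is producing the constant $2$ in the denominator rather than $1$. A naive estimate that bounds $v^*_{T+1}$ by $\|v\|_1 / T$ directly (using the full $\ell_1$-norm instead of only the head) and pairs it with $\sum_{i>T} v^*_i \le \|v\|_1$ would only give $\|v - v_T\|_2 \le \|v\|_1/\sqrt{T}$. The gain of the factor $2$ comes precisely from keeping $H$ and $L$ separate and observing that, for fixed total $\|v\|_1 = H + L$, their product is maximized when $H = L$; this is where AM--GM is essential, and the extremal configuration (equality when the tail carries exactly half the mass, e.g.\ $2T$ equal coordinates) confirms the constant cannot be improved.
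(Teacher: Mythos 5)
Your proof is correct, and it takes a genuinely different---and in fact more careful---route than the paper's. The paper normalizes $\|v\|_1 = d$, asserts the intermediate bound $\|v-v_T\|_2 \le \sqrt{d-T}$ on the grounds that the tail is ``greatest when the vector $v$ has constant entries,'' and then applies AM--GM to the coordinate counts, $\sqrt{(d-T)T} \le d/2$. You instead keep the head mass $H=\sum_{i\le T}v_i^*$ and tail mass $L=\sum_{i>T}v_i^*$ separate, prove $\sum_{i>T}(v_i^*)^2 \le v_{T+1}^*\,L \le HL/T$ by two monotonicity bounds, and apply AM--GM to the masses, $HL \le \|v\|_1^2/4$. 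The difference matters: the paper's intermediate claim is false as stated. Take $v=(2,2,0,0)$ and $T=1$; then $\|v\|_1 = 4 = d$, yet $\|v-v_T\|_2 = 2 > \sqrt{3} = \sqrt{d-T}$, even though the lemma itself holds (with equality, since $\|v\|_1/(2\sqrt{T}) = 2$). The constant vector over all $d$ coordinates is extremal only in the special case $d = 2T$, which is precisely your equality configuration $H=L$ realized by $2T$ equal coordinates; for $d \ne 2T$ the paper's claimed extremizer is not the right one, and its chain of inequalities, while terminating at a true statement, does not constitute a proof. Your argument supplies exactly the missing step, and your closing discussion of where the factor $2$ comes from (maximizing $HL$ at $H=L$) correctly diagnoses both the sharpness of the constant and the flaw in the ``constant entries'' heuristic.
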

\begin{proof}
By linearity, we may assume $\|v\|_1 = d$. Since $v_T$ consists of the largest $T$ coordinates of $v$ in absolute value, we must have that $\|v-v_T\|_2 \leq \sqrt{d-T}$. (This is because the term $\|v-v_T\|_2$ is greatest when the vector $v$ has constant entries.) Then by the AM-GM inequality, 
$$
\|v-v_T\|_2 \sqrt{T} \leq \sqrt{d-T}\sqrt{T} \leq (d - T + T )/ 2 = d/2 = \|v\|_1/2.$$
\nopagebreak
This completes the proof.
\end{proof}


By Lemma 29 of \cite{GSTV07:HHS}, we have 
$$
\|\Phi (x-x_{2s})\|_2 \leq (1+\e)\Big(\|x-x_{2s}\|_2 + \frac{\|x-x_{2s}\|_1}{\sqrt{s}}\Big).
$$
Applying Lemma~\ref{L:ve} to the vector $v=x-x_s$ we then have
$$
\|\Phi (x-x_{2s})\|_2 \leq 1.5(1+\e)\frac{\|x-x_s\|_1}{\sqrt{s}}.
$$
Combined with \eqref{primebnd}, this proves the corollary.

\end{proof}
    		    		
    		    		    		    		    		\subsubsection[Proof of Corollary~\ref{C:napprox}]{Proof of Corollary~\ref{C:napprox}}\label{sec:New:Regularized:Proof:cor2}
    		    		    		    		    		Often one wishes to find a \textit{sparse} approximation to a signal. We now show that by simply truncating the reconstructed vector, we obtain a $2s$-sparse vector very close
to the original signal. 

\begin{proof}
Let $x_S := x_{2s}$ and $\hat{x_T} := \hat{x}_{2s}$, and let $S$ and $T$ denote the 
supports of $x_S$ and $\hat{x_T}$ respectively.
By Corollary~\ref{T:stabsig}, it suffices to show that 
$\|x_S - \hat{x}_T\|_2 \leq 3\|x_S - \hat{x}\|_2$. 

Applying the triangle inequality, we have
$$
\|x_S - \hat{x}_T\|_2 \leq \|(x_S - \hat{x}_T)|_T\|_2 + \|x_S|_{S\backslash T}\|_2 =: a + b.
$$
We then have
$$
a = \|(x_S - \hat{x}_T)|_T\|_2 = \|(x_S - \hat{x})|_T\|_2 \leq \|x_S - \hat{x}\|_2
$$
and
$$
b \leq \|\hat{x}|_{S\backslash T}\|_2 + \|(x_S-\hat{x})|_{S\backslash T}\|_2.
$$
Since $|S| = |T|$, we have $|S\backslash T| = |T\backslash S|$. By the definition of $T$, every coordinate of $\hat{x}$ in $T$ is greater than or equal to every coordinate of $\hat{x}$ in $T^c$ in absolute value. Thus we have,
$$
\|\hat{x}|_{S\backslash T}\|_2 \leq \|\hat{x}|_{T\backslash S}\|_2 = \|(x_S - \hat{x})|_{T\backslash S}\|_2.
$$
Thus $b \leq 2\|x_S - \hat{x}\|_2$, and so
$$
a+b \leq 3\|x_S - \hat{x}\|_2.
$$
This completes the proof.
\end{proof}

\medskip

\noindent {\bf Remark.} 
Corollary~\ref{C:napprox} combined with Corollary~\ref{T:stabsig} and \eqref{vbound} implies that we can also estimate a bound on the whole signal $v$:
$$
\|x-\hat{x}_{2s}\|_2 \leq C\sqrt{\log 2s}\Big(\|e\|_2 + \frac{\|x-x_{s}\|_1}{\sqrt{s}}\Big).
$$

    		\subsection[Implementation and Runtime]{Implementation and Runtime}
    		\label{sec:New:Regularized:Implementation}
    		
    		The Identification step of ROMP, i.e. selection of the subset $J$, 
can be done by {\em sorting} the coordinates of $y$ in the nonincreasing order 
and selecting $s$ biggest.
Many sorting algorithms such as Mergesort or Heapsort provide running times 
of $O(d\log d)$. 

The Regularization step of ROMP, i.e. selecting $J_0 \subset J$, 
can be done fast by observing that $J_0$ is an {\em interval} 
in the decreasing rearrangement of coefficients. 
Moreover, the analysis of the algorithm shows that instead of 
searching over all intervals $J_0$, it suffices
to look for $J_0$ among {\em $O(\log s)$ consecutive intervals}
with endpoints where the magnitude of coefficients decreases by a factor of $2$.
(these are the sets $A_k$ in the proof of Lemma~\ref{L:reg}).
Therefore, the Regularization step can be done in time $O(s)$.

In addition to these costs, the $k$-th iteration step of ROMP involves
{\em multiplication} of the $d \times m$ matrix $\Phi^*$ by a vector, 
and solving the {\em least squares problem} with the $d \times |I|$ matrix 
$\Phi_I$, where $|I| \le 2s$. 
For unstructured matrices, these tasks can be done in time 
$dm$ and $O(s^2 m)$ respectively~\cite{Bjo96:Numerical-Methods}. 
Since the submatrix of $\Phi$ when restricted 
to the index set $I$ is near an isometry, using an iterative method 
such as the Conjugate Gradient Method allows us to solve the least 
squares method in a constant number of iterations (up to a specific 
accuracy)~\cite[Sec.~7.4]{Bjo96:Numerical-Methods}. Using such a method then reduces the time of solving the
least squares problem to just $O(sm)$. Thus in the cases where ROMP
terminates after a fixed number of iterations, the total time to solve
all required least squares problems would be just $O(sm)$. 
For structured matrices, such as
partial Fourier, these times can be improved even more using fast multiply
techniques. 

In other cases, however, ROMP may need more than a constant number
of iterations before terminating, say the full $O(s)$ iterations. In this case, 
it may be more efficient to maintain the QR factorization of $\Phi_I$ and use the Modified 
Gram-Schmidt algorithm. With this method, solving all the least squares problems
takes total time just $O(s^2 m).$ However, storing the QR factorization is quite costly, so
in situations where storage is limited it may be best to use the iterative methods mentioned above.

ROMP terminates in at most $2s$ iterations. Therefore, for unstructured 
matrices using the methods mentioned above and in the interesting regime $m \ge \log d$, 
{\em the total running time of ROMP is O(dNn)}. This is the same bound as for OMP
\cite{TG07:Signal-Recovery}. 
    		
    		\subsection[Numerical Results]{Numerical Results}
    		\label{sec:New:Regularized:Numerical}
    		
    		\subsubsection[Noiseless Numerical Studies]{Noiseless Numerical Studies}
    		
    		This section describes our experiments that illustrate the signal recovery power of ROMP, as shown in~\cite{NV07:Uniform-Uncertainty}.  See Section~\ref{app:code:ROMPcode} for the Matlab code used in these studies.
We experimentally examine how many measurements $m$ are necessary to recover various kinds of $s$-sparse
signals in $\R^d$ using ROMP. 
We also demonstrate that the number of iterations ROMP needs to recover a sparse signal is
in practice at most linear the sparsity. 

First we describe the setup of our experiments. For many values of the ambient dimension $d$, 
the number of measurements $m$, and the sparsity $s$, we reconstruct random signals using ROMP.
For each set of values, we generate an $m \times d$ Gaussian measurement matrix $\Phi$ and then perform $500$ independent trials. The results
we obtained using Bernoulli measurement matrices were very similar.
In a given trial, we generate an $s$-sparse signal $x$ in one of two ways. In either case, we first select the support of the signal by choosing $s$ components uniformly 
at random (independent from the measurement matrix $\Phi$). In the cases where we wish to generate flat signals, we then  set these components to one. Our work as well as the analysis of Gilbert and Tropp~\cite{TG07:Signal-Recovery}
show that this is a challenging case for ROMP (and OMP). In the cases where we wish to generate sparse compressible signals, we set the $i^{th}$ component of the support to plus or minus $i^{-1/p}$ for a specified value of $0 < p < 1$. We then execute ROMP with the measurement
vector $u = \Phi x$.  

Figure~\ref{fig:percent} depicts the percentage (from the $500$ trials) of sparse flat signals that were 
reconstructed exactly. This plot was generated with $d = 256$ for various levels of sparsity $s$. The horizontal axis represents the number of measurements $m$, and the vertical
axis represents the exact recovery percentage. We also performed this same test for sparse compressible signals and found the results very similar to those in Figure~\ref{fig:percent}.
Our results show that performance of ROMP is very similar to that of OMP which can be found in \cite{TG07:Signal-Recovery}. 


Figure~\ref{fig:99} depicts a plot of the values for $m$ and $s$ at which $99\%$ of sparse flat signals are recovered exactly. This plot was generated with $d=256$. The horizontal axis represents the number of measurements $m$, and the vertical axis the sparsity level $s$. 


Theorem~\ref{T:main} guarantees that ROMP runs with at most $O(s)$ iterations. Figure~\ref{fig:itsROMP} depicts the number of iterations executed by ROMP for $d=10,000$ and $m=200$. ROMP was 
executed under the same setting as described above for sparse flat signals as well as sparse compressible signals for various values of $p$, and the number of iterations
in each scenario was averaged over the $500$ trials. These averages were plotted against
the sparsity of the signal. As the plot illustrates, only $2$
iterations were needed for flat signals even for sparsity $s$ as high as $40$. The plot also demonstrates that the number of iterations needed for sparse compressible
is higher than the number needed for sparse flat signals, as one would expect. The plot suggests that for smaller
values of $p$ (meaning signals that decay more rapidly) ROMP needs more iterations. However it shows that even in the case of $p=0.5$, only $6$ iterations are needed even for sparsity $s$ as high as $20$.

\begin{figure}[ht] 
  \includegraphics[width=0.8\textwidth,height=3.2in]{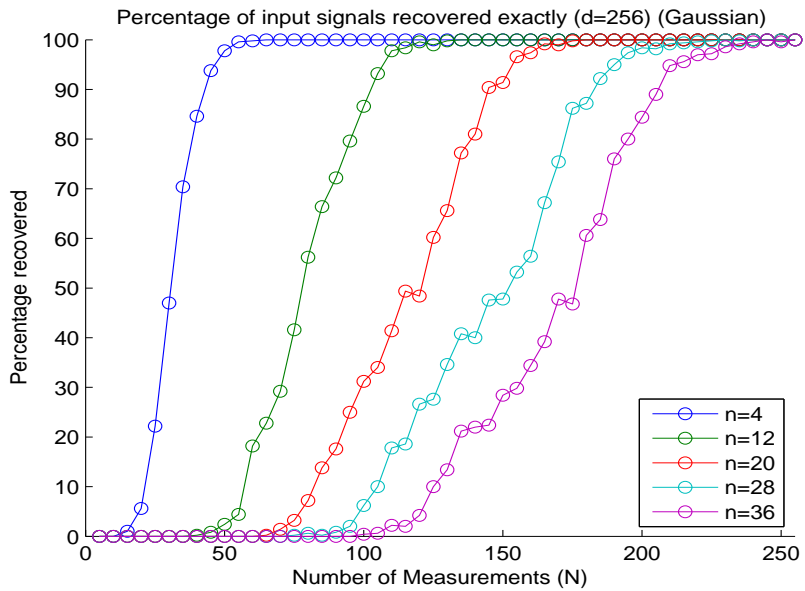}
  \caption{The percentage of sparse flat signals exactly recovered by ROMP as a function of the number of measurements in dimension $d=256$ for various levels of sparsity.}\label{fig:percent}
\end{figure}

\begin{figure}[ht] 
  \includegraphics[width=0.8\textwidth,height=3.2in]{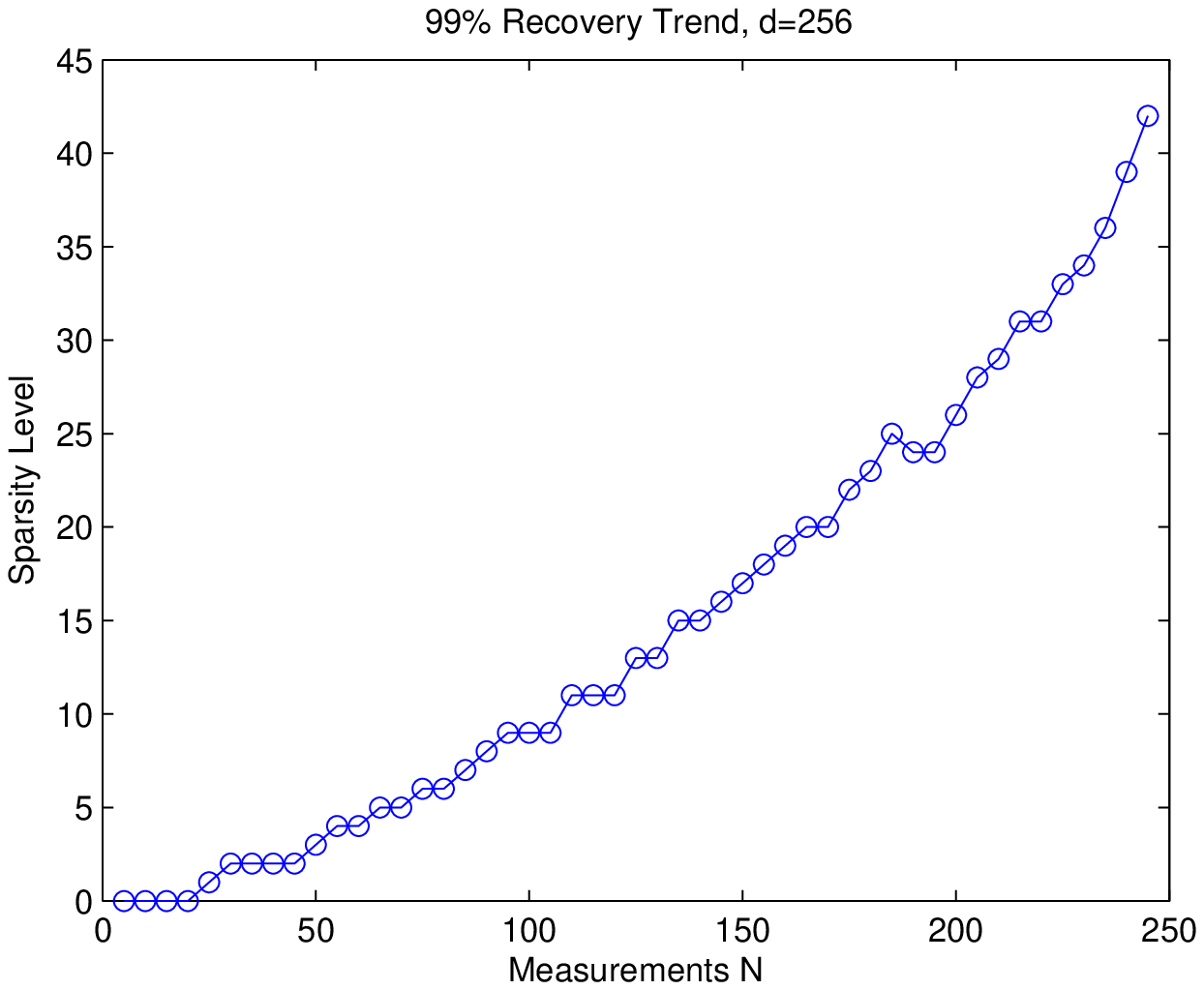}
  \caption{The $99\%$ recovery limit as a function of the sparsity and the number of measurements for sparse flat signals.}\label{fig:99}
\end{figure}

\begin{figure}[ht] 
  \includegraphics[width=0.8\textwidth,height=3.2in]{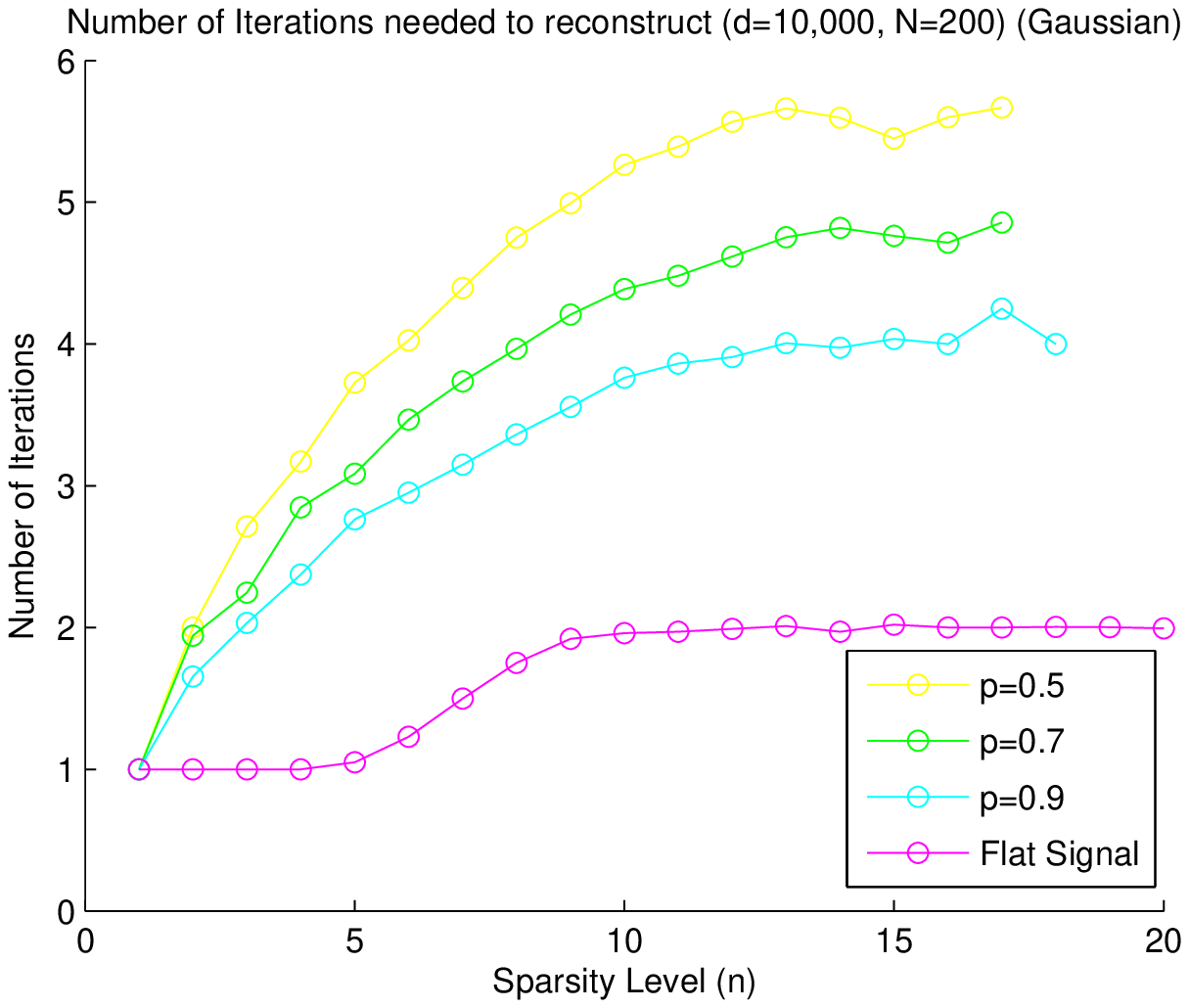}
  \caption{The number of iterations executed by ROMP as a function of the sparsity in dimension $d=10,000$ with $200$ measurements.}\label{fig:itsROMP}
\end{figure}
    		
    		\subsubsection[Noisy Numerical Studies]{Noisy Numerical Studies}
    		
    		This section describes our numerical experiments that illustrate the stability of ROMP as shown in~\cite{NV07:ROMP-Stable}.
We study the recovery error using ROMP for both perturbed
measurements and signals. 
The empirical recovery error is actually much better than that given in the theorems. 

First we describe the setup to our experimental studies. We run ROMP on various values of the ambient dimension $d$, 
the number of measurements $m$, and the sparsity level $s$, and attempt to reconstruct random signals.
For each set of parameters, we perform $500$ trials. Initially, we generate an $m \times d$ Gaussian measurement matrix $\Phi$. For each trial, independent of the matrix, we generate an $s$-sparse signal $x$ by choosing $s$ components uniformly 
at random and setting them to one.
In the case of perturbed signals, we add to the signal a $d$-dimensional error vector with Gaussian entries. In the case of perturbed measurements, we add an $m$-dimensional error vector with Gaussian entries to the measurement vector $\Phi x$.  We then execute ROMP with the measurement
vector $u = \Phi x$ or $u + e$ in the perturbed measurement case. After ROMP terminates, we output the reconstructed vector $\hat{x}$ obtained from the least squares calculation and calculate its distance from the original signal. 

Figure~\ref{fig:meas2} depicts the recovery error $\|x - \hat{x}\|_2$ when ROMP was run with perturbed measurements. This plot was generated with $d = 256$ for various levels of sparsity $s$. The horizontal axis represents the number of measurements $m$, and the vertical
axis represents the average normalized recovery error. Figure~\ref{fig:meas2} confirms the results of Theorem~\ref{T:stability}, while also suggesting the bound may be improved by removing the $\sqrt{\log s}$ factor.

Figure~\ref{fig:sig4} depicts the normalized recovery error when the signal was perturbed by a Gaussian vector. The figure confirms the results of Corollary~\ref{T:stabsig} while also suggesting again that the logarithmic factor in the corollary is unnecessary. 


\begin{figure}[ht] 
  \includegraphics[width=0.8\textwidth,height=3.2in]{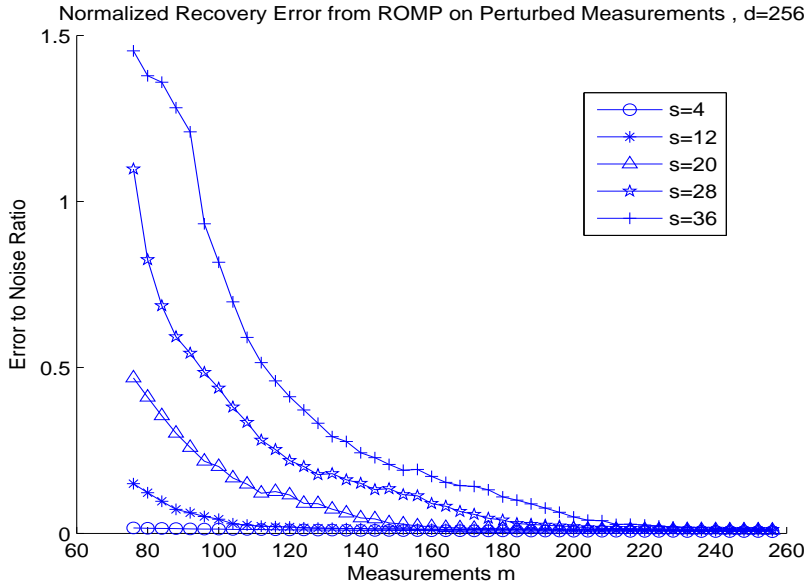}
  \caption{The error to noise ratio $\frac{\|\hat{x} - x\|_2}{\|e\|_2}$ as a function of the number of measurements $m$ in dimension $d=256$ for various levels of sparsity $s$.}\label{fig:meas2}
\end{figure}

\begin{figure}[ht] 
  \includegraphics[width=0.8\textwidth,height=3.2in]{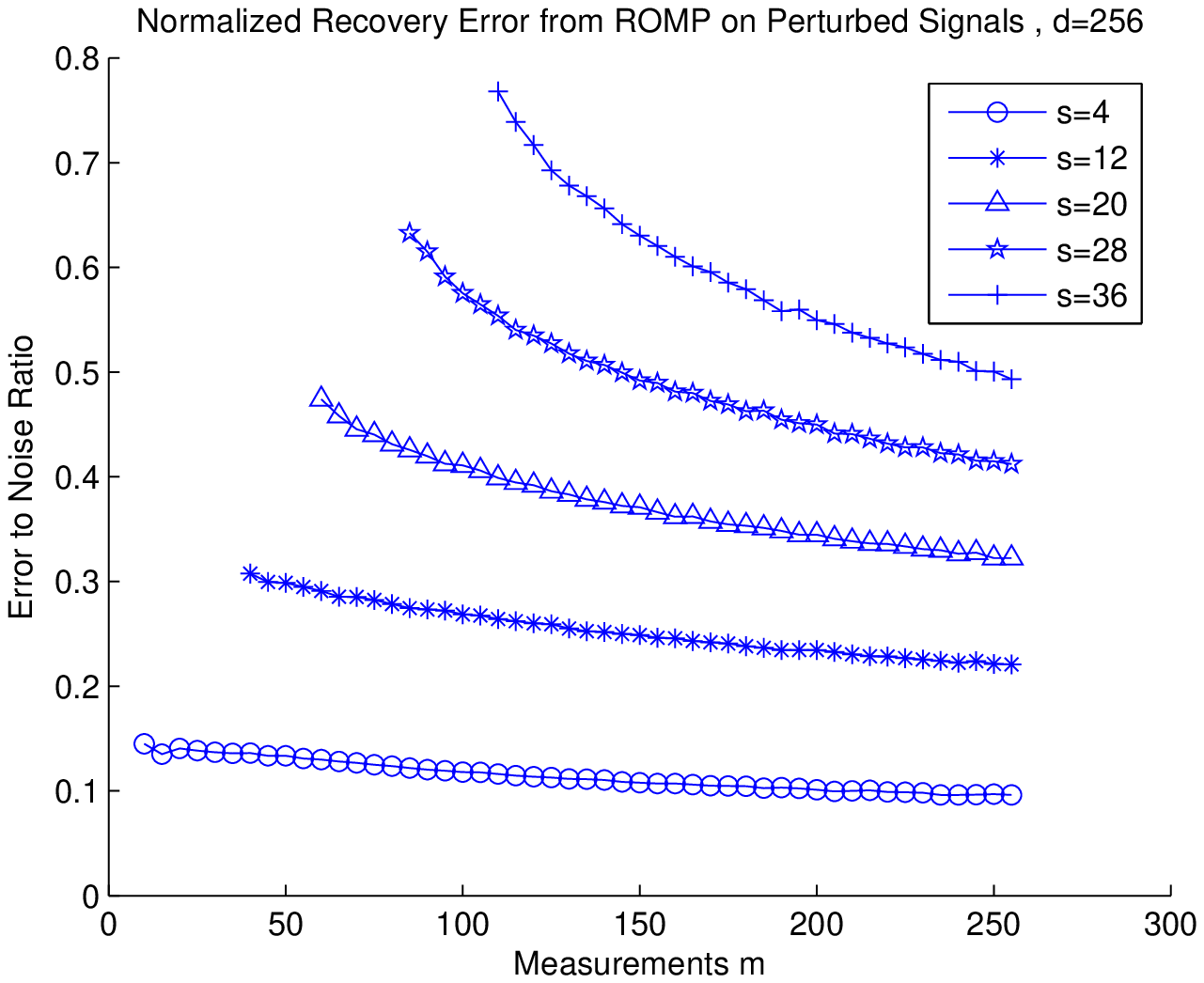}
  \caption{The error to noise ratio $\frac{\|\hat{x} - x_{2s}\|_2}{\|x-x_s\|_1/\sqrt{s}}$ using a perturbed signal, as a function of the number of measurements $m$ in dimension $d=256$ for various levels of sparsity $s$.}\label{fig:sig4}
\end{figure}
    		
    		\subsection[Summary]{Summary}
    		\label{sec:New:Regularized:Summary}
    		
    		There are several critical properties that an ideal algorithm in compressed sensing should possess. One such property is stability, guaranteeing that under small perturbations of the inputs, the algorithm still performs approximately correct. Secondly, the algorithm needs to provide uniform guarantees, meaning that with high probability the algorithm works correctly for \textit{all} inputs. Finally, to be ideal in practice, the algorithm would need to have a fast runtime. The $\ell_1$-minimization approach to compressed sensing is stable and provides uniform guarantees, but since it relies on the use of Linear Programming, lacks a strong bound on its runtime. The greedy approach is quite fast both in theory and in practice, but had lacked both stability and uniform guarantees. We analyzed the restricted isometry property in a unique way and found consequences that could be used in a greedy fashion. Our breakthrough algorithm ROMP is the first to provide all these benefits (stability, uniform guarantees, and speed), and essentially bridges the gap between the two major approaches in compressed sensing.  
    
        \section[Compressive Sampling Matching Pursuit]{Compressive Sampling Matching Pursuit}
        \label{sec:New:Compressive}
            		Regularized Orthogonal Matching Pursuit bridged a critical gap between the major approaches in compressed sensing.  It provided the speed of the greedy approach and the strong guarantees of the convex optimization approach.  Although its contributions were significant, it still left room for improvement.  The requirements imposed by ROMP on the restricted isometry condition were slightly stronger than those imposed by the convex optimization approach.  This then in turn weakened the error bounds provided by ROMP in the case of noisy signals and measurements.  These issues were resolved by our algorithm Compressive Sampling Matching Pursuit (CoSaMP). A similar algorithm, Subspace Matching Pursuit was also developed around this time, which provides similar benefits to those of CoSaMP. See~\cite{DM08:Subspace-Pursuit} for details.

\subsection[Description]{Description}
    		\label{sec:New:Compressive:Description}
    		
				One of the key differences between ROMP and OMP is that at each iteration ROMP selects more than one coordinate to be in the support set.  Because of this, ROMP is able to make mistakes in the support set, while still correctly reconstructing the original signal.  This is accomplished because we bound the number of incorrect choices the algorithm can make.  Once the algorithm chooses an incorrect coordinate, however, there is no way for it to be removed from the support set.  An alternative approach would be to allow the algorithm to choose incorrectly as well as fix its mistakes in later iterations.  In this case, at each iteration we select a slightly larger support set, reconstruct the signal using that support, and use that estimation to calculate the residual. 
				
				Tropp and Needell developed a new variant of OMP, Compressive Sampling Matching Pursuit (CoSaMP)~\cite{NT08:Cosamp,DT08:CoSaMP-TR}. This new algorithm has the same uniform guarantees as ROMP, but does not impose the logarithmic term for the Restricted Isometry Property or in the error bounds. Since the sampling operator $\Phi$ satisfies the Restricted Isometry Property, every $s$ entries of the signal proxy $\vct{y} = \Phi^* \Phi \vct{x}$ are close in the Euclidean norm to the $s$ corresponding entries of the signal $\vct{x}$. Thus as in ROMP, the algorithm first selects the largest coordinates of the signal proxy $\vct{y}$ and adds these coordinates to the running support set. Next however, the algorithm performs a least squares step to get an estimate $\vct{b}$ of the signal, and prunes the signal to make it sparse. The algorithm's major steps are described as follows:

\begin{enumerate} \setlength{\itemsep}{0.5pc}
\item	{\bf Identification.}  The algorithm forms a proxy of the residual from the current samples and locates the largest components of the proxy.


\item	{\bf Support Merger.}  The set of newly identified components is united with the set of components that appear in the current approximation.

\item	{\bf Estimation.}  The algorithm solves a least-squares problem to approximate the target signal on the merged set of components.


\item	{\bf Pruning.}  The algorithm produces a new approximation by retaining only the largest entries in this least-squares signal approximation.


\item	{\bf Sample Update.}  Finally, the samples are updated so that they reflect the residual, the part of the signal that has not been approximated.
\end{enumerate}

\noindent
These steps are repeated until the halting criterion is triggered.  Initially, we concentrate on methods that use a fixed number of iterations.  Section~\ref{sec:New:Compressive:Implementation} discusses some other simple stopping rules that may also be useful in practice.  Using these ideas, the pseudo-code for CoSaMP can thus be described as follows.

    		\bigskip
   \textsc{Compressive Matching Pursuit (CoSaMP)~\cite{NT08:Cosamp}}

\nopagebreak

\fbox{\parbox{\algorithmwidth}{
  \textsc{Input:} Measurement matrix $\Phi$, measurement vector $\vct{u}=\Phi \vct{x}$, sparsity level $s$
  
  \textsc{Output:} $s$-sparse reconstructed vector $\hat{\vct{x}} = \vct{a}$

  \textsc{Procedure:}

\begin{description}

\item[Initialize] Set $\vct{a}^0 = \vct{0}$, $\vct{v} = \vct{u}$, $k = 0$. Repeat the following steps and increment $k$ until the halting criterion is true.

\item[Signal Proxy] Set $\vct{y} = \Phi^\adj \vct{v}$, $\Omega = \supp{ \vct{y}_{2s} }$ and merge the supports: $T = \Omega \cup \supp{ \vct{a}^{k-1} }$.

\item[Signal Estimation] Using least-squares, set $\vct{b}\restrict{T} = \Phi_T^\psinv \vct{u}$ and $\vct{b}\restrict{T^c} = \vct{0}$.

\item[Prune] To obtain the next approximation, set $\vct{a}^{k} = \vct{b}_s$. 

\item [Sample Update] Update the current samples: $\vct{v} = \vct{u} - \Phi \vct{a}^{k}$.
	
  \end{description}
 }}
 
 \bigskip 
 
There are a few major concepts of which the algorithm CoSaMP takes advantage. Unlike some other greedy algorithms, CoSaMP selects many components at each iteration. This idea can be found in theoretical work on greedy algorithms by Temlyakov as well as some early work of Gilbert, Muthukrishnan, Strauss and Tropp \cite{GMS03:Approximation-Functions},\cite{TGMS03:Improved-Sparse}. It is also the key idea of recent work on the Fourier sampling algorithm \cite{GMS05:Improved}. The ROMP and StOMP algorithms also incorporate this notion \cite{NV07:Uniform-Uncertainty}, \cite{DTDS06:Sparse-Solution}.

The application of the Restricted Isometry Property to compare the norms of vectors under the action of the sampling operator and its adjoint is also key in this algorithm and its analysis. The Restricted Isometry Property is due to Cand\`es and Tao \cite{CT05:Decoding-Linear}. The application of the property to greedy algorithms is relatively new, and appears in \cite{GSTV07:HHS} and \cite{NV07:Uniform-Uncertainty}.

Another key idea present in the algorithm is the pruning step to maintain sparsity of the approximation. This also has significant ramifications in other parts of the analysis and the running time. Since the Restricted Isometry Property only holds for sparse vectors, it is vital in the analysis that the approximation remain sparse. This idea also appears in \cite{GSTV07:HHS}. 

Our analysis focuses on mixed-norm error bounds. This idea appears in the work of Cand\`es, Romberg, Tao \cite{CRT06:Stable} as well as \cite{GSTV07:HHS} and \cite{CDD06:Remarks}. In our analysis, we focus on the fact that if the error is large, the algorithm must make progress. This idea appears in work by Gilbert and Strauss, for example \cite{GSTV07:HHS}.
  
The $L_1$-minimization method and the ROMP algorithm provide the strongest known guarantees of sparse recovery. These guarantees are \textit{uniform} in that once the sampling operator satisfies the Restricted Isometry Property, both methods work correctly for all sparse signals. $L_1$-minimization is based on linear programming, which provides only a polynomial runtime.  Greedy algorithms such as OMP and ROMP on the other hand, are much faster both in theory and empirically. Our algorithm CoSaMP provides both uniform guarantees as well as fast runtime, while improving upon the error bounds and Restricted Isometry requirements of ROMP.  We describe these results next as we state the main theorems.  
    		
    		\subsection[Main Theorem]{Main Theorem}
    		\label{sec:New:Compressive:Main}
    		
    		Next we state the main theorem which guarantees exact reconstruction of sparse signals and approximate reconstruction of arbitrary signals.  The proof of the theorem is presented in Section~\ref{sec:New:Compressive:Proof}.
    		
    		\begin{theorem}[CoSaMP~\cite{NT08:Cosamp}] \label{thm:cosamp}
Suppose that $\Fee$ is an $m \times d$ sampling matrix with restricted isometry constant $\delta_{2s} \leq \cnst{c}$, as in~\eqref{eq:RIC2}.  Let $\vct{u} = \Fee \vct{x} + \vct{e}$ be a vector of samples of an arbitrary signal, contaminated with arbitrary noise.  For a given precision parameter $\eta$, the algorithm CoSaMP produces an $s$-sparse approximation $\hat{\vct{x}}$ that satisfies
$$
\enorm{ \vct{x} - \hat{\vct{x}} } \leq
	\cnst{C} \cdot \max\left\{ \eta, 
	\frac{1}{\sqrt{s}} \pnorm{1}{\vct{x} - \vct{x}_{s/2}} + \enorm{ \vct{e} }
	\right\}
$$
where $\vct{x}_{s/2}$ is a best $(s/2)$-sparse approximation to $\vct{x}$.
The running time is $\bigO( \coll{L} \cdot \log ( \enorm{\vct{x}} / \eta ) )$, where $\coll{L}$ bounds the cost of a matrix--vector multiply with $\Fee$ or $\Fee^\adj$.  Working storage is $\bigO(d)$.
\end{theorem}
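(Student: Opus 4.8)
The plan is to establish a single \emph{iteration invariant} showing that each pass of the main loop contracts the approximation error by a fixed factor while introducing only a controlled noise floor, and then to iterate that bound. Writing $\vct{a}^k$ for the $s$-sparse approximation produced after the $k$-th iteration, I would prove an estimate of the form
\begin{equation*}
\enorm{\vct{x} - \vct{a}^{k}} \le \tfrac{1}{2}\enorm{\vct{x} - \vct{a}^{k-1}} + \cnst{C}\,\nu,
\end{equation*}
where $\nu$ is the mixed-norm \emph{unrecoverable energy} $\nu := \enorm{\vct{x} - \vct{x}_{s}} + \tfrac{1}{\sqrt{s}}\pnorm{1}{\vct{x} - \vct{x}_s} + \enorm{\vct{e}}$. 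Before doing so I would reduce to the case of an exactly $s$-sparse signal: splitting $\vct{u} = \Phi \vct{x}_s + \bigl(\Phi(\vct{x}-\vct{x}_s) + \vct{e}\bigr)$ lets me treat the tail $\vct{x}-\vct{x}_s$ as additional measurement noise, and the Restricted Isometry Condition together with Lemma~\ref{L:ve} converts $\enorm{\Phi(\vct{x}-\vct{x}_s)}$ into the $\ell_1$ tail term appearing in $\nu$.

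The per-iteration analysis breaks into the algorithmic steps, each governed by one lemma, all consequences of the Restricted Isometry Condition in the style of Proposition~\ref{P:cons} (with the sparsity level raised to $4s$). \textbf{Identification}: with $\vct{v} = \vct{u} - \Phi \vct{a}^{k-1}$ and proxy $\vct{y} = \Phi^{\adj}\vct{v}$, the set $\Omega$ of the $2s$ largest entries of $\vct{y}$ captures most of the energy of the current residual $\vct{x}-\vct{a}^{k-1}$, so that $\enorm{(\vct{x}-\vct{a}^{k-1})|_{\Omega^c}}$ is small relative to $\enorm{\vct{x}-\vct{a}^{k-1}}$ plus a noise term; this uses the local-approximation and spectral-norm bounds of Proposition~\ref{P:cons}. \textbf{Support merger}: after forming $T = \Omega \cup \supp(\vct{a}^{k-1})$, the part of $\supp(\vct{x})$ lying outside $T$ carries little energy. \textbf{Estimation}: the least-squares solution $\vct{b}|_T = \Phi_T^{\psinv}\vct{u}$ satisfies $\enorm{\vct{x}-\vct{b}} \le \cnst{C}(\enorm{\vct{x}|_{T^c}} + \enorm{\vct{e}})$, again because $\Phi_T$ is a near-isometry as $|T| \le 3s$. \textbf{Pruning}: passing from $\vct{b}$ to its best $s$-sparse approximation $\vct{a}^k = \vct{b}_s$ at most doubles the distance to $\vct{x}$, by the truncation argument of Corollary~\ref{C:napprox}. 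Chaining the four estimates yields the contraction displayed above.

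Iterating the invariant gives $\enorm{\vct{x}-\vct{a}^k} \le 2^{-k}\enorm{\vct{x}} + 2\cnst{C}\nu$, so choosing $k = \lceil \log_2(\enorm{\vct{x}}/\eta)\rceil$ forces the first term below $\eta$ and produces the claimed error bound, with $\nu$ rewritten in the $\vct{x}_{s/2}$ form of the statement by combining its $\ell_2$ and $\ell_1$ tail terms through Lemma~\ref{L:ve}. This same iteration count delivers the $\bigO(\coll{L}\log(\enorm{\vct{x}}/\eta))$ running time, since each iteration costs one application of $\Phi$ and $\Phi^{\adj}$ and one least-squares solve on a well-conditioned $3s$-column submatrix, which (by the near-isometry of $\Phi_T$) converges in $\bigO(1)$ conjugate-gradient steps. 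The main obstacle is the Identification step: unlike ROMP, CoSaMP discards the old approximation's off-support errors and re-solves on the merged set, so I must show that selecting the $2s$ largest proxy coordinates recovers a \emph{constant fraction} of the residual energy in one shot. This is what forces the small absolute bound on $\delta_{2s}$ and requires carefully tracking how the off-diagonal cross terms of $\Phi^{\adj}\Phi$, controlled by the almost-orthogonality estimate (Part~3 of Proposition~\ref{P:cons}), contaminate the proxy.
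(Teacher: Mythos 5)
Your proposal follows essentially the same route as the paper's own proof: the same contraction-type iteration invariant with the unrecoverable energy $\nu$, the same reduction of the general case to the sparse case by absorbing the tail $\Fee(\vct{x}-\vct{x}_s)$ into the noise vector, the same four per-step lemmas (identification, support merger, estimation, pruning) chained to give the factor-$\tfrac12$ contraction, and the same final assembly of the error bound, iteration count $\bigO(\log(\enorm{\vct{x}}/\eta))$, and runtime via a constant number of conjugate-gradient steps on the well-conditioned submatrix $\Fee_T$. The only differences are cosmetic---you invoke the ROMP-style Proposition~\ref{P:cons} and the truncation argument of Corollary~\ref{C:napprox} where the paper uses its CoSaMP-specific RIP consequences (Propositions~\ref{prop:rip-basic} and~\ref{prop:approx-orth}, Corollary~\ref{cor:dumb-rip-bd}) and the simpler factor-$2$ pruning lemma---so the argument matches the paper's.
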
  

\begin{remarks} 
{\bf 1. } We note that as in the case of ROMP, CoSaMP requires knowledge of the sparsity level $s$.  As described in Section~\ref{sec:New:Regularized:Description}, there are several strategies to estimate this level. 

{\bf 2. } In the hypotheses, a bound on the restricted isometry constant $\delta_{2s}$ also suffices.  Indeed, Corollary~\ref{cor:dumb-rip-bd} of the sequel implies that $\delta_{4s} \leq 0.1$ holds whenever $\delta_{2s} \leq 0.025$. 

{\bf 3. } Theorem~\ref{thm:cosamp} is a result of running CoSaMP using an iterative algorithm to solve the least-squares step.  We analyze this step in detail below.  In the case of exact arithmetic, we again analyze CoSaMP and provide an iteration count for this case:

\begin{theorem}[Iteration Count] \label{thm:cosamp-count}
Suppose that CoSaMP is implemented with exact arithmetic.  After at most $6(s+1)$ iterations, CoSaMP produces an $s$-sparse approximation $\hat{\vct{x}}$ that satisfies
$$
\enorm{ \vct{x} - \vct{a} } \leq 20 \nu,
$$
where $\nu$ is the unrecoverable energy~\eqref{eqn:unrecoverable}. 
\end{theorem}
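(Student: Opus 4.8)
The engine of the proof is a per-iteration error-contraction bound, which I would isolate (mirroring the role of Theorem~\ref{T:it} for ROMP) in the form
$$
\enorm{\vct{x} - \vct{a}^{k}} \le \tfrac{1}{2}\,\enorm{\vct{x} - \vct{a}^{k-1}} + 10\,\nu ,
$$
valid at every iteration $k \ge 1$ once $\Fee$ satisfies the restricted isometry hypothesis. Establishing this contraction is the heart of the matter, and the plan is to track the error through the four stages of a single CoSaMP iteration. The signal-proxy and identification steps use the local-approximation and spectral-norm consequences of the RIP (as in Proposition~\ref{P:cons}) to show that the $2s$ largest entries of $\vct{y} = \Fee^\adj \vct{v}$ capture most of the energy of the current residual signal; the support-merger step then guarantees that the still-missing large support is essentially contained in $T$; the least-squares estimation step, because $\Fee_T$ is a near-isometry on the set $T$ of size at most $4s$, returns a $\vct{b}$ whose error off the unrecoverable part is controlled; and the pruning step costs only a bounded factor. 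Each of these estimates spends a small multiple of the restricted isometry constant, and the budget $\delta_{4s} \le 0.1$ (available from $\delta_{2s} \le \cnst{c}$ by Corollary~\ref{cor:dumb-rip-bd}) is exactly what forces the aggregate multiplier on the old error below $\tfrac12$ while holding the additive term to a constant multiple of the unrecoverable energy $\nu$ of~\eqref{eqn:unrecoverable}.

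Granting the contraction, the rest is a short recursion. Starting from $\vct{a}^{0} = \vct{0}$, so that $\enorm{\vct{x} - \vct{a}^{0}} = \enorm{\vct{x}}$, unrolling the inequality and summing the geometric series gives
$$
\enorm{\vct{x} - \vct{a}^{k}} \le 2^{-k}\enorm{\vct{x}} + 10\,\nu \sum_{j=0}^{k-1} 2^{-j} \le 2^{-k}\enorm{\vct{x}} + 20\,\nu .
$$
Thus the error decays geometrically down to the floor $20\nu$, and the only remaining task is to count how many halvings are needed to drive the transient term $2^{-k}\enorm{\vct{x}}$ below the level of the floor, i.e.\ until $2^{-k}\enorm{\vct{x}} \le \nu$. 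This requires $k = \lceil \log_2(\enorm{\vct{x}}/\nu) \rceil$ iterations, after which the right-hand side sits at the claimed level. To read the clean bound $6(s+1)$ off of this logarithm I would control the effective dynamic range $\enorm{\vct{x}}/\nu$: in the exactly sparse, noiseless extreme, exact arithmetic lets the least-squares solve reproduce the recoverable part of $\vct{x}$ on the correctly merged support, so once the transient drops below the smallest recoverable component the error collapses to the floor in finitely many steps; in the general case the count is the logarithm above, which the theorem records as $6(s+1)$.

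I expect the single genuine obstacle to be the per-iteration contraction: everything downstream is bookkeeping, but that inequality forces one to control three different RIP effects at once — the proxy must identify enough new support, the least-squares step must not amplify the error, and pruning must not discard too much — and then to combine them so that the net factor on the previous error is strictly below one rather than merely bounded by a constant. The cleanest route is to prove the contraction as a standalone iteration-invariant statement, pushing the RIP into separate proxy, estimation, and pruning lemmas, and only then feed it into the geometric sum above. The secondary point to verify carefully is that the three error contributions, when added, stay within the $\delta_{4s} \le 0.1$ budget that yields both the factor $\tfrac12$ and the floor constant $20$; the conversion of the logarithmic count into the stated $6(s+1)$ is where I would be most careful about the dynamic-range assumption implicit in the setting of Theorem~\ref{thm:cosamp}.
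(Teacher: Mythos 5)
Your contraction inequality and the geometric-sum bookkeeping are correct --- they are exactly the paper's Theorem~\ref{thm:cosamp-invar} --- but they prove the wrong theorem. Unrolling the recursion gives an iteration count of order $\log_2(\enorm{\vct{x}}/\nu)$, which is the precision-parameter bound of Theorem~\ref{thm:cosamp}, not the bound $6(s+1)$. The ratio $\enorm{\vct{x}}/\nu$ is not controlled by any function of $s$: it is infinite in the exactly sparse, noiseless case ($\nu = 0$), and even your proposed patch --- waiting until the transient falls below the smallest recoverable component $\mu_{\min}$ --- requires $\log_2(\enorm{\vct{x}}/\mu_{\min})$ halvings, which is again unbounded in terms of $s$ (take an $s$-sparse signal with entries $1, 2^{-100}, 2^{-200}, \dots$). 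So the step ``which the theorem records as $6(s+1)$'' is precisely the gap, and it cannot be closed by a pure decay-to-the-floor argument.

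What the paper does instead is structural. It partitions the coefficients of $\vct{x}_s$ into \emph{component bands} $B_j$ (entries whose squared magnitude lies in $(2^{-(j+1)}\enormsq{\vct{x}}, 2^{-j}\enormsq{\vct{x}}]$, equation \eqref{eq:bjs}) and counts the number $p \leq s$ of nonempty bands (the \emph{profile}). The crucial ingredient your sketch lacks is Lemma~\ref{lem:iter-alter}'s relation \eqref{eqn:missing-link}: in every productive iteration the residual satisfies $\smnorm{2}{\vct{r}^k} \leq 2.3\,\smnorm{2}{\vct{x}\restrict{S_k^c}}$, i.e.\ the error is always comparable to the energy of the \emph{still-unidentified} part of the signal, not to $2^{-k}\enorm{\vct{x}}$. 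This ``reset'' property means that a large magnitude gap between consecutive bands costs no extra iterations: after a band is captured (and, by \eqref{eqn:error-reduct}, captured bands persist in the support), the decay restarts from the scale of what remains. Lemma~\ref{lem:iter-count} then bounds the iterations needed per band and sums them with an AM--GM/Jensen argument to get $p\log_{4/3}(1 + 4.6\sqrt{s/p})$ total iterations to identify the full support, after which six more iterations drive the error to the floor (Theorem~\ref{thm:count-sparse}); Lemma~\ref{lem:reduction} transfers this to general signals (Theorem~\ref{thm:count-detail}), and maximizing over $p$ at $p = s$, using $\log_{4/3} 5.6 < 6$, yields $6(s+1)$. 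Your plan would need this band decomposition and the residual-versus-unidentified-energy comparison added as standalone lemmas before the final count can be made independent of the signal's dynamic range.
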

  See Theorem~\ref{thm:count-detail} in Section~\ref{sec:New:Compressive:Proof} below for more details. 

\end{remarks}

The algorithm produces an $s$-sparse approximation whose $\ell_2$ error is comparable with the scaled $\ell_1$ error of the best $(s/2)$-sparse approximation to the signal.  Of course, the algorithm cannot resolve the uncertainty due to the additive noise, so we also pay for the energy in the noise.  This type of error bound is structurally optimal, as discuss when describing the unrecoverable energy below.  Some disparity in the sparsity levels (here, $s$ versus $s/2$) seems to be necessary when the recovery algorithm is computationally efficient~\cite{RG08:Sampling-Bounds}.

To prove our theorem, we show that CoSaMP makes significant progress during each iteration where the approximation error is large relative to \term{unrecoverable energy} $\nu$ in the signal.  This quantity measures the baseline error in our approximation that occurs because of noise in the samples or because the signal is not sparse.  For our purposes, we define the unrecoverable energy by the following.

\begin{equation} \label{eqn:unrecoverable}
\nu = \enorm{ \vct{x} - \vct{x}_s }
	+ \frac{1}{\sqrt{s}} \pnorm{1}{ \vct{x} - \vct{x}_s }
	+ \enorm{ \vct{e} }.
\end{equation}

The expression \eqref{eqn:unrecoverable} for the unrecoverable energy can be simplified using Lemma~7 from~\cite{GSTV07:HHS}, which states that, for every signal $\vct{y} \in \Cspace{N}$ and every positive integer $t$, we have
\begin{equation*} \label{eqn:heads-tails}
\enorm{ \vct{y} - \vct{y}_t } \leq \frac{1}{2\sqrt{t}} \pnorm{1}{\vct{y}}.
\end{equation*}
Choosing $\vct{y} = \vct{x} - \vct{x}_{s/2}$ and $t = s/2$, we reach 
\begin{equation} \label{eqn:unrecov-l1}
\nu 
	\leq \frac{1.71}{\sqrt{s}} \pnorm{1}{ \vct{x} - \vct{x}_{s/2} }
	+ \enorm{\vct{e}}.
\end{equation}
In words, the unrecoverable energy is controlled by the scaled $\ell_1$ norm of the signal tail.  

The term ``unrecoverable energy'' is justified by several facts.  First, we must pay for the $\ell_2$ error contaminating the samples.  To check this point, define $S = \supp{\vct{x}_s}$.  The matrix $\Fee_S$ is nearly an isometry from $\ell_2^S$ to $\ell_2^m$, so an error in the large components of the signal induces an error of equivalent size in the samples.  Clearly, we can never resolve this uncertainty.

The term $s^{-1/2} \pnorm{1}{\vct{x} - \vct{x}_{s}}$ is also required on account of classical results about the Gel'fand widths of the $\ell_1^d$ ball in $\ell_2^d$, due to Kashin~\cite{Kas77:The-widths} and Garnaev--Gluskin~\cite{GG84:On-widths}.  In the language of compressive sampling, their work has the following interpretation.  Let $\Fee$ be a fixed $m \times d$ sampling matrix.  Suppose that, for every signal $\vct{x} \in \Cspace{d}$, there is an algorithm that uses the samples $\vct{u} = \Fee \vct{x}$ to construct an approximation $\vct{a}$ that achieves the error bound
$$
\enorm{ \vct{x} - \vct{a} }
	\leq \frac{\cnst{C}}{\sqrt{s}} \pnorm{1}{\vct{x}}.
$$
Then the number $m$ of measurements must satisfy
$m \geq \cnst{c} s \log(d/s)$.

			 			\subsection[Proofs of Theorems]{Proofs of Theorems}
    		\label{sec:New:Compressive:Proof}
    		Theorem~\ref{thm:cosamp} will be shown by demonstrating that the following iteration invariant holds.  These results can be found in~\cite{NT08:Cosamp}.

\begin{theorem}[Iteration Invariant] \label{thm:cosamp-invar}
For each iteration $k \geq 0$, the signal approximation $\vct{a}^k$ is $s$-sparse and
$$
\smnorm{2}{ \vct{x} - \vct{a}^{k+1} }
	\leq 0.5 \smnorm{2}{ \vct{x} - \vct{a}^{k} } + 10 \nu.
$$
In particular,
$$
\smnorm{2}{ \vct{x} - \vct{a}^{k} } \leq 2^{-k} \enorm{ \vct{x} } +
	20 \nu.
$$
\end{theorem}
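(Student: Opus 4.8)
The plan is to establish the one-step recursion $\smnorm{2}{\vct{x} - \vct{a}^{k+1}} \le 0.5\,\smnorm{2}{\vct{x} - \vct{a}^{k}} + 10\nu$ and then read off the closed form by iterating it. Since $\vct{a}^0 = \vct{0}$, unwinding the recursion gives $\smnorm{2}{\vct{x} - \vct{a}^{k}} \le 2^{-k}\enorm{\vct{x}} + 10\nu\sum_{j\ge 0}2^{-j} = 2^{-k}\enorm{\vct{x}} + 20\nu$, which is exactly the second displayed bound; so essentially all the work lives in the single-iteration estimate. The $s$-sparsity of each $\vct{a}^{k}$ is immediate, since the Prune step sets $\vct{a}^{k} = \vct{b}_s$.

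First I would reduce to an exactly $s$-sparse signal. Writing $\vct{u} = \Phi\vct{x}_s + \tilde{\vct{e}}$ with $\tilde{\vct{e}} := \Phi(\vct{x} - \vct{x}_s) + \vct{e}$, I would treat $\tilde{\vct{e}}$ as effective noise and bound $\enorm{\tilde{\vct{e}}} \le \enorm{\vct{e}} + \enorm{\Phi(\vct{x}-\vct{x}_s)}$. The last term is handled by the already-cited Lemma~29 of~\cite{GSTV07:HHS} (used above for the ROMP corollary), now applied to the tail $\vct{x}-\vct{x}_s$, which gives $\enorm{\Phi(\vct{x}-\vct{x}_s)} \le (1+\e)\bigl(\enorm{\vct{x}-\vct{x}_s} + s^{-1/2}\pnorm{1}{\vct{x}-\vct{x}_s}\bigr)$, so that $\enorm{\tilde{\vct{e}}}$ is at most a small constant times the unrecoverable energy $\nu$ of~\eqref{eqn:unrecoverable}. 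Proving the recursion for $\vct{x}_s$ against the noise $\tilde{\vct{e}}$ and then returning to $\vct{x}$ by two triangle inequalities, absorbing $\enorm{\vct{x}-\vct{x}_s}$ and $\enorm{\tilde{\vct{e}}}$ into $\nu$, delivers the stated inequality with constant $10$.

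For the sparse case I would follow the residual $\vct{r} = \vct{x}_s - \vct{a}^{k}$ through the four algorithmic steps, each controlled by a consequence of the RIP; the hypothesis $\delta_{2s}\le\cnst{c}$ forces $\delta_{4s}\le 0.1$ by Corollary~\ref{cor:dumb-rip-bd}, and it is this small constant at level $4s$ that keeps all numerical factors tight. The RIP consequences I would record first, as analogues of Proposition~\ref{P:cons} adapted to CoSaMP, are the pseudoinverse bound $\enorm{\Phi_T^\psinv\vct{z}} \le (1-\delta)^{-1/2}\enorm{\vct{z}}$ and the cross-term bound $\enorm{\Phi_S^\adj\Phi_T\vct{w}} \le \delta\enorm{\vct{w}}$ for disjoint $S,T$ with $|S\cup T|$ small and $\vct{w}$ supported on $T$. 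Then: (Identification) because $\Omega$ collects the $2s$ largest entries of the proxy $\vct{y}=\Phi^\adj\vct{v}=\Phi^\adj\Phi\vct{r}+\Phi^\adj\vct{e}$ while $|\supp(\vct{r})|\le 2s$, comparing the $\vct{y}$-energy on $\supp(\vct{r})\setminus\Omega$ with that on $\Omega\setminus\supp(\vct{r})$ and invoking the RIP estimate $\enorm{(\vct{y}-\vct{r})|_S}\le\delta\enorm{\vct{r}}+(1+\delta)^{1/2}\enorm{\vct{e}}$ on the relevant set $S$ of size at most $4s$ yields $\enorm{\vct{r}|_{\Omega^c}}\le 0.2223\,\enorm{\vct{r}}+2.34\,\enorm{\vct{e}}$; (Support Merger) as $\supp(\vct{a}^{k})\subseteq T$, we have $\enorm{\vct{x}_s|_{T^c}}=\enorm{\vct{r}|_{T^c}}\le\enorm{\vct{r}|_{\Omega^c}}$; (Estimation) expanding $\vct{b}|_T=\Phi_T^\psinv\vct{u}=\vct{x}_s|_T+\Phi_T^\psinv\Phi_{T^c}\vct{x}_s|_{T^c}+\Phi_T^\psinv\vct{e}$ and applying the two RIP consequences gives $\enorm{\vct{x}_s-\vct{b}}\le 1.112\,\enorm{\vct{x}_s|_{T^c}}+1.06\,\enorm{\vct{e}}$; (Pruning) since $\vct{x}_s$ is $s$-sparse and $\vct{a}^{k+1}=\vct{b}_s$ is a best $s$-term approximation of $\vct{b}$, $\enorm{\vct{x}_s-\vct{a}^{k+1}}\le 2\,\enorm{\vct{x}_s-\vct{b}}$. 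Composing the four bounds collapses the leading factor to $2\cdot 1.112\cdot 0.2223<0.5$ and lumps the remainder into a constant multiple of $\enorm{\vct{e}}$.

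The \emph{main obstacle} is the Identification step. The other three steps are essentially bookkeeping with the triangle inequality and the pseudoinverse bound, but Identification rests on a genuinely combinatorial comparison: that $\Omega$ holds the largest coordinates of $\vct{y}$ and that $|\Omega\setminus\supp(\vct{r})|\ge|\supp(\vct{r})\setminus\Omega|$, so that the $\vct{y}$-energy lost outside $\Omega$ is dominated by the $\vct{y}$-energy that $\Omega$ spends off the true support, which in turn is pure RIP error. This is what forces $\enorm{(\vct{y}-\vct{r})|_S}$ to be controlled on a set $S$ of size up to $4s$, and hence why the hypothesis is genuinely needed at level $4s$. Finally, driving the product of the step constants strictly below $1/2$ (rather than merely below $1$) is the delicate part, and it is precisely this requirement that pins down the admissible value of $\cnst{c}$.
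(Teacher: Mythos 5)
Your proposal is correct and follows essentially the same route as the paper: reduce to the $s$-sparse case by rewriting $\vct{u} = \Fee\vct{x}_s + \widetilde{\vct{e}}$ and absorbing $\enorm{\widetilde{\vct{e}}}$ into $\nu$ (the paper does this via its own Energy Bound, Proposition~\ref{prop:k2-bd}, rather than the cited external lemma, but the content is identical), then track the residual through Identification, Support Merger, Estimation, and Pruning with exactly the constants $0.2223$, $2.34$, $1.112$, $1.06$, and $2$, compose to get the factor $0.5$ and the $7.5\enorm{\vct{e}}$ (hence $10\nu$) term, and unwind the recursion from $\vct{a}^0 = \vct{0}$ to obtain $2^{-k}\enorm{\vct{x}} + 20\nu$. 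The only piece of the paper's treatment you omit is the verification that the invariant survives when the least-squares step is solved iteratively (Theorem~\ref{thm:invar-sparse-ls}), which is not needed for the statement as given.
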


We will first show this holds for sparse input signals, and then derive the general case.

When the sampling matrix satisfies the restricted isometry inequalities \eqref{eq:RIC2}, it has several other properties that we require repeatedly in the proof that the CoSaMP algorithm is correct.
Our first observation is a simple translation of \eqref{eq:RIC2} into other terms, in the same light as Proposition~\ref{P:cons} used in the proof of ROMP.

\begin{proposition} \label{prop:rip-basic}
Suppose $\Fee$ has restricted isometry constant $\delta_r$.  Let $T$ be a set of $r$ indices or fewer.  Then 
\begin{align*}
\phantom{\frac{1}{\sqrt{\delta_r}}}
\smnorm{2}{ \Fee_T^\adj \vct{u} }
	&\leq \sqrt{1 + \delta_r} \enorm{ \vct{u} } \\
\phantom{\frac{1}{\sqrt{\delta_r}}}
\smnorm{2}{ \Fee_T^\psinv \vct{u} }
	&\leq \frac{1}{\sqrt{1 - \delta_r}} \enorm{ \vct{u} } \\
\phantom{\frac{1}{\sqrt{\delta_r}}}
\smnorm{2}{ \Fee_T^\adj \Fee_T \vct{x} }
	&\lesseqqgtr (1 \pm \delta_r) \enorm{ \vct{x}} \\
\phantom{\frac{1}{\sqrt{\delta_r}}}
\smnorm{2}{ (\Fee_T^\adj \Fee_T)^{-1} \vct{x} }
	&\lesseqqgtr \frac{1}{1 \pm \delta_r} \enorm{ \vct{x}}.
\end{align*}
where the last two statements contain an upper and lower bound, depending on the sign chosen.
\end{proposition}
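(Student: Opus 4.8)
The plan is to reduce all four inequalities to a single spectral fact about the submatrix $\Fee_T$: its singular values lie in the interval $[\sqrt{1-\delta_r},\,\sqrt{1+\delta_r}]$. To establish this, I would first note that any vector supported on $T$ is $r$-sparse, since $|T|\le r$, so the quadratic form \eqref{eq:RIC2} applied to such vectors reads
$$
(1-\delta_r)\|x\|_2^2 \le \|\Fee_T x\|_2^2 \le (1+\delta_r)\|x\|_2^2 \qquad \text{for all } x\in\R^T.
$$
By the variational (Rayleigh quotient) characterization of eigenvalues, this is precisely the assertion that the symmetric positive semidefinite matrix $\Fee_T^\adj \Fee_T$ has all its eigenvalues in $[1-\delta_r,\,1+\delta_r]$, equivalently that the singular values of $\Fee_T$ lie in $[\sqrt{1-\delta_r},\,\sqrt{1+\delta_r}]$. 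Since $\delta_r<1$, the smallest singular value is strictly positive, so $\Fee_T$ has full column rank and $\Fee_T^\adj \Fee_T$ is invertible; this justifies the inverse and pseudoinverse appearing in the last two statements.

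Given this, each inequality follows by a one-line operator-norm estimate. For the first bound I would use $\|\Fee_T^\adj\|_{2\to 2} = \|\Fee_T\|_{2\to 2} = \sigma_{\max}(\Fee_T)\le\sqrt{1+\delta_r}$, so that $\|\Fee_T^\adj \vct{u}\|_2\le\sqrt{1+\delta_r}\,\|\vct{u}\|_2$. For the third, since $\Fee_T^\adj \Fee_T$ is symmetric with spectrum in $[1-\delta_r,\,1+\delta_r]$, the spectral theorem supplies both the upper bound (operator norm at most $1+\delta_r$) and the lower bound (smallest eigenvalue at least $1-\delta_r$), yielding $(1-\delta_r)\|\vct{x}\|_2 \le \|\Fee_T^\adj \Fee_T \vct{x}\|_2 \le (1+\delta_r)\|\vct{x}\|_2$. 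For the fourth, the inverse $(\Fee_T^\adj \Fee_T)^{-1}$ has as its spectrum the reciprocals, hence lying in $[(1+\delta_r)^{-1},\,(1-\delta_r)^{-1}]$, which gives the claimed two-sided estimate. For the second, I would write the pseudoinverse as $\Fee_T^\psinv = (\Fee_T^\adj \Fee_T)^{-1}\Fee_T^\adj$ and observe that its operator norm equals $1/\sigma_{\min}(\Fee_T)\le 1/\sqrt{1-\delta_r}$.

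There is no genuine obstacle here: the entire content is the translation of the restricted isometry inequality into control of the extreme singular values of $\Fee_T$, after which every claim is a routine spectral-norm computation. The only points that call for a modicum of care are bookkeeping ones, namely keeping straight which statements take an argument $\vct{u}$ in the measurement space $\R^m$ (the first and second) versus an argument $\vct{x}$ supported on $T$ (the third and fourth), and recording explicitly that the hypothesis $\delta_r<1$ is what makes $\Fee_T$ injective and hence renders the inverse and pseudoinverse well defined.
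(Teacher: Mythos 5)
Your proposal is correct and follows essentially the same route as the paper: the paper's proof also reduces everything to the observation that the restricted isometry inequalities force the singular values of $\Fee_T$ to lie between $\sqrt{1-\delta_r}$ and $\sqrt{1+\delta_r}$, and then invokes standard relationships between the singular values of $\Fee_T$ and those of functions of $\Fee_T$. Your write-up simply makes explicit the spectral bookkeeping (Rayleigh quotient characterization, $\Fee_T^\psinv = (\Fee_T^\adj\Fee_T)^{-1}\Fee_T^\adj$, reciprocal spectrum of the inverse) that the paper leaves to the reader.
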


\begin{proof}
The restricted isometry inequalities \eqref{eq:RIC2} imply that the singular values of $\Fee_T$ lie between $\sqrt{1 - \delta_r}$ and $\sqrt{1 + \delta_r}$. The bounds follow from standard relationships between the singular values of $\Fee_T$ and the singular values of basic functions of $\Fee_T$.
\end{proof}

A second consequence is that disjoint sets of columns from the sampling matrix span nearly orthogonal subspaces.  The following result quantifies this observation.

\begin{proposition}[Approximate Orthogonality] \label{prop:approx-orth}
Suppose $\Fee$ has restricted isometry constant $\delta_r$.  Let $S$ and $T$ be disjoint sets of indices whose combined cardinality does not exceed $r$.  Then
$$
\norm{ \Fee_S^\adj \Fee_T } \leq \delta_{r}.
$$
\end{proposition}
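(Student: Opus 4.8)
The plan is to realize $\Fee_S^\adj \Fee_T$ as an off-diagonal block of the Gram matrix $\Fee_K^\adj \Fee_K$, where $K = S \cup T$, and to control that block by exploiting the fact that $\Fee_K$ is a near-isometry. Since $S$ and $T$ are disjoint, $\abs{K} = \abs{S} + \abs{T} \le r$, so the submatrix $\Fee_K$ falls within the scope of the restricted isometry inequalities~\eqref{eq:RIC2}. The first step I would carry out is to record the operator-norm estimate
$$
\norm{ \Fee_K^\adj \Fee_K - \Id } \le \delta_r .
$$
This follows directly from~\eqref{eq:RIC2}: the matrix $\Fee_K^\adj \Fee_K - \Id$ is symmetric, so its spectral norm equals $\sup_{\enorm{w}=1} \abs{ \pr{ (\Fee_K^\adj \Fee_K - \Id) w }{ w } }$, and for every $w$ supported on $K$ we have $\abs{ \enorm{ \Fee_K w }^2 - \enorm{ w }^2 } \le \delta_r \enorm{ w }^2$, which is exactly the Rayleigh quotient appearing in the supremum.

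Next I would pass from this bound on the full Gram matrix to the off-diagonal block. Identifying $\R^S$ and $\R^T$ with the coordinate subspaces of $\R^K$, take unit vectors $a \in \R^S$ and $b \in \R^T$, so that $\Fee_K a = \Fee_S a$ and $\Fee_K b = \Fee_T b$. Because $S$ and $T$ are disjoint, the supports of $a$ and $b$ do not overlap and hence $\pr{a}{b} = 0$. I can then compute
$$
\pr{ \Fee_S^\adj \Fee_T b }{ a } = \pr{ \Fee_T b }{ \Fee_S a } = \pr{ \Fee_K b }{ \Fee_K a } = \pr{ (\Fee_K^\adj \Fee_K - \Id) b }{ a } + \pr{ b }{ a } = \pr{ (\Fee_K^\adj \Fee_K - \Id) b }{ a } ,
$$
where the final equality uses $\pr{b}{a} = 0$. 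Applying Cauchy--Schwarz together with the first step gives $\abs{ \pr{ \Fee_S^\adj \Fee_T b }{ a } } \le \delta_r \enorm{ a } \enorm{ b } = \delta_r$, and taking the supremum over all unit $a \in \R^S$ and $b \in \R^T$ yields $\norm{ \Fee_S^\adj \Fee_T } \le \delta_r$, as claimed.

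I expect no serious obstacle, since the whole argument is short. The one point that must be handled with a little care is the reduction of the spectral norm of the symmetric matrix $\Fee_K^\adj \Fee_K - \Id$ to a Rayleigh-quotient estimate, because~\eqref{eq:RIC2} constrains only the quadratic form and says nothing about individual entries of $\Fee_K^\adj \Fee_K$ directly. It is precisely the disjointness of $S$ and $T$ (giving $\pr{a}{b} = 0$) that lets me discard the identity term and extract the clean constant $\delta_r$; this is sharper than the bound $2.2\e$ obtained in Part~3 of Proposition~\ref{P:cons}, where an extra factor arises because that argument passes through the ranges of submatrices and their inverses rather than working with the Gram matrix of $\Fee_K$ directly.
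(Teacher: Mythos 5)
Your proof is correct and takes essentially the same route as the paper's: both realize $\Fee_S^\adj \Fee_T$ as the off-diagonal block of the Gram matrix $\Fee_K^\adj \Fee_K - \Id$ on the union $K = S \cup T$ and bound that matrix's spectral norm by $\delta_r$ using the restricted isometry inequalities~\eqref{eq:RIC2}. The only difference is presentational: where the paper simply invokes the fact that the spectral norm of a submatrix never exceeds that of the full matrix, you verify this instance of it explicitly through the bilinear-form computation $\pr{\Fee_T b}{\Fee_S a} = \pr{(\Fee_K^\adj \Fee_K - \Id)b}{a}$ and Cauchy--Schwarz, with disjointness of $S$ and $T$ eliminating the identity term.
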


\begin{proof}
Abbreviate $R = S \cup T$, and observe that $\Fee_S^\adj \Fee_T$ is a submatrix of $\Fee_R^\adj \Fee_R - \Id$.  The spectral norm of a submatrix never exceeds the norm of the entire matrix.  We discern that
$$
\norm{ \Fee_S^\adj \Fee_T }
	\leq \norm{ \Fee_R^\adj \Fee_R - \Id }
	\leq \max\{ (1+\delta_r) - 1, 1 - (1 - \delta_r) \}
	= \delta_r
$$
because the eigenvalues of $\Fee_R^\adj \Fee_R$ lie between $1 - \delta_r$ and $1 + \delta_r$.
\end{proof}

This result will be applied through the following corollary.

\begin{corollary} \label{cor:cross-corr}
Suppose $\Fee$ has restricted isometry constant $\delta_r$.  Let $T$ be a set of indices, and let $\vct{x}$ be a vector.  Provided that $r \geq \abs{T \cup \supp{\vct{x}}}$,
$$
\enorm{ \Fee_T^\adj \Fee \cdot \vct{x}\restrict{T^c} }
	\leq \delta_{r} \enorm{ \vct{x}\restrict{T^c} }.
$$
\end{corollary}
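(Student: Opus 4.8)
The plan is to reduce the claim directly to the Approximate Orthogonality estimate of Proposition~\ref{prop:approx-orth}. The first observation I would make is that the vector $\vct{x}\restrict{T^c}$ is supported on the set $S := \supp{\vct{x}} \setminus T$, which is by construction disjoint from $T$. Because of this, the full matrix $\Fee$ acts on $\vct{x}\restrict{T^c}$ only through its columns indexed by $S$, so that
\[
\Fee_T^\adj \Fee \cdot \vct{x}\restrict{T^c}
	= \Fee_T^\adj \Fee_S (\vct{x}\restrict{T^c})\restrict{S}.
\]
This rewrites the quantity of interest in terms of the two disjoint column-blocks $\Fee_T$ and $\Fee_S$, which is exactly the form Proposition~\ref{prop:approx-orth} is designed to control.

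Next I would verify the cardinality hypothesis required to invoke that proposition. Since $S \cup T = (\supp{\vct{x}} \setminus T) \cup T = \supp{\vct{x}} \cup T$, the combined cardinality satisfies $\abs{S \cup T} = \abs{T \cup \supp{\vct{x}}} \leq r$ by assumption. Thus Proposition~\ref{prop:approx-orth} applies to the disjoint pair $(S,T)$ and yields $\norm{ \Fee_T^\adj \Fee_S } \leq \delta_r$.

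Finally I would combine the operator-norm bound with the observation that restricting $\vct{x}\restrict{T^c}$ to its own support changes nothing about its Euclidean norm, giving
\[
\enorm{ \Fee_T^\adj \Fee \cdot \vct{x}\restrict{T^c} }
	= \enorm{ \Fee_T^\adj \Fee_S (\vct{x}\restrict{T^c})\restrict{S} }
	\leq \norm{ \Fee_T^\adj \Fee_S } \, \enorm{ \vct{x}\restrict{T^c} }
	\leq \delta_r \enorm{ \vct{x}\restrict{T^c} },
\]
which is the desired inequality. The only ``obstacle'' here is bookkeeping rather than analysis: one must be careful that the column index set appearing after the reduction, namely $S$, is genuinely disjoint from $T$ and that $S \cup T$ has cardinality at most $r$, so that $\delta_r$ is the correct restricted isometry constant to apply. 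Once these two set-theoretic checks are in place, the entire analytic content is supplied by Proposition~\ref{prop:approx-orth}, and no further estimates are needed.
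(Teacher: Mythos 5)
Your proof is correct and follows essentially the same route as the paper: define $S = \supp{\vct{x}} \setminus T$ so that $\vct{x}\restrict{T^c} = \vct{x}\restrict{S}$, then apply Proposition~\ref{prop:approx-orth} to the disjoint pair $(S,T)$. The only difference is that you spell out the cardinality check $\abs{S \cup T} = \abs{T \cup \supp{\vct{x}}} \leq r$ explicitly, which the paper leaves implicit.
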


\begin{proof}
Define $S = \supp{\vct{x}} \setminus T$, so we have $\vct{x}\restrict{S} = \vct{x}\restrict{T^c}$.  Thus,
$$
\enorm{ \Fee_T^\adj \Fee \cdot \vct{x}\restrict{T^c} }
	= \enorm{ \Fee_T^\adj \Fee \cdot \vct{x}\restrict{S} }
	\leq \norm{ \Fee_T^\adj \Fee_S } \enorm{ \vct{x}\restrict{S} }
	\leq \delta_{r} \enorm{ \vct{x}\restrict{T^c} },
$$
owing to Proposition~\ref{prop:approx-orth}.
\end{proof}

As a second corollary, we show that $\delta_{2r}$ gives weak control over the higher restricted isometry constants.

\begin{corollary} \label{cor:dumb-rip-bd}
Let $c$ and $r$ be positive integers.  Then $\delta_{cr} \leq c \cdot \delta_{2r}$.
\end{corollary}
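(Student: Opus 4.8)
The plan is to read off $\delta_{cr}$ as the operator norm $\norm{\Fee_T^\adj \Fee_T - \Id}$ maximized over index sets $T$ with $\abs{T} \le cr$ (this is just the symmetric reformulation of the quadratic restricted isometry inequalities \eqref{eq:RIC2}), and then to estimate that norm through the block structure obtained by partitioning $T$. Concretely, for a fixed $T$ with $\abs{T}\le cr$, I would split $T = T_1 \cup \dots \cup T_c$ into disjoint blocks, each of cardinality at most $r$; this is always possible since $\lceil \abs{T}/c\rceil \le r$. The point is that the diagonal blocks of $\Fee_T^\adj \Fee_T - \Id$ are governed by the RIC on sets of size $r$, while each off-diagonal block $\Fee_{T_i}^\adj \Fee_{T_j}$ with $i \ne j$ lives on a set $T_i \cup T_j$ of size at most $2r$ and is therefore controlled by Proposition~\ref{prop:approx-orth}.

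Carrying this out, I would test the self-adjoint matrix $M := \Fee_T^\adj \Fee_T - \Id$ against an arbitrary vector $z$ supported on $T$, writing $z_i := z\restrict{T_i}$. Expanding the quadratic form gives
\[
\langle M z, z\rangle = \sum_{i} \langle (\Fee_{T_i}^\adj \Fee_{T_i} - \Id) z_i, z_i\rangle + \sum_{i \ne j} \langle \Fee_{T_i}^\adj \Fee_{T_j} z_j, z_i\rangle.
\]
Each diagonal term is at most $\delta_r \enormsq{z_i}$ by \eqref{eq:RIC2}, and each off-diagonal term is at most $\delta_{2r}\enorm{z_i}\enorm{z_j}$ by Cauchy--Schwarz together with Proposition~\ref{prop:approx-orth}. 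Since $\delta_r \le \delta_{2r}$ by monotonicity of the restricted isometry constants, every contribution is bounded by $\delta_{2r}\enorm{z_i}\enorm{z_j}$. Summing over all pairs and applying Cauchy--Schwarz in the form $(\sum_i \enorm{z_i})^2 \le c \sum_i \enormsq{z_i} = c\,\enormsq{z}$ yields $\abs{\langle M z, z\rangle} \le c\,\delta_{2r}\enormsq{z}$. Because $M$ is self-adjoint, $\norm{M} = \sup_{\enorm{z}=1}\abs{\langle Mz,z\rangle} \le c\,\delta_{2r}$, and taking the supremum over all admissible $T$ gives $\delta_{cr} \le c\,\delta_{2r}$.

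There is essentially no hard analytic step here; the whole content is bookkeeping. The one place to be careful is that the estimate must capture both the upper and lower restricted isometry inequalities simultaneously, which is exactly why I pass to the self-adjoint operator $M$ and bound its quadratic form: the spectral norm of $M$ controls the symmetric deviation $\bigl|\enormsq{\Fee_T z} - \enormsq{z}\bigr|$ from both sides at once. The only genuine inequality invoked beyond the definitions is the elementary estimate $(\sum_i a_i)^2 \le c \sum_i a_i^2$ for the $c$ block norms $a_i = \enorm{z_i}$, and this is precisely where the factor $c$ in the statement originates.
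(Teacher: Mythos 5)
Your proof is correct. It shares the paper's skeleton --- identify $\delta_{cr}$ with the largest operator norm $\norm{ \Fee_T^\adj \Fee_T - \Id }$ over index sets $\abs{T} \le cr$, partition $T$ into $c$ disjoint blocks of size at most $r$, bound the diagonal blocks by $\delta_r$ via \eqref{eq:RIC2} and the off-diagonal blocks by $\delta_{2r}$ via Proposition~\ref{prop:approx-orth} --- but it differs in the step where the block bounds are assembled into a bound on $\norm{M}$, where $M := \Fee_T^\adj \Fee_T - \Id$. The paper invokes a block version of Gershgorin's theorem, asserting that for at least one index $i$ one has $\abs{ \norm{M} - \norm{M_{ii}} } \le \sum_{j \ne i} \norm{ M_{ij} }$, and omits its derivation (``entirely analogous with the usual proof of Gershgorin's theorem''); it also dispatches $c = 1, 2$ as separate trivial cases. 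You instead exploit self-adjointness of $M$, writing $\norm{M} = \sup_{\enorm{z}=1} \abs{ \langle M z, z \rangle }$, expand the quadratic form over the blocks, and finish with the elementary estimate $\bigl( \sum_i \enorm{z_i} \bigr)^2 \le c \sum_i \enormsq{z_i}$. What your route buys is a fully self-contained, elementary argument: no appeal to an unproved matrix-analysis lemma, no case split on $c$, and the factor $c$ is traced transparently to a single Cauchy--Schwarz step. What the paper's route buys is brevity and the marginally sharper intermediate bound $\delta_r + (c-1)\,\delta_{2r}$ --- though your expansion recovers the same refinement if you keep the diagonal terms weighted by $\delta_r$ and bound the off-diagonal sum by $\sum_{i \ne j} \enorm{z_i}\enorm{z_j} \le (c-1) \sum_i \enormsq{z_i}$, so nothing is lost; both arguments then round up to the stated $c \cdot \delta_{2r}$.
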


\begin{proof}
The result is clearly true for $c = 1, 2,$ so we assume $c \geq 3$.  Let $S$ be an arbitrary index set of size $cr$, and let $\mtx{M} = \Fee_S^\adj \Fee_S - \Id$.  It suffices to check that $\norm{ \mtx{M} } \leq c \cdot \delta_{2r}$.  To that end, we break the matrix $\mtx{M}$ into $r \times r$ blocks, which we denote $\mtx{M}_{ij}$.  A block version of Gershgorin's theorem states that $\norm{\mtx{M}}$ satisfies at least one of the inequalities
$$
\abs{ \norm{ \mtx{M} } - \norm{\mtx{M}_{ii}} } \leq \sum\nolimits_{j\neq i} \norm{ \mtx{M}_{ij} }
\qquad\text{where $i = 1, 2, \dots, c$.}
$$
The derivation is entirely analogous with the usual proof of Gershgorin's theorem, so we omit the details.  For each diagonal block, we have $\norm{ \mtx{M}_{ii} } \leq \delta_r$ because of the restricted isometry inequalities \eqref{eq:RIC2}.  For each off-diagonal block, we have $\norm{ \mtx{M}_{ij} } \leq \delta_{2r}$ because of Proposition~\ref{prop:approx-orth}.  Substitute these bounds into the block Gershgorin theorem and rearrange to complete the proof.
\end{proof}

Finally, we present a result that measures how much the sampling matrix inflates nonsparse vectors.  This bound permits us to establish the major results for sparse signals and then transfer the conclusions to the general case.

\begin{proposition}[Energy Bound] \label{prop:k2-bd}
Suppose that $\Fee$ verifies the upper inequality of \eqref{eq:RIC2}, viz.
$$
\enorm{ \Fee \vct{x} } \leq \sqrt{1 + \delta_r} \enorm{ \vct{x} }
\qquad\text{when}\qquad
\pnorm{0}{\vct{x}} \leq r.
$$
Then, for every signal $\vct{x}$,
$$
\enorm{ \Fee \vct{x} } \leq \sqrt{1 + \delta_r}
	\left[ \enorm{ \vct{x} } + \frac{1}{\sqrt{r}}
		\pnorm{1}{\vct{x}} \right].
$$
\end{proposition}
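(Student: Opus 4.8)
The plan is to reduce the statement about an arbitrary signal to the sparse hypothesis by splitting $\vct{x}$ into pieces supported on at most $r$ coordinates and invoking the triangle inequality. First I would sort the entries of $\vct{x}$ by decreasing magnitude and partition the index set into consecutive blocks $T_0, T_1, T_2, \dots$, where $T_0$ collects the $r$ largest coordinates in absolute value, $T_1$ the next $r$ largest, and so on; since $\vct{x}$ has finitely many nonzero entries, only finitely many blocks are nonempty. Each restriction $\vct{x}\restrict{T_j}$ is $r$-sparse, so the hypothesized upper inequality gives $\enorm{\Fee \vct{x}\restrict{T_j}} \le \sqrt{1+\delta_r}\,\enorm{\vct{x}\restrict{T_j}}$. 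Applying the triangle inequality to the decomposition $\vct{x} = \sum_j \vct{x}\restrict{T_j}$ yields
\[
\enorm{\Fee \vct{x}} \le \sum_j \enorm{\Fee \vct{x}\restrict{T_j}} \le \sqrt{1+\delta_r}\sum_j \enorm{\vct{x}\restrict{T_j}},
\]
so the whole problem reduces to bounding $\sum_j \enorm{\vct{x}\restrict{T_j}}$ by $\enorm{\vct{x}} + r^{-1/2}\pnorm{1}{\vct{x}}$.

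The key step is the standard sorting/averaging estimate that controls the $\ell_2$ norm of each tail block by the $\ell_1$ norm of the block immediately preceding it. Because the coordinates are sorted in decreasing order, every entry of $\vct{x}\restrict{T_j}$ for $j \ge 1$ has magnitude at most the smallest magnitude appearing in $\vct{x}\restrict{T_{j-1}}$, which in turn is at most the average magnitude $r^{-1}\pnorm{1}{\vct{x}\restrict{T_{j-1}}}$ of the previous block. Combining this with the crude bound $\enorm{\vct{x}\restrict{T_j}} \le \sqrt{r}\,\infnorm{\vct{x}\restrict{T_j}}$ gives $\enorm{\vct{x}\restrict{T_j}} \le r^{-1/2}\pnorm{1}{\vct{x}\restrict{T_{j-1}}}$ for each $j \ge 1$.

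Finally I would keep the top block separate and telescope the rest. Using $\enorm{\vct{x}\restrict{T_0}} \le \enorm{\vct{x}}$ for the first block and summing the per-block estimate over $j \ge 1$,
\[
\sum_{j \ge 1} \enorm{\vct{x}\restrict{T_j}} \le \frac{1}{\sqrt{r}}\sum_{j\ge 1}\pnorm{1}{\vct{x}\restrict{T_{j-1}}} \le \frac{1}{\sqrt{r}}\pnorm{1}{\vct{x}},
\]
where the last inequality holds because the blocks partition the support, so the block $\ell_1$ norms sum to at most $\pnorm{1}{\vct{x}}$. Adding the two contributions gives $\sum_j \enorm{\vct{x}\restrict{T_j}} \le \enorm{\vct{x}} + r^{-1/2}\pnorm{1}{\vct{x}}$, and substituting this into the first display produces the claimed bound.

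The hard part will be purely bookkeeping rather than conceptual: getting the index shift in the telescoping sum right (each $T_j$ is estimated by $T_{j-1}$), and remembering that the leading block $T_0$ must be handled separately since its $\ell_2$ mass is what produces the $\enorm{\vct{x}}$ term and cannot be absorbed into the $\ell_1$ term. No convergence subtleties arise, and the averaging inequality is the only place where the sorting is actually used.
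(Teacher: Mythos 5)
Your proposal is correct and is essentially the paper's own argument: the paper packages the same computation as a convex-body containment (showing that the set $\{ \vct{x} : \enorm{\vct{x}} + r^{-1/2}\pnorm{1}{\vct{x}} \leq 1\}$ lies inside the convex hull of the $r$-sparse unit balls, so that $\vct{x}$ is a convex combination of $r$-sparse unit vectors), but the sorted block decomposition, the key estimate $\enorm{\vct{x}\restrict{T_j}} \leq r^{-1/2} \pnorm{1}{\vct{x}\restrict{T_{j-1}}}$ for $j \geq 1$, and the separate handling of the top block are identical to yours. Your direct triangle-inequality phrasing is just an unwrapped version of that convexity argument, so there is nothing to fix.
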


\begin{proof}

First, observe that the hypothesis of the proposition can be regarded as a statement about the operator norm of $\Fee$ as a map between two Banach spaces.  For a set $I \subset \{1, 2, \dots, N\}$, write $B_2^I$ for the unit ball in $\ell_2(I)$.  Define the convex body
$$
S = \conv\left\{ \bigcup\nolimits_{\abs{I} \leq r} B_2^I \right\}
	\subset \Cspace{N},
$$
and notice that, by hypothesis, the operator norm
$$
\pnorm{S \to 2}{ \Fee } =
	\max_{\vct{x} \in S} \enorm{ \Fee \vct{x} }
	\leq \sqrt{1 + \delta_r}.
$$
Define a second convex body 
$$
K = \left\{ \vct{x} : \enorm{\vct{x}} + \frac{1}{\sqrt{r}} \pnorm{1}{\vct{x}}
	\leq 1 \right\} \subset \Cspace{N},
$$
and consider the operator norm
$$
\pnorm{K \to 2}{ \Fee } =
	\max_{\vct{x} \in K} \enorm{ \Fee \vct{x} }.
$$
The content of the proposition is the claim that
$$
\pnorm{K \to 2}{ \Fee } \leq \pnorm{S \to 2}{ \Fee }.
$$
To establish this point, it suffices to check that $K \subset S$.

Choose a vector $\vct{x} \in K$.  We partition the support of $\vct{x}$ into sets of size $r$.  Let $I_0$ index the $r$ largest-magnitude components of $\vct{x}$, breaking ties lexicographically.  Let $I_1$ index the next largest $r$ components, and so forth.  Note that the final block $I_J$ may have fewer than $r$ components.  We may assume that $\vct{x}\restrict{I_j}$ is nonzero for each $j$.

This partition induces a decomposition
$$
\vct{x} = \vct{x}\restrict{I_0} + \sum\nolimits_{j = 0}^J \vct{x}\restrict{I_j}
	= \lambda_0 \vct{y}_0 + \sum\nolimits_{j=0}^J \lambda_j \vct{y}_j
$$
where
$$
\lambda_j = \smnorm{2}{ \vct{x}\restrict{I_j} }
\quad\text{and}\quad
\vct{y}_j = \lambda_j^{-1} \vct{x}\restrict{I_j}.
$$
By construction, each vector $\vct{y}_j$ belongs to $S$ because it is $r$-sparse and has unit $\ell_2$ norm.  We will prove that $\sum_j\lambda_j \leq 1$, which implies that $\vct{x}$ can be written as a convex combination of vectors from the set $S$.  As a consequence, $\vct{x} \in S$.  It emerges that $K \subset S$.

Fix $j$ in the range $\{1, 2, \dots, J\}$.  It follows that $I_j$ contains at most $r$ elements and $I_{j-1}$ contains exactly $r$ elements.  Therefore,
$$
\lambda_j = \enorm{ \vct{x}\restrict{I_j} }
	\leq \sqrt{r} \infnorm{\vct{x}\restrict{I_j} }
	\leq \sqrt{r} \cdot \frac{1}{r} \pnorm{1}{ \vct{x} \restrict{I_{j-1}} }
$$
where the last inequality holds because the magnitude of $\vct{x}$ on the set $I_{j-1}$ dominates its largest entry in $I_j$.  Summing these relations, we obtain
$$
\sum\nolimits_{j=1}^J \lambda_j
	\leq \frac{1}{\sqrt{r}} \sum\nolimits_{j=1}^J \pnorm{1}{ \vct{x}\restrict{I_{j-1}}}
	= \frac{1}{\sqrt{r}} \pnorm{1}{\vct{x}}.
$$
It is clear that $\lambda_0 = \enorm{ \vct{x}\restrict{I_0} } \leq \enorm{ \vct{x} }$.  We may conclude that
$$
\sum\nolimits_{j=0}^J \lambda_j
	\leq \enorm{\vct{x}} + \frac{1}{\sqrt{r}} \pnorm{1}{\vct{x}}
	\leq 1
$$
because $\vct{x} \in K$.
\end{proof}

\subsubsection[Iteration Invariant: Sparse Case]{Iteration Invariant: Sparse Case}

We now commence the proof of Theorem~\ref{thm:cosamp-invar}.  For the moment, let us assume that the signal is actually sparse.  We will remove this assumption later.

The result states that each iteration of the algorithm reduces the approximation error by a constant factor, while adding a small multiple of the noise.  As a consequence, when the approximation error is large in comparison with the noise, the algorithm makes substantial progress in identifying the unknown signal.

\begin{theorem}[Iteration Invariant: Sparse Case]
	\label{thm:invar-sparse}
Assume that $\vct{x}$ is $s$-sparse.  For each $k \geq 0$, the signal approximation $\vct{a}^k$ is $s$-sparse, and
$$
\smnorm{2}{ \vct{x} - \vct{a}^{k+1} }
	\leq 0.5 \smnorm{2}{ \vct{x} - \vct{a}^{k} } + 7.5 \enorm{ \vct{e} }.
$$
In particular,
$$
\smnorm{2}{ \vct{x} - \vct{a}^{k} }
	\leq 2^{-k} \enorm{ \vct{x} } + 15 \enorm{ \vct{e} }.
$$
\end{theorem}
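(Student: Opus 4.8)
The plan is to analyze a single pass of the loop and show that the signal-domain error contracts by a factor $1/2$ up to a noise term, then unroll the resulting recursion. Fix the iteration producing $\vct{a}^{k+1}$ from $\vct{a}^k$ and write $\vct{r} := \vct{x} - \vct{a}^k$ for the current approximation error (not to be confused with the residual samples $\vct{v}$). By the inductive sparsity of $\vct{a}^k$, the vector $\vct{r}$ is $2s$-sparse, and the current samples are $\vct{v} = \vct{u} - \Phi\vct{a}^k = \Phi\vct{r} + \vct{e}$, so the proxy is $\vct{y} = \Phi^\adj\vct{v} = \Phi^\adj\Phi\vct{r} + \Phi^\adj\vct{e}$. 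I would isolate three steps—identification, estimation, pruning—and then chase the constants.

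\emph{Identification.} Write $R = \supp(\vct{r})$ (so $|R|\le 2s$) and $\Omega = \supp(\vct{y}_{2s})$; the goal is to bound $\|\vct{r}|_{\Omega^c}\|_2$. Since $\vct{r}$ lives on $R$, we have $\vct{r}|_{\Omega^c} = \vct{r}|_{R\setminus\Omega}$, and the triangle inequality gives $\|\vct{r}|_{R\setminus\Omega}\|_2 \le \|(\vct{r}-\vct{y})|_{R\setminus\Omega}\|_2 + \|\vct{y}|_{R\setminus\Omega}\|_2$. The combinatorial heart is that $\Omega$ collects the $2s$ largest entries of $\vct{y}$ while $|R\setminus\Omega|\le|\Omega\setminus R|$, so $\|\vct{y}|_{R\setminus\Omega}\|_2 \le \|\vct{y}|_{\Omega\setminus R}\|_2$; and as $\Omega\setminus R\subseteq R^c$ we have $\vct{y}|_{\Omega\setminus R} = (\vct{y}-\vct{r})|_{\Omega\setminus R}$. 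Hence $\|\vct{r}|_{\Omega^c}\|_2 \le \sqrt{2}\,\|(\vct{y}-\vct{r})|_{R\triangle\Omega}\|_2$. Because $\vct{r}$ is supported in $B := R\cup\Omega$ with $|B|\le 4s$, the restricted isometry inequalities bound $\|((\Phi^\adj\Phi-\Id)\vct{r})|_{R\triangle\Omega}\|_2 \le \|\Phi_B^\adj\Phi_B - \Id\|\,\|\vct{r}\|_2 \le \delta_{4s}\|\vct{r}\|_2$, while Proposition~\ref{prop:rip-basic} gives $\|(\Phi^\adj\vct{e})|_{R\triangle\Omega}\|_2 \le \sqrt{1+\delta_{4s}}\,\|\vct{e}\|_2$. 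After merging, $T = \Omega\cup\supp(\vct{a}^k)$ obeys $\vct{x}|_{T^c} = \vct{r}|_{T^c}$ (since $\supp\vct{a}^k\subseteq T$) and $\|\vct{x}|_{T^c}\|_2 \le \|\vct{r}|_{\Omega^c}\|_2$.

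\emph{Estimation and pruning.} Writing $\vct{u} = \Phi_T\vct{x}|_T + \Phi\vct{x}|_{T^c} + \vct{e}$ and $\vct{b}|_T = \Phi_T^\psinv\vct{u}$, I would expand $\vct{b}|_T - \vct{x}|_T = \Phi_T^\psinv(\Phi\vct{x}|_{T^c} + \vct{e})$ and bound the two pieces by $\tfrac{\delta_{4s}}{1-\delta_{3s}}\|\vct{x}|_{T^c}\|_2$ (using $|T|\le 3s$, $|T\cup\supp\vct{x}|\le 4s$, and Corollary~\ref{cor:cross-corr} together with the inverse bound of Proposition~\ref{prop:rip-basic}) and $\tfrac{1}{\sqrt{1-\delta_{3s}}}\|\vct{e}\|_2$ (Proposition~\ref{prop:rip-basic}), respectively. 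Since $\vct{b}|_{T^c}=\vct{0}$, Pythagoras gives $\|\vct{x}-\vct{b}\|_2^2 = \|\vct{x}|_{T^c}\|_2^2 + \|\vct{b}|_T-\vct{x}|_T\|_2^2$, yielding $\|\vct{x}-\vct{b}\|_2 \le C_{\mathrm{est}}\|\vct{x}|_{T^c}\|_2 + C_{\mathrm{est}}'\|\vct{e}\|_2$. The pruning bound is then one line: $\vct{a}^{k+1}=\vct{b}_s$ is a best $s$-term approximation and $\vct{x}$ is $s$-sparse, so $\|\vct{b}-\vct{b}_s\|_2\le\|\vct{b}-\vct{x}\|_2$, whence $\|\vct{x}-\vct{a}^{k+1}\|_2 \le 2\|\vct{x}-\vct{b}\|_2$.

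\emph{Chaining and unrolling.} Substituting $\delta_{4s}\le 0.1$—which Corollary~\ref{cor:dumb-rip-bd} delivers from $\delta_{2s}\le 0.025$—into the composition $\|\vct{x}-\vct{a}^{k+1}\|_2 \le 2C_{\mathrm{est}}\big(C_{\mathrm{id}}\|\vct{r}\|_2 + C_{\mathrm{id}}'\|\vct{e}\|_2\big) + 2C_{\mathrm{est}}'\|\vct{e}\|_2$ and verifying numerically that the product of the leading constants stays below $1/2$, one obtains $\|\vct{x}-\vct{a}^{k+1}\|_2 \le 0.5\|\vct{x}-\vct{a}^k\|_2 + 7.5\|\vct{e}\|_2$. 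Iterating this recursion from $\vct{a}^0=\vct{0}$ and summing $\sum_{j\ge 0}2^{-j}=2$ gives the stated $\|\vct{x}-\vct{a}^k\|_2 \le 2^{-k}\|\vct{x}\|_2 + 15\|\vct{e}\|_2$. The main obstacle is the identification step: the ``largest-entries'' combinatorics must interlock precisely with the RIC consequences so that the coefficient multiplying $\|\vct{r}\|_2$ is small enough to survive the unavoidable factor-of-two loss in pruning and still leave a contraction factor of at most $0.5$; the estimation and pruning steps are essentially mechanical given Proposition~\ref{prop:rip-basic} and the best-approximation property.
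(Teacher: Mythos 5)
Your proposal is correct, and its skeleton---identification, support merger, estimation, pruning, then chaining the constants and unrolling the recursion from $\vct{a}^0 = \vct{0}$---matches the paper's; your estimation, pruning, and merger steps coincide with Lemmas~\ref{lem:estimation}, \ref{lem:pruning}, and~\ref{lem:merger} essentially verbatim. Where you genuinely depart is the identification lemma, which is the delicate step. The paper sandwiches the proxy: it upper-bounds $\enorm{\vct{y}\restrict{\Omega\setminus R}}$ and lower-bounds $\enorm{\vct{y}\restrict{R\setminus\Omega}}$ using Proposition~\ref{prop:rip-basic} and Corollary~\ref{cor:cross-corr} (approximate orthogonality of disjoint column sets), then rearranges; this costs a division by $1-\delta_{2s}$ and yields $\enorm{\vct{r}\restrict{\Omega^c}} \leq 0.2223\enorm{\vct{r}} + 2.34\enorm{\vct{e}}$. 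You instead add and subtract $\vct{r}$, write $\vct{y}-\vct{r} = (\Fee^\adj\Fee - \Id)\vct{r} + \Fee^\adj\vct{e}$, and control the restriction to the symmetric difference $R \triangle \Omega$ by the operator-norm bound $\norm{\Fee_B^\adj\Fee_B - \Id} \leq \delta_{4s}$ on $B = R \cup \Omega$, paying only a $\sqrt{2}$ when the two disjoint pieces are combined. This gives $\enorm{\vct{r}\restrict{\Omega^c}} \leq \sqrt{2}\left(\delta_{4s}\enorm{\vct{r}} + \sqrt{1+\delta_{4s}}\enorm{\vct{e}}\right) \approx 0.142\enorm{\vct{r}} + 1.483\enorm{\vct{e}}$, which is tighter than the paper's bound; after the factor $2.224$ coming from estimation and pruning, your recursion constants are roughly $0.32$ and $5.4$ versus the paper's $0.49$ and $7.3$, so both land inside the advertised $0.5$ and $7.5$, yours with more slack. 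The trade-off is minor: the paper's sandwich argument keeps separate track of where $\delta_{2s}$ and $\delta_{4s}$ enter, which is useful if one wants the weakest possible hypothesis on each constant individually, whereas your symmetric-difference argument is shorter and numerically stronger but charges everything to $\delta_{4s}$.
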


\noindent
The argument proceeds in a sequence of short lemmas, each corresponding to one step in the algorithm.  Throughout this section, we retain the assumption that $\vct{x}$ is $s$-sparse.

Fix an iteration $k \geq 1$.  We write $\vct{a} = \vct{a}^{k-1}$ for the signal approximation at the beginning of the iteration.  Define the residual $\vct{r} = \vct{x} - \vct{a}$, which we interpret as the part of the signal we have not yet recovered.  Since the approximation $\vct{a}$ is always $s$-sparse, the residual $\vct{r}$ must be $2s$-sparse.  Notice that the vector $\vct{v}$ of updated samples can be viewed as noisy samples of the residual:
$$
\vct{v} \defby \vct{u} - \Fee \vct{a} = \Fee (\vct{x} - \vct{a}) + \vct{e}
	= \Fee \vct{r} + \vct{e}.
$$

The identification phase produces a set of components where the residual signal still has a lot of energy.  

\begin{lemma}[Identification] \label{lem:ident}
The set $\Omega = \supp{ \vct{y}_{2s} }$, where $\vct{y} = \Fee^\adj \vct{v}$ is the signal proxy, contains at most $2s$ indices, and
$$
\enorm{ \vct{r}\restrict{\Omega^c} }
	\leq 0.2223 \enorm{ \vct{r} } + 2.34 \enorm{\vct{e}}.
$$
\end{lemma}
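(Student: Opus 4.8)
The plan is to argue that the set $\Omega$ of the $2s$ largest coordinates of the proxy $\vct{y} = \Fee^\adj \vct{v}$ captures almost all of the energy of the residual $\vct{r} = \vct{x} - \vct{a}$, so that what remains, $\vct{r}\restrict{\Omega^c}$, is controlled by the proxy error and the noise. The cardinality claim is immediate, since $\Omega = \supp{\vct{y}_{2s}}$ and $\vct{y}_{2s}$ has at most $2s$ nonzero entries. Write $R = \supp{\vct{r}}$; since $\vct{a}$ is $s$-sparse and $\vct{x}$ is $s$-sparse, $\vct{r}$ is $2s$-sparse, so $\abs{R} \le 2s$. Because $\vct{r}$ vanishes off $R$, we have $\vct{r}\restrict{\Omega^c} = \vct{r}\restrict{R \setminus \Omega}$, and it is this quantity I would bound. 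It is convenient to introduce the proxy error $\vct{w} = \vct{y} - \vct{r} = (\Fee^\adj \Fee - \Id)\vct{r} + \Fee^\adj \vct{e}$, which measures how far the proxy lies from the true residual.

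First I would exploit the maximality of $\Omega$. Since $\Omega$ maximizes $\enorm{\vct{y}\restrict{T}}$ over all $T$ with $\abs{T} = 2s$ and $\abs{R} \le 2s$, we get $\enorm{\vct{y}\restrict{\Omega}} \ge \enorm{\vct{y}\restrict{R}}$; cancelling the contribution of the common block $R \cap \Omega$ yields $\enorm{\vct{y}\restrict{\Omega \setminus R}} \ge \enorm{\vct{y}\restrict{R \setminus \Omega}}$. Combining the triangle inequality on $R \setminus \Omega$ (where $\vct{r}$ may be nonzero) with the fact that $\vct{r}$ vanishes on $\Omega \setminus R$ (so $\vct{y} = \vct{w}$ there) gives
\[
\enorm{\vct{r}\restrict{R \setminus \Omega}} \le \enorm{\vct{w}\restrict{R \setminus \Omega}} + \enorm{\vct{y}\restrict{R \setminus \Omega}} \le \enorm{\vct{w}\restrict{R \setminus \Omega}} + \enorm{\vct{y}\restrict{\Omega \setminus R}} = \enorm{\vct{w}\restrict{R \setminus \Omega}} + \enorm{\vct{w}\restrict{\Omega \setminus R}}.
\]
Thus everything reduces to bounding the proxy error $\vct{w}$ on the two disjoint regions $R \setminus \Omega$ and $\Omega \setminus R$.

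I would then estimate the two regions using the restricted isometry consequences, and this is where the bookkeeping is the real work. On $R \setminus \Omega \subseteq R$ the identity term in $\vct{w}$ survives: restricting to $R$ and using $\enorm{(\Fee_R^\adj \Fee_R - \Id)\vct{r}\restrict{R}} \le \delta_{2s}\enorm{\vct{r}}$ (the eigenvalues of $\Fee_R^\adj\Fee_R$ lie in $[1-\delta_{2s},1+\delta_{2s}]$, exactly the fact used in Proposition~\ref{prop:approx-orth}) handles the signal part, while Proposition~\ref{prop:rip-basic} gives $\enorm{\Fee_{R\setminus\Omega}^\adj \vct{e}} \le \sqrt{1+\delta_{2s}}\,\enorm{\vct{e}}$ for the noise part. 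On $\Omega \setminus R$, which is disjoint from $R$, the identity term drops out, so $\vct{w}\restrict{\Omega\setminus R} = \Fee_{\Omega\setminus R}^\adj \Fee \vct{r} + \Fee_{\Omega\setminus R}^\adj \vct{e}$; the first term is a cross-correlation of disjoint column sets of combined size at most $4s$, bounded by $\delta_{4s}\enorm{\vct{r}}$ via Corollary~\ref{cor:cross-corr}, and the noise part is again at most $\sqrt{1+\delta_{2s}}\,\enorm{\vct{e}}$. Adding the two regions yields a bound of the form $(\delta_{2s}+\delta_{4s})\enorm{\vct{r}} + 2\sqrt{1+\delta_{2s}}\,\enorm{\vct{e}}$, and substituting the working hypotheses $\delta_{2s}\le 0.025$ and $\delta_{4s}\le 0.1$ (the latter guaranteed by Corollary~\ref{cor:dumb-rip-bd}) gives coefficients no larger than the claimed $0.2223$ and $2.34$, which is enough; in fact the stated constants are exactly $\tfrac{2\delta_{4s}}{1-\delta_{4s}}$ and $\tfrac{2\sqrt{1+\delta_{4s}}}{1-\delta_{4s}}$ evaluated at $\delta_{4s}=0.1$, which a slightly cruder unification of the two regions reproduces directly.

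I expect the main obstacle to be precisely this region-splitting. One must keep careful track of when a column set meets the true support $R$ (so that the $-\Id$ term contributes and one invokes the $\delta_{2s}$ bound on $\Fee_R^\adj\Fee_R - \Id$) versus when it is disjoint from $R$ (so that only the off-diagonal cross-correlation, governed by $\delta_{4s}$, appears), and one must verify that the combined index sets never exceed the order for which the assumed restricted isometry constants are available. The energy-comparison step and the triangle inequality are routine; the delicate point is organizing $\vct{w}$ over $R\setminus\Omega$ and $\Omega\setminus R$ so that each piece is matched to the correct consequence of the restricted isometry property.
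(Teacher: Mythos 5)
Your proof is correct and follows the same skeleton as the paper's: the identical splitting of the analysis over $R\setminus\Omega$ and $\Omega\setminus R$, the same maximality-plus-cancellation step giving $\enorm{\vct{y}\restrict{R\setminus\Omega}}\le\enorm{\vct{y}\restrict{\Omega\setminus R}}$, and the same two restricted isometry consequences (Proposition~\ref{prop:rip-basic} for the noise terms, Corollary~\ref{cor:cross-corr} for the cross-correlations, with the same cardinality accounting $\abs{(\Omega\setminus R)\cup R}\le 4s$). The one genuine difference is the bookkeeping of the diagonal term: the paper bounds $\enorm{\vct{y}\restrict{R\setminus\Omega}}$ from below by peeling off the block $\Fee_{R\setminus\Omega}^\adj\Fee_{R\setminus\Omega}$, which costs a factor $(1-\delta_{2s})$ that must later be divided out, whereas you subtract the identity exactly, working with the proxy error $\vct{w}=\vct{y}-\vct{r}=(\Fee^\adj\Fee-\Id)\vct{r}+\Fee^\adj\vct{e}$ and a triangle inequality, so the identity enters with coefficient exactly $1$. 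Your version is marginally sharper: it yields $\enorm{\vct{r}\restrict{\Omega^c}}\le(\delta_{2s}+\delta_{4s})\enorm{\vct{r}}+2\sqrt{1+\delta_{2s}}\,\enorm{\vct{e}}\le 0.2\,\enorm{\vct{r}}+2.1\,\enorm{\vct{e}}$ under $\delta_{2s}\le\delta_{4s}\le 0.1$, compared with the paper's $\bigl[(\delta_{2s}+\delta_{4s})\enorm{\vct{r}}+2\sqrt{1+\delta_{2s}}\,\enorm{\vct{e}}\bigr]/(1-\delta_{2s})$, whose evaluation at $\delta_{2s}=\delta_{4s}=0.1$ is exactly where the stated constants $0.2223$ and $2.34$ come from, as you correctly guessed. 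Both arguments establish the lemma; yours simply avoids the harmless $1/(1-\delta_{2s})$ loss.
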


\begin{proof}
The identification phase forms a proxy $\vct{y} = \Fee^\adj \vct{v}$ for the residual signal.  The algorithm then selects a set $\Omega$ of $2s$ components from $\vct{y}$ that have the largest magnitudes.  The goal of the proof is to show that the energy in the residual on the set $\Omega^c$ is small in comparison with the total energy in the residual.

Define the set $R = \supp{\vct{r}}$.  Since $R$ contains at most $2s$ elements, our choice of $\Omega$ ensures that $\enorm{ \vct{y}\restrict{R} } \leq \enorm{ \vct{y}\restrict{\Omega} }$.  By squaring this inequality and canceling the terms in $R \cap \Omega$, we discover that
$$
\enorm{ \vct{y}\restrict{R \setminus \Omega} }
	\leq \enorm{ \vct{y}\restrict{\Omega \setminus R} }.
$$
Since the coordinate subsets here contain few elements, we can use the restricted isometry constants to provide bounds on both sides.

First, observe that the set $\Omega \setminus R$ contains at most $2s$ elements.  Therefore, we may apply Proposition~\ref{prop:rip-basic} and Corollary~\ref{cor:cross-corr} to obtain
\begin{align*}
\enorm{ \vct{y}\restrict{\Omega \setminus R} }
	&= \smnorm{2}{ \Fee_{\Omega \setminus R}^\adj (\Fee \vct{r} + \vct{e}) } \\
	&\leq \smnorm{2}{ \Fee_{\Omega \setminus R}^\adj \Fee \vct{r} }
		+ \smnorm{2}{ \Fee_{\Omega \setminus R}^\adj \vct{e} } \\
	&\leq \delta_{4s} \enorm{ \vct{r} }
		+ \sqrt{1 + \delta_{2s}} \enorm{ \vct{e} }.
\end{align*}
Likewise, the set $R \setminus \Omega$ contains $2s$ elements or fewer, so Proposition~\ref{prop:rip-basic} and Corollary~\ref{cor:cross-corr} yield
\begin{align*}
\enorm{ \vct{y}\restrict{R \setminus \Omega} }
	&= \smnorm{2}{ \Fee_{R \setminus \Omega}^\adj (\Fee \vct{r} + \vct{e}) } \\
	&\geq \smnorm{2}{ \Fee_{R \setminus \Omega}^\adj \Fee
			\cdot \vct{r}\restrict{R \setminus \Omega} }
		- \smnorm{2}{ \Fee_{R \setminus \Omega}^\adj \Fee
			\cdot \vct{r}\restrict{\Omega} }
		- \smnorm{2}{ \Fee_{R \setminus \Omega}^\adj \vct{e} } \\
	&\geq (1 - \delta_{2s}) \smnorm{2}{ \vct{r}\restrict{R \setminus \Omega} }
		- \delta_{2s} \enorm{ \vct{r} }
		- \sqrt{1 + \delta_{2s}} \enorm{ \vct{e} }. 
\end{align*}
Since the residual is supported on $R$, we can rewrite $\vct{r}\restrict{R \setminus \Omega} = \vct{r}\restrict{\Omega^c}$.  Finally, combine the last three inequalities and rearrange to obtain
$$
\enorm{ \vct{r}\restrict{\Omega^c} }
	\leq \frac{ (\delta_{2s} + \delta_{4s}) \enorm{ \vct{r} }
		+ 2 \sqrt{1 + \delta_{2s}} \enorm{ \vct{e} } }
		{ 1 - \delta_{2s} }.
$$
Invoke the numerical hypothesis that $\delta_{2s} \leq \delta_{4s} \leq 0.1$ to complete the argument.
\end{proof}

The next step of the algorithm merges the support of the current signal approximation $\vct{a}$ with the newly identified set of components.  The following result shows that components of the signal $\vct{x}$ outside this set have very little energy.

\begin{lemma}[Support Merger] \label{lem:merger}
Let $\Omega$ be a set of at most $2s$ indices.  The set $T = \Omega \cup \supp{\vct{a}}$ contains at most $3s$ indices, and
$$
\enorm{ \vct{x}\restrict{T^c} }
	\leq \enorm{ \vct{r}\restrict{\Omega^c} }.
$$
\end{lemma}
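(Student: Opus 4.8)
The plan is to dispatch this lemma with two elementary observations: one about cardinality, one about supports. Both are set-theoretic and require no appeal to the restricted isometry constants, so I expect no real obstacle here — this is the easy bookkeeping step that sets up the substantive estimates in the estimation and pruning lemmas to follow.

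First I would handle the cardinality claim. Since $T = \Omega \cup \supp{\vct{a}}$, subadditivity of cardinality under unions gives $\abs{T} \leq \abs{\Omega} + \abs{\supp{\vct{a}}}$. By hypothesis $\abs{\Omega} \leq 2s$, and because we are in the sparse case the running approximation $\vct{a}$ is $s$-sparse, so $\abs{\supp{\vct{a}}} \leq s$. Adding these yields $\abs{T} \leq 3s$, as required.

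Next I would prove the norm bound. The key point is that $\supp{\vct{a}} \subset T$, so $\vct{a}$ vanishes identically on the complement $T^c$. Recalling that the residual is $\vct{r} = \vct{x} - \vct{a}$, this means that on $T^c$ we have $\vct{x}\restrict{T^c} = (\vct{x} - \vct{a})\restrict{T^c} = \vct{r}\restrict{T^c}$, since the $\vct{a}$ contribution drops out. Hence $\enorm{\vct{x}\restrict{T^c}} = \enorm{\vct{r}\restrict{T^c}}$.

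Finally I would compare the two restrictions of $\vct{r}$. Because $\Omega \subset T$, taking complements reverses the inclusion to give $T^c \subset \Omega^c$. Restricting a fixed vector to a smaller coordinate set can only decrease its Euclidean norm, so $\enorm{\vct{r}\restrict{T^c}} \leq \enorm{\vct{r}\restrict{\Omega^c}}$. Chaining this with the previous equality produces $\enorm{\vct{x}\restrict{T^c}} \leq \enorm{\vct{r}\restrict{\Omega^c}}$, completing the argument. The whole proof is a few lines; the only thing to be careful about is correctly tracking that $\vct{a}$ is supported inside $T$ (which is what makes the $\vct{x}$-versus-$\vct{r}$ swap legitimate) and that the inclusion $\Omega \subset T$ is the right direction to shrink the index set.
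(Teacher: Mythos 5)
Your proof is correct and follows essentially the same route as the paper: the identity $\vct{x}\restrict{T^c} = (\vct{x}-\vct{a})\restrict{T^c} = \vct{r}\restrict{T^c}$ via $\supp{\vct{a}} \subset T$, followed by the norm monotonicity from $T^c \subset \Omega^c$. The explicit cardinality count is a minor addition the paper leaves implicit, but nothing of substance differs.
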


\begin{proof}
Since $\supp{\vct{a}} \subset T$, we find that
$$
\enorm{ \vct{x}\restrict{T^c} }
	= \enorm{ (\vct{x} - \vct{a})\restrict{T^c} }
	= \enorm{ \vct{r}\restrict{T^c} }
	\leq \enorm{ \vct{r}\restrict{\Omega^c} },
$$
where the inequality follows from the containment $T^c \subset \Omega^c$.
\end{proof}

The estimation step of the algorithm solves a least-squares problem to obtain values for the coefficients in the set $T$.  We need a bound on the error of this approximation.

\begin{lemma}[Estimation] \label{lem:estimation}
Let $T$ be a set of at most $3s$ indices, and define the least-squares signal estimate $\vct{b}$ by the formulae
$$
\vct{b}\restrict{T} = \Fee_T^\psinv \vct{u}
\qquad\text{and}\qquad
\vct{b}\restrict{T^c} = \vct{0},
$$
where $\vct{u} = \Fee\vct{x} + \vct{e}$. Then
$$
\enorm{ \vct{x} - \vct{b} }
	\leq 1.112 \enorm{ \vct{x}\restrict{T^c} } + 1.06 \enorm{\vct{e}}.
$$
\end{lemma}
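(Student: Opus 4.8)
The plan is to split the error $\vct{x} - \vct{b}$ according to the set $T$ and its complement, bound each piece separately using the restricted isometry consequences already in hand, and recombine by the triangle inequality. Since $\vct{b}\restrict{T^c} = \vct{0}$ by construction, the off-support error is simply $(\vct{x}-\vct{b})\restrict{T^c} = \vct{x}\restrict{T^c}$, which contributes the term $\enorm{\vct{x}\restrict{T^c}}$ with no further work. All the substance lies in the on-support error $(\vct{x}-\vct{b})\restrict{T}$.

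To handle that piece, I would first expand the samples as $\vct{u} = \Fee_T\,\vct{x}\restrict{T} + \Fee\cdot\vct{x}\restrict{T^c} + \vct{e}$ and substitute into $\vct{b}\restrict{T} = \Fee_T^\psinv\vct{u} = (\Fee_T^\adj\Fee_T)^{-1}\Fee_T^\adj\vct{u}$; here $\Fee_T^\adj\Fee_T$ is invertible because the restricted isometry inequalities force the singular values of $\Fee_T$ to be bounded below. The key algebraic cancellation is that $\Fee_T^\psinv\Fee_T$ acts as the identity on $\R^T$, so the term $\Fee_T\,\vct{x}\restrict{T}$ reproduces $\vct{x}\restrict{T}$ exactly and cancels, leaving
$$
(\vct{x}-\vct{b})\restrict{T} = -(\Fee_T^\adj\Fee_T)^{-1}\Fee_T^\adj\bigl(\Fee\cdot\vct{x}\restrict{T^c} + \vct{e}\bigr).
$$
I then break this into its two summands. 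For the signal-contamination term, I factor the norm as $\norm{(\Fee_T^\adj\Fee_T)^{-1}}\cdot\enorm{\Fee_T^\adj\Fee\cdot\vct{x}\restrict{T^c}}$; since $\vct{x}$ is $s$-sparse and $\abs{T}\le 3s$, the union $T\cup\supp{\vct{x}}$ has at most $4s$ elements, so Corollary~\ref{cor:cross-corr} gives $\enorm{\Fee_T^\adj\Fee\cdot\vct{x}\restrict{T^c}}\le\delta_{4s}\enorm{\vct{x}\restrict{T^c}}$, while the last bound of Proposition~\ref{prop:rip-basic} gives $\norm{(\Fee_T^\adj\Fee_T)^{-1}}\le(1-\delta_{3s})^{-1}$. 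For the noise term, I recognize $(\Fee_T^\adj\Fee_T)^{-1}\Fee_T^\adj = \Fee_T^\psinv$ and invoke the pseudoinverse bound of Proposition~\ref{prop:rip-basic} to get $\enorm{\Fee_T^\psinv\vct{e}}\le(1-\delta_{3s})^{-1/2}\enorm{\vct{e}}$.

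Combining via $\enorm{\vct{x}-\vct{b}}\le\enorm{\vct{x}\restrict{T^c}}+\enorm{(\vct{x}-\vct{b})\restrict{T}}$ produces a bound of the form $\bigl(1+\tfrac{\delta_{4s}}{1-\delta_{3s}}\bigr)\enorm{\vct{x}\restrict{T^c}} + (1-\delta_{3s})^{-1/2}\enorm{\vct{e}}$. Substituting the standing numerical hypothesis $\delta_{3s}\le\delta_{4s}\le 0.1$ (legitimate by Corollary~\ref{cor:dumb-rip-bd}) makes the first coefficient at most $1+0.1/0.9 < 1.112$ and the second at most $1/\sqrt{0.9}<1.06$, which is exactly the claim. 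The main thing to get right is the least-squares residual identity above — tracking that $\Fee\cdot\vct{x}\restrict{T^c}$ is the only signal contamination surviving the oblique projection onto $\range(\Fee_T)$ — together with the cardinality bookkeeping ($4s$ for the cross-correlation, $3s$ for the inverse) so that the correct restricted isometry constants are invoked; the rest is routine estimation.
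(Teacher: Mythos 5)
Your proposal is correct and follows essentially the same route as the paper's proof: the same split $\enorm{\vct{x}-\vct{b}} \leq \enorm{\vct{x}\restrict{T^c}} + \enorm{(\vct{x}-\vct{b})\restrict{T}}$, the same cancellation via $\Fee_T^\psinv \Fee_T = \Id_T$ leaving $\Fee_T^\psinv(\Fee\cdot\vct{x}\restrict{T^c} + \vct{e})$, and the same invocation of Proposition~\ref{prop:rip-basic} and Corollary~\ref{cor:cross-corr} with the constants $\delta_{3s} \leq \delta_{4s} \leq 0.1$ to reach $1 + \delta_{4s}/(1-\delta_{3s}) < 1.112$ and $(1-\delta_{3s})^{-1/2} < 1.06$. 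The only cosmetic difference is that the monotonicity $\delta_{3s} \leq \delta_{4s}$ is plain monotonicity of restricted isometry constants rather than a consequence of Corollary~\ref{cor:dumb-rip-bd}, which is instead used to pass from a $\delta_{2s}$ hypothesis to a $\delta_{4s}$ bound in the main theorem.
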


This result assumes that we solve the least-squares problem in infinite precision.  In practice, the right-hand side of the bound contains an extra term owing to the error from the iterative least-squares solver.  Below, we study how many iterations of the least-squares solver are required to make the least-squares error negligible in the present argument.

\begin{proof}
Note first that
$$
\enorm{ \vct{x} - \vct{b} }
	\leq \enorm{ \vct{x}\restrict{T^c} }
		+ \enorm{ \vct{x}\restrict{T} - \vct{b}\restrict{T} }.
$$
Using the expression $\vct{u} = \Fee\vct{x} + \vct{e}$ and the fact $\Fee_T^\psinv \Fee_T = \Id_T$, we calculate that
\begin{align*}
\enorm{ \vct{x}\restrict{T} - \vct{b}\restrict{T} }
	&= \smnorm{2}{ \vct{x}\restrict{T} - \Fee_T^\psinv(\Fee \cdot \vct{x}\restrict{T} + \Fee \cdot \vct{x}\restrict{T^c} + \vct{e}) } \\
	&=  \smnorm{2}{ \Fee_T^\psinv (\Fee \cdot \vct{x}\restrict{T^c}
		+ \vct{e}) } \\
	&\leq \smnorm{2}{ (\Fee_T^\adj \Fee_T)^{-1} \Fee_T^\adj \Fee \cdot \vct{x}\restrict{T^c} } + \smnorm{2}{ \Fee_T^\psinv \vct{e} }.
\end{align*}
The cardinality of $T$ is at most $3s$, and $\vct{x}$ is $s$-sparse, so Proposition~\ref{prop:rip-basic} and Corollary~\ref{cor:cross-corr} imply that
\begin{align*}
\enorm{ \vct{x}\restrict{T} - \vct{b}\restrict{T} }
	&\leq \frac{1}{1 - \delta_{3s}} \enorm{ \Fee_T^\adj \Fee \cdot \vct{x}\restrict{T^c} } + \frac{1}{\sqrt{1 - \delta_{3s}}} \enorm{ \vct{e}} \\
	&\leq \frac{\delta_{4s}}{1 - \delta_{3s}}
		\enorm{ \vct{x}\restrict{T^c} }
		+ \frac{\enorm{\vct{e}}}{\sqrt{1 - \delta_{3s}}}.
\end{align*}
Combine the bounds to reach
$$
\enorm{ \vct{x} - \vct{b} } \leq
	\left[ 1 + \frac{\delta_{4s}}{1 - \delta_{3s}} \right]
		\enorm{ \vct{x}\restrict{T^c} }
	+ \frac{\enorm{\vct{e}}}{\sqrt{1 - \delta_{3s}}}.
$$
Finally, invoke the hypothesis that $\delta_{3s} \leq \delta_{4s} \leq 0.1$.
\end{proof}

The final step of each iteration is to prune the intermediate approximation to its largest $s$ terms.  The following lemma provides a bound on the error in the pruned approximation.

\begin{lemma}[Pruning] \label{lem:pruning}
The pruned approximation $\vct{b}_s$ satisfies
$$
\enorm{ \vct{x} - \vct{b}_s }
	\leq 2 \enorm{ \vct{x} - \vct{b} }.
$$
\end{lemma}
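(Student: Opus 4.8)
The plan is to exploit the defining optimality of the pruning step rather than to manipulate coordinates directly. By construction, $\vct{b}_s$ is obtained from $\vct{b}$ by retaining its $s$ largest-magnitude entries, so among \emph{all} $s$-sparse vectors it is the closest one to $\vct{b}$ in the Euclidean norm. That is, for every $s$-sparse $\vct{z}$ we have $\enorm{\vct{b} - \vct{b}_s} \leq \enorm{\vct{b} - \vct{z}}$. This is the single structural fact I would lean on.

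First I would instantiate this optimality with the specific competitor $\vct{z} = \vct{x}$. This is exactly where the standing assumption of the sparse case enters: since $\vct{x}$ is itself $s$-sparse, it is an admissible candidate, and therefore $\enorm{\vct{b} - \vct{b}_s} \leq \enorm{\vct{b} - \vct{x}} = \enorm{\vct{x} - \vct{b}}$. In other words, the error introduced by pruning $\vct{b}$ down to $s$ terms is no larger than the error $\vct{b}$ already had relative to the true (sparse) signal.

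Next I would apply the triangle inequality to the decomposition $\vct{x} - \vct{b}_s = (\vct{x} - \vct{b}) + (\vct{b} - \vct{b}_s)$, giving $\enorm{\vct{x} - \vct{b}_s} \leq \enorm{\vct{x} - \vct{b}} + \enorm{\vct{b} - \vct{b}_s}$, and then substitute the bound from the previous step to conclude $\enorm{\vct{x} - \vct{b}_s} \leq 2\enorm{\vct{x} - \vct{b}}$, as claimed.

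There is no genuine obstacle here; the argument is two lines. The only point worth flagging is conceptual: the factor $2$ arises precisely because we bound the pruning error by the \emph{entire} approximation error, and the comparison is legitimate only because $\vct{x}$ is $s$-sparse and so competes against $\vct{b}_s$ in the best-$s$-term problem. When this lemma is later deployed in the general (non-sparse) setting, $\vct{x}$ will no longer be an admissible $s$-sparse competitor, and this clean comparison will have to be replaced by comparing against $\vct{x}_s$ and folding the resulting tail term into the unrecoverable energy $\nu$.
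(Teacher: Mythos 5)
Your proof is correct and is essentially identical to the paper's: both rest on the triangle inequality applied to $\vct{x} - \vct{b}_s = (\vct{x}-\vct{b}) + (\vct{b}-\vct{b}_s)$ together with the observation that $\vct{b}_s$ is the best $s$-sparse approximation to $\vct{b}$, so the $s$-sparse competitor $\vct{x}$ can only do worse. Your closing remark about why the sparsity of $\vct{x}$ is essential (and how the general case is later handled by reduction) matches exactly how the paper deploys this lemma, so there is nothing to add.
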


\begin{proof}
The intuition is that $\vct{b}_s$ is close to $\vct{b}$, which is close to $\vct{x}$.  Rigorously,
$$
\enorm{ \vct{x} - \vct{b}_s }
	\leq \enorm{ \vct{x} - \vct{b} } + \enorm{ \vct{b} - \vct{b}_s }
	\leq 2 \enorm{ \vct{x} - \vct{b} }.
$$
The second inequality holds because $\vct{b}_s$ is the best $s$-sparse approximation to $\vct{b}$.  In particular, the $s$-sparse vector $\vct{x}$ is a worse approximation.
\end{proof}

We now complete the proof of the iteration invariant for sparse signals, Theorem~\ref{thm:invar-sparse}.  At the end of an iteration, the algorithm forms a new approximation $\vct{a}^{k} = \vct{b}_s$, which is evidently $s$-sparse.  Applying the lemmas we have established, we easily bound the error:
\begin{align*}
\smnorm{2}{ \vct{x} - \vct{a}^k }
	&= \enorm{ \vct{x} - \vct{b}_s } \\
	&\leq 2 \enorm{ \vct{x} - \vct{b} }
		&& \text{Pruning (Lemma \ref{lem:pruning})} \\
	&\leq 2 \cdot ( 1.112 \enorm{ \vct{x}\restrict{T^c} }
		+ 1.06\enorm{\vct{e}} )
		&& \text{Estimation (Lemma \ref{lem:estimation})} \\
	&\leq 2.224 \enorm{ \vct{r}\restrict{\Omega^c} } + 2.12 \enorm{\vct{e}}
		&& \text{Support merger (Lemma \ref{lem:merger})} \\
	&\leq 2.224 \cdot ( 0.2223 \enorm{ \vct{r} } + 2.34 \enorm{\vct{e}} )
		+ 2.12 \enorm{\vct{e}}
		&& \text{Identification (Lemma \ref{lem:ident})} \\
	&< 0.5 \enorm{ \vct{r} } + 7.5 \enorm{\vct{e}} \\
	&= 0.5 \smnorm{2}{ \vct{x} - \vct{a}^{k-1} } + 7.5 \enorm{\vct{e}}.
\end{align*}
To obtain the second bound in Theorem~\ref{thm:invar-sparse}, simply solve the error recursion and note that
$$
(1 + 0.5 + 0.25 + \dots) \cdot 7.5 \enorm{\vct{e}} \leq 15 \enorm{\vct{e}}.
$$
This point completes the argument.  

Before extending the iteration invariant to the sparse case, we first analyze in detail the least-sqaures step.  This will allow us to completely prove our main result, Theorem~\ref{thm:cosamp}.

\subsubsection[Least Squares Analysis]{Least Squares Analysis}\label{sec:LSA}

To develop an efficient implementation of CoSaMP, it is critical to use an iterative method when we solve the least-squares problem in the estimation step.  Here we analyze this step for the noise-free case.  The two natural choices are Richardson's iteration and conjugate gradient.  The efficacy of these methods rests on the assumption that the sampling operator has small restricted isometry constants.  Indeed, since the set $T$ constructed in the support merger step contains at most $3s$ components, the hypothesis $\delta_{4s} \leq 0.1$ ensures that the condition number
$$
\kappa( \Fee_T^\adj \Fee_T )
	= \frac{\lambda_{\max}(\Fee_T^\adj \Fee_T)}{\lambda_{\min}(\Fee_T^\adj \Fee_T)}
	\leq \frac{1 + \delta_{3s}}{1 - \delta_{3s}}
	< 1.223.
$$
This condition number is closely connected with the performance of Richardson's iteration and conjugate gradient.  In this section, we show that Theorem~\ref{thm:invar-sparse} holds if we perform a constant number of iterations of either least-squares algorithm.

For completeness, let us explain how Richardson's iteration can be applied to solve the least-squares problems that arise in CoSaMP.  Suppose we wish to compute $\mtx{A}^\psinv \vct{u}$ where $\mtx{A}$ is a tall, full-rank matrix.  Recalling the definition of the pseudoinverse, we realize that this amounts to solving a linear system of the form
$$
(\mtx{A}^\adj \mtx{A}) \vct{b} = \mtx{A}^\adj \vct{u}.
$$
This problem can be approached by \term{splitting} the Gram matrix:
$$
\mtx{A}^\adj \mtx{A} = \Id + \mtx{M}
$$
where $\mtx{M} = \mtx{A}^\adj \mtx{A} - \Id$.  Given an initial iterate $\vct{z}^0$, Richardon's method produces  the subsequent iterates via the formula
$$
\vct{z}^{\ell+1} = \mtx{A}^\adj \vct{u} - \mtx{M} \vct{z}^{\ell}.
$$
Evidently, this iteration requires only matrix--vector multiplies with $\mtx{A}$ and $\mtx{A}^\adj$.  It is worth noting that Richardson's method can be accelerated \cite[Sec.~7.2.5]{Bjo96:Numerical-Methods}, but we omit the details.

It is quite easy to analyze Richardson's iteration~\cite[Sec.~7.2.1]{Bjo96:Numerical-Methods}.  Observe that
$$
\smnorm{2}{ \vct{z}^{\ell+1} - \mtx{A}^\psinv \vct{u} }
	= \smnorm{2}{ \mtx{M} ( \vct{z}^\ell - \mtx{A}^{\psinv} \vct{u}) }
	\leq \norm{ \mtx{M} } \smnorm{2}{ \vct{z}^\ell - \mtx{A}^{\psinv} \vct{u} }.
$$
This recursion delivers
$$
\smnorm{2}{ \vct{z}^{\ell} - \mtx{A}^\psinv \vct{u} }
	\leq \norm{ \mtx{M} }^\ell \smnorm{2}{ \vct{z}^0 - \mtx{A}^\psinv \vct{u} }
\qquad\text{for $\ell = 0, 1, 2, \dots$.}
$$
In words, the iteration converges linearly.

In our setting, $\mtx{A} = \Fee_T$ where $T$ is a set of at most $3s$ indices.  Therefore, the restricted isometry inequalities \eqref{eq:RIC2} imply that
$$
\norm{ \mtx{M} } = \norm{ \Fee_T^\adj \Fee_T - \Id } \leq \delta_{3s}.
$$
We have assumed that $\delta_{3s} \leq \delta_{4s} \leq 0.1$, which means that the iteration converges quite fast.  Once again, the restricted isometry behavior of the sampling matrix plays an essential role in the performance of the CoSaMP algorithm.

Conjugate gradient provides even better guarantees for solving the least-squares problem, but it is somewhat more complicated to describe and rather more difficult to analyze.  We refer the reader to \cite[Sec.~7.4]{Bjo96:Numerical-Methods} for more information.  The following lemma summarizes the behavior of both Richardson's iteration and conjugate gradient in our setting.

\begin{lemma}[Error Bound for LS]
	\label{lem:ls-error}
Richardson's iteration produces a sequence $\{ \vct{z}^\ell \}$ of iterates that satisfy
$$
\smnorm{2}{ \vct{z}^\ell - \Fee_T^\psinv \vct{u} }
	\leq 0.1^\ell \cdot \smnorm{2}{ \vct{z}^0 - \Fee_T^\psinv \vct{u} }
\qquad\text{for $\ell = 0, 1, 2, \dots$}. 
$$ 
Conjugate gradient produces a sequence of iterates that satisfy
$$
\smnorm{2}{ \vct{z}^\ell - \Fee_T^\psinv \vct{u} }
	\leq 2 \cdot \rho^\ell \cdot \smnorm{2}{ \vct{z}^0 - \Fee_T^\psinv \vct{u} }
\qquad\text{for $\ell = 0, 1, 2, \dots$}. 
$$
where
$$
\rho = \frac{ \sqrt{\kappa(\Fee_T^\adj \Fee_T)} - 1 }{\sqrt{\kappa(\Fee_T^\adj \Fee_T)} + 1} \leq 0.072.
$$
\end{lemma}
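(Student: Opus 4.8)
The plan is to treat the two iterative solvers separately: the Richardson bound is essentially immediate from the contraction identity already derived above the lemma, whereas the conjugate gradient bound requires importing the standard Krylov-subspace convergence theory and then translating it into the $\ell_2$ norm in which the lemma is stated. In both cases the only analytic input beyond the iterative method itself is the restricted isometry control on the singular values of $\Fee_T$ supplied by Proposition~\ref{prop:rip-basic}.

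For Richardson's iteration I would simply iterate the one-step estimate established just above the lemma. With $\mtx{A} = \Fee_T$ and $\mtx{M} = \Fee_T^\adj \Fee_T - \Id$, that estimate reads $\smnorm{2}{\vct{z}^{\ell+1} - \Fee_T^\psinv \vct{u}} \le \norm{\mtx{M}}\,\smnorm{2}{\vct{z}^{\ell} - \Fee_T^\psinv \vct{u}}$. Unrolling the recursion gives a geometric bound with ratio $\norm{\mtx{M}}^\ell$, and Proposition~\ref{prop:rip-basic} forces the eigenvalues of $\Fee_T^\adj\Fee_T$ into $[1-\delta_{3s}, 1+\delta_{3s}]$, so $\norm{\mtx{M}} \le \delta_{3s} \le 0.1$. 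This yields exactly the claimed factor $0.1^\ell$.

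For conjugate gradient I would first recall the classical energy-norm convergence estimate (as in \cite[Sec.~7.4]{Bjo96:Numerical-Methods}). Writing $\vct{b}^\star = \Fee_T^\psinv \vct{u}$ for the least-squares solution and $B = \Fee_T^\adj \Fee_T$ for the symmetric positive-definite Gram matrix, CG applied to the normal equations minimizes the $B$-energy norm of the error over the Krylov subspace, so $\smnorm{B}{\vct{z}^\ell - \vct{b}^\star} \le 2\rho^\ell \smnorm{B}{\vct{z}^0 - \vct{b}^\star}$ with $\rho = (\sqrt{\kappa(B)}-1)/(\sqrt{\kappa(B)}+1)$; this rate comes from bounding the governing min--max polynomial problem by a shifted, scaled Chebyshev polynomial on the spectral interval of $B$. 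The condition number is already controlled above the lemma by $\kappa(B) \le (1+\delta_{3s})/(1-\delta_{3s}) < 1.223$, which plugs directly into $\rho$ to give $\rho < 0.051 \le 0.072$. The final step converts from the energy norm to $\ell_2$: since $\smnorm{B}{\vct{w}} = \enorm{\Fee_T \vct{w}}$ and the singular values of $\Fee_T$ are pinned to $[\sqrt{1-\delta_{3s}}, \sqrt{1+\delta_{3s}}]$ by Proposition~\ref{prop:rip-basic}, one has $\sqrt{1-\delta_{3s}}\,\enorm{\vct{w}} \le \smnorm{B}{\vct{w}} \le \sqrt{1+\delta_{3s}}\,\enorm{\vct{w}}$, so the energy bound implies $\enorm{\vct{z}^\ell - \vct{b}^\star} \le 2\sqrt{\kappa(B)}\,\rho^\ell\,\enorm{\vct{z}^0 - \vct{b}^\star}$.

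The main obstacle is precisely this last conversion, which introduces an extra factor $\sqrt{\kappa(B)} \approx 1.1$ not present in the stated constant $2$. I would dispose of it by exploiting the slack between the true rate ($\rho \approx 0.05$) and the quoted value ($0.072$): for $\ell \ge 1$ the inequality $\sqrt{\kappa(B)}\,(0.051)^\ell \le (0.072)^\ell$ holds, while for $\ell = 0$ the claim $\enorm{\vct{z}^0 - \vct{b}^\star} \le 2\enorm{\vct{z}^0 - \vct{b}^\star}$ is trivial, so the stated form $2\rho^\ell$ with $\rho \le 0.072$ is recovered. The only genuinely nontrivial ingredient is the Chebyshev estimate underlying the CG rate, which I would cite rather than reprove; everything else is restricted-isometry-driven singular-value bookkeeping.
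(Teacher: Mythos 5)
Your proposal is correct, and its Richardson half coincides exactly with the paper's own argument: the paper derives the one-step contraction $\smnorm{2}{\vct{z}^{\ell+1}-\Fee_T^\psinv\vct{u}} \le \norm{\mtx{M}}\,\smnorm{2}{\vct{z}^{\ell}-\Fee_T^\psinv\vct{u}}$, unrolls it, and invokes $\norm{\Fee_T^\adj\Fee_T - \Id} \le \delta_{3s} \le 0.1$ via the restricted isometry inequalities, just as you do. Where you genuinely diverge is the conjugate gradient half: the paper does not prove it at all, but simply defers to \cite[Sec.~7.4]{Bjo96:Numerical-Methods} after recording the condition-number bound $\kappa(\Fee_T^\adj\Fee_T) < 1.223$. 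Your treatment fills in what that citation hides: the classical Chebyshev estimate controls the error in the $B$-energy norm $\smnorm{B}{\cdot} = \enorm{\Fee_T\,\cdot\,}$, and passing to the $\ell_2$ norm of the lemma statement costs an extra factor $\sqrt{\kappa(B)} \approx 1.1$, which the bare constant $2$ in the lemma does not accommodate. Your resolution --- observing that the true rate is at most about $0.051$, so $\sqrt{\kappa(B)}\,(0.051)^\ell \le (0.072)^\ell$ for all $\ell \ge 1$ while the case $\ell = 0$ is trivial --- is a legitimate way to recover the stated form, and it is consistent with how the lemma is actually consumed (Corollary~\ref{cor:ls-est} only uses the numerical rate after three iterations). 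The one caveat is interpretive rather than mathematical: strictly speaking you establish the displayed inequality with the numerical value $0.072$ in place of the exact expression $(\sqrt{\kappa}-1)/(\sqrt{\kappa}+1)$, since the norm conversion degrades the latter; this is exactly the slack the inequality $\rho \le 0.072$ appears designed to absorb, so your reading is the faithful one, and your write-up is in this respect more careful than the paper's.
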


This can even be improved further if the eigenvalues of $\Phi_T^*\Phi_T$ are clustered~\cite{deanna}.  Iterative least-squares algorithms must be seeded with an initial iterate, and their performance depends heavily on a wise selection thereof.  CoSaMP offers a natural choice for the initializer: the current signal approximation.  As the algorithm progresses, the current signal approximation provides an increasingly good starting point for solving the least-squares problem.
The following shows that the error in the initial iterate is controlled by the current approximation error.

\begin{lemma}[Initial Iterate for LS]
	\label{lem:ls-init}
Let $\vct{x}$ be an $s$-sparse signal with noisy samples $\vct{u} = \Fee \vct{x} + \vct{e}$.  Let $\vct{a}^{k-1}$ be the signal approximation at the end of the $(k-1)$th iteration, and let $T$ be the set of components identified by the support merger. Then
$$
\smnorm{2}{ \vct{a}^{k-1} - \Fee_T^\psinv \vct{u}  }
	\leq 2.112 \smnorm{2}{ \vct{x} - \vct{a}^{k-1} } + 1.06 \enorm{ \vct{e} }
$$
\end{lemma}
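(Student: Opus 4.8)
The plan is to recognize that the quantity $\Fee_T^\psinv \vct{u}$ appearing here is exactly the least-squares signal estimate $\vct{b}$ whose accuracy was already analyzed in the Estimation lemma (Lemma~\ref{lem:estimation}). Concretely, extend $\Fee_T^\psinv \vct{u}$ to a vector on all of $\{1,\dots,d\}$ by setting it to zero off $T$; this is precisely the vector $\vct{b}$ with $\vct{b}\restrict{T} = \Fee_T^\psinv \vct{u}$ and $\vct{b}\restrict{T^c} = \vct{0}$. Since $\vct{a}^{k-1}$ is also supported inside $T$ (it was merged into $T$ in the support merger step, so $\supp{\vct{a}^{k-1}} \subset T = \Omega \cup \supp{\vct{a}^{k-1}}$), the left-hand side $\smnorm{2}{\vct{a}^{k-1} - \Fee_T^\psinv \vct{u}}$ equals $\smnorm{2}{\vct{a}^{k-1} - \vct{b}}$, a difference of two vectors living on $T$. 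The idea is then to route through the true signal $\vct{x}$ by the triangle inequality and quote the already-proven error bound for $\vct{b}$.

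The key steps, in order, would be: first, record the support containment $\supp{\vct{a}^{k-1}} \subset T$, so that $\vct{a}^{k-1}\restrict{T^c} = \vct{0}$ and hence $\vct{x}\restrict{T^c} = (\vct{x} - \vct{a}^{k-1})\restrict{T^c}$, giving $\enorm{\vct{x}\restrict{T^c}} \le \smnorm{2}{\vct{x} - \vct{a}^{k-1}}$. Second, apply the triangle inequality $\smnorm{2}{\vct{a}^{k-1} - \vct{b}} \le \smnorm{2}{\vct{a}^{k-1} - \vct{x}} + \smnorm{2}{\vct{x} - \vct{b}}$. Third, invoke Lemma~\ref{lem:estimation}, which (since $\abs{T} \le 3s$ and $\vct{x}$ is $s$-sparse) gives $\smnorm{2}{\vct{x} - \vct{b}} \le 1.112\,\enorm{\vct{x}\restrict{T^c}} + 1.06\,\enorm{\vct{e}}$. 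Substituting the first step's bound for $\enorm{\vct{x}\restrict{T^c}}$ yields $\smnorm{2}{\vct{x} - \vct{b}} \le 1.112\,\smnorm{2}{\vct{x} - \vct{a}^{k-1}} + 1.06\,\enorm{\vct{e}}$, and combining with the triangle inequality produces the constant $1 + 1.112 = 2.112$ on $\smnorm{2}{\vct{x} - \vct{a}^{k-1}}$ and $1.06$ on $\enorm{\vct{e}}$, exactly matching the claim.

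There is no genuine analytical obstacle here; the whole content is bookkeeping. The only points that require a moment of care are the two observations that make the reduction to Lemma~\ref{lem:estimation} legitimate: that $\vct{a}^{k-1}$ is supported on $T$ (so that its difference from $\Fee_T^\psinv\vct{u}$ is measured entirely within $T$, and so that the tail $\enorm{\vct{x}\restrict{T^c}}$ can be upper bounded by the current approximation error), and that $\vct{u} = \Fee\vct{x} + \vct{e}$ with $\vct{x}$ $s$-sparse and $\abs{T}\le 3s$, which are precisely the hypotheses under which Lemma~\ref{lem:estimation} was established. Thus the whole argument is a single triangle inequality followed by a citation, and the constants add up on the nose.
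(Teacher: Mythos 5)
Your proposal is correct and is essentially identical to the paper's own proof: the paper likewise uses the support containment $\supp(\vct{a}^{k-1}) \subset T$ to bound $\smnorm{2}{\vct{x}\restrict{T^c}} \leq \smnorm{2}{\vct{x}-\vct{a}^{k-1}}$, applies the triangle inequality through $\vct{x}$, and invokes Lemma~\ref{lem:estimation} to arrive at the constants $2.112$ and $1.06$. Your explicit identification of $\Fee_T^\psinv\vct{u}$ with the zero-extended estimate $\vct{b}$ is a minor tidiness improvement over the paper's notation, but it is the same argument.
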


\begin{proof}
By construction of $T$, the approximation $\vct{a}^{k-1}$ is supported inside $T$, so
$$
\smnorm{2}{ \vct{x}\restrict{T^c} }
	= \smnorm{2}{(\vct{x} - \vct{a}^{k-1})\restrict{T^c} }
	\leq \smnorm{2}{\vct{x} - \vct{a}^{k-1}}.
$$
Using Lemma~\ref{lem:estimation}, we may calculate how far $\vct{a}^{k-1}$ lies from the solution to the least-squares problem.
\begin{align*}
\smnorm{2}{ \vct{a}^{k-1} - \Fee_T^\psinv \vct{u} }
	&\leq \smnorm{2}{ \vct{x} - \vct{a}^{k-1} }
		+ \smnorm{2}{ \vct{x} - \Fee_T^\psinv \vct{u} } \\
	&\leq \smnorm{2}{ \vct{x} - \vct{a}^{k-1} }
		+ 1.112 \smnorm{2}{ \vct{x}\restrict{T^c} }
		+ 1.06 \enorm{ \vct{e} } \\
	&\leq 2.112 \smnorm{2}{ \vct{x} - \vct{a}^{k-1} } + 1.06 \enorm{ \vct{e} }.
\end{align*}

\end{proof}

We need to determine how many iterations of the least-squares algorithm are required to ensure that the approximation produced is sufficiently good to support the performance of CoSaMP.

\begin{corollary}[Estimation by Iterative LS]
	\label{cor:ls-est}
Suppose that we initialize the LS algorithm with $\vct{z}^0 = \vct{a}^{k-1}$.  After at most three iterations, both Richardson's iteration and conjugate gradient produce a signal estimate $\vct{b}$ that satisfies
$$
\smnorm{2}{ \vct{x} - \vct{b} } \leq
	1.112 \smnorm{2}{ \vct{x}\restrict{T^c} }
	+ 0.0022 \smnorm{2}{ \vct{x} - \vct{a}^{k-1} } + 1.062 \enorm{ \vct{e} }.
$$
\end{corollary}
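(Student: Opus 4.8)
The plan is to route the bound through the \emph{exact} least-squares estimate and to treat the three solver iterations as a small additive perturbation. Write $\vct{b}^\star$ for the vector with $\vct{b}^\star\restrict{T} = \Fee_T^\psinv \vct{u}$ and $\vct{b}^\star\restrict{T^c} = \vct{0}$; this is precisely the estimate analyzed in Lemma~\ref{lem:estimation}. The approximate estimate $\vct{b}$ produced after three iterations also vanishes on $T^c$ and has $\vct{b}\restrict{T} = \vct{z}^3$, so $\vct{b}$ and $\vct{b}^\star$ differ only on $T$. Hence I would begin with the triangle inequality
$$
\smnorm{2}{\vct{x} - \vct{b}} \le \smnorm{2}{\vct{x} - \vct{b}^\star} + \smnorm{2}{\vct{b}^\star - \vct{b}},
$$
and observe that $\smnorm{2}{\vct{b}^\star - \vct{b}} = \smnorm{2}{\vct{z}^3 - \Fee_T^\psinv \vct{u}}$, which is exactly the iterative least-squares error controlled by Lemma~\ref{lem:ls-error}.

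For the first term I would quote Lemma~\ref{lem:estimation} verbatim, giving $\smnorm{2}{\vct{x} - \vct{b}^\star} \le 1.112 \smnorm{2}{\vct{x}\restrict{T^c}} + 1.06 \enorm{\vct{e}}$. For the second term I would chain the two solver lemmas. Taking $\ell = 3$ in Lemma~\ref{lem:ls-error} produces the factor $0.1^3 = 0.001$ for Richardson's iteration and $2(0.072)^3 < 0.001$ for conjugate gradient, so the single bound $\smnorm{2}{\vct{z}^3 - \Fee_T^\psinv \vct{u}} \le 0.001 \smnorm{2}{\vct{z}^0 - \Fee_T^\psinv \vct{u}}$ covers both methods uniformly. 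Since $\vct{z}^0 = \vct{a}^{k-1}$, Lemma~\ref{lem:ls-init} then converts the initial error into $\smnorm{2}{\vct{z}^0 - \Fee_T^\psinv \vct{u}} \le 2.112 \smnorm{2}{\vct{x} - \vct{a}^{k-1}} + 1.06 \enorm{\vct{e}}$.

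Combining the three estimates and collecting constants finishes the argument: the current-approximation term acquires coefficient $0.001 \cdot 2.112 = 0.002112 \le 0.0022$, while the noise coefficient becomes $1.06 + 0.001 \cdot 1.06 \le 1.062$, matching the claimed inequality. There is no genuine obstacle here; the corollary is a bookkeeping assembly of three earlier lemmas. The one point worth flagging is the observation that $\vct{b}$ and $\vct{b}^\star$ share the same (zero) values off $T$, which is what lets the detour through $\vct{b}^\star$ isolate the solver error as exactly $\smnorm{2}{\vct{z}^3 - \Fee_T^\psinv \vct{u}}$; and the choice of three iterations is dictated precisely by needing $0.1^\ell$ small enough that the amplified initial error stays below the tolerance $0.0022$.
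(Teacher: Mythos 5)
Your proposal is correct and follows essentially the same route as the paper: both arguments combine Lemma~\ref{lem:ls-error} (with $\ell = 3$) and Lemma~\ref{lem:ls-init} to bound the solver error by $0.002112\,\smnorm{2}{\vct{x} - \vct{a}^{k-1}} + 0.00106\,\enorm{\vct{e}}$, then invoke Lemma~\ref{lem:estimation} for the exact least-squares estimate and finish with the triangle inequality. Your explicit remark that $\vct{b}$ and the exact estimate $\vct{b}^\star$ agree (both vanish) on $T^c$ is a point the paper glosses over notationally, but it is the same proof.
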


\begin{proof}
Combine Lemma~\ref{lem:ls-error} and Lemma~\ref{lem:ls-init} to see that three iterations of Richardson's method yield
$$
\smnorm{2}{ \vct{z}^3 - \Fee_T^\psinv \vct{u}  }
	\leq 0.002112 \smnorm{2}{ \vct{x} - \vct{a}^{k-1} } + 0.00106 \enorm{ \vct{e} }.
$$
The bound for conjugate gradient is slightly better.  Let $\vct{b}\restrict{T} = \vct{z}^3$.  According to the estimation result, Lemma~\ref{lem:estimation}, we have
$$
\smnorm{2}{ \vct{x} - \Fee_T^\psinv \vct{u} } \leq
	1.112 \smnorm{2}{ \vct{x}\restrict{T^c} } + 1.06 \enorm{ \vct{e} }.
$$
An application of the triangle inequality completes the argument.
\end{proof}

Finally, we need to check that the sparse iteration invariant, Theorem~\ref{thm:invar-sparse} still holds when we use an iterative least-squares algorithm.

\begin{theorem}[Sparse Iteration Invariant with Iterative LS] \label{thm:invar-sparse-ls}
Suppose that we use Richardson's iteration or conjugate gradient for the estimation step, initializing the LS algorithm with the current approximation $\vct{a}^{k-1}$ and performing three LS iterations.  Then Theorem~\ref{thm:invar-sparse} still holds.
\end{theorem}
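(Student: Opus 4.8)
The plan is to reuse, essentially verbatim, the error recursion developed in the proof of the exact sparse invariant (Theorem~\ref{thm:invar-sparse}), changing only the estimation step: wherever that argument invoked the exact least-squares bound of Lemma~\ref{lem:estimation}, I would instead invoke the iterative bound of Corollary~\ref{cor:ls-est}. Since Corollary~\ref{cor:ls-est} reproduces the exact estimation bound up to a single additional summand, $0.0022\,\smnorm{2}{\vct{x}-\vct{a}^{k-1}}$, together with a negligibly larger noise coefficient, the whole verification reduces to checking that this extra term can be absorbed into the slack already present in the constants. Note first that the sparsity conclusion is immediate: $\vct{a}^k = \vct{b}_s$ is $s$-sparse regardless of which solver produced the intermediate estimate $\vct{b}$.

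Concretely, I would fix an iteration $k$ and write $\vct{r} = \vct{x}-\vct{a}^{k-1}$, so that $\enorm{\vct{r}} = \smnorm{2}{\vct{x}-\vct{a}^{k-1}}$. First apply Pruning (Lemma~\ref{lem:pruning}) to get $\smnorm{2}{\vct{x}-\vct{a}^{k}} = \enorm{\vct{x}-\vct{b}_s} \le 2\enorm{\vct{x}-\vct{b}}$, where now $\vct{b}$ is the output of three steps of Richardson's iteration or conjugate gradient initialized at $\vct{a}^{k-1}$. Next substitute the iterative estimation bound of Corollary~\ref{cor:ls-est}, then apply Support merger (Lemma~\ref{lem:merger}) to replace $\enorm{\vct{x}\restrict{T^c}}$ by $\enorm{\vct{r}\restrict{\Omega^c}}$, and finally Identification (Lemma~\ref{lem:ident}) to bound $\enorm{\vct{r}\restrict{\Omega^c}}$ by $0.2223\,\enorm{\vct{r}} + 2.34\,\enorm{\vct{e}}$. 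This is exactly the chain of the sparse case, with the single inexactness summand carried along unchanged.

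The only thing left to verify — and the crux of the whole argument — is the arithmetic of the constants after collecting terms. The coefficient of $\smnorm{2}{\vct{x}-\vct{a}^{k-1}}$ becomes
$$
2 \cdot 1.112 \cdot 0.2223 + 2 \cdot 0.0022 = 0.49439 + 0.0044 < 0.5,
$$
so the contraction factor survives; the key point is that three least-squares iterations drive the inexactness contribution ($0.0044$ after pruning) below the roughly $0.0056$ of margin left over by the exact argument. Likewise the noise coefficient becomes $2\cdot 1.112\cdot 2.34 + 2\cdot 1.062 = 7.328 < 7.5$. Hence $\smnorm{2}{\vct{x}-\vct{a}^{k}} \le 0.5\,\smnorm{2}{\vct{x}-\vct{a}^{k-1}} + 7.5\,\enorm{\vct{e}}$, which is precisely the one-step bound of Theorem~\ref{thm:invar-sparse}, and the telescoped second bound follows by solving the recursion exactly as before. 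I expect no genuine obstacle here: the substance of the statement is simply that three iterations perturb the estimation bound by an amount that was engineered (via the condition-number estimate and the choice of warm start $\vct{z}^0 = \vct{a}^{k-1}$) to be smaller than the available slack, so the main step is just confirming this numerical inequality.
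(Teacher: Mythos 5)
Your proposal is correct and follows essentially the same route as the paper's own proof: repeat the sparse-case chain (pruning, then Corollary~\ref{cor:ls-est} in place of Lemma~\ref{lem:estimation}, then support merger and identification) and verify that the constants still close, yielding $2.224\cdot 0.2223 + 0.0044 < 0.5$ and $2.224\cdot 2.34 + 2.124 < 7.5$. Your explicit bookkeeping of the slack ($0.0044$ of inexactness against the $\approx 0.0056$ margin) matches the arithmetic the paper carries out inline.
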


\begin{proof}
We repeat the calculation from the above case using Corollary~\ref{cor:ls-est} instead of the simple estimation lemma.  To that end, recall that the residual $\vct{r} = \vct{x} - \vct{a}^{k-1}$.  Then
\begin{align*}
\smnorm{2}{ \vct{x} - \vct{a}^k }
	&\leq 2 \enorm{ \vct{x} - \vct{b} } \\
	&\leq 2 \cdot ( 1.112 \enorm{ \vct{x}\restrict{T^c} }
		+ 0.0022 \enorm{ \vct{r} } + 1.062 \enorm{\vct{e}} ) \\
	&\leq 2.224 \enorm{ \vct{r}\restrict{\Omega^c} } + 0.0044 \enorm{\vct{r}} + 2.124 \enorm{\vct{e}} \\
	&\leq 2.224 \cdot ( 0.2223 \enorm{ \vct{r} } + 2.34 \enorm{\vct{e}} ) + 0.0044 \enorm{\vct{r}}
		+ 2.124 \enorm{\vct{e}} \\
	&< 0.5 \enorm{ \vct{r} } + 7.5 \enorm{\vct{e}} \\
	&= 0.5 \smnorm{2}{ \vct{x} - \vct{a}^{k-1} } + 7.5 \enorm{\vct{e}}.
\end{align*}
This bound is precisely what is required for the theorem to hold.
\end{proof}

We are now prepared to extend the iteration invariant to the general case, and prove Theorem~\ref{thm:cosamp}.

\subsubsection{Extension to General Signals}

In this section, we finally complete the proof of the main result for CoSaMP, Theorem~\ref{thm:cosamp-invar}.  The remaining challenge is to remove the hypothesis that the target signal is sparse, which we framed in Theorems~\ref{thm:invar-sparse} and~\ref{thm:invar-sparse-ls}.  Although this difficulty might seem large, the solution is simple and elegant.  It turns out that we can view the noisy samples of a general signal as samples of a sparse signal contaminated with a different noise vector that implicitly reflects the tail of the original signal.

\begin{lemma}[Reduction to Sparse Case] \label{lem:reduction}
Let $\vct{x}$ be an arbitrary vector in $\Cspace{N}$.  The sample vector $\vct{u} = \Fee \vct{x} + \vct{e}$ can also be expressed as $\vct{u} = \Fee \vct{x}_s + \widetilde{\vct{e}}$ where
$$
\enorm{ \widetilde{\vct{e}} } \leq
1.05 \left[ \enorm{ \vct{x} - \vct{x}_s } + \frac{1}{\sqrt{s}}
	\pnorm{1}{ \vct{x} - \vct{x}_s } \right] + \enorm{ \vct{e} }.
$$
\end{lemma}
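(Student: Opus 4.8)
The plan is to realize the signal tail as an additional piece of noise, so that an arbitrary signal can be treated by the sparse-case machinery already established. First I would split the samples according to the best $s$-term approximation,
$$
\vct{u} = \Fee \vct{x} + \vct{e} = \Fee \vct{x}_s + \Fee(\vct{x} - \vct{x}_s) + \vct{e},
$$
and then simply \emph{define} the effective noise vector to be $\widetilde{\vct{e}} := \Fee(\vct{x} - \vct{x}_s) + \vct{e}$. With this definition we have $\vct{u} = \Fee \vct{x}_s + \widetilde{\vct{e}}$ where $\vct{x}_s$ is genuinely $s$-sparse, which is exactly the form required. By the triangle inequality, $\enorm{\widetilde{\vct{e}}} \leq \enorm{ \Fee(\vct{x} - \vct{x}_s) } + \enorm{\vct{e}}$, so the whole problem reduces to controlling $\enorm{ \Fee(\vct{x} - \vct{x}_s) }$, i.e. to bounding how much $\Fee$ inflates the (non-sparse) tail $\vct{x} - \vct{x}_s$.

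The key tool for this is the Energy Bound, Proposition~\ref{prop:k2-bd}, which was proved precisely to measure the action of $\Fee$ on vectors that are not sparse. Applying it with $r = s$ to the vector $\vct{x} - \vct{x}_s$ yields
$$
\enorm{ \Fee(\vct{x} - \vct{x}_s) }
	\leq \sqrt{1 + \delta_s} \left[ \enorm{ \vct{x} - \vct{x}_s }
		+ \frac{1}{\sqrt{s}} \pnorm{1}{ \vct{x} - \vct{x}_s } \right].
$$
To finish I would invoke the standing numerical hypothesis on the restricted isometry constants. Since $\delta_s \leq \delta_{4s} \leq 0.1$ by monotonicity of the restricted isometry constants (Corollary~\ref{cor:dumb-rip-bd}), we obtain $\sqrt{1 + \delta_s} \leq \sqrt{1.1} < 1.05$, and combining this with the triangle-inequality step gives exactly the claimed estimate.

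I expect essentially no genuine obstacle here: all of the analytic difficulty has already been absorbed into Proposition~\ref{prop:k2-bd}, and the only real idea is the conceptual reinterpretation of the tail $\Fee(\vct{x} - \vct{x}_s)$ as part of the noise. The remaining steps are routine bookkeeping with the triangle inequality and substitution of the numerical bound on $\delta_s$. The one point to be careful about is that the constant $1.05$ is tight enough to require $\delta_s \le 0.1$ rather than a weaker bound, which is guaranteed by the monotonicity corollary, so it is worth citing that explicitly.
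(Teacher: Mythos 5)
Your proof is correct and takes essentially the same route as the paper's: the same decomposition $\widetilde{\vct{e}} = \Fee(\vct{x}-\vct{x}_s) + \vct{e}$, the same application of the Energy Bound (Proposition~\ref{prop:k2-bd}) to the tail, and the same numerical step $\sqrt{1+\delta_s} \leq 1.05$. One small correction: the monotonicity $\delta_s \leq \delta_{4s}$ is immediate from the definition of the restricted isometry constant (every $s$-sparse vector is $4s$-sparse), not a consequence of Corollary~\ref{cor:dumb-rip-bd}, which goes in the opposite direction, bounding larger-order constants by $\delta_{2r}$ at the cost of a factor $c$.
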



\begin{proof}
Decompose $\vct{x} = \vct{x}_s + (\vct{x} - \vct{x}_s)$ to obtain $\vct{u} = \Fee \vct{x}_s + \widetilde{\vct{e}}$ where $\widetilde{\vct{e}} = \Fee (\vct{x} - \vct{x}_s) + \vct{e}$.
To compute the size of the error term, we simply apply the triangle inequality and Proposition~\ref{prop:k2-bd}:
$$
\enorm{ \widetilde{\vct{e}} }
	\leq \sqrt{1 + \delta_s} \left[ \enorm{ \vct{x} - \vct{x}_s } + \frac{1}{\sqrt{s}}
		\pnorm{1}{ \vct{x} - \vct{x}_s } \right]
	+ \enorm{ \vct{e} }.
$$
Finally, invoke the fact that $\delta_s \leq \delta_{4s} \leq 0.1$ to obtain $\sqrt{1 + \delta_s} \leq 1.05$.
\end{proof}

This lemma is just the tool we require to complete the proof of Theorem~\ref{thm:cosamp-invar}.

\begin{proof}[Proof of Theorem~\ref{thm:cosamp-invar}]
Let $\vct{x}$ be a general signal, and use Lemma~\ref{lem:reduction} to write the noisy vector of samples $\vct{u} = \Fee \vct{x}_s + \widetilde{\vct{e}}$.  Apply the sparse iteration invariant, Theorem~\ref{thm:invar-sparse}, or the analog for iterative least-squares, Theorem~\ref{thm:invar-sparse-ls}.  We obtain
$$
\smnorm{2}{ \vct{x}_s - \vct{a}^{k+1} }
	\leq 0.5 \smnorm{2}{ \vct{x}_s - \vct{a}^k }
		+ 7.5 \enorm{ \widetilde{\vct{e}}}.
$$
Invoke the lower and upper triangle inequalities to obtain
$$
\smnorm{2}{ \vct{x} - \vct{a}^{k+1} }
	\leq 0.5 \smnorm{2}{ \vct{x} - \vct{a}^k }
		+ 7.5 \enorm{ \widetilde{\vct{e}} }
		+ 1.5 \enorm{ \vct{x} - \vct{x}_s }.
$$
Finally, recall the estimate for $\enorm{\widetilde{\vct{e}}}$ from Lemma~\ref{lem:reduction}, and simplify to reach
\begin{align*}
\smnorm{2}{ \vct{x} - \vct{a}^{k+1} }
	&\leq 0.5 \smnorm{2}{ \vct{x} - \vct{a}^k }
		+ 9.375 \enorm{ \vct{x} - \vct{x}_s }
		+ \frac{7.875}{\sqrt{s}} \pnorm{1}{ \vct{x} - \vct{x}_s }
		+ 7.5 \enorm{ \vct{e} } \\
	&< 0.5 \smnorm{2}{ \vct{x} - \vct{a}^k } + 10 \nu.
\end{align*}
where $\nu$ is the unrecoverable energy \eqref{eqn:unrecoverable}.
\end{proof}

We have now collected all the material we need to establish the main result.
Fix a precision parameter $\eta$.  After at most $\bigO( \log(\enorm{\vct{x}} / \eta ) )$ iterations, CoSaMP produces an $s$-sparse approximation $\vct{a}$ that satisfies
$$
\enorm{ \vct{x} - \vct{a} } \leq \cnst{C} \cdot (\eta + \nu)
$$
in consequence of Theorem~\ref{thm:cosamp-invar}.  Apply inequality \eqref{eqn:unrecov-l1} to bound the unrecoverable energy $\nu$ in terms of the $\ell_1$ norm.  We see that the approximation error satisfies
$$
\enorm{ \vct{x} - \vct{a} } \leq \cnst{C} \cdot \max\left\{ \eta, \frac{1}{\sqrt{s}} \pnorm{1}{ \vct{x} - \vct{x}_{s/2}} + \enorm{\vct{e}} \right\}.
$$
According to Theorem~\ref{thm:resources}, each iteration of CoSaMP is completed in time $\bigO(\coll{L})$, where $\coll{L}$ bounds the cost of a matrix--vector multiplication with $\Fee$ or $\Fee^\adj$.  The total runtime, therefore, is $\bigO( \coll{L} \log(\enorm{\vct{x}} /\eta) )$.  The total storage is $\bigO(N)$.

Finally, in the statement of the theorem, we replace $\delta_{4s}$ with $\delta_{2s}$ by means of Corollary~\ref{cor:dumb-rip-bd}, which states that $\delta_{cr} \leq c \cdot \delta_{2r}$ for any positive integers $c$ and $r$.

\subsubsection{Iteration Count for Exact Arithmetic}

As promised, we now provide an iteration count for CoSaMP in the case of exact arithmetic.  These results can be found in~\cite{DT08:CoSaMP-TR}. 

We obtain an estimate on the number of iterations of the CoSaMP algorithm necessary to identify the recoverable energy in a sparse signal, assuming exact arithmetic.  Except where stated explicitly, we assume that $\vct{x}$ is $s$-sparse.  It turns out that the number of iterations depends heavily on the signal structure.  Let us explain the intuition behind this fact.  

When the entries in the signal decay rapidly, the algorithm must identify and remove the largest remaining entry from the residual before it can make further progress on the smaller entries.  Indeed, the large component in the residual contaminates each component of the signal proxy.  In this case, the algorithm may require an iteration or more to find each component in the signal.

On the other hand, when the $s$ entries of the signal are comparable, the algorithm can simultaneously locate many entries just by reducing the norm of the residual below the magnitude of the smallest entry.  Since the largest entry of the signal has magnitude at least $s^{-1/2}$ times the $\ell_2$ norm of the signal, the algorithm can find all $s$ components of the signal after about $\log s$ iterations. 

To quantify these intuitions, we want to collect the components of the signal into groups that are comparable with each other.  To that end, define the \term{component bands} of a signal $\vct{x}$ by the formulae
\begin{equation}\label{eq:bjs}
B_j \defby \left\{ i : 2^{-(j+1)} \enormsq{\vct{x}}
	< \abssq{ x_i } \leq 2^{-j} \enormsq{\vct{x}} \right\}
\qquad\text{for $j = 0, 1, 2, \dots$.}
\end{equation}
The \term{profile} of the signal is the number of bands that are nonempty:
$$
{\rm profile}(\vct{x}) \defby \# \{ j : B_j \neq \emptyset \}.
$$
In words, the profile counts how many orders of magnitude at which the signal has coefficients.  It is clear that the profile of an $s$-sparse signal is at most $s$.

See Figure~\ref{fig:profile} for images of stylized signals with different profiles.

 \begin{figure}[ht]
\begin{center}
$\begin{array}{c@{\hspace{.1in}}c}
\includegraphics[width=3in]{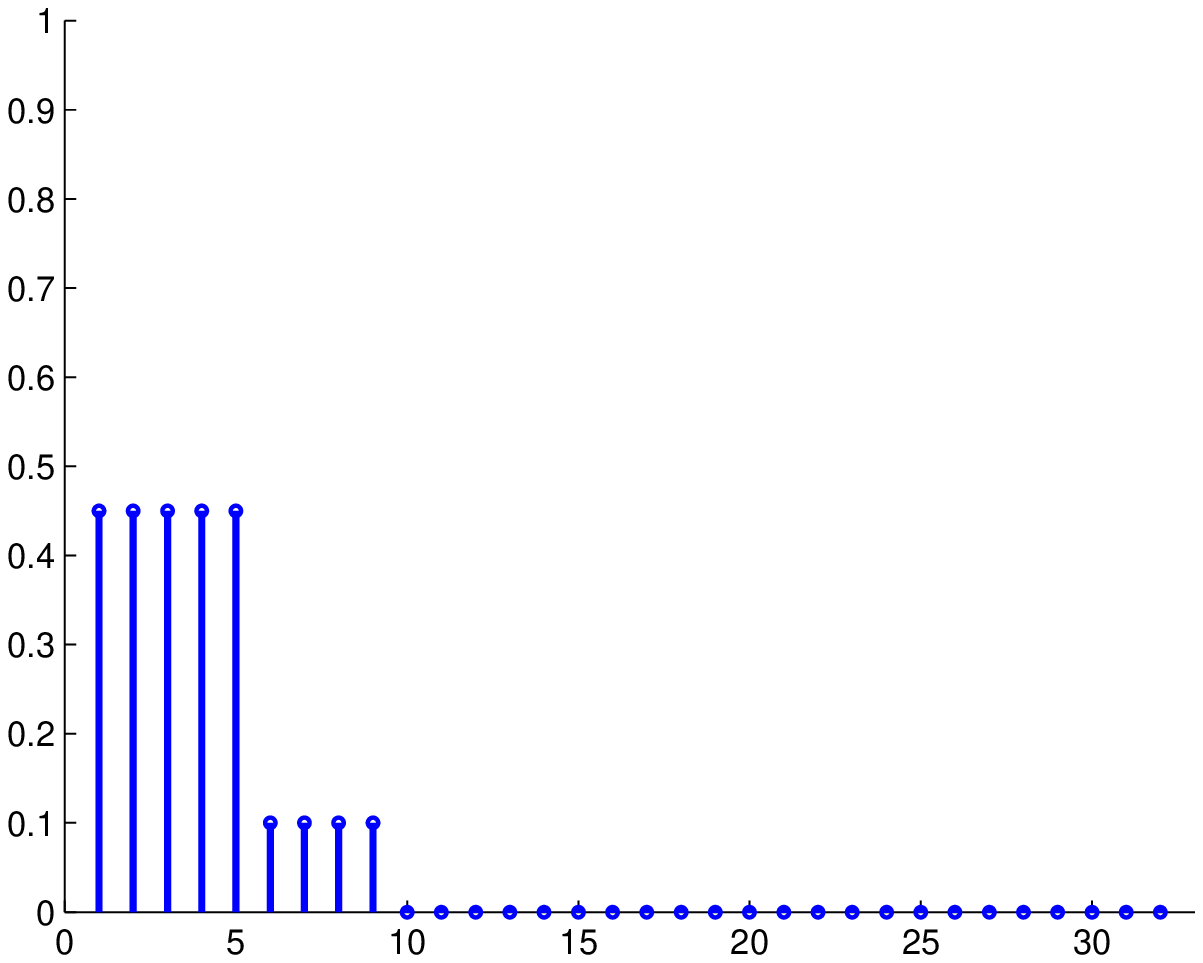}  &  
\includegraphics[width=3in]{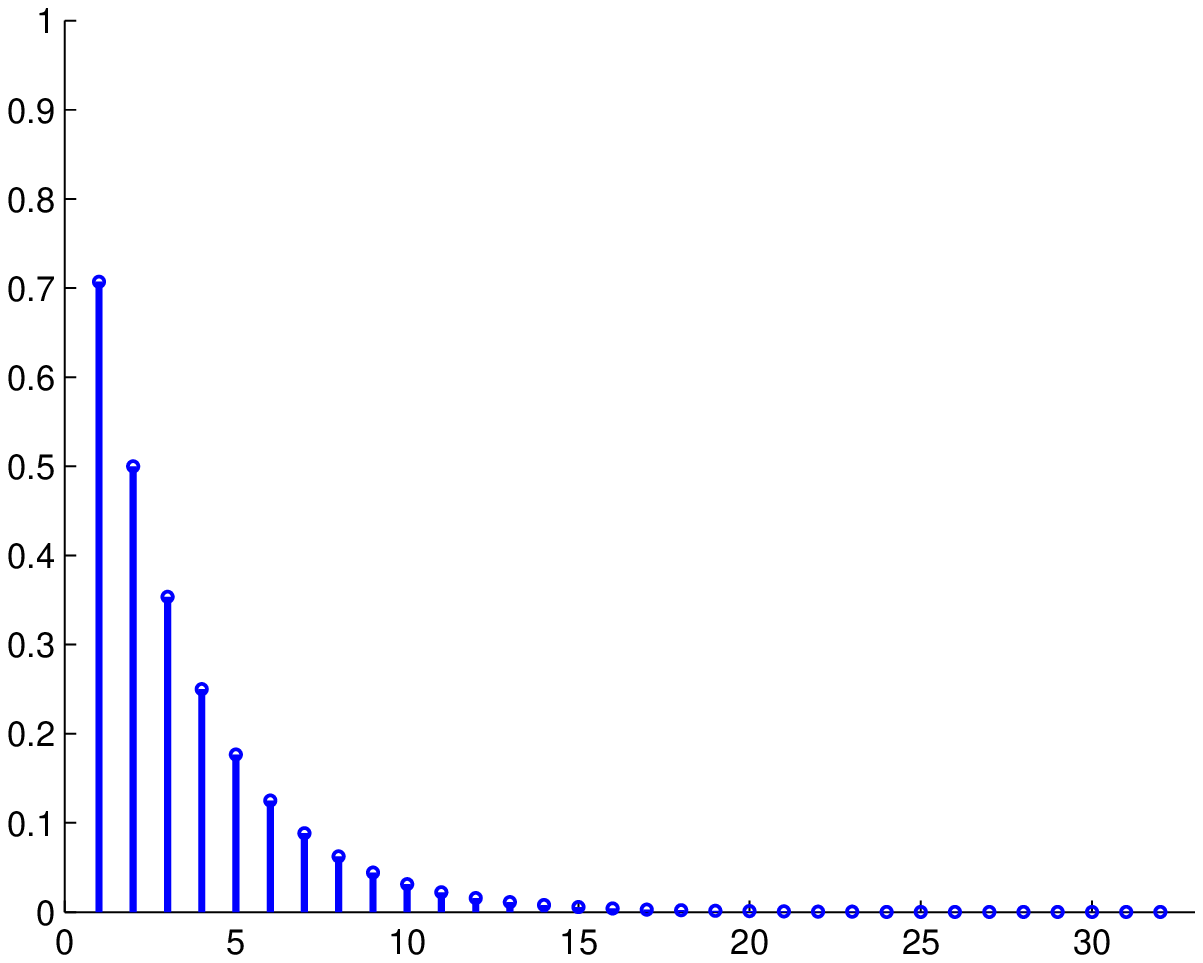}   \\
\end{array}$
\end{center}
\caption{Illustration of two unit-norm signals with sharply different profiles (left: low, right: high).}
\label{fig:profile}
\end{figure}


First, we prove a result on the number of iterations needed to acquire an $s$-sparse signal.  At the end of the section, we extend this result to general signals, which yields Theorem~\ref{thm:cosamp-count}.


\begin{theorem}[Iteration Count: Sparse Case] \label{thm:count-sparse}
Let $\vct{x}$ be an $s$-sparse signal, and define $p = {\rm profile}(\vct{x})$.  After at most
$$
p \log_{4/3}(1 + 4.6 \sqrt{s/p}) + 6
$$
iterations, CoSaMP produces an approximation $\vct{a}$ that satisfies
$$
\enorm{ \vct{x} - \vct{a} } \leq 17 \enorm{ \vct{e} }.
$$
\end{theorem}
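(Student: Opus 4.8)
The plan is to replace the naive iteration count $\log_2(\|x\|_2/\|e\|_2)$, which is sensitive to the dynamic range of the signal, by a bound depending only on the sparsity $s$ and the profile $p$. The engine is a \emph{locking} principle together with a sharpened one-step estimate. First I would record locking: if $i \in \supp(x)$ satisfies $|x_i| > \|x - a^{k}\|_2$, then $i \in \supp(a^{k})$, since otherwise $(x - a^k)_i = x_i$ forces $\|x - a^k\|_2 \ge |x_i|$. Because Theorem~\ref{thm:invar-sparse} gives $\|x - a^{k+1}\|_2 \le 0.5\,\|x - a^{k}\|_2 + 7.5\|e\|_2$, once the error drops below a threshold $\tau \ge 15\|e\|_2$ it stays below $\tau$; hence every component above the current error level is \emph{permanently} captured as the algorithm descends toward the noise floor.

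Second, I would extract a gap-skipping estimate from the Estimation and Pruning lemmas. Writing $a = a^{k-1}$ and $\epsilon = \|x - a\|_2$, the merged set $T$ contains $\supp(a)$ and therefore every index with $|x_i| > \epsilon$, so $T^c \subseteq \{i : |x_i| \le \epsilon\}$. Lemma~\ref{lem:estimation} and Lemma~\ref{lem:pruning} then yield
$$
\|x - a^{k}\|_2 \le 2\,\|x - b\|_2 \le 2.224\,\big\|x|_{\{i:\,|x_i| \le \epsilon\}}\big\|_2 + 2.12\,\|e\|_2 .
$$
Setting $g(t) := \big\|x|_{\{i:\,|x_i| \le t\}}\big\|_2$, this reads $\epsilon_k \le 2.224\,g(\epsilon_{k-1}) + 2.12\|e\|_2$, to be used alongside the geometric bound $\epsilon_k \le 0.5\,\epsilon_{k-1} + 7.5\|e\|_2$. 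The key point is that $g$ is constant across any gap in the magnitude spectrum, so a single iteration collapses the error across an arbitrarily wide empty range of magnitudes; this is precisely what strips the dynamic-range dependence out of the count.

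Third, I would partition the run into at most $p$ phases, one per nonempty band $B_j$ from \eqref{eq:bjs}, ordered by decreasing magnitude. On entering the phase for $B_j$, all higher bands are locked, so by the gap-skipping estimate the error is already comparable to the cumulative tail norm of the not-yet-locked components; the phase ends once the error falls below the magnitude level $\mu_j \approx 2^{-(j+1)/2}\|x\|_2$ of $B_j$, at which moment $B_j$ locks. The geometric contraction bounds the iterations in this phase by $\log$ of the relevant ratio, which one controls by $\log N_j$ with $N_j = |B_j|$, plus a constant absorbing the jump onto the next band. Summing over phases and invoking concavity of the logarithm with $\sum_j N_j = s$ (Jensen/AM--GM) gives $\sum_j \log N_j \le p\log(s/p)$, turning a crude $p\log s$ into the advertised $p\log_{4/3}(1 + 4.6\sqrt{s/p})$; carrying the additive noise terms honestly makes the guaranteed per-iteration contraction a constant bounded away from $1$ (effectively $3/4$ rather than the idealized $1/2$), which is what fixes the base $4/3$. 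The additive $+6$ and the relaxation of the floor from $15\|e\|_2$ to $17\|e\|_2$ pay for the terminal phase and for rounding the constants; the same argument then lifts to general signals through Lemma~\ref{lem:reduction} to produce Theorem~\ref{thm:cosamp-count}.

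The main obstacle is the bookkeeping of the phase argument: I must confirm that locking is genuinely permanent across the relevant iterations, handle partial progress inside a populous band, and ensure the cumulative-tail estimate entering each phase is not spoiled by the noise term or by the constant $2.224 > 1$ in the gap-skipping bound (so that one takes the $\min$ of the two one-step bounds — geometric in the dense regime, gap-skipping across gaps). Chasing the constants so the two bounds fuse into a clean base-$4/3$ logarithm with the factor $4.6$ and the additive $6$, and verifying the terminal error is at most $17\|e\|_2$, is the delicate part; the conceptual content lives entirely in the locking principle and the gap-skipping estimate of the first two steps.
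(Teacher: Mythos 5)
Your proposal follows essentially the same route as the paper's own proof: your ``locking'' principle is the paper's band-persistence argument in Lemma~\ref{lem:iter-count}, your gap-skipping estimate is exactly the chain \eqref{eqn:est-bd} obtained from Lemmas~\ref{lem:estimation} and~\ref{lem:pruning}, your per-band phases with contraction factor $3/4$ reproduce the alternative structure of Lemma~\ref{lem:iter-alter}, and your Jensen/AM--GM summation over bands with $\sum_j |B_j| = s$ is the paper's geometric-mean computation yielding $p\log_{4/3}(1+4.6\sqrt{s/p})$. The terminal bookkeeping (a constant number of extra halving iterations to descend from the noise-floor bound to $17\enorm{\vct{e}}$) likewise matches the paper's final step, so the sketch is sound and differs only in constant-chasing details.
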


For a fixed $s$, the bound on the number of iterations achieves its maximum value at $p = s$.  Since $\log_{4/3} 5.6 < 6$, the number of iterations never exceeds $6(s + 1)$.

Let us instate some notation that will be valuable in the proof of the theorem.  We write $p = {\rm profile}(\vct{x})$.  For each $k = 0, 1, 2, \dots$, the signal $\vct{a}^k$ is the approximation after the $k$th iteration.  We abbreviate $S_k = \supp{ \vct{a}^k }$, and we define the residual signal $\vct{r}^k = \vct{x} - \vct{a}^k$.  The norm of the residual can be viewed as the approximation error.

For a nonnegative integer $j$, we may define an auxiliary signal
$$
\vct{y}^j \defby \vct{x}\restrict{\bigcup_{i \geq j} B_i}
$$
In other words, $\vct{y}^j$ is the part of $\vct{x}$ contained in the bands $B_j$, $B_{j+1}$, $B_{j+2}$, \dots.  For each $j \in J$, we have the estimate
\begin{equation} \label{eqn:signal-tail-bd}
\enormsq{ \vct{y}^j } \leq \sum\nolimits_{i \geq j} 2^{-i} \enormsq{\vct{x}} \cdot \abs{B_i}
\end{equation}
by definition of the bands.  These auxiliary signals play a key role in the analysis.

The proof of the theorem involves a sequence of lemmas.  The first object is to establish an alternative that holds in each iteration.  One possibility is that the approximation error is small, which means that the algorithm is effectively finished.  Otherwise, the approximation error is dominated by the energy in the unidentified part of the signal, and the subsequent approximation error is a constant factor smaller.

\begin{lemma} \label{lem:iter-alter}
For each iteration $k = 0, 1, 2, \dots$, at least one of the following alternatives holds.  Either
\begin{equation} \label{eqn:resid-err-bd}
\smnorm{2}{ \vct{r}^k }
	\leq 70 \enorm{ \vct{e} }
\end{equation}
or else
\begin{align}
\smnorm{2}{ \vct{r}^k }
	&\leq 2.3 \smnorm{2}{ \vct{x}\restrict{S_k^c} }
	\qquad\text{and}
	 \label{eqn:missing-link} \\
\smnorm{2}{ \vct{r}^{k+1} }
	&\leq 0.75 \smnorm{2}{ \vct{r}^{k} }
	\label{eqn:error-reduct}.
\end{align}
\end{lemma}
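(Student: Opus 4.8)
The plan is to prove the dichotomy in contrapositive form: I assume that the small-residual bound \eqref{eqn:resid-err-bd} fails at iteration $k$, and from this single hypothesis I derive both \eqref{eqn:missing-link} and \eqref{eqn:error-reduct}. So throughout I work under the standing assumption $\smnorm{2}{\vct{r}^k} > 70 \enorm{\vct{e}}$, equivalently $\enorm{\vct{e}} < \tfrac{1}{70}\smnorm{2}{\vct{r}^k}$. The reduction estimate \eqref{eqn:error-reduct} is then almost immediate: invoking the sparse iteration invariant, Theorem~\ref{thm:invar-sparse}, gives $\smnorm{2}{\vct{r}^{k+1}} \le 0.5\,\smnorm{2}{\vct{r}^k} + 7.5\,\enorm{\vct{e}}$, and substituting the standing assumption together with $7.5/70 < 0.108$ yields $\smnorm{2}{\vct{r}^{k+1}} < 0.608\,\smnorm{2}{\vct{r}^k} \le 0.75\,\smnorm{2}{\vct{r}^k}$.

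The substance of the lemma is the ``missing link'' \eqref{eqn:missing-link}, which I would obtain by controlling the residual through the part of the signal lying off the current support $S_k$. The key structural observation is that $\vct{a}^k$ vanishes on $S_k^c$, so the residual splits by coordinates as
$$
\smnorm{2}{\vct{r}^k}^2 = \smnorm{2}{(\vct{x}-\vct{a}^k)\restrict{S_k}}^2 + \smnorm{2}{\vct{x}\restrict{S_k^c}}^2 ,
$$
which reduces everything to bounding the on-support error $\smnorm{2}{(\vct{x}-\vct{a}^k)\restrict{S_k}}$. Since $\vct{a}^k = \vct{b}_s$ is the pruning of the least-squares estimate $\vct{b}$ produced in the $k$th iteration, its entries agree with $\vct{b}$ on $S_k$, so $(\vct{x}-\vct{a}^k)\restrict{S_k} = (\vct{x}-\vct{b})\restrict{S_k}$ and hence $\smnorm{2}{(\vct{x}-\vct{a}^k)\restrict{S_k}} \le \enorm{\vct{x}-\vct{b}}$.

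Next I would feed this into the Estimation lemma (Lemma~\ref{lem:estimation}), applied with the merged set $T \supseteq S_k$, which gives $\enorm{\vct{x}-\vct{b}} \le 1.112\,\smnorm{2}{\vct{x}\restrict{T^c}} + 1.06\,\enorm{\vct{e}}$; since $T^c \subseteq S_k^c$, I may replace $\smnorm{2}{\vct{x}\restrict{T^c}}$ by the larger quantity $\smnorm{2}{\vct{x}\restrict{S_k^c}}$. Writing $R = \smnorm{2}{\vct{r}^k}$ and $A = \smnorm{2}{\vct{x}\restrict{S_k^c}}$ and combining the splitting with this estimate and the noise bound $\enorm{\vct{e}} < R/70$ produces a quadratic inequality of the form $R^2 \le (1.112\,A + 1.06\,R/70)^2 + A^2$. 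Solving for $R$ in terms of $A$ gives a constant around $1.5$, comfortably below the claimed factor, so \eqref{eqn:missing-link} holds. The degenerate case $k=0$ needs a separate but trivial word, since there $S_0 = \emptyset$ and \eqref{eqn:missing-link} reads $\enorm{\vct{x}} \le 2.3\,\enorm{\vct{x}}$ while \eqref{eqn:error-reduct} again comes from the invariant.

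I expect the only genuine obstacle to be the bookkeeping in this final step: the noise term $1.06\,\enorm{\vct{e}}$ must be absorbed using the case hypothesis $\enorm{\vct{e}} < R/70$ before the quadratic can be solved, and one must check that the resulting coefficient really lands under $2.3$. Everything else is a direct coordinate decomposition plus the two already-established facts (the iteration invariant and the estimation bound), so the argument should be short once the case split and the support inclusion $S_k \subseteq T$ are in place.
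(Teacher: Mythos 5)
Your proof is correct, and it reaches the dichotomy by a genuinely different route than the paper. The paper never decomposes the residual by coordinates: it chains the estimation lemma (Lemma~\ref{lem:estimation}) with the pruning lemma (Lemma~\ref{lem:pruning}) to obtain the single linear inequality $\smnorm{2}{\vct{r}^k} \le 2.224\,\smnorm{2}{\vct{x}\restrict{S_k^c}} + 2.12\,\enorm{\vct{e}}$ (valid for all $k$, trivially at $k=0$), and then splits cases on $\smnorm{2}{\vct{x}\restrict{S_k^c}}$ versus the threshold $30\,\enorm{\vct{e}}$: below the threshold this inequality yields \eqref{eqn:resid-err-bd} (the constant works out to $68.84 \le 70$); above it, the same inequality yields \eqref{eqn:missing-link} with constant $\approx 2.295$, and the chain $\smnorm{2}{\vct{r}^k} \ge \smnorm{2}{\vct{r}^k\restrict{S_k^c}} = \smnorm{2}{\vct{x}\restrict{S_k^c}} \ge 30\,\enorm{\vct{e}}$ feeds Theorem~\ref{thm:invar-sparse} to give \eqref{eqn:error-reduct}. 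You instead argue by contrapositive, keyed to $\smnorm{2}{\vct{r}^k} > 70\,\enorm{\vct{e}}$, and replace the pruning lemma by the exact orthogonal splitting $\smnorm{2}{\vct{r}^k}^2 = \smnorm{2}{(\vct{x}-\vct{b})\restrict{S_k}}^2 + \smnorm{2}{\vct{x}\restrict{S_k^c}}^2$, which is legitimate because $\vct{a}^k = \vct{b}_s$ agrees with $\vct{b}$ on $S_k$ and vanishes off it. Both arguments rest on the same two external inputs --- Lemma~\ref{lem:estimation} together with $T_k^c \subseteq S_k^c$, and the sparse iteration invariant --- but your version avoids the factor $2$ from pruning and so lands at a sharper constant in \eqref{eqn:missing-link} (your quadratic has root $\approx 1.513$, comfortably under $2.3$), and it makes \eqref{eqn:error-reduct} a one-line consequence of the invariant since $0.5 + 7.5/70 < 0.61$. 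What the paper's arrangement buys in exchange is purely linear arithmetic and the reuse of one inequality for both branches of the dichotomy; what yours buys is the tighter constant and a cleaner logical structure, at the price of solving a quadratic and saying a separate word about $k=0$ and about the degenerate case $\vct{x}\restrict{S_k^c} = 0$ (which, as your estimates implicitly show, cannot occur under your standing assumption).
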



\begin{proof}
Define $T_k$ as the merged support that occurs during iteration $k$.
The pruning step ensures that the support $S_{k}$ of the approximation at the end of the iteration is a subset of the merged support, so
$$
\smnorm{2}{ \vct{x}\restrict{T_{k}^c} } \leq
	\smnorm{2}{ \vct{x}\restrict{S_k^c} }
\qquad\text{for $k = 1, 2, 3, \dots$}.	
$$
At the end of the $k$th iteration, the pruned vector $\vct{b}_s$ becomes the next approximation $\vct{a}^{k}$, so the estimation and pruning results, Lemmas~\ref{lem:estimation} and~\ref{lem:pruning}, together imply that
\begin{align} 
\smnorm{2}{ \vct{r}^{k} }
	&\leq 2 \cdot ( 1.112 \smnorm{2}{ \vct{x}\restrict{T_{k}^c} }
		+ 1.06 \enorm{\vct{e}} ) \notag \\
	&\leq 2.224 \smnorm{2}{ \vct{x}\restrict{S_{k}^c} }
		+ 2.12 \enorm{\vct{e}} \label{eqn:est-bd}
\qquad\text{for $k = 1, 2, 3, \dots$}.
\end{align}
Note that the same relation holds trivially for iteration $k = 0$ because $\vct{r}^0 = \vct{x}$ and $S_0 = \emptyset$.

Suppose that there is an iteration $k \geq 0$ where
\begin{equation*} 
\smnorm{2}{ \vct{x}\restrict{S_{k}^c} } < 30 \enorm{\vct{e}}.
\end{equation*}
We can introduce this bound directly into the inequality \eqref{eqn:est-bd} to obtain the first conclusion \eqref{eqn:resid-err-bd}.

Suppose on the contrary that in iteration $k$ we have
\begin{equation*} 
\smnorm{2}{ \vct{x}\restrict{S_{k}^c} } \geq 30 \enorm{\vct{e}}.
\end{equation*}
Introducing this relation into the inequality \eqref{eqn:est-bd} leads quickly to the conclusion \eqref{eqn:missing-link}.  We also have the chain of relations
$$
\smnorm{2}{ \vct{r}^k }
	\geq \smnorm{2}{ \vct{r}^k\restrict{S_k^c} }
	= \smnorm{2}{ (\vct{x} - \vct{a}^k)\restrict{S_k^c} }
	= \enorm{ \vct{x}\restrict{S_k^c} }
	\geq 30 \enorm{ \vct{e}}.
$$
Therefore, the sparse iteration invariant, Theorem~\ref{thm:invar-sparse} ensures that \eqref{eqn:error-reduct} holds.
\end{proof}

The next lemma contains the critical part of the argument.  Under the second alternative in the previous lemma, we show that the algorithm completely identifies the support of the signal, and we bound the number of iterations required to do so.

\begin{lemma} \label{lem:iter-count}
Fix $K = \lfloor p \log_{4/3} (1 + 4.6 \sqrt{s/p}) \rfloor$.  Assume that \eqref{eqn:missing-link} and \eqref{eqn:error-reduct} are in force for each iteration $k = 0, 1, 2, \dots, K$.  Then $\supp{ \vct{a}^{K} } = \supp{\vct{x}}$.
\end{lemma}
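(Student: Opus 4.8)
The plan is to track how fast the residual norm $\enorm{\vct{r}^k}$ shrinks and to convert each threshold it crosses into the capture of an entire component band. First I would record two standing facts that hold under the hypotheses \eqref{eqn:missing-link}--\eqref{eqn:error-reduct}: the residual norm is non-increasing (indeed $\enorm{\vct{r}^{k+1}}\le 0.75\,\enorm{\vct{r}^k}$), and since $\vct{a}^k$ is supported on $S_k$ we have $\enorm{\vct{x}\restrict{S_k^c}}=\enorm{\vct{r}^k\restrict{S_k^c}}\le\enorm{\vct{r}^k}$. The key elementary observation is a \emph{capture criterion}: if $\enorm{\vct{r}^k}\le\theta_j:=2^{-(j+1)/2}\enorm{\vct{x}}$, then every coordinate of $B_j$ lies in $S_k$, since an omitted $i\in B_j$ would force $\enorm{\vct{x}\restrict{S_k^c}}\ge\abs{x_i}>\theta_j$ by the definition \eqref{eq:bjs} of the band. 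Together with the monotonicity of the residual this shows that a captured band stays captured, and that capturing the highest nonempty band (the smallest $\theta_j$) forces $\supp(\vct{x})\subseteq S_K$; the reverse inclusion is immediate because $\vct{a}^K$ is $s$-sparse, giving the stated identity.

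Next I would enumerate the nonempty bands $B_{j_1},\dots,B_{j_p}$ with $j_1<\dots<j_p$ and sizes $b_\ell=\abs{B_{j_\ell}}$, and run a phase argument. Let $k_{\ell-1}$ be the first iteration at which $\enorm{\vct{r}^{k}}\le\theta_{j_{\ell-1}}$, so bands $1,\dots,\ell-1$ are already captured and $\vct{x}\restrict{S_{k_{\ell-1}}^c}$ is supported on bands $\ge j_\ell$. The missing-link bound \eqref{eqn:missing-link} then \emph{resets} the residual: $\enorm{\vct{r}^{k_{\ell-1}}}\le 2.3\,\enorm{\vct{x}\restrict{S_{k_{\ell-1}}^c}}\le 2.3\,\enorm{\vct{y}^{j_\ell}}$. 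Applying the contraction \eqref{eqn:error-reduct} repeatedly, the threshold $\theta_{j_\ell}$ is reached within
$$
\Delta_\ell \le \Big\lceil \log_{4/3}\frac{2.3\,\enorm{\vct{y}^{j_\ell}}}{\theta_{j_\ell}} \Big\rceil
$$
iterations, so that $k_p\le\sum_{\ell=1}^p\Delta_\ell$, and it remains to bound this sum by $K$.

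This is where the profile enters. Using $j_i\ge j_\ell+(i-\ell)$ in the tail estimate \eqref{eqn:signal-tail-bd}, a geometric-tail bound gives $\enormsq{\vct{y}^{j_\ell}}\le 2^{-j_\ell}\enormsq{\vct{x}}\,c_\ell$ with the smoothed band weight $c_\ell:=\sum_{t\ge0}2^{-t}b_{\ell+t}$, whence $\enormsq{\vct{y}^{j_\ell}}/\theta_{j_\ell}^2\le 2c_\ell$ and $\Delta_\ell\le\log_{4/3}\!\big(2.3\sqrt{2c_\ell}\big)+1$. A short interchange of summation yields $\sum_\ell c_\ell\le 2\sum_i b_i\le 2s$. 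Two applications of concavity then finish the estimate: by Jensen for the square root, $\tfrac1p\sum_\ell\sqrt{c_\ell}\le\sqrt{\tfrac1p\sum_\ell c_\ell}\le\sqrt{2s/p}$, and by Jensen for the logarithm, $\sum_\ell\log_{4/3}\!\big(2.3\sqrt{2}\cdot\sqrt{c_\ell}\big)\le p\log_{4/3}\!\big(2.3\sqrt{2}\cdot\tfrac1p\sum_\ell\sqrt{c_\ell}\big)$. Chaining these gives $\sum_\ell\Delta_\ell\le p\log_{4/3}\!\big(2.3\sqrt2\cdot\sqrt{2s/p}\big)=p\log_{4/3}\!\big(4.6\sqrt{s/p}\big)$, the constant $2.3\cdot 2=4.6$ appearing exactly; the $1+$ inside the logarithm and the floor in $K$ absorb the additive $p$ from the ceilings.

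The main obstacle is precisely this last concavity step. The naive per-phase estimate uses the raw tail count $n_\ell=\sum_{i\ge\ell}b_i$, which only produces $\sqrt{s}$ inside the logarithm and is off by a factor of order $\log s$ when $p\approx s$. Obtaining the sharp $\sqrt{s/p}$ requires replacing $n_\ell$ by the smoothed weight $c_\ell$ (so that $\sum_\ell c_\ell=\bigO(s)$ rather than $\bigO(ps)$), and then splitting the single $\log_{4/3}$ factor into a square root and a logarithm so that Jensen can be applied to each in turn. A secondary point demanding care is the bookkeeping that keeps each band captured as subsequent phases proceed; this rests entirely on the residual monotonicity guaranteed by assuming \eqref{eqn:error-reduct} at every iteration $k\le K$.
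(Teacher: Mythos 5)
Your strategy coincides with the paper's proof of this lemma: the capture threshold $2^{-(j+1)/2}\enorm{\vct{x}}$ for band $B_j$, persistence of captured bands under the monotone decay \eqref{eqn:error-reduct}, the per-band phase count that starts from the reset $\enorm{\vct{r}^k}\le 2.3\enorm{\vct{y}^{j}}$ supplied by \eqref{eqn:missing-link}, the tail estimate \eqref{eqn:signal-tail-bd} with the interchange of summation (your smoothed weight $c_\ell$ is exactly the paper's inner sum, with the same bound $\sum_\ell c_\ell \le 2s$), and a concavity argument producing $\sqrt{s/p}$. The genuine gap is in the integrality bookkeeping at the very end. You bound each phase by $\Delta_\ell\le\log_{4/3}\rho_\ell+1$ (ceiling pulled outside the logarithm), so your total is $p\log_{4/3}\bigl(4.6\sqrt{s/p}\bigr)+p$, and you assert that the additive $p$ is absorbed by the ``$1+$'' inside the logarithm defining $K$, helped by the floor. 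That absorption fails, and it fails always. Writing $u=4.6\sqrt{s/p}$, the inequality you need, $p\log_{4/3}(u)+p\le p\log_{4/3}(1+u)$, is equivalent to $\log_{4/3}(1+1/u)\ge 1$, i.e.\ $u\le 3$; but the profile of an $s$-sparse signal satisfies $p\le s$, so $u\ge 4.6$ and the inequality is violated by at least $p\bigl(1-\log_{4/3}(1+1/4.6)\bigr)\approx 0.32\,p$ iterations. The floor in $K$ only makes $K$ smaller, so it works against you, not for you. Concretely, at $p=s$ your argument certifies identification only after about $6.3\,s$ iterations, while $K\approx 5.99\,s$, so the lemma as stated is not established.

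The repair is precisely how the paper handles the ceilings: keep the integer correction \emph{inside} the logarithm. Bound the cost of the phase for band $j$ by $\log_{4/3}\bigl\lceil 2.3\cdot 2^{(j+1)/2}\enorm{\vct{y}^j}/\enorm{\vct{x}}\bigr\rceil$, use $\lceil x\rceil\le 1+x$, and then apply AM--GM (geometric mean $\le$ arithmetic mean) to the quantities $1+2.3\cdot 2^{(j+1)/2}\enorm{\vct{y}^j}/\enorm{\vct{x}}$ themselves, rather than applying Jensen to their logarithms and carrying a separate additive $p$. This gives
$$
\frac1p\sum\nolimits_{j\in J}\log\Bigl\lceil 2.3\cdot\frac{2^{(j+1)/2}\enorm{\vct{y}^j}}{\enorm{\vct{x}}}\Bigr\rceil
\le \log\Bigl(1+\frac{2.3}{p}\sum\nolimits_{j\in J}\frac{2^{(j+1)/2}\enorm{\vct{y}^j}}{\enorm{\vct{x}}}\Bigr)
\le \log\bigl(1+4.6\sqrt{s/p}\bigr),
$$
and hence a total of $p\log_{4/3}\bigl(1+4.6\sqrt{s/p}\bigr)$ with no leftover $+p$. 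In other words, the ``$1+$'' in the definition of $K$ is not spare slack: it \emph{is} the ceiling correction, already spent, and your proof spends it twice.
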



\begin{proof}
First, we check that, once the norm of the residual is smaller than each element of a band, the components in that band persist in the support of each subsequent approximation.  Define $J$ to be the set of nonempty bands, and fix a band $j \in J$.  Suppose that, for some iteration $k$, the norm of the residual satisfies
\begin{equation} \label{eqn:resid-band-j}
\smnorm{2}{ \vct{r}^k } \leq 2^{-(j+1)/2} \enorm{ \vct{x} }.
\end{equation}
Then it must be the case that
$B_j \subset \supp{ \vct{a}^k }$.
If not, then some component $i \in B_j$ appears in the residual: $r^k_i = x_i$.  This supposition implies that
$$
\smnorm{2}{ \vct{r}^k } \geq \abs{x_i} > 2^{-(j+1)/2} \enorm{\vct{x}},
$$
an evident contradiction.  Since \eqref{eqn:error-reduct} guarantees that the norm of the residual declines in each iteration, \eqref{eqn:resid-band-j} ensures that the support of each subsequent approximation contains $B_j$.


Next, we bound the number of iterations required to find the next nonempty band $B_j$, given that we have already identified the bands $B_i$ where $i < j$.  Formally, assume that the support $S_k$ of the current approximation contains $B_i$ for each $i < j$.  In particular, the set of missing components $S_k^c \subset \supp{ \vct{y}^j }$.  It follows from relation \eqref{eqn:missing-link} that
$$
\smnorm{2}{\vct{r}^k} \leq 2.3 \enorm{ \vct{y}^j }.
$$
We can conclude that we have identified the band $B_j$ in iteration $k + \ell$ if
$$
\smnorm{2}{\vct{r}^{k+\ell}} \leq 2^{-(j+1)/2} \enorm{ \vct{x} }.
$$
According to \eqref{eqn:error-reduct}, we reduce the error by a factor of $\beta^{-1} = 0.75$ during each iteration.  Therefore, the number $\ell$ of iterations required to identify $B_j$ is at most
$$
\log_\beta \left\lceil \frac{ 2.3 \enorm{\vct{y}^j} }{ 2^{-(j+1)/2} \enorm{ \vct{x} } } \right\rceil
$$
We discover that the total number of iterations required to identify all the (nonempty) bands is at most
$$
k_{\star} \defby \sum\nolimits_{j\in J} \log_\beta \left\lceil 2.3 \cdot \frac{2^{(j+1)/2} \enorm{ \vct{y}^j }}{\enorm{ \vct{x} }} \right \rceil.
$$
For each iteration $k \geq \lfloor k_{\star} \rfloor$, it follows that $\supp{\vct{a}^k} = \supp{\vct{x}}$.

It remains to bound $k_{\star}$ in terms of the profile $p$ of the signal.  For convenience, we focus on a slightly different quantity.  First, observe that $p = \abs{J}$.  Using the geometric mean--arithmetic mean inequality, we discover that
\begin{align*}
\exp\left\{ \frac{1}{p} \sum\nolimits_{j \in J} \log \left\lceil
	2.3 \cdot \frac{2^{(j+1)/2} \enorm{ \vct{y}^j }}{\enorm{ \vct{x} }} \right\rceil
	\right\}
&\leq \exp\left\{ \frac{1}{p} \sum\nolimits_{j \in J} \log \left( 1 +
	2.3 \cdot \frac{2^{(j+1)/2} \enorm{ \vct{y}^j }}{\enorm{ \vct{x} }} \right)
	\right\} \\
&\leq \frac{1}{p} \sum\nolimits_{j \in J} \left( 1 +
	2.3 \cdot \frac{2^{(j+1)/2} \enorm{ \vct{y}^j }}{\enorm{ \vct{x} }} \right) \\
&= 1 + \frac{2.3}{p} \sum\nolimits_{j \in J}
	\left( \frac{ 2^{j+1} \enormsq{ \vct{y}^j }}{\enormsq{ \vct{x} }} \right)^{1/2}.
\end{align*}
To bound the remaining sum, we recall the relation \eqref{eqn:signal-tail-bd}.  Then we invoke Jensen's inequality and simplify the result.
\begin{align*}
\frac{1}{p} \sum_{j \in J}
	\left( \frac{ 2^{j+1} \enormsq{ \vct{y}^j }}{\enormsq{ \vct{x} }} \right)^{1/2}
&\leq \frac{1}{p} \sum\nolimits_{j \in J}
	\left( 2^{j+1} \sum\nolimits_{i \geq j} 2^{-i} \abs{B_i} \right)^{1/2}
\leq \left( \frac{1}{p} \sum\nolimits_{j\in J}
	2^{j+1} \sum_{i \geq j} 2^{-i} \abs{B_i} \right)^{1/2} \\
&\leq \left( \frac{1}{p} \sum\nolimits_{i \geq 0} \abs{B_i}
	\sum\nolimits_{j \leq i} 2^{j - i + 1} \right)^{1/2}
\leq \left( \frac{4}{p} \sum\nolimits_{i \geq 0} \abs{B_i} \right)^{1/2} \\
&= 2 \sqrt{s/p}.
\end{align*}
The final equality holds because the total number of elements in all the bands equals the signal sparsity $s$.  Combining these bounds, we reach
$$
\exp\left\{ \frac{1}{p} \sum\nolimits_{j \in J} \log \left\lceil
	2.3 \cdot \frac{2^{(j+1)/2} \enorm{ \vct{y}^j }}
		{\enorm{\vct{x}}} \right\rceil \right\}
\leq 1 + 4.6 \sqrt{s/p}.
$$
Take logarithms, multiply by $p$, and divide through by $\log \beta$.  We conclude that the required number of iterations $k_{\star}$ is bounded as
$$
k_{\star} \leq p \log_\beta( 1 + 4.6 \sqrt{s/p} ).
$$
This is the advertised conclusion.
\end{proof}

Finally, we check that the algorithm produces a small approximation error within a reasonable number of iterations.

\begin{proof}[Proof of Theorem~\ref{thm:count-sparse}]
Abbreviate $K = \lfloor p \log( 1 + 4.6 \sqrt{s/p} ) \rfloor$.
Suppose that \eqref{eqn:resid-err-bd} never holds during the first $K$ iterations of the algorithm.  Under this circumstance, Lemma~\ref{lem:iter-alter} implies that both \eqref{eqn:missing-link} and \eqref{eqn:error-reduct} hold during each of these $K$ iterations.  It follows from Lemma \ref{lem:iter-count} that the support $S_K$ of the $K$th approximation equals the support of $\vct{x}$.  Since $S_K$ is contained in the merged support $T_K$, we see that the vector $\vct{x}\restrict{T_K^c} = \vct{0}$.  Therefore, the estimation and pruning results, Lemmas~\ref{lem:estimation} and~\ref{lem:pruning}, show that
$$
\smnorm{2}{ \vct{r}^K }
	\leq 2 \cdot \left( 1.112 \enorm{ \vct{x}\restrict{T_K^c} }
		+ 1.06 \enorm{ \vct{e}} \right)
	= 2.12 \enorm{\vct{e}}.
$$
This estimate contradicts \eqref{eqn:resid-err-bd}. 

It follows that there is an iteration $k \leq K$ where  \eqref{eqn:resid-err-bd} is in force.  Repeated applications of the iteration invariant, Theorem~\ref{thm:invar-sparse}, allow us to conclude that
$$
\smnorm{2}{ \vct{r}^{K + 6} } < 17 \enorm{\vct{e}}.
$$
This point completes the argument.
\end{proof}

Finally, we extend the sparse iteration count result to the general case.

\begin{theorem}[Iteration Count] \label{thm:count-detail}
Let $\vct{x}$ be an arbitrary signal, and define $p = {\rm profile}(\vct{x}_s)$.  After at most
$$
p \log_{4/3}(1 + 4.6 \sqrt{s/p}) + 6
$$
iterations, CoSaMP produces an approximation $\vct{a}$ that satisfies
$$
\enorm{ \vct{x} - \vct{a} } \leq 20 \enorm{ \vct{e} }.
$$
\end{theorem}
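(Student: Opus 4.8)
The plan is to reduce the general case to the already-established sparse iteration count, Theorem~\ref{thm:count-sparse}, by means of the reduction lemma, Lemma~\ref{lem:reduction}. The key observation is that CoSaMP never sees the signal $\vct{x}$ directly; its entire trajectory is determined by the samples $\vct{u} = \Fee \vct{x} + \vct{e}$ and the matrix $\Fee$. Lemma~\ref{lem:reduction} rewrites these exact same samples as $\vct{u} = \Fee \vct{x}_s + \widetilde{\vct{e}}$, the noisy samples of the $s$-sparse signal $\vct{x}_s$ with modified noise $\widetilde{\vct{e}} = \Fee(\vct{x}-\vct{x}_s) + \vct{e}$. Because the algorithm's behaviour depends only on $\vct{u}$ and $\Fee$, running CoSaMP on the original problem is literally identical to running it on this sparse problem, so every conclusion proved in the sparse setting transfers verbatim.

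First I would invoke Lemma~\ref{lem:reduction} to record the bound
$$
\enorm{\widetilde{\vct{e}}} \le 1.05\left[ \enorm{\vct{x}-\vct{x}_s} + \tfrac{1}{\sqrt{s}}\pnorm{1}{\vct{x}-\vct{x}_s}\right] + \enorm{\vct{e}} \le 1.05\,\nu,
$$
where $\nu$ is the unrecoverable energy \eqref{eqn:unrecoverable}; the final inequality merely collects the three terms defining $\nu$, using $1.05 > 1$ to absorb the $\enorm{\vct{e}}$ summand. The effective target $\vct{x}_s$ is exactly $s$-sparse, and by definition $p = {\rm profile}(\vct{x}_s)$, so Theorem~\ref{thm:count-sparse} applies directly with $\widetilde{\vct{e}}$ playing the role of $\vct{e}$. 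The crucial point is that the iteration bound in that theorem depends only on $s$ and on the profile $p$ of the target signal, and not at all on the magnitude of the noise; hence the count $p\log_{4/3}(1 + 4.6\sqrt{s/p}) + 6$ is unchanged under this substitution. After that many iterations we therefore obtain an approximation $\vct{a}$ with $\enorm{\vct{x}_s - \vct{a}} \le 17\,\enorm{\widetilde{\vct{e}}}$.

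The last step transfers the error from $\vct{x}_s$ back to $\vct{x}$ by the triangle inequality and then expresses everything through $\nu$:
$$
\enorm{\vct{x} - \vct{a}} \le \enorm{\vct{x} - \vct{x}_s} + \enorm{\vct{x}_s - \vct{a}} \le \enorm{\vct{x} - \vct{x}_s} + 17\,\enorm{\widetilde{\vct{e}}} \le \nu + 17\cdot 1.05\,\nu < 20\,\nu,
$$
using $\enorm{\vct{x}-\vct{x}_s} \le \nu$. This is the advertised bound, with the right-hand side read as $20\,\nu$ (the unrecoverable energy), consistent with Theorem~\ref{thm:cosamp-count}; in the exactly sparse case $\nu = \enorm{\vct{e}}$ and one recovers the literally stated constant.

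I expect no genuine obstacle, since the conceptual work was already done in isolating Lemma~\ref{lem:reduction} and in establishing the sparse count. The only things requiring care are (i) confirming that the profile in the iteration bound is that of $\vct{x}_s$, exactly as the theorem is phrased, and (ii) checking the arithmetic $1 + 17(1.05) = 18.85 < 20$ so that the promised slack holds. The one subtlety worth stating explicitly is the noise-independence of the iteration count: replacing $\vct{e}$ by the larger $\widetilde{\vct{e}}$ inflates only the error constant, from $17$ to under $20$, while leaving the number of iterations untouched.
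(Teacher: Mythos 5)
Your proof is correct and follows essentially the same route as the paper's: reduce via Lemma~\ref{lem:reduction}, apply the sparse count Theorem~\ref{thm:count-sparse} to $\vct{x}_s$ with noise $\widetilde{\vct{e}}$, then use the triangle inequality; your consolidated arithmetic $1 + 17(1.05) = 18.85 < 20$ matches the paper's term-by-term expansion ($18.9\enorm{\vct{x}-\vct{x}_s} + 17.9 s^{-1/2}\pnorm{1}{\vct{x}-\vct{x}_s} + 17\enorm{\vct{e}} < 20\nu$). Your reading of the right-hand side as $20\nu$ rather than the literal $20\enorm{\vct{e}}$ in the statement is also exactly what the paper's own proof establishes, so that interpretive step is sound.
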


\begin{proof}
Let $\vct{x}$ be a general signal, and let $p = {\rm profile}(\vct{x}_s)$.  Lemma~\ref{lem:reduction} allows us to write the noisy vector of samples $\vct{u} = \Fee \vct{x}_s + \widetilde{\vct{e}}$.  The sparse iteration count result, Theorem~\ref{thm:count-sparse}, states that after at most
$$
p \log_{4/3}(1 + 4.6 \sqrt{s/p}) + 6
$$
iterations, the algorithm produces an approximation $\vct{a}$ that satisfies
$$
\enorm{ \vct{x}_s - \vct{a} } \leq 17 \enorm{ \widetilde{\vct{e}} }.
$$
Apply the lower triangle inequality to the left-hand side.  Then recall the estimate for the noise in Lemma~\ref{lem:reduction}, and simplify to reach
\begin{align*}
\enorm{ \vct{x} - \vct{a} }
	&\leq 17 \enorm{ \widetilde{\vct{e}} } + \enorm{ \vct{x} - \vct{x}_s } \\
	&\leq 18.9 \enorm{ \vct{x} - \vct{x}_s }
		+ \frac{17.9}{\sqrt{s}} \pnorm{1}{ \vct{x} - \vct{x}_s }
		+ 17 \enorm{ \vct{e} } \\
	&< 20 \nu,
\end{align*}
where $\nu$ is the unrecoverable energy.  
\end{proof}

\begin{proof}[Proof of Theorem~\ref{thm:cosamp-count}]
Invoke Theorem~\ref{thm:count-detail}.  Recall that the estimate for the number of iterations is maximized with $p = s$, which gives an upper bound of $6(s+1)$ iterations, independent of the signal.
\end{proof}
    		
    		\subsection[Implementation and Runtime]{Implementation and Runtime}
    		\label{sec:New:Compressive:Implementation}
    		
    	CoSaMP was designed to be a practical method for signal recovery.  An efficient implementation of the algorithm requires 
some ideas from numerical linear algebra, as well as some basic techniques from the theory of algorithms.  This section discusses the key issues and develops an analysis of the running time for the two most common scenarios.

We focus on the least-squares problem in the estimation step because it is the major obstacle to a fast implementation of the algorithm.  The algorithm guarantees that the matrix $\Fee_T$ never has more than $3s$ columns, so our assumption $\delta_{4s} \leq 0.1$ implies that the matrix $\Fee_T$ is extremely well conditioned.  As a result, we can apply the pseudoinverse $\Fee_T^\psinv = (\Fee_T^\adj \Fee_T)^{-1} \Fee_T^\adj$ very quickly using an iterative method, such as Richardson's iteration~\cite[Sec.~7.2.3]{Bjo96:Numerical-Methods} or conjugate gradient~\cite[Sec.~7.4]{Bjo96:Numerical-Methods}.  These techniques have the additional advantage that they only interact with the matrix $\Fee_T$ through its action on vectors.  It follows that the algorithm performs better when the sampling matrix has a fast matrix--vector multiply.

Let us stress that direct methods for least squares are likely to be extremely inefficient in this setting.  The first reason is that each least-squares problem may contain substantially different sets of columns from $\Fee$.  As a result, it becomes necessary to perform a completely new {\sf QR} or {\sf SVD} factorization during each iteration at a cost of $\bigO(s^2 m)$.  The second problem is that computing these factorizations typically requires direct access to the columns of the matrix, which is problematic when the matrix is accessed through its action on vectors.  Third, direct methods have storage costs $\bigO(sm)$, which may be deadly for large-scale problems.

The remaining steps of the algorithm involve standard techniques.  Let us estimate the operation counts.

\begin{description} \setlength{\itemsep}{0.3pc}
\item	[Proxy]
	Forming the proxy is dominated by the cost of the matrix--vector multiply $\Fee^\adj \vct{v}$.

\item	[Identification]
	We can locate the largest $2s$ entries of a vector in time $\bigO(N)$ using
the approach in \cite[Ch.~9]{CLRS01:Intro-Algorithms}.  In practice, it may be faster to sort the entries of the signal in decreasing order of magnitude at cost $\bigO(N \log N)$ and then select the first $2s$ of them.  The latter procedure can be accomplished with quicksort, mergesort, or heapsort~\cite[Sec.~II]{CLRS01:Intro-Algorithms}.  To implement the algorithm to the letter, the sorting method needs to be stable because we stipulate that ties are broken lexicographically. This point is not important in practice.

\item	[Support Merger]
	We can merge two sets of size $\bigO(s)$ in expected time $\bigO(s)$ using randomized hashing methods~\cite[Ch.~11]{CLRS01:Intro-Algorithms}.  One can also sort both sets first and use the elementary merge procedure~\cite[p.~29]{CLRS01:Intro-Algorithms} for a total cost $\bigO(s \log s)$.

\item	[LS estimation]
	We use Richardson's iteration or conjugate gradient to compute $\Fee_T^\psinv \vct{u}$.  Initializing the least-squares algorithm requires a matrix--vector multiply with $\Fee_T^\adj$.  Each iteration of the least-squares method requires one matrix--vector multiply each with $\Fee_T$ and $\Fee_T^\adj$.  Since $\Fee_T$ is a submatrix of $\Fee$, the matrix--vector multiplies can also be obtained from multiplication with the full matrix.  We proved above that a constant number of least-squares iterations suffices for Theorem~\ref{thm:cosamp-invar} to hold.
	
	\item	[Pruning]
	This step is similar to identification.  Pruning can be implemented in time $\bigO(s)$, but it may be preferable to sort the components of the vector by magnitude and then select the first $s$ at a cost of $\bigO(s\log s)$.

\item	[Sample Update]
	This step is dominated by the cost of the multiplication of $\Fee$ with the $s$-sparse vector $\vct{a}^k$.
\end{description}
    		
    		Table~\ref{tab:runtime} summarizes this discussion in two particular cases.  The first column shows what happens when the sampling matrix $\Fee$ is applied to vectors in the standard way, but we have random access to submatrices.  The second column shows what happens when the sampling matrix $\Fee$ and its adjoint $\Fee^\adj$ both have a fast multiply with cost $\coll{L}$, where we assume that $\coll{L} \geq N$.  A typical value is $\coll{L} = \bigO(N \log N)$.  In particular, a partial Fourier matrix satisfies this bound.

\begin{table}[thb]
\centering
\renewcommand{\arraystretch}{1.25}
\caption{Operation count for CoSaMP.  Big-O notation is omitted for legibility.  The dimensions of the sampling matrix  $\Fee$ are $m \times N$; the sparsity level is $s$.  The number $\coll{L}$ bounds the cost of a matrix--vector multiply with $\Fee$ or $\Fee^\adj$.}  
	\label{tab:runtime}
\vspace{1pc}
\begin{tabular}{|l||r|r|}
\hline
Step				& Standard multiply		& Fast multiply \\
\hline\hline
Form proxy			& $mN$ 					& $\coll{L}$ \\
Identification		& $N$					& $N$ \\
Support merger		& $s$					& $s$ \\
LS estimation		& $sm$ 					& $\coll{L}$ \\
Pruning				& $s$ 					& $s$ \\
Sample update			& $sm$					& $\coll{L}$ \\
\hline
Total per iteration & $\bigO(mN)$ 			& $\bigO(\coll{L})$ \\
\hline
\end{tabular}
\vspace{1pc}
\end{table}

Finally, we note that the storage requirements of the algorithm are also favorable.  Aside from the storage required by the sampling matrix, the algorithm constructs only one vector of length $N$, the signal proxy.  The sample vectors $\vct{u}$ and $\vct{v}$ have length $m$, so they require $\bigO(m)$ storage.  The signal approximations can be stored using sparse data structures, so they require at most $\bigO(s \log N)$ storage.  Similarly, the index sets that appear require only $\bigO(s \log N)$ storage.  The total storage is at worst $\bigO(N)$.

The following result summarizes this discussion.

\begin{theorem}[Resource Requirements] \label{thm:resources}
Each iteration of CoSaMP requires $\bigO(\coll{L})$ time, where $\coll{L}$ bounds the cost of a multiplication with the matrix $\Fee$ or $\Fee^\adj$.  The algorithm uses storage $\bigO(N)$.
\end{theorem}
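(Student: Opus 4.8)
The plan is to prove both claims by a direct step-by-step accounting of the six operations that constitute one CoSaMP iteration, throughout using the standing assumption $\coll{L} \geq N$ (any matrix--vector multiply with the $m \times N$ matrix $\Fee$ must at least read its input and record its output). For each step I would exhibit a concrete implementation together with its cost, and then sum the six bounds; the matrix--vector multiplies will dominate, giving $\bigO(\coll{L})$ per iteration.

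Concretely, I would walk through the loop body in order. The \textbf{signal proxy} step forms $\vct{y} = \Fee^\adj \vct{v}$, whose cost is exactly one multiply by $\Fee^\adj$, namely $\bigO(\coll{L})$. The \textbf{identification} step selects the $2s$ largest-magnitude entries of $\vct{y}$; via the linear-time selection algorithm of \cite[Ch.~9]{CLRS01:Intro-Algorithms} this is $\bigO(N)$, absorbed into $\bigO(\coll{L})$. The \textbf{support merger} unites two index sets of size $\bigO(s)$ at cost $\bigO(s)$ by hashing (or $\bigO(s\log s)$ by sorting), and the \textbf{pruning} step likewise selects the $s$ largest entries in $\bigO(s)$ time. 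Finally, the \textbf{sample update} $\vct{v} = \vct{u} - \Fee \vct{a}^k$ is dominated by one multiply of $\Fee$ against an $s$-sparse vector, again $\bigO(\coll{L})$.

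The one step carrying genuine content, and hence the main obstacle, is the \textbf{least-squares estimation} of $\Fee_T^\psinv \vct{u}$. A naive direct solve would cost $\bigO(s^2 m)$ and demand random access to the columns of $\Fee$, defeating the goal. The key is to run an iterative solver (Richardson's iteration or conjugate gradient) seeded with the current approximation $\vct{a}^{k-1}$, so that each inner iteration costs only two multiplies by the submatrices $\Fee_T$ and $\Fee_T^\adj$; since $\Fee_T$ is a submatrix of $\Fee$, each such multiply is $\bigO(\coll{L})$. I would then invoke the conditioning estimate $\kappa(\Fee_T^\adj \Fee_T) < 1.223$ together with the already-established Lemma~\ref{lem:ls-error} and Corollary~\ref{cor:ls-est} to conclude that a \emph{constant} number of inner iterations (three) suffices for the error guarantee of Theorem~\ref{thm:invar-sparse-ls}. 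Thus estimation also costs $\bigO(\coll{L})$, and summing all six bounds proves the per-iteration claim.

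For the storage bound I would account for every object held in memory beyond the sampling matrix: the signal proxy $\vct{y}$ occupies $\bigO(N)$; the sample vectors $\vct{u},\vct{v} \in \R^m$ occupy $\bigO(m) \leq \bigO(N)$; and the $s$-sparse approximations $\vct{a}^k$ together with the index sets $\Omega$ and $T$ can be kept in sparse data structures using $\bigO(s \log N)$ space. Adding these yields total working storage $\bigO(N)$, completing the argument.
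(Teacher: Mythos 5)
Your proposal is correct and follows essentially the same route as the paper: a step-by-step operation count of the six stages (with the $\coll{L}\geq N$ convention absorbing the $\bigO(N)$ identification cost), the same treatment of the least-squares step via Richardson's iteration or conjugate gradient seeded with $\vct{a}^{k-1}$ and the constant-iteration guarantee of Corollary~\ref{cor:ls-est} and Theorem~\ref{thm:invar-sparse-ls}, and the same storage accounting of the proxy, sample vectors, and sparse index sets. No gaps to report.
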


\subsubsection{Algorithmic Variations}

This section describes other possible halting criteria and their consequences.  It also proposes some other variations on the algorithm.

There are three natural approaches to halting the algorithm.  The first, which we have discussed in the body of the paper, is to stop after a fixed number of iterations.  Another possibility is to use the norm $\enorm{ \vct{v} }$ of the current samples as evidence about the norm $\enorm{\vct{r}}$ of the residual.  A third possibility is to use the magnitude $\infnorm{\vct{y}}$ of the entries of the proxy to bound the magnitude $\infnorm{\vct{r}}$ of the entries of the residual.

It suffices to discuss halting criteria for sparse signals because Lemma~\ref{lem:reduction} shows that the general case can be viewed in terms of sampling a sparse signal.  Let $\vct{x}$ be an $s$-sparse signal, and let $\vct{a}$ be an $s$-sparse approximation.  The residual $\vct{r} = \vct{x} - \vct{a}$.  We write $\vct{v} = \Fee \vct{r} + \vct{e}$ for the induced noisy samples of the residual and $\vct{y} = \Fee^\adj \vct{v}$ for the signal proxy. 

The discussion proceeds in two steps.  First, we argue that an \term{a priori} halting criterion will result in a guarantee about the quality of the final signal approximation.


\begin{theorem}[Halting I]
The halting criterion $\enorm{ \vct{v} } \leq \eps$ ensures that
$$
\enorm{ \vct{x} - \vct{a} } \leq 1.06 \cdot ( \eps + \enorm{ \vct{e}}).
$$
The halting criterion $\pnorm{\infty}{ \vct{y} } \leq \eta / \sqrt{2s}$ ensures that
$$
\pnorm{\infty}{ \vct{x} - \vct{a} } \leq 1.12 \eta + 1.17 \enorm{\vct{e}}.
$$
\end{theorem}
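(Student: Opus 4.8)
The plan is to treat both halting criteria by relating the samples (respectively the proxy) back to the residual $\vct{r} = \vct{x} - \vct{a}$, which is $2s$-sparse since it is the difference of two $s$-sparse vectors, and then invoking the restricted isometry inequalities through the standing bound $\delta_{2s} \leq \delta_{4s} \leq 0.1$. Throughout I would write $\vct{v} = \Fee\vct{r} + \vct{e}$ for the noisy samples of the residual and $\vct{y} = \Fee^\adj\vct{v}$ for the signal proxy, exactly as set up in the preamble to the statement.

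For the first criterion the argument is short and purely $\ell_2$. Since $\vct{r}$ is $2s$-sparse, the lower restricted isometry inequality \eqref{eq:RIC2} gives $\enorm{\Fee\vct{r}} \geq \sqrt{1 - \delta_{2s}}\,\enorm{\vct{r}}$, while the identity $\Fee\vct{r} = \vct{v} - \vct{e}$ together with the triangle inequality gives $\enorm{\Fee\vct{r}} \leq \enorm{\vct{v}} + \enorm{\vct{e}}$. Combining these and dividing yields $\enorm{\vct{x} - \vct{a}} \leq (1-\delta_{2s})^{-1/2}(\enorm{\vct{v}} + \enorm{\vct{e}})$; substituting the halting bound $\enorm{\vct{v}} \leq \eps$ and $\delta_{2s} \leq 0.1$ (so that $(0.9)^{-1/2} < 1.06$) produces the claimed estimate.

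The second criterion is the substantive part, because it requires passing from an $\ell_\infty$ control of $\vct{y}$ to an $\ell_\infty$ control of $\vct{r}$, whereas the only RIP tools available are $\ell_2$-based; I would route through the $\ell_2$ norm. Set $R = \supp(\vct{r})$, so that $|R| \leq 2s$, and restrict the proxy to $R$, giving $\vct{y}|_R = \Fee_R^\adj\Fee_R\,\vct{r}|_R + \Fee_R^\adj\vct{e}$. Because the eigenvalues of $\Fee_R^\adj\Fee_R$ lie in $[1-\delta_{2s},\,1+\delta_{2s}]$ (Proposition~\ref{prop:rip-basic}), the first term has norm at least $(1-\delta_{2s})\enorm{\vct{r}}$, while $\enorm{\Fee_R^\adj\vct{e}} \leq \sqrt{1+\delta_{2s}}\,\enorm{\vct{e}}$; hence $(1-\delta_{2s})\enorm{\vct{r}} \leq \enorm{\vct{y}|_R} + \sqrt{1+\delta_{2s}}\,\enorm{\vct{e}}$.

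The key cancellation is then $\enorm{\vct{y}|_R} \leq \sqrt{|R|}\,\pnorm{\infty}{\vct{y}} \leq \sqrt{2s}\,\pnorm{\infty}{\vct{y}}$, so the halting bound $\pnorm{\infty}{\vct{y}} \leq \eta/\sqrt{2s}$ makes this term at most $\eta$. Finally I would invoke $\pnorm{\infty}{\vct{x}-\vct{a}} = \pnorm{\infty}{\vct{r}} \leq \enorm{\vct{r}}$ and insert $\delta_{2s}\leq 0.1$, checking that $(0.9)^{-1} < 1.12$ and $\sqrt{1.1}/0.9 < 1.17$ to obtain the stated constants. The only real obstacle is recognizing that one should restrict the proxy to $R$ and exploit the $\sqrt{2s}$ factor in the crude $\ell_\infty$-to-$\ell_2$ passage, precisely so that it cancels the $1/\sqrt{2s}$ built into the stopping rule; once that is seen, the estimate reduces to the same restricted-isometry bookkeeping used repeatedly earlier in the section.
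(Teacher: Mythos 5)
Your proposal is correct and follows essentially the same route as the paper's own proof: part one uses the lower restricted isometry bound on the $2s$-sparse residual together with the triangle inequality on $\vct{v} = \Fee\vct{r} + \vct{e}$, and part two restricts the proxy to $R = \supp(\vct{r})$, bounds $\enorm{\vct{y}\restrict{R}}$ below via Proposition~\ref{prop:rip-basic} and above by $\sqrt{2s}\,\pnorm{\infty}{\vct{y}}$, exactly as in the paper. The constants check out ($(0.9)^{-1/2} < 1.06$, $1/0.9 < 1.12$, $\sqrt{1.1}/0.9 < 1.17$), so nothing is missing.
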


\begin{proof}
Since $\vct{r}$ is $2s$-sparse, Proposition~\ref{prop:rip-basic} ensures that
$$
\sqrt{1 - \delta_{2s}} \enorm{ \vct{r} } - \enorm{ \vct{e} }
	\leq \enorm{ \vct{v} }.
$$
If $\enorm{ \vct{v} } \leq \eps$, it is immediate that
$$
\enorm{ \vct{r} } \leq
\frac{ \eps + \enorm{ \vct{e} } }{\sqrt{1 - \delta_{2s}}}.
$$
The definition $\vct{r} = \vct{x} - \vct{a}$ and the numerical bound $\delta_{2s} \leq \delta_{4s} \leq 0.1$ dispatch the first claim.

Let $R = \supp{ \vct{r} }$, and note that $\abs{R} \leq 2s$.  Proposition~\ref{prop:rip-basic} results in
$$
(1 - \delta_{2s}) \enorm{ \vct{r} } - \sqrt{1 + \delta_{2s}} \enorm{ \vct{e} }
	\leq \enorm{ \vct{y}\restrict{R} }.
$$
Since
$$
\enorm{ \vct{y} \restrict{R} }
	\leq \sqrt{2s} \pnorm{ \infty}{ \vct{y}\restrict{R} }
	\leq \sqrt{2s} \pnorm{ \infty}{ \vct{y} },
$$
we find that the requirement $\pnorm{\infty}{\vct{y}} \leq \eta /\sqrt{2s}$ ensures that
$$
\pnorm{\infty}{ \vct{r} } \leq
	\frac{\eta + \sqrt{1 + \delta_{2s}}\enorm{ \vct{e} }}{1 - \delta_{2s}}.
$$
The numerical bound $\delta_{2s} \leq 0.1$ completes the proof.
\end{proof}

Second, we check that each halting criterion is triggered when the residual has the desired property.

\begin{theorem}[Halting II]
The halting criterion $\enorm{ \vct{v} } \leq \eps$ is triggered as soon as
$$
\enorm{ \vct{x} - \vct{a} } \leq 0.95 \cdot ( \eps - \enorm{ \vct{e} } ).
$$
The halting criterion $\pnorm{\infty}{\vct{y}} \leq \eta / \sqrt{2s}$ is triggered as soon as
$$
\pnorm{\infty}{ \vct{x} - \vct{a} } \leq \frac{0.45 \eta}{ s } - \frac{0.68 \enorm{\vct{e}}}{\sqrt{s}}.
$$\end{theorem}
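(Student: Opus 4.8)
The plan is to prove both implications as converses to the two bounds in Halting~I: there the diagnostic quantities $\enorm{\vct{v}}$ and $\pnorm{\infty}{\vct{y}}$ were \emph{lower}-bounded in terms of the residual $\vct{r} = \vct{x} - \vct{a}$, whereas now I want to \emph{upper}-bound them so that smallness of $\vct{r}$ forces each halting test to fire. Throughout I would exploit that $\vct{r}$ is $2s$-sparse (being the difference of two $s$-sparse vectors), the decompositions $\vct{v} = \Fee\vct{r} + \vct{e}$ and $\vct{y} = \Fee^\adj\Fee\vct{r} + \Fee^\adj\vct{e}$, and the restricted isometry consequences in Proposition~\ref{prop:rip-basic} and Corollary~\ref{cor:cross-corr}, finishing by inserting the hypothesis together with the numerical bound $\delta_{2s} \le \delta_{4s} \le 0.1$.

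For the first criterion I would simply apply the triangle inequality and the upper restricted isometry inequality to the $2s$-sparse vector $\vct{r}$, giving $\enorm{\vct{v}} \le \sqrt{1+\delta_{2s}}\,\enorm{\vct{r}} + \enorm{\vct{e}}$. Since $\delta_{2s}\le 0.1$ yields $1/\sqrt{1+\delta_{2s}} \ge 1/\sqrt{1.1} > 0.95$, the hypothesis $\enorm{\vct{x}-\vct{a}} \le 0.95(\eps - \enorm{\vct{e}})$ (which in particular forces $\eps-\enorm{\vct{e}}\ge 0$) gives $\sqrt{1+\delta_{2s}}\,\enorm{\vct{r}} \le \eps - \enorm{\vct{e}}$, hence $\enorm{\vct{v}} \le \eps$. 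This half is routine.

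The second criterion is the substantive one, and the key is a coordinatewise bound on the proxy. For a fixed index $i$ I would bound the noise part by Cauchy--Schwarz, $\abs{(\Fee^\adj\vct{e})_i} = \abs{\pr{\Fee_{\cdot i}}{\vct{e}}} \le \sqrt{1+\delta_{2s}}\,\enorm{\vct{e}}$, using $\enorm{\Fee_{\cdot i}}^2 = (\Fee^\adj\Fee)_{ii}\le 1+\delta_{2s}$. For the signal part I would set $\Lambda = \supp(\vct{r})\cup\{i\}$, of size at most $2s+1\le 4s$, and write $\Fee_\Lambda^\adj\Fee_\Lambda = \Id_\Lambda + (\Fee_\Lambda^\adj\Fee_\Lambda - \Id_\Lambda)$; the diagonal term isolates $r_i$ exactly, while the remainder is controlled by $\norm{\Fee_\Lambda^\adj\Fee_\Lambda - \Id_\Lambda}\le\delta_{4s}$, so $\abs{(\Fee^\adj\Fee\vct{r})_i}\le \pnorm{\infty}{\vct{r}} + \delta_{4s}\enorm{\vct{r}}$. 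Taking the maximum over $i$ and using $\enorm{\vct{r}}\le\sqrt{2s}\,\pnorm{\infty}{\vct{r}}$ produces
$$
\pnorm{\infty}{\vct{y}} \le \bigl(1 + \delta_{4s}\sqrt{2s}\bigr)\pnorm{\infty}{\vct{r}} + \sqrt{1+\delta_{2s}}\,\enorm{\vct{e}}.
$$
Keeping the coefficient of $\pnorm{\infty}{\vct{r}}$ equal to $1$ (rather than the cruder $1+\delta_{2s}$) via this $\Lambda$-argument turns out to matter for the final margin.

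The last step is to substitute the hypothesis $\pnorm{\infty}{\vct{x}-\vct{a}} \le \tfrac{0.45\eta}{s} - \tfrac{0.68\enorm{\vct{e}}}{\sqrt{s}}$ and $\delta_{2s}\le\delta_{4s}\le 0.1$ into the displayed bound and verify the result is at most $\eta/\sqrt{2s}$. I expect this to be the main obstacle, because the noise terms very nearly cancel and the slack is only of order $0.013\,\eta/\sqrt{s}$; the constants $0.45$ and $0.68$ are evidently tuned for exactly this. The clean way to close it is to note that the hypothesis can hold only when its right-hand side is nonnegative, which forces $\enorm{\vct{e}}\le \tfrac{0.45\eta}{0.68\sqrt{s}}$; after substitution the target inequality is affine and increasing in $\enorm{\vct{e}}$, so it suffices to check it at this extreme admissible value, where the two sides match up to a small negative residual. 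This confirms that the test $\pnorm{\infty}{\vct{y}}\le\eta/\sqrt{2s}$ is triggered.
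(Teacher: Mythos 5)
Your proof is correct, and you handle the first criterion exactly as the paper does; but for the second criterion you take a genuinely different route. The paper never touches the proxy coordinatewise: it takes the singleton $R$ holding the largest-magnitude entry of $\vct{y}$ and uses Proposition~\ref{prop:rip-basic} to get $\pnorm{\infty}{\vct{y}} = \enorm{\vct{y}\restrict{R}} \leq \sqrt{1+\delta_1}\,\enorm{\vct{v}}$, so the $\ell_\infty$ test fires as soon as $\enorm{\vct{v}} \leq \eta/(\sqrt{2s}\sqrt{1+\delta_1})$; it then simply invokes the already-proved first criterion with $\eps = \eta/(\sqrt{2s}\sqrt{1+\delta_1})$ and passes to the stated hypothesis via $\enorm{\vct{x}-\vct{a}} \leq \sqrt{2s}\,\pnorm{\infty}{\vct{x}-\vct{a}}$; the constants are then absorbed term by term, since $0.45 \leq 0.95/(2\sqrt{1.1}) \approx 0.453$ and $0.68 \geq 0.95/\sqrt{2} \approx 0.672$, so no case analysis is needed. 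You instead prove the sharper coordinatewise bound $\pnorm{\infty}{\vct{y}} \leq (1+\delta_{4s}\sqrt{2s})\,\pnorm{\infty}{\vct{r}} + \sqrt{1+\delta_{2s}}\,\enorm{\vct{e}}$ by splitting the restricted Gram matrix into identity plus perturbation on $\Lambda = \supp(\vct{r})\cup\{i\}$ (legitimate, since $\abs{\Lambda} \leq 2s+1 \leq 4s$), and then close by optimizing over the admissible noise level; your endpoint computation $\sqrt{1.1}\cdot(0.45/0.68) \approx 0.694 < 1/\sqrt{2} \approx 0.707$ and the positivity of the slope in $\enorm{\vct{e}}$ both check out, so your argument is complete. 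The trade-off is this: the paper's reduction is shorter, needs only $\delta_1$ and $\delta_{2s}$, and its constants leave a comfortable term-by-term margin; your bound keeps the coefficient of $\pnorm{\infty}{\vct{r}}$ near $1$ rather than the paper's implicit $\sqrt{(1+\delta_1)(1+\delta_{2s})}\,\sqrt{2s}$, which is genuinely tighter for large $s$, but it invokes $\delta_{4s}$ and---because the constants $0.45$ and $0.68$ were tuned to the paper's looser chain, not to yours---forces exactly the delicate worst-case-noise verification you correctly identified as the main obstacle.
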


\begin{proof}
Proposition \ref{prop:rip-basic} shows that
$$
\enorm{ \vct{v} } \leq \sqrt{1 + \delta_{2s}} \enorm{ \vct{r}} + \enorm{ \vct{e} }.
$$
Therefore, the condition
$$
\enorm{\vct{r}} \leq
	\frac{\eps - \enorm{\vct{e}}}{\sqrt{1 + \delta_{2s}}}
$$
ensures that $\enorm{ \vct{v} } \leq \eps$.  Note that $\delta_{2s} \leq 0.1$ to complete the first part of the argument.

Now let $R$ be the singleton containing the index of a largest-magnitude coefficient of $\vct{y}$.
Proposition~\ref{prop:rip-basic} implies that
$$
\pnorm{ \infty}{ \vct{y} } = \enorm{ \vct{y}\restrict{R} } \leq \sqrt{1 + \delta_{1}}\enorm{ \vct{v} }.
$$
By the first part of this theorem, the halting criterion $\pnorm{\infty}{\vct{y}} \leq \eta / \sqrt{2s}$ is triggered as soon as
$$
\enorm{ \vct{x} - \vct{a} } \leq 0.95 \cdot \left( \frac{\eta}{\sqrt{2s}\sqrt{1 + \delta_{1}}} - \enorm{ \vct{e} } \right).
$$
Since $\vct{x} - \vct{a}$ is $2s$-sparse, we have the bound $\enorm{ \vct{x} - \vct{a} } \leq \sqrt{2s}\pnorm{\infty}{ \vct{x-a} }$.  To wrap up, recall that $\delta_{1} \leq \delta_{2s} \leq 0.1$.
\end{proof}

Next we discuss other variations of the algorithm.

Here is a version of the algorithm that is, perhaps, simpler than that described above.  At each iteration, we approximate the current residual rather than the entire signal.  This approach is similar to HHS pursuit \cite{GSTV07:HHS}.  The inner loop changes in the following manner.
\begin{description} \setlength{\itemsep}{0.5pc}
\item	[Identification]
	As before, select $\Omega = \supp{\vct{y}_{2s}}$.

\item	[Estimation]
	Solve a least-squares problem with the \emph{current samples} instead of the original samples to obtain an approximation of the \emph{residual signal}.  Formally, $\vct{b} = \Fee_\Omega^\psinv \vct{v}$.  In this case, one initializes the iterative least-squares algorithm with the zero vector to take advantage of the fact that the residual is becoming small.

\item	[Approximation Merger]
	Add this approximation of the residual to the previous approximation of the signal to obtain a new approximation of the signal: $\vct{c} = \vct{a}^{k-1} + \vct{b}$.

\item	[Pruning]
	Construct the $s$-sparse signal approximation: $\vct{a}^k = \vct{c}_s$.

\item	[Sample Update]
	Update the samples as before: $\vct{v} = \vct{u} - \Fee \vct{a}^k$.
\end{description}

By adapting the argument in this paper, we have been able to show that this algorithm also satisfies Theorem~\ref{thm:cosamp}.  We believe this version is quite promising for applications.

An alternative variation is as follows.  After the inner loop of the algorithm is complete, we can solve another least-squares problem in an effort to improve the final result.  If $\vct{a}$ is the approximation at the end of the loop, we set $T = \supp{ \vct{a} }$.  Then solve $\vct{b} = \Fee_T^\psinv \vct{u}$ and output the $s$-sparse signal approximation $\vct{b}$.  Note that the output approximation is not guaranteed to be better than $\vct{a}$ because of the noise vector $\vct{e}$, but it should never be much worse.

Another variation is to prune the merged support $T$ down to $s$ entries \emph{before} solving the least-squares problem.  One may use the values of the proxy $\vct{y}$ as surrogates for the unknown values of the new approximation on the set $\Omega$.  Since the least-squares problem is solved at the end of the iteration, the columns of $\Fee$ that are used in the least-squares approximation are orthogonal to the current samples $\vct{v}$.  As a result, the identification step always selects new components in each iteration.  We have not attempted an analysis of this algorithm.

    		\subsection[Numerical Results]{Numerical Results}
    		\label{sec:New:Compressive:Numerical}
    		
    			\subsubsection[Noiseless Numerical Studies]{Noiseless Numerical Studies}
    		
    		This section describes our experiments that illustrate the signal recovery power of CoSaMP.  See Section~\ref{app:code:Cosampcode} for the Matlab code used in these studies.
We experimentally examine how many measurements $m$ are necessary to recover various kinds of $s$-sparse
signals in $\R^d$ using ROMP. 
We also demonstrate that the number of iterations CoSaMP needs to recover a sparse signal is
in practice at most linear the sparsity, and in fact this serves as a successful halting criterion. 

First we describe the setup of our experiments. For many values of the ambient dimension $d$, 
the number of measurements $m$, and the sparsity $s$, we reconstruct random signals using CoSaMP.
For each set of values, we generate an $m \times d$ Gaussian measurement matrix $\Phi$ and then perform $500$ independent trials. The results
we obtained using Bernoulli measurement matrices were very similar.
In a given trial, we generate an $s$-sparse signal $x$ in one of two ways. In either case, we first select the support of the signal by choosing $s$ components uniformly 
at random (independent from the measurement matrix $\Phi$). In the cases where we wish to generate flat signals, we then  set these components to one.  In the cases where we wish to generate sparse compressible signals, we set the $i^{th}$ component of the support to plus or minus $i^{-1/p}$ for a specified value of $0 < p < 1$. We then execute CoSaMP with the measurement
vector $u = \Phi x$.  

Figure~\ref{figcos:percent} depicts the percentage (from the $500$ trials) of sparse flat signals that were 
reconstructed exactly. This plot was generated with $d = 256$ for various levels of sparsity $s$. The horizontal axis represents the number of measurements $m$, and the vertical
axis represents the exact recovery percentage. 

Figure~\ref{figcos:99} depicts a plot of the values for $m$ and $s$ at which $99\%$ of sparse flat signals are recovered exactly. This plot was generated with $d=256$. The horizontal axis represents the number of measurements $m$, and the vertical axis the sparsity level $s$. 

Our results guarantee that CoSaMP reconstructs signals correctly with just $O(s)$ iterations. Figure~\ref{figcos:itsROMP} depicts the number of iterations needed by CoSaMP for $d=10,000$ and $m=200$ for perfect reconstruction. CoSaMP was 
executed under the same setting as described above for sparse flat signals as well as sparse compressible signals for various values of $p$, and the number of iterations
in each scenario was averaged over the $500$ trials. These averages were plotted against
the sparsity of the signal. The plot demonstrates that often far fewer iterations are actually needed in some cases.  This is not surprising, since as we discussed above alternative halting criteria may be better in practice.  The plot also demonstrates that the number of iterations needed for sparse compressible
is higher than the number needed for sparse flat signals, as one would expect. The plot suggests that for smaller
values of $p$ (meaning signals that decay more rapidly) CoSaMP needs more iterations.

\begin{figure}[ht] 
  \includegraphics[width=0.8\textwidth,height=3.2in]{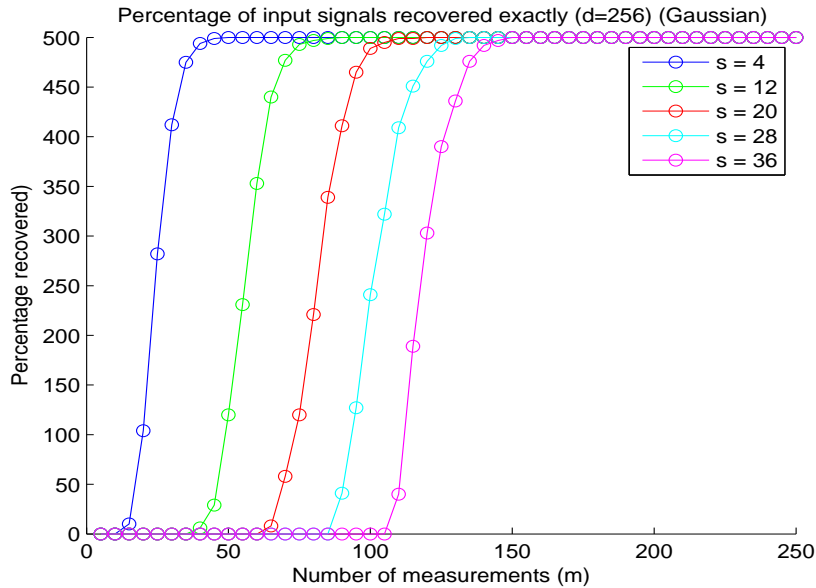}
  \caption{The percentage of sparse flat signals exactly recovered by CoSaMP as a function of the number of measurements in dimension $d=256$ for various levels of sparsity.}\label{figcos:percent}
\end{figure}

\begin{figure}[ht] 
  \includegraphics[width=0.8\textwidth,height=3.2in]{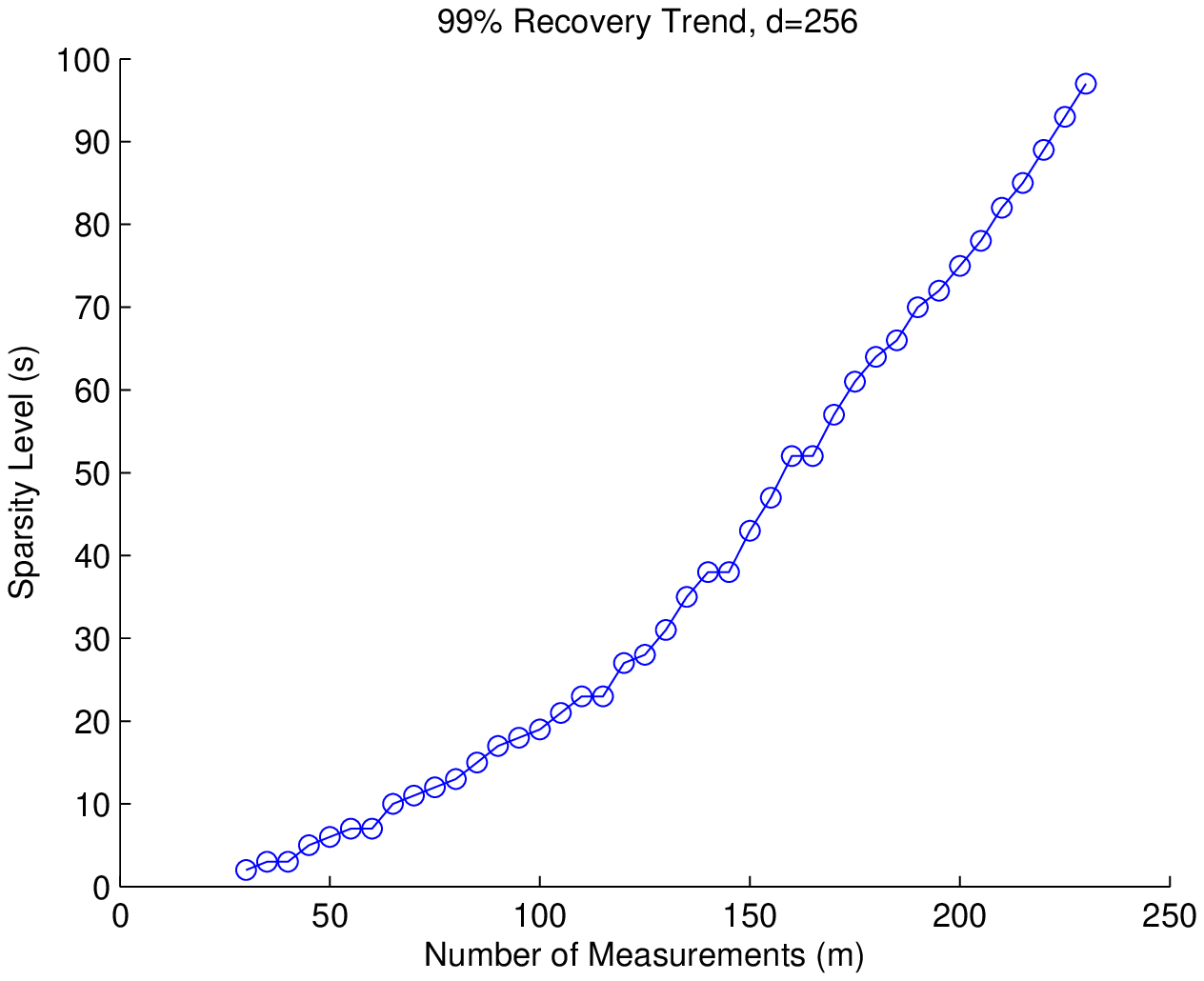}
  \caption{The $99\%$ recovery limit as a function of the sparsity and the number of measurements for sparse flat signals.}\label{figcos:99}
\end{figure}

\begin{figure}[ht] 
  \includegraphics[width=0.8\textwidth,height=3.2in]{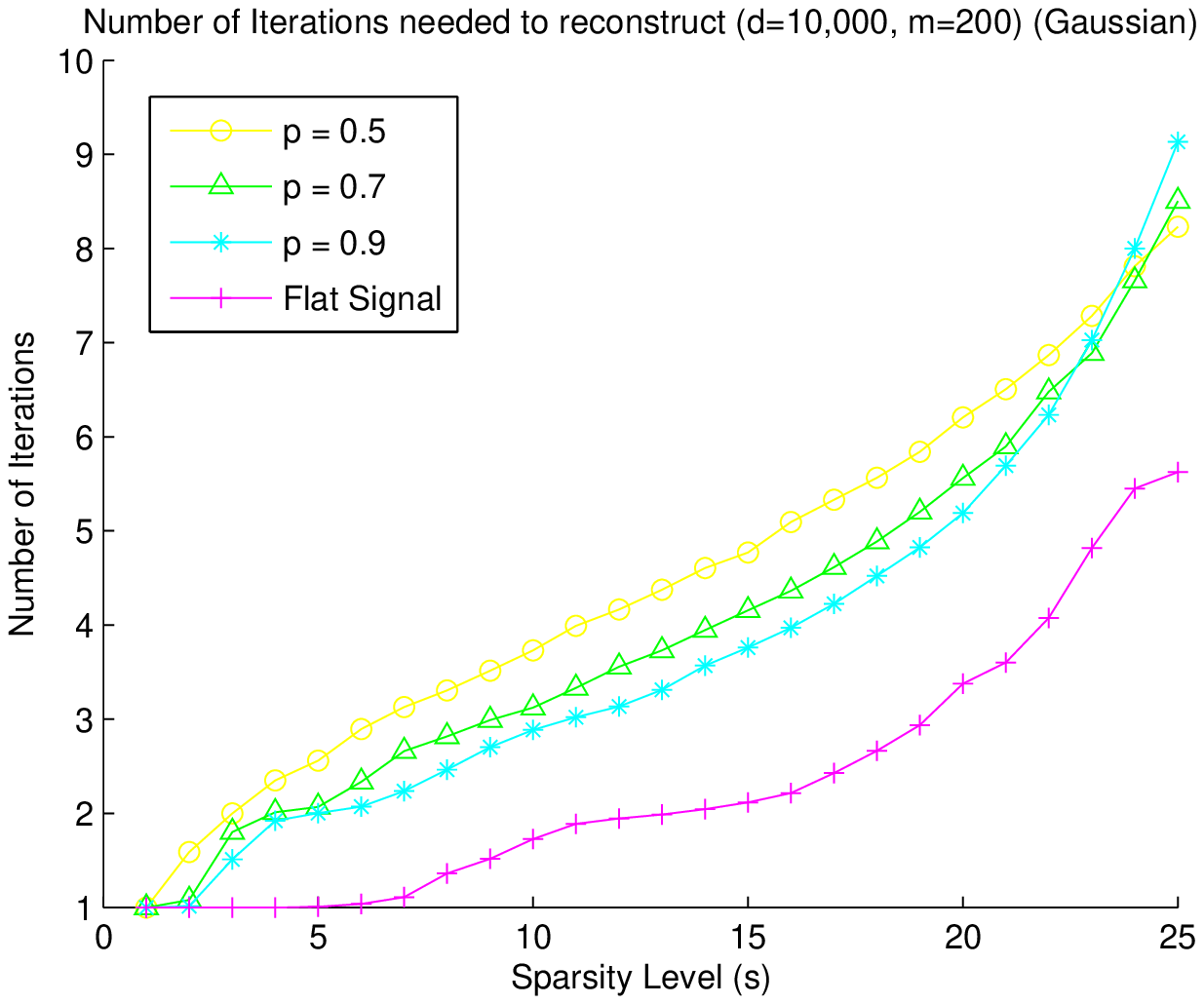}
  \caption{The number of iterations needed by CoSaMP as a function of the sparsity in dimension $d=10,000$ with $200$ measurements.}\label{figcos:itsROMP}
\end{figure}
    		
    		\subsubsection[Noisy Numerical Studies]{Noisy Numerical Studies}
    		
    		This section describes our numerical experiments that illustrate the stability of CoSaMP. 
We study the recovery error using CoSaMP for both perturbed
measurements and signals. 
The empirical recovery error confirms that given in the theorems. 

First we describe the setup to our experimental studies. We run CoSaMP on various values of the ambient dimension $d$, 
the number of measurements $m$, and the sparsity level $s$, and attempt to reconstruct random signals.
For each set of parameters, we perform $500$ trials. Initially, we generate an $m \times d$ Gaussian measurement matrix $\Phi$. For each trial, independent of the matrix, we generate an $s$-sparse signal $x$ by choosing $s$ components uniformly 
at random and setting them to one.
In the case of perturbed signals, we add to the signal a $d$-dimensional error vector with Gaussian entries. In the case of perturbed measurements, we add an $m$-dimensional error vector with Gaussian entries to the measurement vector $\Phi x$.  We then execute ROMP with the measurement
vector $u = \Phi x$ or $u + e$ in the perturbed measurement case. After CoSaMP terminates (using a fixed number of iterations of $10s$), we output the reconstructed vector $\hat{x}$ obtained from the least squares calculation and calculate its distance from the original signal. 

Figure~\ref{figcos:meas2} depicts the recovery error $\|x - \hat{x}\|_2$ when CoSaMP was run with perturbed measurements. This plot was generated with $d = 256$ for various levels of sparsity $s$. The horizontal axis represents the number of measurements $m$, and the vertical
axis represents the average normalized recovery error. 

Figure~\ref{figcos:sig4} depicts the normalized recovery error when the signal was perturbed by a Gaussian vector. Again these results are consistent with our proven theorems, but notice that we normalize here by $\|x-x_s\|_1/\sqrt{s}$ rather than $\|x-x_{s/2}\|_1/\sqrt{s}$ as our theorems suggest.  These plots show that perhaps the former normalization factor is actually more accurate, and the latter may be a consequence of our analysis only.


\begin{figure}[ht] 
  \includegraphics[width=0.8\textwidth,height=3.2in]{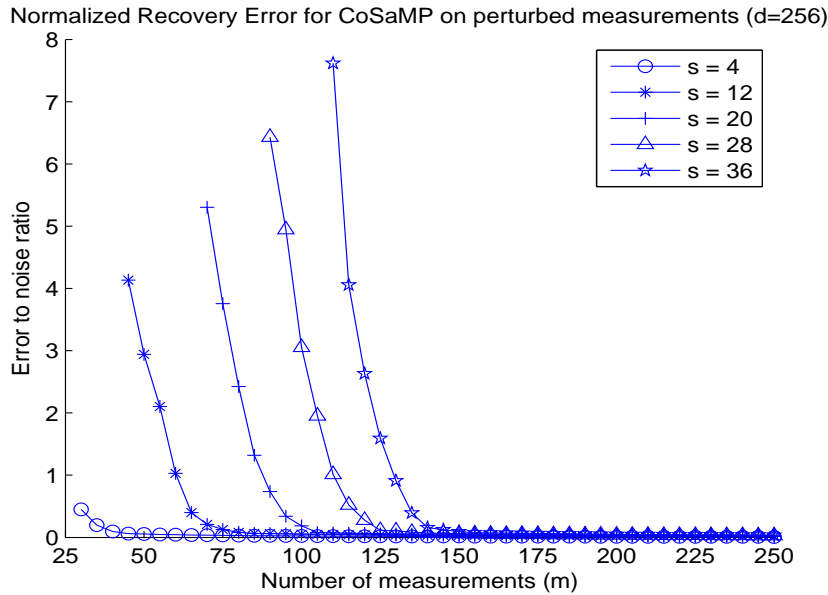}
  \caption{The error to noise ratio $\frac{\|\hat{x} - x\|_2}{\|e\|_2}$ as a function of the number of measurements $m$ in dimension $d=256$ for various levels of sparsity $s$.}\label{figcos:meas2}
\end{figure}

\begin{figure}[ht] 
  \includegraphics[width=0.8\textwidth,height=3.2in]{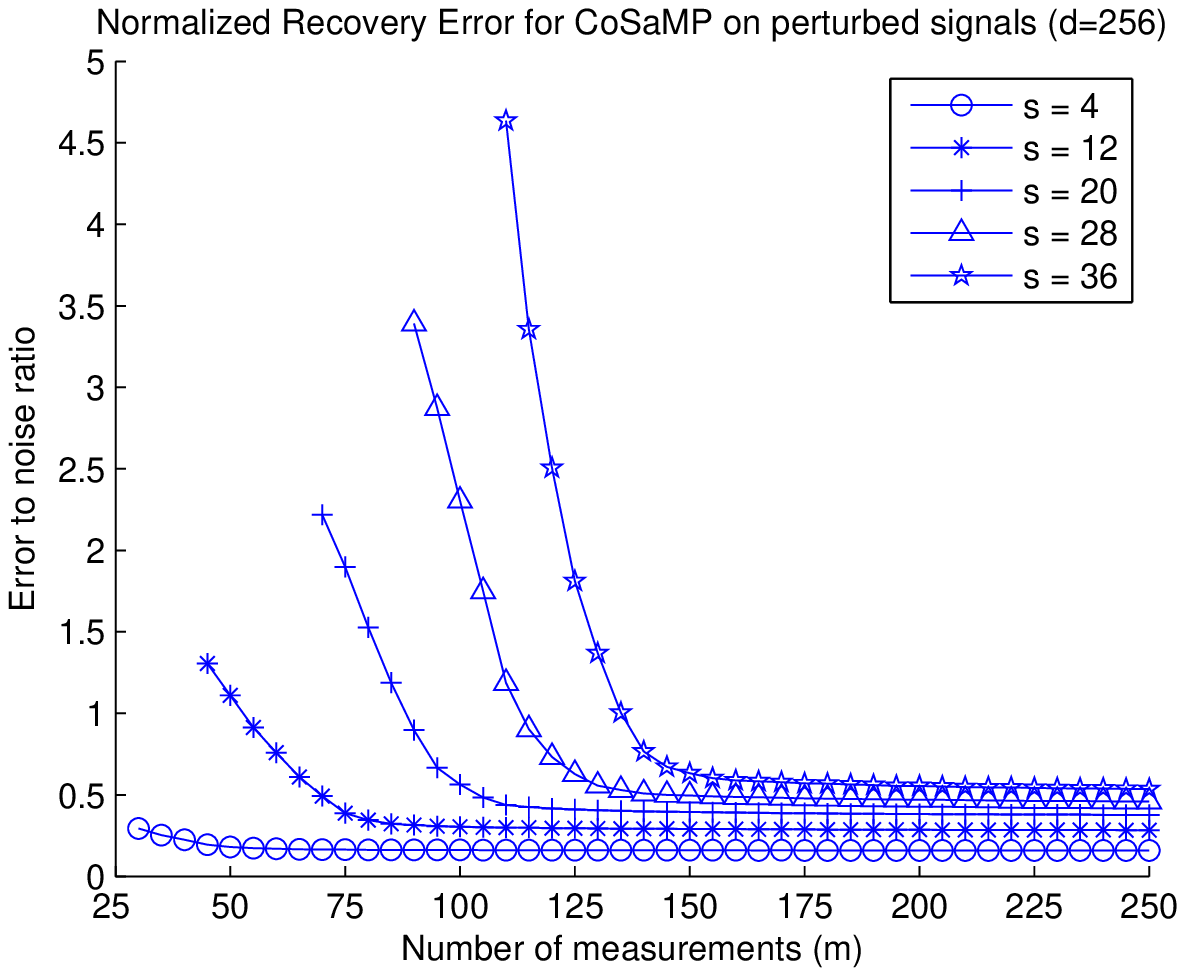}
  \caption{The error to noise ratio $\frac{\|\hat{x} - x_{s}\|_2}{\|x-x_s\|_1/\sqrt{s}}$ using a perturbed signal, as a function of the number of measurements $m$ in dimension $d=256$ for various levels of sparsity $s$.}\label{figcos:sig4}
\end{figure}
    		
    		\subsection[Summary]{Summary}
    		\label{sec:New:Compressive:Summary}

CoSaMP draws on both algorithmic ideas and analytic techniques that have appeared before.  Here we summarize the results in the context of other work.  This discussion can also be found in~\cite{NT08:Cosamp}.
The initial discovery works on compressive sampling proposed to perform signal recovery by solving a convex optimization problem~\cite{CRT06:Robust-Uncertainty,Don06:Compressed-Sensing} (see also Section~\ref{sec:Approaches:Basis} above).  Given a sampling matrix $\Fee$ and a noisy vector of samples $\vct{u} = \Fee \vct{x} + \vct{e}$ with $\enorm{\vct{e}}\leq \eps$, we consider the mathematical program~\eqref{eqn:bp}.
In words, we look for a signal reconstruction that is consistent with the samples but has minimal $\ell_1$ norm.  The intuition behind this approach is that minimizing the $\ell_1$ norm promotes sparsity, so allows the approximate recovery of compressible signals.  Cand{\`e}s, Romberg, and Tao established in~\cite{CRT06:Stable} that a minimizer $\vct{a}$ of \eqref{eqn:bp} satisfies
\begin{equation} \label{eqn:bp-err}
\enorm{ \vct{x} - \vct{a} } \leq \cnst{C} \left[ \frac{1}{\sqrt{s}} \pnorm{1}{\vct{x} - \vct{x}_s }
	+ \eps \right]
\end{equation}
provided that the sampling matrix $\Fee$ has restricted isometry constant $\delta_{4s} \leq 0.2$.  In \cite{Can08:Restricted-Isometry}, the hypothesis on the restricted isometry constant is sharpened to $\delta_{2s} \leq \sqrt{2} - 1$.  The error bound for CoSaMP is equivalent, modulo the exact value of the constants.

The literature describes a huge variety of algorithms for solving the optimization problem \eqref{eqn:bp}.  The most common approaches involve interior-point methods~\cite{CRT06:Robust-Uncertainty,KKL+06:Method-Large-Scale}, projected gradient methods~\cite{FNW07:Gradient-Projection}, or iterative thresholding~\cite{DDM04:Iterative-Thresholding} 
The interior-point methods are guaranteed to solve the problem to a fixed precision in time $\bigO( m^2 d^{1.5} )$, where $m$ is the number of measurements and $d$ is the signal length~\cite{NN94:Interior-Point}.  Note that the constant in the big-O notation depends on some of the problem data.  The other convex relaxation algorithms, while sometimes faster in practice, do not currently offer rigorous guarantees.  CoSaMP provides rigorous bounds on the runtime that are much better than the available results for interior-point methods.

Tropp and Gilbert proposed the use of a greedy iterative algorithm called \term{orthogonal matching pursuit} (OMP) for signal recovery \cite{TG07:Signal-Recovery} (see also Section~\ref{sec:Approaches:Greedy:Orthogonal} above).  
Tropp and Gilbert were able to prove a weak result for the performance of OMP \cite{TG07:Signal-Recovery}.  Suppose that $\vct{x}$ is a fixed, $s$-sparse signal, and let $m = \cnst{C} s \log s$.  Draw an $m \times s$ sampling matrix $\Fee$ whose entries are independent, zero-mean subgaussian random variables with equal variances.  Given noiseless measurements $\vct{u} = \Fee \vct{x}$, OMP reconstructs $\vct{x}$ after $s$ iterations, except with probability $s^{-1}$.  In this setting, OMP must fail for some sparse signals\cite{R08:Impossibility}, so it does not provide the same uniform guarantees as convex relaxation.  It is unknown whether OMP succeeds for compressible signals or whether it succeeds when the samples are contaminated with noise.

Donoho et al.~invented another greedy iterative method called \term{stagewise OMP}, or StOMP \cite{DTDS06:Sparse-Solution} (see also Section~\ref{sec:Approaches:Greedy:Stagewise} above).  This algorithm uses the signal proxy to select multiple components at each step, using a rule inspired by ideas from wireless communications.  The algorithm is faster than OMP because of the selection rule, and it sometimes provides good performance, although parameter tuning can be difficult.  There are no rigorous results available for StOMP.

Needell and Vershynin developed and analyzed another greedy approach, called \term{regularized OMP}, or ROMP \cite{NV07:Uniform-Uncertainty, NV07:ROMP-Stable} (see also Section~\ref{sec:New:Regularized} above).  The work on ROMP represents an advance because the authors establish under restricted isometry hypotheses that their algorithm can approximately recover any compressible signal from noisy samples.  More precisely, suppose that the sampling matrix $\Fee$ has restricted isometry constant $\delta_{8s} \leq 0.01 / \sqrt{\log s}$.   Given noisy samples $\vct{u} = \Fee\vct{x} + \vct{e}$, ROMP produces a $2s$-sparse signal approximation $\vct{a}$ that satisfies
$$
\enorm{ \vct{x} - \vct{a} } \leq \cnst{C} \sqrt{\log s} \left[ \frac{1}{\sqrt{s}}
	\pnorm{1}{ \vct{x} - \vct{x}_s } + \enorm{ \vct{e} } \right].
$$
This result is comparable with the result for convex relaxation, aside from the extra logarithmic factor in the restricted isometry hypothesis and the error bound.  The results for CoSaMP show that it does not suffer these parasitic factors, so its performance is essentially optimal.

        \section[Reweighted L1-Minimization]{Reweighted L1-Minimization}
        \label{sec:New:Reweighted}
        
As discussed in Section~\ref{sec:Approaches:Basis}, the $\ell_1$-minimization problem~\eqref{eq:ell1} is equivalent to the nonconvex problem~\eqref{eq:ell0} when the measurement matrix $\Phi$ satisfies a certain condition.  Let us first state the best known theorem for recovery using $\ell_1$, provided by Cand\`es, in more detail.  We will rely on this result to prove the new theorems.

\begin{theorem}[$\ell_1$-minimization from~\cite{Can08:Restricted-Isometry}]\label{candes} Assume $\Phi$ has $\delta_{2s} < \sqrt{2}-1$. Let $x$ be an arbitrary signal with noisy measurements $\Phi x + e$, where $\|e\|_2 \leq \varepsilon$. Then the approximation $\hat{x}$ to $x$ from $\ell_1$-minimization satisfies
$$
\|x-\hat{x}\|_2 \leq C\varepsilon + C'\frac{\|x-x_s\|_1}{\sqrt{s}},
$$
where $C = \frac{2\alpha}{1-\rho}$, $C' = \frac{2(1+\rho)}{1-\rho}$, $\rho = \frac{\sqrt{2}\delta_{2s}}{1-\delta_{2s}}$, and $\alpha = \frac{2\sqrt{1+\delta_{2s}}}{\sqrt{1-\delta_{2s}}}$ .

\end{theorem}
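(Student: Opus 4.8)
The plan is to follow the now-standard dual-constraint argument of Candès, linking the feasibility (``tube'') bound with the $\ell_1$-optimality (``cone'') bound through the restricted isometry property. Write $\hat{x}$ for the minimizer of the program \eqref{eqn:bp} with $\vct{u} = \Phi x + e$, set $h := \hat{x} - x$ for the error vector, and let $T_0$ index the $s$ largest-magnitude entries of $x$ (so $x_{T_0} = x_s$). I would partition the complement $T_0^c$ into consecutive blocks $T_1, T_2, \dots$ of size $s$, ordered by decreasing magnitude of the entries of $h$, and abbreviate $T_{01} = T_0 \cup T_1$.

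First I would record the two constraints. The tube constraint is immediate: since both $x$ and $\hat{x}$ are feasible for \eqref{eqn:bp}, the triangle inequality gives $\|\Phi h\|_2 \le \|\Phi\hat{x} - \vct{u}\|_2 + \|\vct{u} - \Phi x\|_2 \le 2\varepsilon$. The cone constraint comes from $\|\hat{x}\|_1 \le \|x\|_1$: splitting the $\ell_1$ norm across $T_0$ and $T_0^c$ and applying the reverse triangle inequality yields $\|h_{T_0^c}\|_1 \le \|h_{T_0}\|_1 + 2\|x_{T_0^c}\|_1$, where $\|x_{T_0^c}\|_1 = \|x - x_s\|_1$.

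Next I would control the tail. The defining feature of the blocks is that every entry on $T_j$ is dominated by the average magnitude on $T_{j-1}$, so $\|h_{T_j}\|_2 \le s^{-1/2}\|h_{T_{j-1}}\|_1$ for $j \ge 2$; summing and inserting the cone constraint together with $\|h_{T_0}\|_1 \le \sqrt{s}\,\|h_{T_0}\|_2$ gives
$$\sum_{j \ge 2} \|h_{T_j}\|_2 \le \|h_{T_{01}}\|_2 + 2 e_0, \qquad e_0 := \frac{\|x - x_s\|_1}{\sqrt{s}}.$$
The crux is then the RIP estimate on $T_{01}$. Expanding $\|\Phi h_{T_{01}}\|_2^2 = \langle \Phi h_{T_{01}}, \Phi h\rangle - \sum_{j \ge 2}\langle \Phi h_{T_{01}}, \Phi h_{T_j}\rangle$, I would bound the first term by $2\varepsilon\sqrt{1+\delta_{2s}}\,\|h_{T_{01}}\|_2$ using the tube constraint, and each summand in the second term by the near-orthogonality of columns supported on disjoint sets (a consequence of \eqref{eq:RIC2}, of the same flavor as Proposition~\ref{prop:approx-orth}), which produces the factor $\sqrt{2}\,\delta_{2s}$ after accounting for $\|h_{T_0}\|_2 + \|h_{T_1}\|_2 \le \sqrt{2}\,\|h_{T_{01}}\|_2$. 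Combining with the lower bound $\|\Phi h_{T_{01}}\|_2^2 \ge (1-\delta_{2s})\|h_{T_{01}}\|_2^2$, dividing out one power of the norm, and substituting the tail estimate isolates $\|h_{T_{01}}\|_2$.

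The main obstacle --- and the reason the hypothesis $\delta_{2s} < \sqrt{2}-1$ appears --- is this absorption step: the substitution reintroduces a term proportional to $\sqrt{2}\,\delta_{2s}\|h_{T_{01}}\|_2$ on the right, and moving it to the left requires $1 - \delta_{2s} - \sqrt{2}\,\delta_{2s} > 0$, i.e.\ $\rho = \sqrt{2}\,\delta_{2s}/(1-\delta_{2s}) < 1$, which is exactly $\delta_{2s} < \sqrt{2}-1$. Factoring $1 - \delta_{2s} - \sqrt{2}\,\delta_{2s} = (1-\delta_{2s})(1-\rho)$ then bounds $\|h_{T_{01}}\|_2$ by a multiple of $\varepsilon$ plus a multiple of $e_0$, each carrying $(1-\rho)$ in the denominator. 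Finally I would pass from $\|h_{T_{01}}\|_2$ to the full error via $\|h\|_2 \le \|h_{T_{01}}\|_2 + \sum_{j\ge2}\|h_{T_j}\|_2 \le 2\|h_{T_{01}}\|_2 + 2e_0$ and collect terms to reach $\|x-\hat{x}\|_2 \le C\varepsilon + C' e_0$; keeping careful track of the numerical factors throughout reproduces the stated values of $\alpha$, $\rho$, $C = 2\alpha/(1-\rho)$, and $C' = 2(1+\rho)/(1-\rho)$.
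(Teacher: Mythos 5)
Your proposal is correct and takes essentially the same route as the paper: the tube and cone constraints, the size-$s$ block decomposition of $T_0^c$, the tail bound $\sum_{j\ge 2}\|h_{T_j}\|_2 \le \|h_{T_{01}}\|_2 + 2e_0$, and the RIP-plus-near-orthogonality estimate on $T_0\cup T_1$ are exactly what the paper carries out in Lemma~\ref{tinyLem} (following Cand\`es), and your absorption step (requiring $\rho<1$, i.e.\ $\delta_{2s}<\sqrt{2}-1$) together with $\|h\|_2 \le 2\|h_{T_{01}}\|_2 + 2e_0$ reproduces $C = 2\alpha/(1-\rho)$ and $C' = 2(1+\rho)/(1-\rho)$ precisely. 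The only caveat is that tracking the constants actually yields $\alpha = 2\sqrt{1+\delta_{2s}}/(1-\delta_{2s})$ (as in Cand\`es' original paper and in the paper's own Theorem~\ref{theone}), rather than the $\sqrt{1-\delta_{2s}}$ denominator written in the statement of Theorem~\ref{candes}; this is an inconsistency in the paper's statement, not a flaw in your argument.
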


The key difference between the two problems of course, is that the $\ell_1$ formulation depends on the magnitudes of the coefficients of a signal, whereas the $\ell_0$ does not.  To reconcile this imbalance, a new weighted $\ell_1$-minimization algorithm was proposed by Cand\`es, Wakin, and Boyd~\cite{CWB08:Reweighted}.  This algorithm solves the following weighted version of $(L_1)$ at each iteration:

\begin{equation*}\tag{$WL_1$}\label{WL1}
\min_{\hat{\vct{x}}\in\R^d} \sum_{i=1}^d \delta_i \hat{\vct{x}}_i \text{ subject to } \Phi \vct{x} = \Phi \hat{\vct{x}}.
\end{equation*}

It is clear that in this formulation, large weights $\delta_i$ will encourage small coordinates of the solution vector, and small weights will encourage larger coordinates.  Indeed, suppose the $s$-sparse signal $x$ was known exactly, and that the weights were set as
\begin{equation}\label{ws}
\delta_i = \frac{1}{|x_i|}.
\end{equation}
Notice that in this case, the weights are infinite at all locations outside of the support of $x$.  This will force the coordinates of the solution vector $\hat{\vct{x}}$ at these locations to be zero.  Thus if the signal $\vct{x}$ is $s$-sparse with $s\leq m$, these weights would guarantee that $\hat{\vct{x}} = \vct{x}$.  Of course, these weights could not be chosen without knowing the actual signal $x$ itself, but this demonstrates the positive effect that the weights can have on the performance of $\ell_1$-minimization.  

The helpful effect of the weights can also be viewed geometrically.  Recall that the solution to the problem $(L_1)$ is the contact point where the smallest $\ell_1$-ball meets the subspace $\vct{x} + \ker \Phi$.  When the solution to $(L_1)$ does not coincide with the original signal $\vct{x}$, it is because there is an $\ell_1$-ball smaller than the one containing $\vct{x}$, which meets the subspace $x + \ker \Phi$.  The solution to problem $(WL_1)$, however, is the place where the \textit{weighted} $\ell_1$-ball meets the subspace.  When the weights are appropriate, this is an $\ell_1$-ball that has been pinched toward the signal $x$ (see Figure~\ref{fig2rw}).  This new geometry reduces the likelihood of the incorrect solution.

\begin{center}
 	\begin{figure}[ht] 
 	\centering
  \includegraphics[width=0.4\textwidth,height=1.6in]{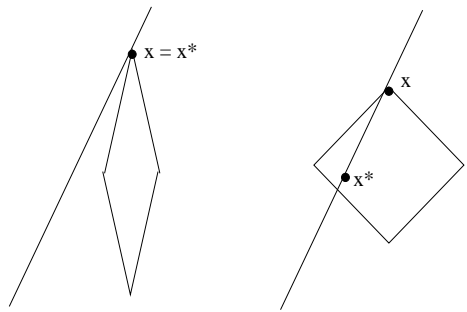}
  \caption{Weighted $\ell_1$-ball geometry (right) versus standard (left).}\label{fig2rw}
\end{figure} 
\end{center}

Although the weights might not initially induce this geometry, one hopes that by solving the problem $(WL_1)$ at each iteration, the weights will get closer to the optimal values~\eqref{ws}, thereby improving the reconstruction of $x$.  Of course, one cannot actually have an infinite weight as in~\eqref{ws}, so a stability parameter must also be used in the selection of the weight values.  The reweighted $\ell_1$-minimization algorithm can thus be described precisely as follows.

\bigskip

\textsc{Reweighted $\ell_1$-minimization}

\nopagebreak

\fbox{\parbox{\algorithmwidth}{
  \textsc{Input:} Measurement vector $u \in \R^m$, stability parameter $a$  
  
  \textsc{Output:} Reconstructed vector $\hat{x}$

  \begin{description}
    \item[Initialize] Set the weights $\delta_i = 1$ for $i=1\ldots d$.\\
      Repeat the following until convergence or a fixed number of times:
    \item[Approximate] Solve the reweighted $\ell_1$-minimization problem:
    $$ \hat{x} = \argmin_{\hat{x}\in\R^d} \sum_{i=1}^d \delta_i \hat{x}_i \text{ subject to } u = \Phi \hat{x} \text{ (or } \|\Phi \hat{x} - u\|_2 \leq \e \text{)}. $$
    \item[Update] Reset the weights:
    $$ \delta_i = \frac{1}{|\hat{x}_i| + a}. $$
  \end{description}
 }}

    \bigskip

\begin{remark}Note that the optional set of constraints given in the algorithm is only for the case in which the signal or measurements may be corrupted with noise.  It may also be advantageous to decrease the stability parameter $a$ so that $a \rightarrow 0$ as the iterations tend to infinity.  See the proof of Theorem~\ref{theone} below for details.
\end{remark} 

In~\cite{CWB08:Reweighted}, the reweighted $\ell_1$-minimization algorithm is discussed thoroughly, and experimental results are provided to show that it often outperforms the standard method.  However, no provable guarantees have yet been made for the algorithm's success.  Here we analyze the algorithm when the measurements and signals are corrupted with noise.  Since the reweighted method needs a weight vector that is somewhat close to the actual signal $x$, it is natural to consider the noisy case since the standard $\ell_1$-minimization method itself produces such a vector.  We are able to prove an error bound in this noisy case that improves upon the best known bound for the standard method.  We also provide numerical studies that show the bounds are improved in practice as well.



\subsection{Main Results}\label{sec:prfs}

The main theorem of this paper guarantees an error bound for the reconstruction using reweighted $\ell_1$-minimization that improves upon the best known bound of Theorem~\ref{candes} for the standard method.  For initial simplicity, we consider the case where the signal $x$ is exactly sparse, but the measurements $u$ are corrupted with noise.  Our main theorem, Theorem~\ref{theone} will imply results for the case where the signal $x$ is arbitrary.

\begin{theorem}[Reweighted $\ell_1$, Sparse Case]\label{theone}
Assume $\Phi$ satisfies the restricted isometry condition with parameters $(2s, \delta)$ where $\delta < \sqrt{2}-1$. Let $x$ be an $s$-sparse vector with noisy measurements $u = \Phi x + e$ where $\|e\|_2 \leq \varepsilon$. Assume the smallest nonzero coordinate $\mu$ of $x$ satisfies $\mu \geq \frac{4\alpha\varepsilon}{1-\rho}$.  Then the limiting approximation from reweighted $\ell_1$-minimization satisfies
$$
\|x-\hat{x}\|_2 \leq C''\e,
$$
where $C''= \frac{2\alpha}{1+\rho}$, $\rho = \frac{\sqrt{2}\delta}{1-\delta}$ and $\alpha = \frac{2\sqrt{1+\delta}}{1-\delta}$. 
\end{theorem}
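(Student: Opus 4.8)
The plan is to bootstrap the standard $\ell_1$ guarantee of Theorem~\ref{candes} through the reweighting loop, showing that each pass sharpens the recovery until the error settles at the smaller constant $C'' = \frac{2\alpha}{1+\rho}$. First I would observe that the initial pass of the algorithm uses the trivial weights $\delta_i \equiv 1$, so it is exactly the program of Theorem~\ref{candes}; since $x$ is $s$-sparse the tail term $\|x-x_s\|_1$ vanishes and the first reconstruction $x^1$ obeys $\|x - x^1\|_2 \le \frac{2\alpha}{1-\rho}\varepsilon =: C_1\varepsilon$. The role of the hypothesis $\mu \ge \frac{4\alpha\varepsilon}{1-\rho} = 2C_1\varepsilon$ is then to guarantee that this reconstruction already separates the support: writing $T = \supp(x)$, on $T$ one has $|x^1_i| \ge \mu - C_1\varepsilon$, while off $T$ one has $|x^1_i| \le C_1\varepsilon$, and the gap condition makes these two ranges disjoint. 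Consequently the updated weights $\delta_i = 1/(|x^1_i| + a)$ are uniformly small on $T$ and uniformly large off $T$.

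The second ingredient is a weighted analogue of Cand\`es's estimate. I would rerun the cone-plus-RIP argument behind Theorem~\ref{candes} almost verbatim, the only change being that weighted minimality, $\sum_i \delta_i |\hat x_i| \le \sum_i \delta_i |x_i|$, produces a weighted cone condition $\|h_{T^c}\|_1 \le W \|h_T\|_1$ for the residual $h = \hat x - x$, where $W = (\max_{i\in T}\delta_i)/(\min_{i\notin T}\delta_i)$ is the weight ratio. Tracking $W$ through the tail bound $\sum_{j\ge 2}\|h_{T_j}\|_2 \le s^{-1/2}\|h_{T^c}\|_1 \le W\|h_T\|_2 \le W\|h_{T\cup T_1}\|_2$ (here $T_1, T_2, \ldots$ are the usual blocks of the $s$ next-largest coordinates of $h$) and the near-orthogonality step that normally contributes the factor $\rho$ replaces $\rho$ by $\rho W$, so the error estimate becomes $\|h\|_2 \le \frac{(1+W)\alpha}{1-\rho W}\varepsilon$. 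This reduces to the Cand\`es bound $C_1\varepsilon$ precisely at $W = 1$ and is strictly smaller for $W < 1$.

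The heart of the argument is the self-consistency at the limiting reconstruction $\hat x$. Writing $t = \|x - \hat x\|_2/\varepsilon$ and driving the stability parameter $a \to 0$, the separation estimate of the first paragraph gives $W \le t/(m - t)$ with $m = \mu/\varepsilon = 4\alpha/(1-\rho)$, while the weighted estimate gives $t \le (1+W)\alpha/(1-\rho W)$. Eliminating $W$ and clearing denominators collapses these into the single quadratic inequality $mt - (1+\rho)t^2 \le \alpha m$, whose two boundary values are exactly $t = \frac{2\alpha}{1+\rho}$ and $t = \frac{2\alpha}{1-\rho}$. Since the parabola opens downward, the inequality forces either $t \le \frac{2\alpha}{1+\rho}$ or $t \ge \frac{2\alpha}{1-\rho}$; the first pass already yields $t \le C_1 = \frac{2\alpha}{1-\rho}$, which rules out the large branch and leaves $\|x - \hat x\|_2 \le \frac{2\alpha}{1+\rho}\varepsilon = C''\varepsilon$.

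I expect the main obstacle to be making this last step fully rigorous, because the two fixed points $C_1$ and $C''$ of the reweighting map coincide with the two roots of the quadratic, and the map is stationary at $W = 1$. One must therefore argue that the reweighting genuinely pushes the error strictly below $C_1$ and that the iterates migrate to the smaller root rather than stalling at the larger one. This calls for a monotonicity argument for the sequence of error constants, together with the verification that the support-separation invariant of the first paragraph is preserved at every pass (not merely the first), so that the weighted estimate applies uniformly along the sequence and the admissible region genuinely contracts toward $\frac{2\alpha}{1+\rho}\varepsilon$.
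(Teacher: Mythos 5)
Your proposal is correct and takes essentially the same route as the paper: your weighted Cand\`es estimate is the paper's Lemma~\ref{noisymeas}, with $C_1 = \frac{A+a}{\mu-A+a}$ playing the role of your weight ratio $W$, and the limiting error is extracted from the same quadratic fixed-point equation whose roots, at $\mu = \frac{4\alpha\e}{1-\rho}$, are exactly your $\frac{2\alpha}{1+\rho}$ and $\frac{2\alpha}{1-\rho}$. The obstacle you flag in your last paragraph is handled in the paper by precisely the monotonicity argument you call for: it defines the recursion $E(1)=\frac{2\alpha}{1-\rho}\e$, $E(k+1)=\frac{\bigl(1+\frac{E(k)}{\mu-E(k)}\bigr)\alpha}{1-\rho\frac{E(k)}{\mu-E(k)}}\,\e$, proves $E(k)$ is nonincreasing using the hypothesis $\mu \ge \frac{4\alpha\e}{1-\rho}$ (so the support-separation invariant persists at every pass), and identifies the limit with the smaller root of the quadratic because the iterates never exceed $\mu/2$.
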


\begin{remarks} 

{\bf 1.} We actually show that the reconstruction error satisfies
\begin{equation}\label{actualbnd}
\|x-\hat{x}\|_2 \leq \frac{2\alpha\varepsilon}{1 + \sqrt{1-\frac{4\alpha\varepsilon}{\mu}-\frac{4\alpha\varepsilon\rho}{\mu}}}.
\end{equation}
This bound is stronger than that given in Theorem~\ref{theone}, which is only equal to this bound when $\mu$ nears the value $\frac{4\alpha\varepsilon}{1-\rho}$.  However, the form in Theorem~\ref{theone} is much simpler and clearly shows the role of the parameter $\delta$ by the use of $\rho$.

{\bf 2.} For signals whose smallest non-zero coefficient $\mu$ does not satisfy the condition of the theorem, we may apply the theorem to those coefficients that do satisfy this requirement, and treat the others as noise.  See Theorem~\ref{ddnoise} below.

{\bf 3.} Although the bound in the theorem is the {\em limiting} bound, we provide a recursive relation~\eqref{Ek} in the proof which provides an exact error bound per iteration.  In Section~\ref{sec:num} we use dynamic programming to show that in many cases only a very small number of iterations are actually required to obtain the above error bound.
\end{remarks}


We now discuss the differences between Theorem~\ref{candes} and our new result Theorem~\ref{theone}. In the case where $\delta$ nears its limit of $\sqrt{2}-1$, the constant $\rho$ increases to $1$, and so the constant $C$ in Theorem~\ref{candes} is unbounded. However, the constant $C''$ in Theorem~\ref{theone} remains bounded even in this case.  In fact, as $\delta$ approaches $\sqrt{2}-1$, the constant $C''$ approaches just $4.66$.  The tradeoff of course, is in the requirement on $\mu$.  As $\delta$ gets closer to $\sqrt{2}-1$, the bound needed on $\mu$ requires the signal to have unbounded non-zero coordinates relative to the noise level $\e$.  However, to use this theorem efficiently, one would select the largest $\delta < \sqrt{2}-1$ that allows the requirement on $\mu$ to be satisfied, and then apply the theorem for this value of $\delta$.  Using this strategy, when the ratio $\frac{\mu}{\e} = 10$, for example, the error bound is just $3.85\e$.  

Theorem~\ref{theone} and a short calculation will imply the following result for \textit{arbitrary} signals $x$.

\begin{theorem}[Reweighted $\ell_1$]\label{ddnoise}
Assume $\Phi$ satisfies the restricted isometry condition with parameters $(2s, \sqrt{2}-1)$. Let $x$ be an arbitrary vector with noisy measurements $u = \Phi x + e$ where $\|e\|_2 \leq \varepsilon$. Assume the smallest nonzero coordinate $\mu$ of $x_s$ satisfies $\mu \geq \frac{4\alpha\varepsilon_0}{1-\rho},$ where $\varepsilon_0 = 
1.2(\|x-x_s\|_2 + \frac{1}{\sqrt{s}}\|x-x_s\|_1) + \varepsilon$.
Then the limiting approximation from reweighted $\ell_1$-minimization satisfies
$$
\|x-\hat{x}\|_2 \leq \frac{4.1\alpha}{1+\rho}\Big(\frac{\|x-x_{s/2}\|_1}{\sqrt{s}} + \varepsilon \Big),
$$
and
$$
\|x-\hat{x}\|_2 \leq \frac{2.4\alpha}{1+\rho}\Big(\|x-x_s\|_2 + \frac{\|x-x_s\|_1}{\sqrt{s}} + \varepsilon \Big),
$$
where $\rho$ and $\alpha$ are as in Theorem~\ref{theone}. 
\end{theorem}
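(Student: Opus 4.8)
The plan is to derive Theorem~\ref{ddnoise} from the exactly-sparse result Theorem~\ref{theone} by the standard device of folding the signal tail $x - x_s$ into the noise. I would write the sample vector as $u = \Phi x + e = \Phi x_s + \widetilde{e}$, where $\widetilde{e} := \Phi(x - x_s) + e$ is viewed as an effective error vector for the genuinely $s$-sparse signal $x_s$. The key point is that the reweighted algorithm sees only the pair $(\Phi, u)$, and $u = \Phi x_s + \widetilde e$ is a legitimate noisy-measurement instance for $x_s$; so if I can control $\|\widetilde e\|_2$ and check the hypothesis on the smallest nonzero coordinate, Theorem~\ref{theone} applies verbatim with $x_s$ in the role of the signal.

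The first concrete task is to bound $\|\widetilde e\|_2$. Since $\Phi$ satisfies the restricted isometry condition with parameters $(2s, \sqrt{2}-1)$, it verifies in particular the upper inequality of~\eqref{eq:RIC2} at sparsity $s$, so Proposition~\ref{prop:k2-bd} (the energy bound) applies to the non-sparse vector $x - x_s$ and gives $\|\Phi(x-x_s)\|_2 \le \sqrt{1+\delta_s}\,\bigl(\|x-x_s\|_2 + \tfrac{1}{\sqrt s}\|x-x_s\|_1\bigr)$. Because $\delta_s \le \delta_{2s} < \sqrt{2}-1$ forces $\sqrt{1+\delta_s} < 2^{1/4} \le 1.2$, the triangle inequality together with $\|e\|_2 \le \varepsilon$ yields exactly $\|\widetilde e\|_2 \le \varepsilon_0$, the quantity in the hypothesis. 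Next I would observe that the smallest nonzero coordinate of $x_s$ is precisely the $\mu$ of the statement, so the assumption $\mu \ge \frac{4\alpha\varepsilon_0}{1-\rho}$ is exactly the requirement of Theorem~\ref{theone} at noise level $\varepsilon_0$. Applying that theorem gives the limiting bound $\|x_s - \hat{x}\|_2 \le \frac{2\alpha}{1+\rho}\,\varepsilon_0$.

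The remaining work is bookkeeping. The triangle inequality gives $\|x - \hat x\|_2 \le \|x - x_s\|_2 + \frac{2\alpha}{1+\rho}\varepsilon_0$; expanding $\varepsilon_0 = 1.2\bigl(\|x-x_s\|_2 + \tfrac{1}{\sqrt s}\|x-x_s\|_1\bigr) + \varepsilon$ and collecting the tail terms, using that $\tfrac{\alpha}{1+\rho} \ge 2$ throughout the admissible range $\delta < \sqrt{2}-1$, produces the second displayed bound with constant $\frac{2.4\alpha}{1+\rho}$. To pass to the first bound I would convert the two tail quantities into a single $\ell_1$ expression: applying Lemma~\ref{L:ve} to $v = x - x_{s/2}$ with $T = s/2$ (and noting $(x-x_{s/2}) - (x-x_{s/2})_{s/2} = x - x_s$) gives $\|x - x_s\|_2 \le \frac{1}{\sqrt{2s}}\|x - x_{s/2}\|_1$, while $\|x-x_s\|_1 \le \|x - x_{s/2}\|_1$ handles the $\ell_1$ term; collecting constants (the factor $1.707 \times 2.4 \approx 4.1$ links the two bounds) yields $\frac{4.1\alpha}{1+\rho}$.

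I expect the only genuine subtlety to lie in the reduction itself: one must be certain that Theorem~\ref{theone}, whose proof tracks the reweighting iterates, depends on the signal only through the data $(\Phi,u)$ and the scalar noise bound, so that relabeling $x_s$ as the signal and $\widetilde e$ as the noise is valid and the hypothesis on $\mu$ transfers cleanly. Everything after that — the energy-bound estimate $\sqrt{1+\delta_s} \le 1.2$, the triangle-inequality absorption of the stray $\|x-x_s\|_2$, and the elementary tail conversions via Lemma~\ref{L:ve} — is the ``short calculation'' the surrounding text alludes to, and it is routine once the reduction is justified.
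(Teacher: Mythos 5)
Your proposal is correct and takes essentially the same route as the paper: the paper's own proof rewrites $\Phi x + e = \Phi x_s + \widetilde{e}$ with $\|\widetilde{e}\|_2 \leq 1.2\bigl(\|x-x_s\|_2 + \tfrac{1}{\sqrt{s}}\|x-x_s\|_1\bigr) + \|e\|_2 \leq 2.04\tfrac{\|x-x_{s/2}\|_1}{\sqrt{s}} + \|e\|_2$ (citing Lemma 6.1 of the CoSaMP paper and Lemma 7 of the HHS paper, which are precisely the in-paper Proposition~\ref{prop:k2-bd}/Lemma~\ref{lem:reduction} and Lemma~\ref{L:ve} that you invoke), and then applies Theorem~\ref{theone} at noise level $\varepsilon_0$. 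The bookkeeping you spell out---transferring the hypothesis on $\mu$ to $x_s$, the triangle-inequality absorption of the stray $\|x-x_s\|_2$, and the tail conversions---is exactly the ``short calculation'' the paper compresses into a single line.
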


  Again in the case where $\delta$ nears its bound of $\sqrt{2}-1$, both constants $C$ and $C'$ in Theorem~\ref{candes} approach infinity. However, in Theorem~\ref{ddnoise}, the constant remains bounded even in this case.  The same strategy discussed above for Theorem~\ref{theone} should also be used for Theorem~\ref{ddnoise}.  Next we begin proving Theorem~\ref{theone} and Theorem~\ref{ddnoise}.
  
  

\subsection{Proofs}

We will first utilize a lemma that bounds the $\ell_2$ norm of a small portion of the difference vector $x - \hat{x}$ by the $\ell_1$-norm of its remainder.  This lemma is proved in~\cite{Can08:Restricted-Isometry} and essentially in~\cite{CRT06:Stable} as part of the proofs of the main theorems of those papers.  We include a proof here as well since we require the final result as well as intermediate steps for the proof of our main theorem.

\begin{lemma}\label{tinyLem}
Set $h = \hat{x} - x$, and let $\alpha$, $\e$, and $\rho$ be as in Theorem~\ref{theone}. Let $T_0$ be the set of $s$ largest coefficients in magnitude of $x$ and $T_1$ be the $s$ largest coefficients of $h_{T_0^c}$. Then
$$
\|h_{T_0\cup T_1}\|_2 \leq \alpha\e + \frac{\rho}{\sqrt{s}}\|h_{T_0^c}\|_1.
$$
\end{lemma}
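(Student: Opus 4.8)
The plan is to follow the now-standard geometry of the $\ell_1$ feasibility problem, using only that $\hat{x}$ is feasible together with the restricted isometry consequences already collected in the excerpt. First I would record the \emph{tube constraint}: since both $z=x$ and $z=\hat{x}$ satisfy $\|\Phi z - u\|_2\le\e$, while $u=\Phi x+e$ with $\|e\|_2\le\e$, the triangle inequality gives $\|\Phi h\|_2\le 2\e$. It is worth noting that $\ell_1$-minimality is \emph{not} needed for this lemma, since its right-hand side already carries the quantity $\|h_{T_0^c}\|_1$; minimality is what later converts that quantity into something useful, but it plays no role here.

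Next I would set up the usual ``shelling'' of the complement. Writing $T_0^c$ as the disjoint union of $T_1, T_2, \dots$, where $T_1$ indexes the $s$ largest coordinates of $h_{T_0^c}$, $T_2$ the next $s$, and so on, each coordinate of $h_{T_j}$ for $j\ge 2$ is bounded in magnitude by the average magnitude over $T_{j-1}$, i.e.\ by $s^{-1}\|h_{T_{j-1}}\|_1$. Hence $\|h_{T_j}\|_2 \le s^{-1/2}\|h_{T_{j-1}}\|_1$, and summing telescopes to the tail bound
\[
\sum_{j\ge 2}\|h_{T_j}\|_2 \le \frac{1}{\sqrt{s}}\|h_{T_0^c}\|_1.
\]

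The core of the argument is to control $\|h_{T_{01}}\|_2$, where $T_{01}:=T_0\cup T_1$ has size at most $2s$, via the identity $\Phi h_{T_{01}} = \Phi h - \sum_{j\ge 2}\Phi h_{T_j}$. Applying the lower restricted isometry inequality \eqref{eq:RIC2} to the $2s$-sparse vector $h_{T_{01}}$ I would write $(1-\delta)\|h_{T_{01}}\|_2^2 \le \|\Phi h_{T_{01}}\|_2^2 = \langle \Phi h_{T_{01}}, \Phi h\rangle - \sum_{j\ge2}\langle \Phi h_{T_{01}}, \Phi h_{T_j}\rangle$. The first inner product is at most $\sqrt{1+\delta}\,\|h_{T_{01}}\|_2\cdot 2\e$ by Cauchy--Schwarz, the tube constraint, and the upper inequality of \eqref{eq:RIC2}. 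For the cross terms I would split $T_{01}$ into its two $s$-element blocks $T_0$ and $T_1$ and invoke the approximate-orthogonality estimate (Proposition~\ref{prop:approx-orth}, with the relevant constant $\delta=\delta_{2s}$) on each disjoint pairing, giving $|\langle \Phi h_{T_{01}}, \Phi h_{T_j}\rangle| \le \delta(\|h_{T_0}\|_2+\|h_{T_1}\|_2)\|h_{T_j}\|_2 \le \sqrt{2}\,\delta\,\|h_{T_{01}}\|_2\|h_{T_j}\|_2$, the last step using $\|h_{T_0}\|_2+\|h_{T_1}\|_2\le\sqrt2\,\|h_{T_{01}}\|_2$.

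Finally I would substitute the tail bound into the sum over $j$, cancel one common factor of $\|h_{T_{01}}\|_2$, and divide by $(1-\delta)$, reaching $\|h_{T_{01}}\|_2 \le \frac{2\sqrt{1+\delta}}{1-\delta}\e + \frac{\sqrt2\,\delta}{1-\delta}\cdot\frac{1}{\sqrt s}\|h_{T_0^c}\|_1$, which is exactly $\alpha\e + \frac{\rho}{\sqrt s}\|h_{T_0^c}\|_1$ for the constants $\alpha=\frac{2\sqrt{1+\delta}}{1-\delta}$ and $\rho=\frac{\sqrt2\,\delta}{1-\delta}$ of Theorem~\ref{theone}. The only delicate point is ensuring every operation stays inside a support of size at most $2s$ so that just $\delta_{2s}$ appears: this is precisely why the cross terms must be split into the two size-$s$ blocks of $T_{01}$, and the $\sqrt2$ produced by $\|h_{T_0}\|_2+\|h_{T_1}\|_2\le\sqrt2\,\|h_{T_{01}}\|_2$ is what yields the sharp constant $\rho=\sqrt2\,\delta/(1-\delta)$ rather than a coarser one.
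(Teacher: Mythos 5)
Your proof is correct and follows essentially the same route as the paper's: the tube constraint $\|\Phi h\|_2\le 2\e$ from feasibility of $x$ and $\hat{x}$, the shelling of $T_0^c$ into size-$s$ blocks with the tail bound $\sum_{j\ge2}\|h_{T_j}\|_2\le s^{-1/2}\|h_{T_0^c}\|_1$, and the splitting of the cross terms $\langle\Phi h_{T_0\cup T_1},\Phi h_{T_j}\rangle$ over the blocks $T_0$ and $T_1$ to get the factor $\sqrt{2}\,\delta$. The only cosmetic difference is that you invoke the approximate-orthogonality statement of Proposition~\ref{prop:approx-orth}, whereas the paper cites the equivalent Lemma~2.1 of Cand\`es (via the parallelogram law); the constants come out identically.
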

\begin{proof}
Continue defining sets $T_j$ by setting $T_1$ to be the $s$ largest coefficients of $h_{T_0^c}$, $T_2$ the next $s$ largest coefficients of $h_{T_0^c}$, and so on.
We begin by applying the triangle inequality and using the fact that $x$ itself is feasible in~\eqref{WL1}. This yields
\begin{equation}\label{first}
\|\Phi h\|_2 \leq \|\Phi\hat{x} - u\|_2 + \|\Phi x - u\|_2 \leq 2\e.
\end{equation}

By the decreasing property of these sets and since each set $T_j$ has cardinality at most $s$, we have for each $j\geq 2$,
$$
\|h_{T_j}\|_2 \leq \sqrt{s}\|h_{T_j}\|_\infty \leq \frac{1}{\sqrt{s}}\|h_{T_{j-1}}\|_1.
$$
Summing the terms, this gives
\begin{equation}\label{second}
\sum_{j\geq 2}\|h_{T_j}\|_2 \leq \frac{1}{\sqrt{s}}\|h_{T_0^c}\|_1.
\end{equation}
By the triangle inequality, we then also have
\begin{equation}\label{ref1}
\|h_{(T_0\cup T_1)^c}\|_2 \leq \frac{1}{\sqrt{s}}\|h_{T_0^c}\|_1.
\end{equation}
Now by linearity we have
$$
\|\Phi h_{T_0\cup T_1} \|_2^2 = \langle\Phi h_{T_0\cup T_1}, \Phi h\rangle - \langle\Phi h_{T_0\cup T_1}, \sum_{j\geq 2}\Phi h_{T_j}\rangle.
$$
By~\eqref{first} and the restricted isometry condition, we have
$$
|\langle\Phi h_{T_0\cup T_1}, \Phi h\rangle| \leq \|\Phi h_{T_0\cup T_1}\|_2\|\Phi h\|_2 \leq 2\e\sqrt{1+\delta}\|h_{T_0\cup T_1}\|_2.
$$
As shown in Lemma 2.1 of~\cite{Can08:Restricted-Isometry}, the restricted isometry condition and parallelogram law imply that for $j \geq 2$,
$$
|\langle \Phi h_{T_0}, \Phi h_{T_j} \rangle| \leq \delta\|\Phi h_{T_0}\|_2\|\Phi h_{T_j}\|_2 \text{ and } |\langle \Phi h_{T_1}, \Phi h_{T_j} \rangle| \leq \delta\|\Phi h_{T_1}\|_2\|\Phi h_{T_j}\|_2.
$$
Since all sets $T_j$ are disjoint, the above three inequalities yield
$$
(1-\delta)\|h_{T_0\cup T_1}\|_2^2 \leq \|\Phi h_{T_0\cup T_1}\|_2^2 \leq \|h_{T_0\cup T_1}\|_2(2\e\sqrt{1+\delta} + \sqrt{2}\delta\sum_{j\geq 2}\|h_{T_j}\|_2).
$$
Therefore by~\eqref{second}, we have
$$
\|h_{T_0\cup T_1}\|_2 \leq \alpha\e + \frac{\rho}{\sqrt{s}}\|h_{T_0^c}\|_1.
$$

\end{proof}

We will next require two lemmas that give results about a single iteration of reweighted $\ell_1$-minimization.

\begin{lemma}[Single reweighted $\ell_1$-minimization]\label{doublenoise}
Assume $\Phi$ satisfies the restricted isometry condition with parameters $(2s, \sqrt{2}-1)$. Let $x$ be an arbitrary vector with noisy measurements $u = \Phi x + e$ where $\|e\|_2 \leq \varepsilon$. Let $w$ be a vector such that $\|w-x\|_\infty \leq A$ for some constant $A$. Denote by $x_s$ the vector consisting of the $s$ (where $s \leq |\supp (x)|$) largest coefficients of $x$ in absolute value. Let $\mu$ be the smallest coordinate of $x_s$ in absolute value, and set $b = \|x-x_s\|_\infty$. Then when $\mu \geq A$ and $\rho C_1 < 1$, the approximation from reweighted $\ell_1$-minimization using weights $\delta_i = 1 / (w_i + a)$ satisfies
$$
\|x-\hat{x}\|_2 \leq
D_1\varepsilon + D_2\frac{\|x-x_s\|_1}{a},
$$
where $D_1=\frac{(1+C_1)\alpha}{1-\rho C_1}$, $D_2 = C_2+\frac{(1+C_1)\rho C_2}{1-\rho C_1}$, $C_1 = \frac{A+a+b}{\mu-A+a}$, $C_2 = \frac{2(A+a+b)}{\sqrt{s}}$, and $\rho$ and $\alpha$ are as in Theorem~\ref{theone}.
\end{lemma}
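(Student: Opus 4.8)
The plan is to reduce this single-iteration bound to the already-proved Theorem~\ref{candes} by carefully accounting for how the weights $\delta_i = 1/(w_i+a)$ distort the standard $\ell_1$ analysis. The key object is the difference vector $h = \hat{x}-x$, and the strategy mirrors the proof of Lemma~\ref{tinyLem} but now with the weighted objective replacing the plain $\ell_1$ norm. First I would record the feasibility fact that $x$ itself is feasible in (\ref{WL1}), so that $\sum_i \delta_i |\hat{x}_i| \le \sum_i \delta_i |x_i|$. Splitting over the set $T_0$ of the $s$ largest coordinates of $x$ and its complement, and using the reverse/forward triangle inequalities $|\hat{x}_i| \ge |x_i| - |h_i|$ on $T_0$ and $|\hat{x}_i| = |h_i| \ge |h_i| - |x_i|$ on $T_0^c$, I would derive a weighted cone constraint of the form
$$
\sum_{i \in T_0^c} \delta_i |h_i| \le \sum_{i \in T_0} \delta_i |h_i| + 2\sum_{i\in T_0^c}\delta_i |x_i|.
$$

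The crux is to convert this \emph{weighted} cone inequality into an \emph{unweighted} one, since all the restricted-isometry machinery in Lemma~\ref{tinyLem} operates on plain $\ell_1$ norms. This is where the hypothesis $\|w-x\|_\infty \le A$ enters. On the support $T_0$, the smallest coordinate of $x$ has magnitude at least $\mu$, so $w_i \ge \mu - A$ and thus $\delta_i \le 1/(\mu - A + a)$; meanwhile on $T_0^c$ the coordinates satisfy $|x_i| \le b$, so $w_i \le A + b$ and $\delta_i \ge 1/(A+a+b)$. Comparing the extreme weight values produces the factor $C_1 = \frac{A+a+b}{\mu-A+a}$ relating the weight on $T_0$ to the weight on $T_0^c$, and the term $2\sum_{i\in T_0^c}\delta_i|x_i|$ is controlled by $\frac{1}{a}\|x-x_s\|_1$ together with the bookkeeping that yields $C_2$. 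The outcome I want is a clean bound
$$
\|h_{T_0^c}\|_1 \le C_1 \|h_{T_0}\|_1 + C_2 \frac{\|x-x_s\|_1}{a} \cdot (\text{const}),
$$
i.e.\ a cone constraint weakened only by the multiplicative factor $C_1$.

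From there the argument reattaches to Lemma~\ref{tinyLem}. I would invoke that lemma verbatim to get $\|h_{T_0\cup T_1}\|_2 \le \alpha\varepsilon + \frac{\rho}{\sqrt{s}}\|h_{T_0^c}\|_1$, and combine it with the tail estimate (\ref{ref1}), $\|h_{(T_0\cup T_1)^c}\|_2 \le \frac{1}{\sqrt s}\|h_{T_0^c}\|_1$, so that $\|h\|_2$ is controlled by $\|h_{T_0\cup T_1}\|_2$ plus the tail. The modified cone constraint then lets me bound $\frac{1}{\sqrt s}\|h_{T_0^c}\|_1$ by $C_1 \|h_{T_0\cup T_1}\|_2$ plus the $C_2$-weighted tail term; substituting and solving the resulting self-referential inequality for $\|h_{T_0\cup T_1}\|_2$ introduces the denominator $1-\rho C_1$ (which is why the hypothesis $\rho C_1 < 1$ is needed) and produces exactly the constants $D_1 = \frac{(1+C_1)\alpha}{1-\rho C_1}$ and $D_2 = C_2 + \frac{(1+C_1)\rho C_2}{1-\rho C_1}$.

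The main obstacle I anticipate is the weight-comparison step: translating the pointwise control $\|w-x\|_\infty \le A$ into sharp upper and lower bounds on the $\delta_i$ over $T_0$ versus $T_0^c$, and doing the bookkeeping so that the factor $C_1$ and the constant $C_2$ emerge with precisely these values rather than looser ones. In particular one must be careful that the condition $\mu \ge A$ keeps the weights on $T_0$ finite and positive (so $\mu - A + a > 0$), and that the separation between the ``large'' coordinates (magnitude $\ge \mu$) and the tail (magnitude $\le b$) is exploited in the right direction. Once that conversion is done cleanly, the remainder is the same restricted-isometry and cone-constraint algebra already carried out in Lemma~\ref{tinyLem}, so the genuinely new content is entirely concentrated in handling the weights.
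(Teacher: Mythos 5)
Your proposal is correct and follows essentially the same route as the paper's proof: the weighted cone constraint from minimality of $\hat{x}$ in \eqref{WL1}, the conversion to an unweighted cone constraint via the bounds $\delta_i \leq 1/(\mu-A+a)$ on $T_0$ and $\delta_i \geq 1/(A+a+b)$ on $T_0^c$, the verbatim invocation of Lemma~\ref{tinyLem} (which only needs feasibility of $\hat{x}$, not unweighted minimality), and the self-referential inequality whose resolution yields the $1-\rho C_1$ denominator and the stated $D_1$, $D_2$. The only cosmetic difference is that the paper keeps the weighted tail $\|x_{T_0^c}\|_w$ symbolic until the final step before bounding it by $\|x_{T_0^c}\|_1/a$, whereas you convert it earlier; the constants come out the same.
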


\begin{proof}[Proof of Lemma~\ref{doublenoise}]

Set $h$ and $T_j$ for $j\geq 0$ as in Lemma~\ref{tinyLem}.  For simplicity, denote by $\|\cdot\|_w$ the weighted $\ell_1$-norm: 
$$
\|z\|_w \defby \sum_{i=1}^d \frac{1}{|w_i| + a}z_i.
$$
Since $\hat{x} = x+h$ is the minimizer of~\eqref{WL1}, we have 
$$
\|x\|_w \geq \|x+h\|_w = \|(x+h)_{T_0}\|_w + \|(x+h)_{T_0^c}\|_w \geq \|x_{T_0}\|_w - \|h_{T_0}\|_w + \|h_{T_0^c}\|_w - \|x_{T_0^c}\|_w.
$$
This yields 
$$
\|h_{T_0^c}\|_w \leq \|h_{T_0}\|_w + 2\|x_{T_0^c}\|_w.
$$
Next we relate the reweighted norm to the usual $\ell_1$-norm. We first have
$$
\|h_{T_0^c}\|_w \geq \frac{\|h_{T_0^c}\|_1}{A+a+b},
$$
by definition of the reweighted norm as well as the values of $A$, $a$, and $b$.
Similarly we have
$$
\|h_{T_0}\|_w  \leq \frac{\|h_{T_0}\|_1}{\mu - A + a}.
$$
Combining the above three inequalities, we have
\begin{align}\label{ref2}
\|h_{T_0^c}\|_1 \leq (A+a+b)\|h_{T_0^c}\|_w &\leq (A+a+b)(\|h_{T_0}\|_w + 2\|x_{T_0^c}\|_w)\notag\\
&\leq \frac{A+a+b}{\mu-A+a}\|h_{T_0}\|_1 + 2(A+a+b)\|x_{T_0^c}\|_w.
\end{align}
Using~\eqref{ref1} and~\eqref{ref2} along with the fact $\|h_{T_0}\|_1 \leq \sqrt{s}\|h_{T_0}\|_2$, we have
\begin{equation}\label{third}
\|h_{(T_0\cup T_1)^c}\|_2 \leq C_1\|h_{T_0}\|_2 + C_2\|x_{T_0^c}\|_w, 
\end{equation}
where $C_1 = \frac{A+a+b}{\mu-A+a}$ and $C_2 = \frac{2(A+a+b)}{\sqrt{s}}$.
By Lemma~\ref{tinyLem}, we have
$$
\|h_{T_0\cup T_1}\|_2 \leq \alpha\varepsilon + \frac{\rho}{\sqrt{s}}\|h_{T_0^c}\|_1,
$$
where $\rho = \frac{\sqrt{2}\delta_{2s}}{1-\delta_{2s}}$ and $\alpha = \frac{2\sqrt{1+\delta_{2s}}}{\sqrt{1-\delta_{2s}}}$.
Thus by~\eqref{ref2}, we have 
$$
\|h_{T_0\cup T_1}\|_2 \leq \alpha\varepsilon + \frac{\rho}{\sqrt{s}}(C_1\|h_{T_0}\|_1 + 2(A+a+b)\|x_{T_0^c}\|_w) = \alpha\varepsilon + \rho C_1\|h_{T_0\cup T_1}\|_2 + \rho C_2\|x_{T_0^c}\|_w.
$$
Therefore, 
\begin{equation}\label{need}
\|h_{T_0\cup T_1}\|_2 \leq (1 - \rho C_1)^{-1}(\alpha\varepsilon + \rho C_2\|x_{T_0^c}\|_w).
\end{equation}
Finally by~\eqref{third} and~\eqref{need},
\begin{align*}
\|h\|_2 &\leq \|h_{T_0\cup T_1}\|_2 + \|h_{(T_0\cup T_1)^c}\|_2 \\
&\leq(1+C_1)\|h_{T_0\cup T_1}\|_2 + C_2\|x_{T_0^c}\|_w \\
&\leq (1+C_1)((1 - \rho C_1)^{-1}(\alpha\varepsilon + \rho C_2\|x_{T_0^c}\|_w)) + C_2\|x_{T_0^c}\|_w.
\end{align*}
Applying the inequality $\|x_{T_0^c}\|_w \leq (1/a)\|x_{T_0^c}\|_1$ and simplifying completes the claim.

\end{proof}

\begin{lemma}[Single reweighted $\ell_1$-minimization, Sparse Case]\label{noisymeas}
Assume $\Phi$ satisfies the restricted isometry condition with parameters $(2s, \sqrt{2}-1)$. Let $x$ be an $s$-sparse vector with noisy measurements $u = \Phi x + e$ where $\|e\|_2 \leq \varepsilon$. Let $w$ be a vector such that $\|w-x\|_\infty \leq A$ for some constant $A$. Let $\mu$ be the smallest non-zero coordinate of $x$ in absolute value. Then when $\mu \geq A$, the approximation from reweighted $\ell_1$-minimization using weights $\delta_i = 1 / (w_i + a)$ satisfies
$$
\|x-\hat{x}\|_2 \leq D_1\varepsilon. 
$$
Here $D_1 = \frac{(1+C_1)\alpha}{1-\rho C_1}$, $C_1 = \frac{A+a}{\mu-A+a}$, and $\alpha$ and $\rho$ are as in Theorem~\ref{theone}.

\end{lemma}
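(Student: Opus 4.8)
The plan is to recognize this statement as the exactly-sparse specialization of Lemma~\ref{doublenoise}, in which every contribution coming from the tail of the signal vanishes. The key observation is that when $x$ is $s$-sparse, the set $T_0$ of its $s$ largest coefficients contains the entire support of $x$, so that $x_{T_0^c} = 0$. Consequently the quantities $b = \|x-x_s\|_\infty$, $\|x-x_s\|_1$, and $\|x_{T_0^c}\|_w$ that appear throughout the proof of Lemma~\ref{doublenoise} are all identically zero, and the second term $D_2\|x-x_s\|_1/a$ in that bound disappears, leaving only $D_1\varepsilon$.

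First I would set $h = \hat{x}-x$ and introduce the sets $T_j$ exactly as in Lemma~\ref{tinyLem}, with $T_0 = \supp(x)$. Since $x$ is feasible for the weighted program and $\hat{x}$ is its minimizer, the same minimality argument as in Lemma~\ref{doublenoise} gives $\|h_{T_0^c}\|_w \leq \|h_{T_0}\|_w$, the term $2\|x_{T_0^c}\|_w$ now being absent. Converting the weighted norms to ordinary $\ell_1$ norms via $\|h_{T_0^c}\|_w \geq \|h_{T_0^c}\|_1/(A+a)$ and $\|h_{T_0}\|_w \leq \|h_{T_0}\|_1/(\mu-A+a)$, and using $\|h_{T_0}\|_1 \leq \sqrt{s}\|h_{T_0}\|_2$, produces the tail estimate
$$
\|h_{(T_0\cup T_1)^c}\|_2 \leq C_1\|h_{T_0}\|_2, \qquad C_1 = \frac{A+a}{\mu-A+a},
$$
which is precisely the analog of \eqref{third} with $b=0$ and $\|x_{T_0^c}\|_w=0$.

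Next I would feed this into Lemma~\ref{tinyLem}, which gives $\|h_{T_0\cup T_1}\|_2 \leq \alpha\varepsilon + \frac{\rho}{\sqrt{s}}\|h_{T_0^c}\|_1$. Bounding $\|h_{T_0^c}\|_1 \leq C_1\sqrt{s}\|h_{T_0}\|_2 \leq C_1\sqrt{s}\|h_{T_0\cup T_1}\|_2$ and rearranging yields $\|h_{T_0\cup T_1}\|_2 \leq (1-\rho C_1)^{-1}\alpha\varepsilon$, the analog of \eqref{need}. A final triangle inequality $\|h\|_2 \leq (1+C_1)\|h_{T_0\cup T_1}\|_2$ then delivers the claimed bound $\|x-\hat{x}\|_2 \leq D_1\varepsilon$ with $D_1 = (1+C_1)\alpha/(1-\rho C_1)$.

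There is no substantive obstacle here, since the result is a clean corollary of Lemma~\ref{doublenoise}; the only point requiring care is that the denominator $1-\rho C_1$ be positive, so that $D_1$ is well-defined. This is inherited from the hypothesis $\rho C_1 < 1$ of Lemma~\ref{doublenoise}, and it holds in the regime of interest because $\rho < 1$ whenever $\delta < \sqrt{2}-1$, while the assumption $\mu \geq A$ keeps $C_1$ controlled.
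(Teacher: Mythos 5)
Your proposal is correct and takes essentially the same approach as the paper: the paper's entire proof is the one-line observation that this is the case of Lemma~\ref{doublenoise} where $x - x_s = 0$ and $b = 0$, which is exactly your opening remark. Your unfolding of the details (the vanishing of $b$, $\|x-x_s\|_1$, and $\|x_{T_0^c}\|_w$, leaving only the $D_1\varepsilon$ term) just verifies explicitly what the paper leaves implicit.
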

\begin{proof}
This is the case of Lemma~\ref{doublenoise} where $x-x_s = 0$ and $b=0$. 
\end{proof}

\begin{proof}[Proof of Theorem~\ref{theone}]
The proof proceeds as follows. First, we use the error bound in Theorem~\ref{candes} as the initial error, and then apply Lemma~\ref{noisymeas} repeatedly. We show that the error decreases at each iteration, and then deduce its limiting bound using the recursive relation. To this end, let $E(k)$ for $k=1,\ldots$, be the error bound on $\|x-\hat{x}_k\|_2$ where $\hat{x}_k$ is the reconstructed signal at the $k^{th}$ iteration. Then by Theorem~\ref{candes} and Lemma~\ref{noisymeas}, we have the recursive definition
\begin{equation}\label{Ek}
E(1) = \frac{2\alpha}{1-\rho}\varepsilon, \quad E(k+1) = \frac{(1+\frac{E(k)}{\mu-E(k)})\alpha}{1-\rho \frac{E(k)}{\mu-E(k)}}\varepsilon. 
\end{equation}
Here we have taken $a\rightarrow 0$ iteratively (or if $a$ remains fixed, a small constant $O(a)$ will be added to the error). First, we show that the base case holds, $E(1) \leq E(2)$. Since $\mu \geq \frac{4\alpha\varepsilon}{1-\rho}$, we have that
$$
\frac{E(1)}{\mu-E(1)} = \frac{\frac{2\alpha\varepsilon}{1-\rho}}{\mu-\frac{2\alpha\varepsilon}{1-\rho}} \leq 1.
$$
Therefore we have
$$
E(2) = \frac{(1+\frac{E(1)}{\mu-E(1)})\alpha}{1-\rho \frac{E(1)}{\mu-E(1)}}\varepsilon \leq \frac{2\alpha}{1-\rho}\varepsilon = E(1).
$$
Next we show the inductive step, that $E(k+1)\leq E(k)$ assuming the inequality holds for all previous $k$. Indeed, if $E(k) \leq E(k-1)$, then we have
$$
E(k+1) = \frac{(1+\frac{E(k)}{\mu-E(k)})\alpha}{1-\rho \frac{E(k)}{\mu-E(k)}}\varepsilon \leq \frac{(1+\frac{E(k-1)}{\mu-E(k-1)})\alpha}{1-\rho \frac{E(k-1)}{\mu-E(k-1)}}\varepsilon = E(k).
$$  
Since $\mu \geq \frac{4\alpha\varepsilon}{1-\rho}$ and $\rho \leq 1$ we have that $\mu - E(k)\geq 0$ and $\rho\frac{E(k)}{\mu-E(k)}\leq 1$, so $E(k)$ is also bounded below by zero. Thus E(k) is a bounded decreasing sequence, so it must converge. Call its limit $L$. By the recursive definition of $E(k)$, we must have
$$
L = \frac{(1+\frac{L}{\mu-L})\alpha}{1-\rho \frac{L}{\mu-L}}\varepsilon.
$$
Solving this equation yields
$$
L=\frac{\mu-\sqrt{\mu^2-4\mu\alpha\varepsilon-4\mu\alpha\varepsilon\rho}}{2(1+\rho)},
$$
where we choose the solution with the minus since $E(k)$ is decreasing and $E(1) < \mu/2$ (note also that $L=0$ when $\e=0$). Multiplying by the conjugate and simplifying yields
$$
L = \frac{4\mu\alpha\varepsilon+4\mu\alpha\varepsilon\rho}{2(1+\rho)(\mu+\sqrt{\mu^2-4\mu\alpha\varepsilon-4\mu\alpha\varepsilon\rho})} = \frac{2\alpha\varepsilon}{1 + \sqrt{1-\frac{4\alpha\varepsilon}{\mu}-\frac{4\alpha\varepsilon\rho}{\mu}}}.
$$
Then again since $\mu \geq \frac{4\alpha\varepsilon}{1-\rho}$, we have
$$
L \leq \frac{2\alpha\varepsilon}{1+\rho}.
$$ 
\end{proof}

\begin{proof}[Proof of Theorem~\ref{ddnoise}]
By Lemma 6.1 of~\cite{NT08:Cosamp} and Lemma 7 of~\cite{GSTV07:HHS}, we can rewrite $\Phi x + e$ as $\Phi x_s + \widetilde{e}$ where 
$$
\|\widetilde{e}\|_2 \leq 1.2(\|x-x_s\|_2 + \frac{1}{\sqrt{s}}\|x-x_s\|_1) + \|e\|_2 \leq 2.04\Big(\frac{\|x-x_{s/2}\|_1}{\sqrt{s}}\Big) + \|e\|_2.
$$
This combined with Theorem~\ref{theone} completes the claim.
\end{proof}

\subsection{Numerical Results and Convergence}\label{sec:num}

Our main theorems prove bounds on the reconstruction error limit.  However, as is the case with many recursive relations, convergence to this threshold is often quite fast.  To show this, we use dynamic programming to compute the theoretical error bound $E(k)$ given by~\eqref{Ek} and test its convergence rate to the threshold given by~eqref{actualbnd}. Since the ratio between $\mu$ and $\e$ is important, we fix $\mu = 10$ and test the convergence for various values of $\e$ and $\delta$.  We conclude that the error bound has achieved the threshold when it is within 0.001 of it.  The results are displayed in Figure~\ref{fig:its}.

\begin{figure}[ht] 
  \includegraphics[scale=0.6]{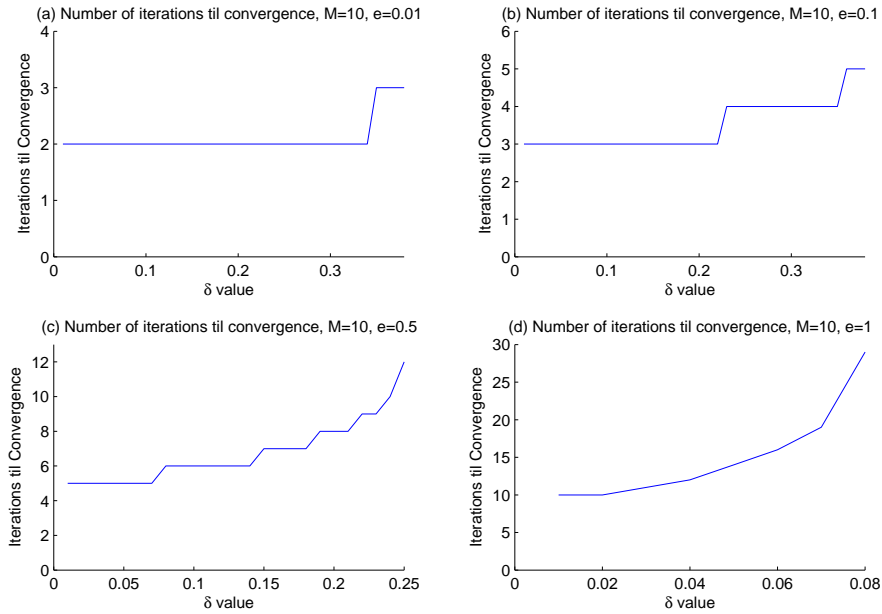}
  \caption{The number of iterations required for the theoretical error bounds~eqref{Ek} to reach the theoretical error threshold~\eqref{actualbnd} when (a) $\mu = 10$, $\e=0.01$, (b) $\mu = 10$, $\e=0.1$, (c) $\mu = 10$, $\e=0.5$, (d) $\mu = 10$, $\e=1.0$. }\label{fig:its}
\end{figure}

We observe that in each case, as $\delta$ increases we require slightly more iterations.  This is not surprising since higher $\delta$ means a lower bound.  We also confirm that less iterations are required when the ratio $\mu/\e$ is smaller. 

Next we examine some numerical experiments conducted to test the actual error with reweighted $\ell_1$-minimization versus the standard $\ell_1$ method.  In these experiments we consider signals of dimension $d=256$ with $s=30$ non-zero entries.  We use a $128 \times 256$ measurement matrix $\Phi$ consisting of Gaussian entries.  We note that we found similar results when the measurement matrix $\Phi$ consisted of symmetric Bernoulli entries.  Sparsity, measurement, and dimension values in this range demonstrate a good example of the advantages of the reweighted method.  Our results above suggest that improvements are made using the reweighted method when $\delta$ cannot be forced very small.  This means that in situations with sparsity levels $s$ much smaller or measurement levels $m$ much larger, the reweighted method may not offer much improvements.  These levels are not the only ones that show improvements, however, see for example those experiments in~\cite{CWB08:Reweighted}.  

For each trial in our experiments we construct an $s$-sparse signal $x$ with uniformly chosen support and entries from either the Gaussian distribution or the symmetric Bernoulli distribution.  All entries are chosen independently and independent of the measurement matrix $\Phi$.  We then construct the noise vector $e$ to have independent Gaussian entries, and normalize the vector to have norm a fraction (1/5) of the noiseless measurement vector $\Phi x$ norm.  We then run the reweighted $\ell_1$-algorithm using $\e$ such that $\e^2 = \sigma^2(m + 2\sqrt{2m})$ where $\sigma^2$ is the variance of the normalized error vectors. This value is likely to provide a good upper bound on the noise norm (see e.g. \cite{CRT06:Stable}, \cite{CWB08:Reweighted}). The stability parameter $a$ tends to zero with increased iterations. We run 500 trials for each parameter selection and signal type.  We found similar results for non-sparse signals such as noisy sparse signals and compressible signals.  This is not surprising since we can treat the signal error as measurement error after applying the the measurement matrix (see the proof of Theorem~\ref{ddnoise}). 

Figure~\ref{fig:dece} depicts the experiment with Gaussian signals and decreasing stability parameter $a$.  In particular, we set $a = 1/(1000k)$ in the $k^{th}$ iteration.  The plot (left) depicts the error after a single $\ell_1$-minimization and after $9$ iterations using the reweighted method.  The histogram (right) depicts the improvements $\|x - \hat{x}\|_2 / \|x - x^*\|_2$ where $\hat{x}$ and $x^*$ are the reconstructed vectors after $9$ iterations of reweighted and a single $\ell_1$-minimization, respectively.  
  
  \begin{figure}[h!]
\begin{center}
$\begin{array}{c@{\hspace{.1in}}c}
\includegraphics[width=3in]{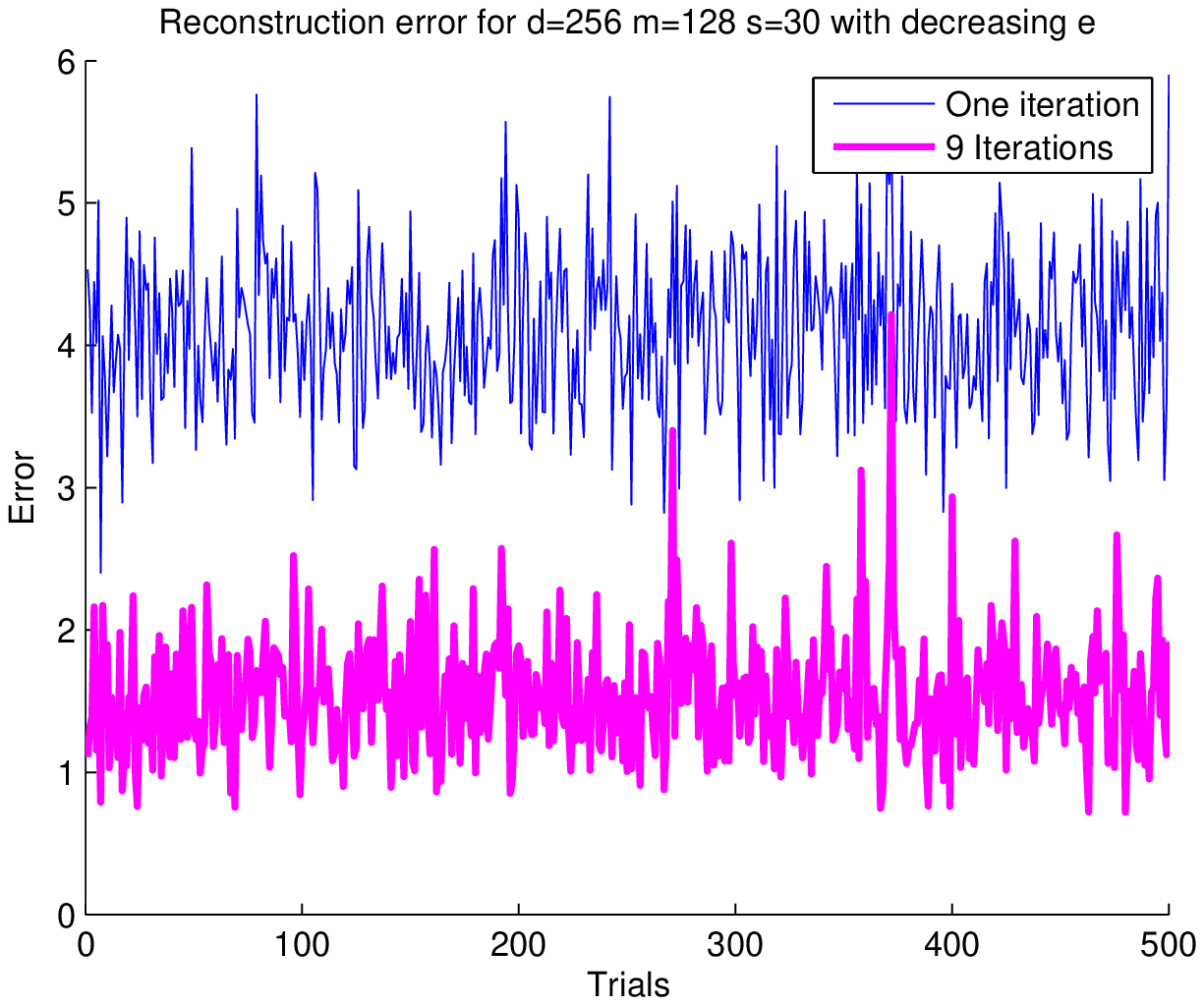}  &  
\includegraphics[width=3in]{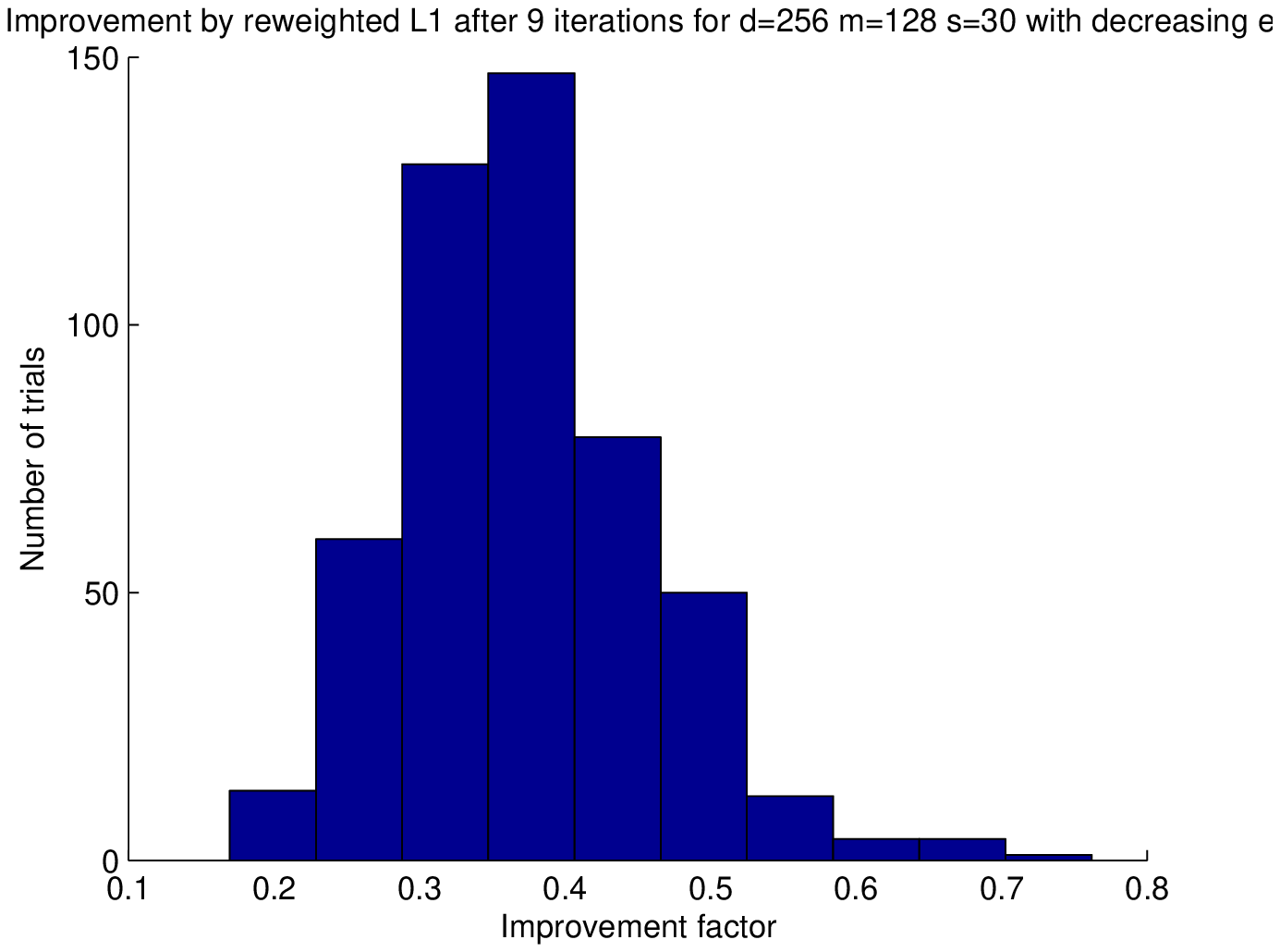}   \\
\end{array}$
\end{center}
\caption{Improvements in the $\ell_2$ reconstruction error using reweighted $\ell_1$-minimization versus standard $\ell_1$-minimization for Gaussian signals.}
\label{fig:dece}
\end{figure}

Figure~\ref{fig:deceBern} depicts the same results as Figure~\ref{fig:dece} above, but for the experiments with Bernoulli signals.  We see that in this case again reweighted $\ell_1$ offers improvements.  In this setting, the improvements in the Bernoulli signals seem to be slightly less than in the Gaussian case.  It is clear that in the case of flat signals like Bernoulli, the requirement on $\mu$ in Theorem~\ref{theone} may be easier to satisfy, since the signal will have no small components unless they are all small.  However, in the case of flat signals, if this requirement is not met, then the Theorem guarantees no improvements whatsoever.  In the Gaussian case, even if the requirement on $\mu$ is not met, the Theorem still shows that improvements will be made on the larger coefficients.  

  
  \begin{figure}[h!]
\begin{center}
$\begin{array}{c@{\hspace{.1in}}c}
\includegraphics[width=3in]{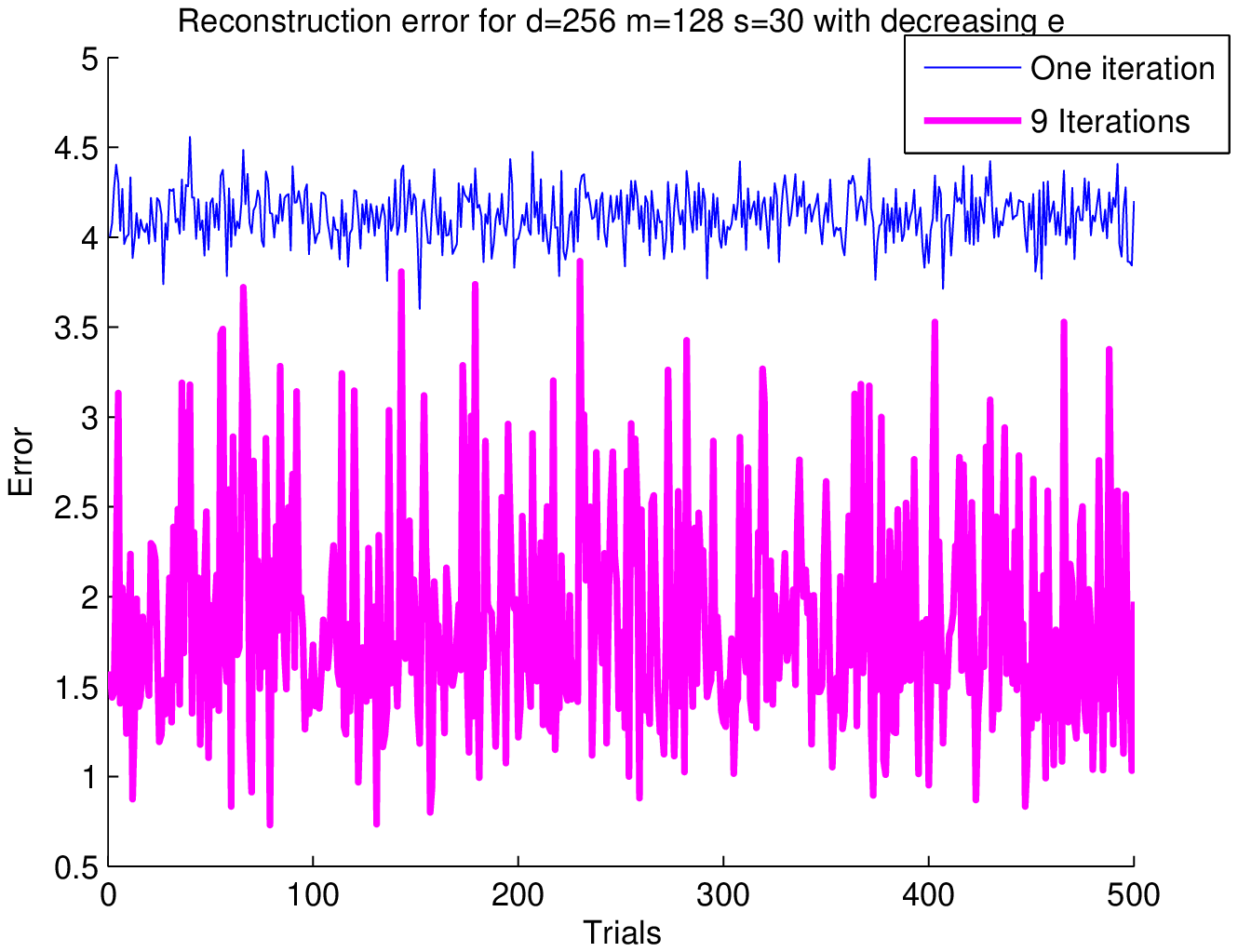}  &  
\includegraphics[width=3in]{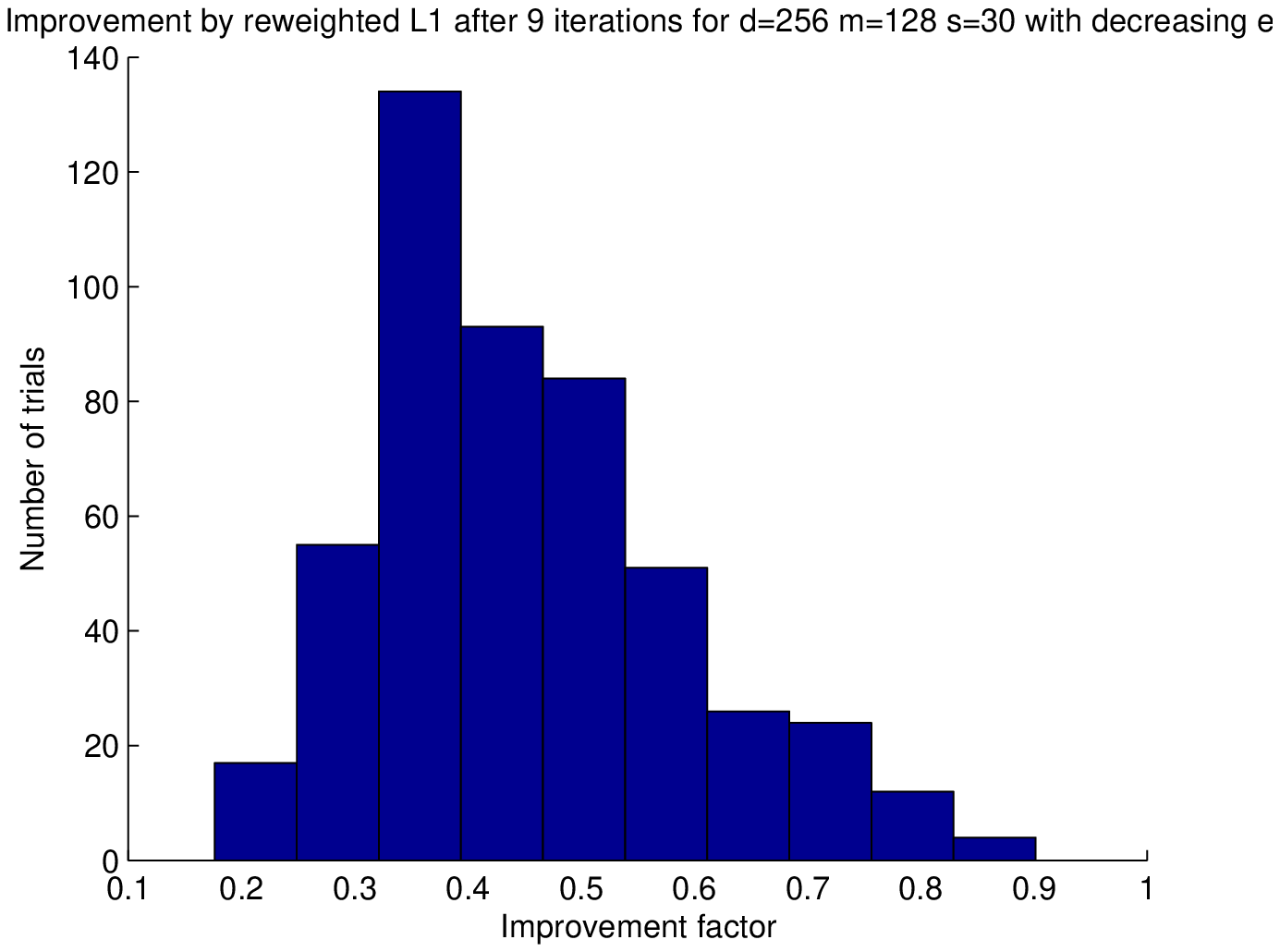}   \\
\end{array}$
\end{center}
\caption{Improvements in the $\ell_2$ reconstruction error using reweighted $\ell_1$-minimization versus standard $\ell_1$-minimization for Bernoulli signals.}
\label{fig:deceBern}
\end{figure}
        
			  \section[Randomized Kaczmarz]{Randomized Kaczmarz}
        \label{sec:New:Kaczmarz}
        
Although the algorithms in compressed sensing themselves are not random, the measurement matrices used in the compression step are.  The suggestion that randomness often makes things easier is the key to the idea of randomized algorithms.  It is often the case that a deterministic algorithm's flaw can be fixed by introducing randomness.  This notion is at the heart of a new randomized version of the well known Kaczmarz algorithm.  Although the work on Kaczmarz is outside the realm on compressed sensing, it does bare some striking similarities.  

The Kaczmarz method~\cite{K37:Angena} is one of the most popular solvers of overdetermined linear systems and has numerous applications from computer tomography to image processing.  The algorithm consists of a series of alternating projections, and is often considered a type of \textit{Projection on Convex Sets} (POCS) method.  Given a consistent system of linear equations of the form 
$$
Ax = b,
$$
the Kaczmarz method iteratively projects onto the solution spaces of each equation in the system.  That is, if $a_1, \ldots, a_m \in \R^n$ denote the rows of $A$, the method cyclically projects the current estimate orthogonally onto the hyperplanes consisting of solutions to $\pr{a_i}{x} = b_i$.  Each iteration consists of a single orthogonal projection.  The algorithm can thus be described using the recursive relation,
$$
x_{k+1} = x_k + \frac{b_i - \pr{a_i}{x_k}}{\|a_i\|_2^2}a_i,
$$
where $x_k$ is the $k^{th}$ iterate and $i = (k$ mod $m) + 1$.  

Theoretical results on the convergence rate of the Kaczmarz method have been difficult to obtain.  Most known estimates depend on properties of the matrix $A$ which may be time consuming to compute, and are not easily comparable to those of other iterative methods (see e.g. ~\cite{DH97:Therate},~\cite{G05:Onthe},~\cite{HN90:Onthe}).  Since the Kaczmarz method cycles through the rows of $A$ sequentially, its convergence rate depends on the order of the rows.  Intuition tells us that the order of the rows of $A$ does not change the difficulty level of the system as a whole, so one would hope for results independent of the ordering.  One natural way to overcome this is to use the rows of $A$ in a random order, rather than sequentially.  Several observations were made on the improvements of this randomized version~\cite{N86:TheMath,HM93:Algebraic}, but only recently have theoretical results been obtained~\cite{SV06:Arandom}. 

In designing a random version of the Kaczmarz method, it is necessary to set the probability of each row being selected. Strohmer and Vershynin propose in~\cite{SV06:Arandom} to set the probability proportional to the Euclidean norm of the row. Their revised algorithm can then be described by the following:
$$
x_{k+1} = x_k + \frac{b_{p(i)} - \pr{a_{p(i)}}{x_k}}{\|a_{p(i)}\|_2^2}a_{p(i)},
$$
where $p(i)$ takes values in $\{1, \ldots, m\}$ with probabilities $\frac{\|a_{p(i)}\|_2^2}{\|A\|_F^2}$.  Here and throughout, $\|A\|_F$ denotes the Frobenius norm of $A$ and $\|\cdot\|_2$ denotes the usual Euclidean norm or spectral norm for vectors or matrices, respectively.  We note here that of course, one needs some knowledge of the norm of the rows of $A$ in this version of the algorithm. In general, this computation takes $\bigO(mn)$ time.  However, in many cases such as the case in which $A$ contains Gaussian entries, this may be approximately or exactly known.  

In~\cite{SV06:Arandom}, Strohmer and Vershynin prove the following exponential bound on the expected rate of convergence for the randomized Kaczmarz method,
\begin{equation}\label{SVbound}
\mathbb{E}\|x_k - x\|_2^2 \leq \Big(1 - \frac{1}{R}\Big)^k\|x_0 - x\|_2^2,
\end{equation}
where $R = \|A^{-1}\|^2\|A\|_F^2$ and $x_0$ is an arbitrary initial estimate. Here and throughout, $\|A^{-1}\| \defby \inf\{M : M\|Ax\|_2 \geq \|x\|_2$ for all $x\}$.  

The first remarkable note about this result is that it is essentially independent of the number $m$ of equations in the system.  Indeed, by the definition of $R$, $R$ is proportional to $n$ within a square factor of the condition number of $A$.  Also, since the algorithm needs only access to the randomly chosen rows of $A$, the method need not know the entire matrix $A$.  Indeed, the bound~\eqref{SVbound} and the relationship of $R$ to $n$ shows that the estimate $x_k$ converges exponentially fast to the solution in just $\bigO(n)$ iterations.  Since each iteration requires $\bigO(n)$ time, the method overall has a $\bigO(n^2)$ runtime.  Thus this randomized version of the algorithm provides advantages over all previous methods for extremely overdetermined linear systems.  There are of course situations where other methods, such as the conjugate gradient method, outperform the randomized Kaczmarz method.  However, numerical studies in~\cite{SV06:Arandom} show that in many scenarios (for example when $A$ is Gaussian), the randomized Kaczmarz method provides faster convergence than even the conjugate gradient method.

The remarkable benefits provided by the randomized Kaczmarz algorithm lead one to question whether the method works in the more realistic case where the system is corrupted by noise.  In this paper we provide theoretical and empirical results to suggest that in this noisy case the method converges exponentially to the solution within a specified error bound.  The error bound is proportional to $\sqrt{R}$, and we also provide a simple example showing this bound is sharp in the general setting.  

\subsection{Main Results}

In this section we discuss the results by Needell in~\cite{N9:RK}.

Theoretical and empirical studies have shown the randomized Kaczmarz algorithm to provide very promising results.  Here we show that it also performs well in the case where the system is corrupted with noise.  In this section we consider the system $Ax=b$ after an error vector $r$ is added to the right side:
$$
Ax \approx b + r.
$$
First we present a simple example to gain intuition about how drastically the noise can affect the system.  To that end, consider the $n \times n$ identity matrix $A$. Set $b=0$, $x=0$, and suppose the error is the vector whose entries are all one, $r = (1, 1, \ldots, 1)$.  Then the solution to the noisy system is clearly $r$ itself, and so by~\eqref{SVbound}, the iterates $x_k$ of randomized Kaczmarz will converge exponentially to $r$.  Since $A$ is the identity matrix, we have $R=n$.  Then by~\eqref{SVbound} and Jensen's inequality, we have  
$$
\mathbb{E}\|x_k - r\|_2 \leq \Big(1 - \frac{1}{R}\Big)^{k/2}\|x_0-r\|_2.
$$
Then by the triangle inequality, we have
$$
\mathbb{E}\|x_k - x\|_2 \geq \|r - x\|_2 - \Big(1 - \frac{1}{R}\Big)^{k/2}\|x_0-r\|_2.
$$
Finally by the definition of $r$ and $R$, this implies
$$
\mathbb{E}\|x_k - x\|_2 \geq \sqrt{R} - \Big(1 - \frac{1}{R}\Big)^{k/2}\|x_0-r\|_2.
$$
This means that the limiting error between the iterates $x_k$ and the original solution $x$ is $\sqrt{R}$. In~\cite{SV06:Arandom} it is shown that the bound provided in~\eqref{SVbound} is optimal, so if we wish to maintain a general setting, the best error bound for the noisy case we can hope for is proportional to $\sqrt{R}$.  Our main result proves this exact theoretical bound.

\begin{theorem}[Noisy randomized Kaczmarz]\label{thm}
Let $x_k^*$ be the $k^{th}$ iterate of the noisy Randomized Kaczmarz method run with $Ax \approx b + r$, and let $a_1, \ldots a_m$ denote the rows of $A$. Then we have
$$
\mathbb{E}\|x_k^* - x\|_2 \leq \Big(1 - \frac{1}{R}\Big)^{k/2}\|x_{0}\|_2 + \sqrt{R}\gamma,
$$
where $R = \|A^{-1}\|^2\|A\|_F^2$ and $\gamma = \max_i \frac{|r_i|}{\|a_i\|_2}$.
\end{theorem}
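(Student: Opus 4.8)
The plan is to imitate the one-step contraction analysis behind Strohmer and Vershynin's bound \eqref{SVbound}, carrying along the extra term that the noise vector $r$ contributes at each projection. First I would rewrite the noisy iteration in terms of the error $w_k := x_k^* - x$. Substituting the definition of $x_{k+1}^*$ and using $b_i = \pr{a_i}{x}$ (for $i = p(i)$ the randomly chosen index and $x$ the true solution of $Ax=b$), the factor $(b_i + r_i) - \pr{a_i}{x_k^*}$ becomes $-\pr{a_i}{w_k} + r_i$, so that
\begin{equation*}
w_{k+1} = \Big( w_k - \frac{\pr{a_i}{w_k}}{\|a_i\|_2^2}a_i \Big) + \frac{r_i}{\|a_i\|_2^2}a_i.
\end{equation*}
The first bracketed vector is exactly the orthogonal projection $P_i w_k$ of $w_k$ onto the hyperplane $\{z : \pr{a_i}{z} = 0\}$, while the second is a multiple of $a_i$ and therefore lies in the orthogonal complement of that hyperplane.

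The key step is to exploit this orthogonality through the Pythagorean identity, which turns the update into a clean equality rather than an inequality:
\begin{equation*}
\|w_{k+1}\|_2^2 = \|P_i w_k\|_2^2 + \frac{r_i^2}{\|a_i\|_2^2} = \|w_k\|_2^2 - \frac{\pr{a_i}{w_k}^2}{\|a_i\|_2^2} + \frac{r_i^2}{\|a_i\|_2^2}.
\end{equation*}
Taking the conditional expectation over the choice of $i$ with probabilities $\|a_i\|_2^2/\|A\|_F^2$, the middle term reproduces the Strohmer--Vershynin estimate $\sum_i \frac{\|a_i\|_2^2}{\|A\|_F^2}\frac{\pr{a_i}{w_k}^2}{\|a_i\|_2^2} = \frac{\|Aw_k\|_2^2}{\|A\|_F^2} \geq \frac{1}{R}\|w_k\|_2^2$, where the inequality uses $\|Aw_k\|_2 \geq \|w_k\|_2/\|A^{-1}\|$ from the definition of $\|A^{-1}\|$ and $R = \|A^{-1}\|^2\|A\|_F^2$. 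For the remaining term I would bound $r_i^2/\|a_i\|_2^2 \leq \gamma^2$ uniformly by definition of $\gamma$. Passing to the full expectation $\mathbb{E}$ gives the affine recursion
\begin{equation*}
\mathbb{E}\|w_{k+1}\|_2^2 \leq \Big(1 - \frac{1}{R}\Big)\mathbb{E}\|w_k\|_2^2 + \gamma^2.
\end{equation*}

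Finally I would unroll this recursion. With $\beta = 1 - 1/R$, induction gives $\mathbb{E}\|w_k\|_2^2 \leq \beta^k\|w_0\|_2^2 + \gamma^2\sum_{j=0}^{k-1}\beta^j \leq \beta^k\|w_0\|_2^2 + R\gamma^2$, since $\sum_{j\geq 0}\beta^j = R$. Applying Jensen's inequality in the form $\mathbb{E}\|w_k\|_2 \leq \sqrt{\mathbb{E}\|w_k\|_2^2}$ together with the subadditivity $\sqrt{a+b} \leq \sqrt{a}+\sqrt{b}$ then yields $\mathbb{E}\|x_k^* - x\|_2 \leq (1 - 1/R)^{k/2}\|x_0 - x\|_2 + \sqrt{R}\,\gamma$, which is the asserted bound. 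I expect the only genuine subtlety to be the orthogonal decomposition in the first step: one must verify that the noise contribution $\frac{r_i}{\|a_i\|_2^2}a_i$ is truly orthogonal to $P_i w_k$, so that the cross term vanishes and the exact Pythagorean identity holds; once that is in place, everything reduces to the geometric-series bookkeeping already present in the noiseless argument, and the uniform bound $r_i^2/\|a_i\|_2^2 \leq \gamma^2$ is what converts the accumulated noise into the clean $\sqrt{R}\,\gamma$ term.
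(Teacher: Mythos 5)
Your proposal is correct and follows essentially the same route as the paper's proof: decompose the noisy projection as the noiseless orthogonal projection plus the shift $\frac{r_i}{\|a_i\|_2^2}a_i$ along $a_i$, invoke the Pythagorean identity, apply the Strohmer--Vershynin one-step contraction to the projection term, bound the noise term by $\gamma^2$, unroll the resulting affine recursion into a geometric series, and finish with Jensen's inequality and $\sqrt{a+b}\leq\sqrt{a}+\sqrt{b}$. The only cosmetic difference is that you inline the expectation computation $\mathbb{E}\bigl[\pr{a_i}{w_k}^2/\|a_i\|_2^2\bigr] = \|Aw_k\|_2^2/\|A\|_F^2 \geq \|w_k\|_2^2/R$, whereas the paper cites it as a lemma from Strohmer and Vershynin's work.
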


\begin{remark}
In the case discussed above, note that we have $\gamma = 1$, so the example indeed shows the bound is sharp.
\end{remark}

Before proving the theorem, it is important to first analyze what happens to the solution spaces of the original equations $Ax=b$ when the error vector is added.  Letting $a_1, \ldots a_m$ denote the rows of $A$, we have that each solution space $\pr{a_i}{x} = b_i$ of the original system is a hyperplane whose normal is $\frac{a_i}{\|a_i\|_2}$.  When noise is added, each hyperplane is translated in the direction of $a_i$.  Thus the new geometry consists of hyperplanes parallel to those in the noiseless case.  A simple computation provides the following lemma which specifies exactly how far each hyperplane is shifted.

\begin{lemma}\label{easylem}
Let $H_i$ be the affine subspace of $\R^n$ consisting of the solutions to $\left\langle a_i, x \right\rangle = b_i$. Let $H_i^*$ be the solution space of the noisy system $\left\langle a_i, x \right\rangle = b_i + r_i$. Then $H_i^* = \{w + \alpha_i a_i : w\in H_i\}$ where $\alpha_i = \frac{r_i}{\|a_i\|_2^2}$.
\end{lemma}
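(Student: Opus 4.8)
The plan is to establish the set identity $H_i^* = \{w + \alpha_i a_i : w \in H_i\}$ by verifying the two inclusions separately, since each reduces to a single inner-product computation. The geometric picture driving the argument is that $H_i$ and $H_i^*$ are parallel affine hyperplanes sharing the common normal direction $a_i/\|a_i\|_2$, so passing from one to the other should amount to translating by the unique multiple of $a_i$ that shifts the value of $\pr{a_i}{\cdot}$ by exactly $r_i$. The constant $\alpha_i$ is precisely the coefficient that realizes this shift.

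For the inclusion $\supseteq$, I would take an arbitrary $w \in H_i$, so that $\pr{a_i}{w} = b_i$, and compute $\pr{a_i}{w + \alpha_i a_i} = \pr{a_i}{w} + \alpha_i \|a_i\|_2^2$ by bilinearity. Substituting $\alpha_i = r_i/\|a_i\|_2^2$ collapses the second term to $r_i$, giving $\pr{a_i}{w+\alpha_i a_i} = b_i + r_i$, which is exactly the defining equation of $H_i^*$; hence $w + \alpha_i a_i \in H_i^*$.

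For the reverse inclusion $\subseteq$, I would take $v \in H_i^*$, set $w := v - \alpha_i a_i$, and run the same computation in reverse: $\pr{a_i}{w} = \pr{a_i}{v} - \alpha_i \|a_i\|_2^2 = (b_i + r_i) - r_i = b_i$, so that $w \in H_i$ and $v = w + \alpha_i a_i$ exhibits $v$ as a point of the translated set. Together the two inclusions yield the claimed equality.

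There is no genuine obstacle here — the statement is, as the surrounding text notes, a simple computation. The only point requiring a moment of care is the normalization: the translation coefficient must be $\alpha_i = r_i/\|a_i\|_2^2$ rather than $r_i/\|a_i\|_2$, precisely so that the factor $\|a_i\|_2^2$ produced by $\pr{a_i}{a_i}$ cancels and leaves the shift equal to $r_i$ exactly. The reverse inclusion is what confirms that this value of $\alpha_i$ is forced and not merely sufficient.
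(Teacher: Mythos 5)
Your proposal is correct and follows essentially the same route as the paper's own proof: both directions are verified by the identical inner-product computations, checking $\pr{a_i}{w+\alpha_i a_i} = b_i + r_i$ for $w \in H_i$ and, conversely, that $v - \alpha_i a_i$ satisfies $\pr{a_i}{v - \alpha_i a_i} = b_i$ for $v \in H_i^*$. Nothing further is needed.
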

\begin{proof}
First, if $w\in H_i$ then $\pr{a_i}{w + \alpha a_i} = \pr{a_i}{w} + \alpha\|a_i\|_2^2 = b_i + r_i$, so $w + \alpha a_i \in H_i^*$. Next let $u \in H_i^*$. Set $ w = u - \alpha a_i$. Then $\pr{a_i}{w} = \pr{a_i}{u} - r_i = b_i + r_i - r_i = b_i$, so $w \in H_i^*$. This completes the proof.
\end{proof}

We will also utilize the following lemma which is proved in the proof of Theorem 2 in~\cite{SV06:Arandom}.

\begin{lemma}\label{SVlem}
Let $x_{k-1}^*$ be any vector in $\mathbb{R}^n$ and let $x_k$ be its orthogonal projection onto a random solution space as in the noiseless randomized Kaczmarz method run with $Ax=b$. Then we have
$$
\mathbb{E}\|x_k - x\|_2^2 \leq \Big(1 - \frac{1}{R}\Big)\|x_{k-1}^* - x\|_2^2,
$$
where $R = \|A^{-1}\|^2\|A\|_F^2$.
\end{lemma}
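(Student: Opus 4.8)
The plan is to analyze a single projection step \emph{exactly} and then average over the random choice of row. Write $\delta = x_{k-1}^* - x$ for the current error vector, and let $i$ be the randomly selected index, so that $x_k$ is the orthogonal projection of $x_{k-1}^*$ onto the hyperplane $H_i = \{z : \pr{a_i}{z} = b_i\}$, given explicitly by the update
$$x_k = x_{k-1}^* + \frac{b_i - \pr{a_i}{x_{k-1}^*}}{\|a_i\|_2^2} a_i.$$
The crucial geometric observation is that the true solution $x$ lies in \emph{every} $H_i$, since $Ax = b$. Because $x_k$ is the orthogonal projection of $x_{k-1}^*$ onto $H_i$ and $x \in H_i$, the displacement $x_{k-1}^* - x_k$ (which is parallel to $a_i$) is orthogonal to $x_k - x$ (which lies in the direction subspace of $H_i$). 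Pythagoras then gives the exact identity
$$\|x_k - x\|_2^2 = \|x_{k-1}^* - x\|_2^2 - \|x_{k-1}^* - x_k\|_2^2.$$

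First I would compute the step length. Substituting $b_i = \pr{a_i}{x}$ into the update formula,
$$x_{k-1}^* - x_k = \frac{\pr{a_i}{x_{k-1}^* - x}}{\|a_i\|_2^2} a_i, \qquad \text{so} \qquad \|x_{k-1}^* - x_k\|_2^2 = \frac{\pr{a_i}{\delta}^2}{\|a_i\|_2^2}.$$
Plugging this into the Pythagorean identity and then taking the expectation over $i$ with the prescribed probabilities $\|a_i\|_2^2 / \|A\|_F^2$, the factor $\|a_i\|_2^2$ cancels against the denominator — this cancellation is precisely what makes the bound independent of the number $m$ of equations — leaving
$$\mathbb{E}\|x_k - x\|_2^2 = \|\delta\|_2^2 - \frac{1}{\|A\|_F^2} \sum_{i=1}^m \pr{a_i}{\delta}^2 = \|\delta\|_2^2 - \frac{\|A\delta\|_2^2}{\|A\|_F^2},$$
where the last equality recognizes $\sum_i \pr{a_i}{\delta}^2 = \|A\delta\|_2^2$ because the $a_i$ are the rows of $A$.

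Finally I would invoke the definition $\|A^{-1}\| = \inf\{M : M\|Az\|_2 \geq \|z\|_2 \text{ for all } z\}$ recorded earlier in the excerpt, which yields the lower bound $\|A\delta\|_2^2 \geq \|\delta\|_2^2 / \|A^{-1}\|^2$. Dividing through by $\|A\|_F^2$ and recalling $R = \|A^{-1}\|^2 \|A\|_F^2$ gives $\|A\delta\|_2^2 / \|A\|_F^2 \geq \|\delta\|_2^2 / R$; substituting into the displayed expectation produces the claimed contraction $\mathbb{E}\|x_k - x\|_2^2 \leq (1 - 1/R)\|x_{k-1}^* - x\|_2^2$. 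There is no genuine obstacle here: every step is an equality except for the single spectral inequality at the end, and the only point demanding care is verifying the orthogonality underlying the Pythagorean split — namely that $x \in H_i$, so that projecting onto $H_i$ removes exactly the component of the error along $a_i$. The elegance of the argument lies entirely in the sampling weights being chosen proportional to $\|a_i\|_2^2$, which is what converts the per-row identity into the clean averaged quantity $\|A\delta\|_2^2 / \|A\|_F^2$.
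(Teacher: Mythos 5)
Your proof is correct and is essentially the argument of Strohmer--Vershynin, which the dissertation itself does not reproduce but simply cites (the lemma is attributed to the proof of Theorem~2 in their paper): the exact Pythagorean decomposition using $x \in H_i$, the cancellation of $\|a_i\|_2^2$ against the sampling weights to obtain $\|A\delta\|_2^2/\|A\|_F^2$, and the final bound via the definition of $\|A^{-1}\|$ are precisely the steps of the original proof. No gaps.
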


We are now prepared to prove Theorem~\ref{thm}.

\begin{proof}[Proof of Theorem~\ref{thm}]
Let $x_{k-1}^*$ denote the $(k-1)^{th}$ iterate of noisy Randomized Kaczmarz.  Using notation as in Lemma~\ref{easylem}, let $H_i^*$ be the solution space chosen in the $k^{th}$ iteration.  Then $x_k^*$ is the orthogonal projection of $x_{k-1}^*$ onto $H_i^*$. Let $x_k$ denote the orthogonal projection of $x_{k-1}^*$ onto $H_i$ (see Figure~\ref{fig0}). 

\begin{figure}[ht] 
  \includegraphics[scale=0.5]{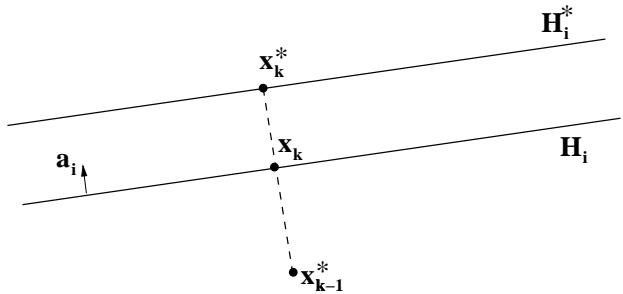}
  \caption{The parallel hyperplanes $H_i$ and $H_i^*$ along with the two projected vectors $x_k$ and $x_k^*$.}\label{fig0}
\end{figure}

By Lemma~\ref{easylem}  and the fact that $a_i$ is orthogonal to $H_i$ and $H_i^*$, we have that $x_k^* - x = x_k - x + \alpha_i a_i$.  Again by orthogonality, we have $\|x_k^* - x\|_2^2 = \|x_k - x\|_2^2 + \|\alpha_i a_i\|_2^2$.  Then by Lemma~\ref{SVlem} and the definition of $\gamma$, we have 
$$
\mathbb{E}\|x_k^* - x\|_2^2 \leq \Big(1 - \frac{1}{R}\Big)\|x_{k-1}^* - x\|_2^2 + \gamma^2,
$$
where the expectation is conditioned upon the choice of the random selections in the first $k-1$ iterations.
Then applying this recursive relation iteratively and taking full expectation, we have
\begin{align*}
\mathbb{E}\|x_k^* - x\|_2^2 &\leq \Big(1 - \frac{1}{R}\Big)^k\|x_{0} - x\|_2^2 + \sum_{j=0}^{k-1}\Big(1 - \frac{1}{R}\Big)^j\gamma^2\\
&\leq \Big(1 - \frac{1}{R}\Big)^k\|x_{0} - x\|_2^2 + R\gamma^2.
\end{align*}
By Jensen's inequality we then have
$$
\mathbb{E}\|x_k^* - x\|_2 \leq \left(\Big(1 - \frac{1}{R}\Big)^k\|x_{0} - x\|_2^2 + R\gamma^2\right)^{1/2} \leq \Big(1 - \frac{1}{R}\Big)^{k/2}\|x_{0} - x\|_2 + \sqrt{R}\gamma.
$$
This completes the proof.
\end{proof}

\subsection{Numerical Examples}

In this section we describe some of our numerical results for the randomized Kaczmarz method in the case of noisy systems.  The results displayed in this section use matrices with independent Gaussian entries.  Figure~\ref{fig1} depicts the error between the estimate by randomized Kaczmarz and the actual signal, in comparison with the predicted threshold value.  This study was conducted for 100 trials using $100 \times 50$ Gaussian matrices and independent Gaussian noise.  The systems were homogeneous, meaning $x=0$ and $b=0$.  The thick line is a plot of the threshold value, $\gamma\sqrt{R}$ for each trial.  The thin line is a plot of the error in the estimate after 1000 iterations for the corresponding trial.  As is evident by the plots, the error is quite close to the threshold.  Of course in the Gaussian case depicted, it is not surprising that often the error is below the threshold.  As discussed above, the threshold is sharp for certain kinds of matrices and noise vectors.

\begin{figure}[ht] 
  \includegraphics[scale=0.7]{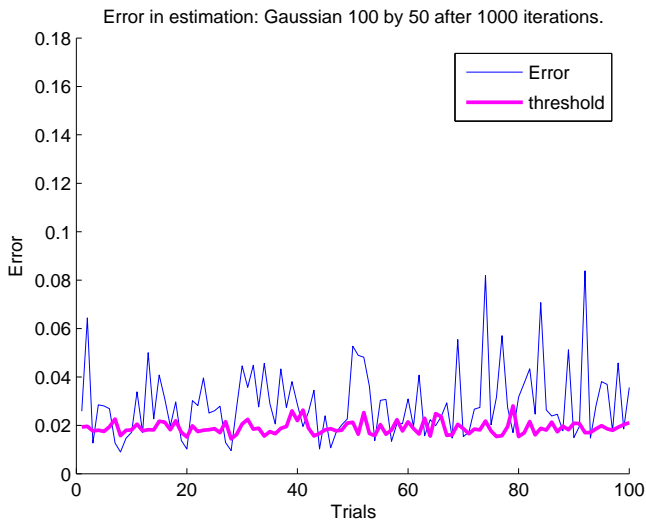}
  \caption{The comparison between the actual error in the randomized Kaczmarz estimate (thin line) and the predicted threshold (thick line).}\label{fig1}
\end{figure}

Our next study investigated the convergence rate for the randomized Kaczmarz method with noise for homogeneous systems.  Again we let our matrices $A$ be $100 \times 50$ Gaussian matrices, and our error vector be independent Gaussian noise.  Figure~\ref{fig2} displays a scatter plot of the results of this study over various trials.  It is not surprising that the convergence appears exponential as predicted by the theorems. 

\begin{figure}[ht] 
  \includegraphics[scale=0.7]{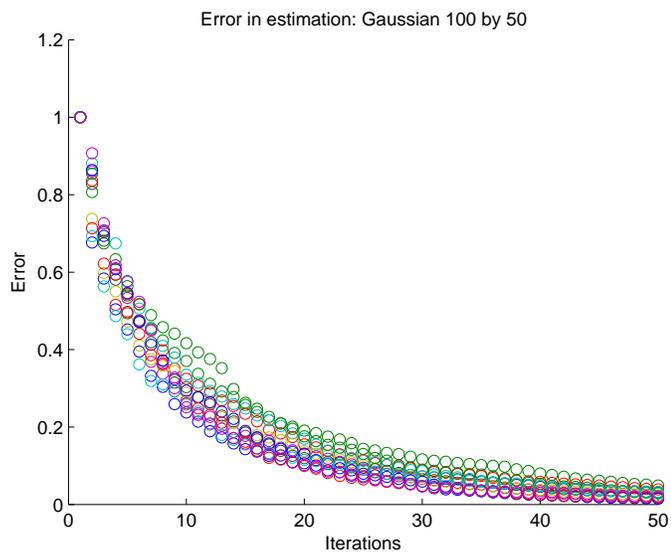}
  \caption{Convergence rate for randomized Kaczmarz over various trials.}\label{fig2}
\end{figure}

            %
            
%


    %
    %

    \newchapter{Summary}{Summary}{Summary}
    \label{sec:summary}
    
Compressed sensing is a new and fast growing field of applied mathematics that addresses the shortcomings of conventional signal compression. Given a signal with few nonzero coordinates relative to its dimension, compressed sensing seeks to reconstruct the signal from few nonadaptive linear measurements. As work in this area developed, two major approaches to the problem emerged, each with its own set of advantages and disadvantages. The first approach, L1-Minimization~\cite{CT05:Decoding,CRT06:Stable}, provided strong results, but lacked the speed of the second, the greedy approach. The greedy approach, while providing a fast runtime, lacked stability and uniform guarantees.  This gap between the approaches has led researchers to seek an algorithm that could provide the benefits of both.  In collaboration, my adviser Roman Vershynin and I have bridged this gap and provided a breakthrough algorithm, called Regularized Orthogonal Matching Pursuit (ROMP)~\cite{NV07:Uniform-Uncertainty,NV07:ROMP-Stable}. ROMP is the first algorithm to provide the stability and uniform guarantees similar to those of L1-Minimization, while providing speed as a greedy approach. After analyzing these results, my colleague Joel Tropp and I developed the algorithm Compressive Sampling Matching Pursuit (CoSaMP), which improved upon the guarantees of ROMP~\cite{NT08:Cosamp,DT08:CoSaMP-TR}. CoSaMP is the first algorithm to have provably optimal guarantees in every important aspect.

It was the negative opinions about conventional signal compression that spurred the development of compressed sensing. The traditional methodology is a costly process, which acquires the entire signal, compresses it, and then throws most of the information away. However, new ideas in the field combine signal acquisition and compression, significantly improving the overall cost.  The problem is formulated as the realization of such a signal $x$ from few linear measurements, of the form $\Phi x$ where $\Phi$ is a (usually random) measurement matrix. Since Linear Algebra clearly shows that recovery in this fashion is not possible, the domain from which the signal is reconstructed must be restricted. Thus the domain that is considered is the domain of all \textit{sparse} vectors. In particular, we call a signal \textit{$s$-sparse} when it has $s$ or less nonzero coordinates.
It is now well known that many signals in practice are sparse in this sense or with respect to a different basis.

As discussed, there are several critical properties that an ideal algorithm in compressed sensing should possess.  The algorithm clearly needs to be efficient in practice.  This means it should have a fast runtime and low memory requirements.  It should also provide uniform guarantees so that one measurement matrix works for all signals with high probability.  Lastly, the algorithm needs to provide stability under noise in order to be of any use in practice.  The second approach uses greedy algorithms and is thus quite fast both in theory and in practice, but lacks both stability and uniform guarantees. The first approach relies on a condition called the Restricted Isometry Property (RIP), which had never been usefully applied to greedy algorithms. For a measurement matrix $\Phi$, we say that $\Phi$ satisfies the RIP with parameters $(s,\e)$ if 
$$
(1-\e)\|v\|_2 \leq \|\Phi v\|_2 \leq (1+\e)\|v\|_2
$$
holds for all $s$-sparse vectors $v$.
We analyzed this property in a unique way and found consequences that could be used in a greedy fashion. Our breakthrough algorithm ROMP is the first to provide all these benefits (stability, uniform guarantees, and speed), while CoSaMP improves upon these significant results and provides completely optimal runtime bounds.

One of the basic greedy algorithms which inspired our work is Orthogonal Matching Pursuit (OMP), which was analyzed by Gilbert and Tropp~\cite{TG07:Signal-Recovery}. OMP uses the observation vector $u = \Phi^* \Phi x$ to iteratively calculate the support of the signal $x$ (which can then be used to reconstruct $x$). At each iteration, OMP selects the largest component of the observation vector $u$ to be in the support, and then subtracts off its contribution. Although OMP is very fast, it does not provide uniform guarantees. Indeed, the algorithm correctly reconstructs a \textit{fixed} signal $x$ with high probability, rather than \textit{all} signals. It is also unknown whether OMP is stable.

Our new algorithm ROMP is similar in spirit to OMP, in that it uses the observation vector $u$ to calculate the support of the signal $x$. One of the key differences in the algorithm is that ROMP selects many coordinates of $u$ at each iteration. The regularization step of ROMP guarantees that each of the selected coordinates have close to an equal share of the information about the signal $x$. This allows us to translate captured energy of the signal into captured support of the signal. We show that when the measurement matrix $\Phi$ satisfies the Restricted Isometry Property with parameters $(2s, c/\log s)$, ROMP exactly reconstructs any $s$-sparse signal in just $s$ iterations. Our stability results show that for an \textit{arbitrary} signal $x$ with noisy measurements $\Phi x +e$, ROMP provides an approximation $\hat{x}$ to $x$ that satisfies
\begin{equation}\label{vboundsum}
      \|\hat{x} - x\|_2 
      \leq C \sqrt{\log s} \Big( \|e\|_2 + \frac{\|x-x_s\|_1}{\sqrt{s}} \Big),
    \end{equation}
    where $x_s$ is the vector of the $s$ largest coordinates of $x$ in magnitude. 
 ROMP is thus the first greedy algorithm providing the strong guarantees of the L1-Minimization approach, and bridges the gap between the two approaches.
 
After the breakthrough of ROMP, Needell and Tropp developed a new algorithm, Compressive Sampling Matching Pursuit (CoSaMP)~\cite{NT08:Cosamp,DT08:CoSaMP-TR}. CoSaMP maintains an estimation of the support as well as an estimation of the signal throughout each iteration. It again is a greedy algorithm that provides the uniform and stability guarantees of the L1 approach. CoSaMP improves upon the stability bounds of ROMP as well as the number of measurements required by the algorithm. Indeed we show that for any measurement matrix satisfying the Restricted Isometry Property with parameters (2s, c),
CoSaMP approximately reconstructs any arbitrary signal $x$ from its noisy measurements $u = \Phi x + e$ in at most $6s$ iterations: 
  $$
    \|\hat{x} - x\|_2 
    \leq C\Big( \|e\|_2 + \frac{\|x-x_s\|_1}{\sqrt{s}} \Big).
  $$ 
  We also provide a rigorous analysis of the runtime, which describes how exactly the algorithm should be implemented in practice. CoSaMP thus provides optimal guarantees at every important aspect.

        


    %
    %

    %
    %
    


 
 \appendix
 \newchapter{Matlab Code}{Matlab Code}{Matlab Code}\label{app:code}
 This section contains original Matlab code used to produce the figures contained within this work.
 \section{Basis Pursuit}\label{app:code:BPcode}
 \singlespacing
\begin{verbatim}
%NAME: Basis Pursuit Tester
%PURPOSE: Tests sparse, noisy, compressible signals on Basis Pursuit
%AUTHOR: Deanna Needell
%OUTSIDE FUNCTIONS: l1eq_pd (L1-Magic, J. Romberg)

clear all
warning off all

%Variables
N=[10:5:250];
d=[256];
n=[2:2:90]; 
p=[0.2:0.2:1]; %Used for compressible signals

%Number of Trials for each parameter set
numTrials = 500;

%Counters
numN = size(N, 2);
numd = size(d, 2);
numn = size(n, 2);
nump = size(p, 2);

%Data Collection
numCorr = zeros(numN, numd, numn);
minMeas = zeros(numN);
AvgerrorNormd = zeros(numN, numd, numn, nump);
Avgerror = zeros(numN, numd, numn, nump);
%for ip:=1:nump %USED FOR COMPRESSIBLE SIGNALS
  for id = 1:numd
    for iN=1:numN 
     in=1;
     done=0;
     while in <= numn && ~done
       for trial = 1:numTrials
         tN = N(1, iN);
         td = d(1, id);
         tn = n(1, in);
         tp = p(1, ip);

         %Set Matrix
         Phi = randn(tN, td);
         Phi = sign(Phi);
         I = zeros(1,1);

         %Set signal
         z = randperm(td);
         v = zeros(td, 1);
         for t = 1:tn
             v(z(t))=1;
         end

         %USED IN THE CASE OF COMPRESSIBLE SIGNALS
         %y = sign(randn(td, 1));
         %noiErr=0;
         %for i=1:tn

             %v(z(i)) = i^(-1/tp)*y(i);
             %if i > tn
             %    noiErr = noiErr + abs(v(z(i)));
             %end
         %end

         %Set measurement and residual
         x = Phi * v;

         %USED IN THE CASE OF NOISE
         %e = randn(tN, 1);
         %x = x + e / norm(e, 2) / 2;

         %Set initial estimate
         x0 = Phi'*x;

         %Run Basis Pursuit (via L1-Magic)
         xp = l1eq_pd(x0, Phi, [], x, 1e-6);

         %Collect Data
         if norm(xp-v, 2) < 0.01
              numCorr(iN, id, in) = numCorr(iN, id, in) +1;
         end  
         AvgerrorNormd(iN, id, in, ip) = (AvgerrorNormd(iN, id, in, ip) * 
                   (counted-1) + (norm(xp-v,2)/noiErr*(tn)^0.5))/counted;
         Avgerror(iN, id, in, ip) = (Avgerror(iN, id, in, ip) * 
                   (counted-1) + norm(xp-v, 2))/counted;
         end % end Trial

       if numCorr(iN, id, in) / numTrials > 0.98
          minMeas(iN) = tn;
       else
          done=1;
       end
       in = in +1;
      end % n 
    end % N
  end % d
%end % p 
   


\end{verbatim}
 
  \section{Orthogonal Matching Pursuit}\label{app:code:OMPcode}
 \singlespacing
\begin{verbatim}
%NAME: Orthogonal Matching Pursuit Tester
%PURPOSE: Tests sparse signals on Orthogonal Matching Pursuit
%AUTHOR: Deanna Needell
%OUTSIDE FUNCTIONS: None

clear all
warning off all
%Variables
N=[10:5:250];
d=[256];
n=[2:2:90];  

numTrials = 500;
numN = size(N, 2);
numd = size(d, 2);
numn = size(n, 2);

%Data Collection
numCorr = zeros(numN, numd, numn);
mostSpars = zeros(numN);

for id = 1:numd
 for iN=1:numN 
   in=1;
   done=1;
   keepGo=1;
   while in <= numn && keepGo
     for trial = 1:numTrials
       tN = N(1, iN);
       td = d(1, id);
       tn = n(1, in);

       %Set Matrix
       Phi = randn(tN, td);
       Phi = sign(Phi);
       I = zeros(1,1);

       %Set signal
       z = randperm(td);
       supp=z(1:tn);
       v = zeros(td, 1);
       for t = 1:tn
           v(z(t))=1;
       end
       x = Phi * v;
       r = x;

       %Run OMP

       while length(I)-1 < tn
         u = Phi' * r;
         absu = abs(u);
         [b, ix] = sort(absu, 'descend');
         bestInd = ix(1);
         bestVal = b(1);

         %Update I
         I(length(I)+1) = bestInd;

         %Update the residual
         PhiSubI = Phi(:, I(2));
         for c=3:length(I)
             if ~isMember(I(2:c-1),I(c))
               PhiSubI(:,c-1) = Phi(:, I(c));
             end
         end
         y = lscov(PhiSubI, x);
         r = x - PhiSubI * y;
       end

	     if isMember(supp, I);
	          numCorr(iN, id, in) = numCorr(iN, id, in) +1;
	     end               
     end % end Trial
     
     if numCorr(iN, id, in) / numTrials > 0.98 && done
         mostSpars(iN) = tn;
     else
         done=0;
     end
     if numCorr(iN, id, in) <= 0.01
         keepGo=0;
     end
     in = in +1;
   end % n
 end % N
end % d

\end{verbatim}
 
   \section{Regularized Orthogonal Matching Pursuit}\label{app:code:ROMPcode}
 \singlespacing
\begin{verbatim}
function [vOut, numIts] = romp(n, Phi, x)
% [vOut] = romp(n, Phi, x)
%%% PARAMETERS %%%%%%%%%%%%%%%%%%%%%%%%%%%
% d = ambient dimension of the signal v
% N = number of measurements
% n = sparsity level of n
% Phi = N by d measurement matrix
% x = measurement vector (Phi * v)
% vOut = reconstructed signal
%%%%%%%%%%%%%%%%%%%%%%%%%%%%%%%%%%%%%%%%%%

%%% FUNCTION DESCRIPTION %%%%%%%%%%%%%%%%%
% romp takes parameters as described
% above. Given the sparsity level n and
% the N by d measurement matrix Phi, and
% the measurement vector x = Phi * v, romp
% reconstructs the original signal v.
% This reconstruction is the output.
%%%%%%%%%%%%%%%%%%%%%%%%%%%%%%%%%%%%%%%%%%

clear r I J J0 u b ix numIts Jvals 
warning off all

N = size(Phi, 1);
d = size(Phi, 2);

% Set residual
r = x;

%Set index set to "empty"
I = zeros(1,1);

%Counter (to be used optionally)
numIts = 0;

%Run ROMP
while length(I)-1 < 2*n && norm(r) > 10^(-6)

   numIts = numIts + 1;

   %Find J, the biggest n coordinates of u
   u = Phi' * r;
   absu = abs(u);
   [b, ix] = sort(absu, 'descend');
   J = ix(1:n);
   Jvals = b(1:n);

   %Find J0, the set of comparable coordinates with maximal energy
   w=1;
   best = -1;
   J0 = zeros(1);
   while w <= n
       first = Jvals(w);
       firstw = w;
       energy = 0;
       while ( w <= n ) && ( Jvals(w) >= 1/2 * first )
           energy = energy + Jvals(w)^2;
           w = w+1;
       end
       if energy > best
           best = energy;
           J0 = J(firstw:w-1);
       end
   end

   %Add J0 to the index set I
   I(length(I)+1: length(I)+length(J0)) = J0;

   %Update the residual
   PhiSubI = Phi(:, I(2));
   for c=3:length(I)
       if ~isMember(I(2:c-1),I(c))
         PhiSubI(:,c-1) = Phi(:, I(c));
       end
   end
   y = lscov(PhiSubI, x);
   r = x - PhiSubI * y;

end % end Run IRA  

vSmall = PhiSubI \ x;
vOut = zeros(d, 1);
for c=2:length(I)
    vOut(I(c)) = vSmall(c-1);
end
   
\end{verbatim}            
 
    \section{Compressive Sampling Matching Pursuit}\label{app:code:Cosampcode}
 \singlespacing
\begin{verbatim}

%NAME: CoSaMP Tester
%PURPOSE: Tests sparse signals on CoSaMP
%AUTHOR: Deanna Needell
%OUTSIDE FUNCTIONS: None

%Testing Parameters
sVals=[1:1:55]; % Sparsity levels
mVals=[5:5:250]; %Measurement levels
dVals=[256]; %dimension

numTrials=500; %Number of trials per parameter set

%Set Variable lengths and Data Collection
nums=length(sVals);
numm=length(mVals);
numd=length(dVals);
numCorrect = zeros(nums, numm, numd);
trend99 = zeros(numm, 1);

for is=1:nums
    for im=1:numm
        for id=1:numd
            %Set Parameters
            s = sVals(is);
            m = mVals(im);
            d = dVals(id);
            
            %Start a trial
            for trial=1:numTrials
            
                %Generate Measurement matrix
                Phi = randn(m,d);
                
                %Generate sparse signal
                z = randperm(d);
                x = zeros(d, 1);
                x(z(1:s)) = sign(randn(s,1));
                
                %Generate measurements
                u = Phi*x;
                
                %Begin CoSaMP
                
                %Initialize
                a = zeros(d,1);
                v = u;
                it=0;
                stop = 0;
                while ~stop
                    
                    %Signal Proxy
                    y = Phi'*v;
                    [tmp, ix] = sort(abs(y), 'descend');
                    Omega = ix(1:2*s);
                    [tmp, ix] = sort(abs(a), 'descend');
                    T = union(Omega, ix(1:s));
                    
                    %Signal Estimation
                    b = zeros(d, 1);
                    b(T) = Phi(:, T) \ u;
                    
                    %Prune
                    [tmp, ix] = sort(abs(b), 'descend');
                    a = zeros(d, 1);
                    a(ix(1:s), 1) = b(ix(1:s), 1);
                    
                    %Sample Update
                    v = u - Phi*a;
                    
                    %Iteration counter
                    it = it + 1;
                    
                    %Check Halting Condition
                    if norm(a-x) <= 10^(-4) || it > max(8*s, 60)
                        stop = 1;
                    end
                    
                end %End CoSaMP iteration
                
                %Collect Data
                if norm(a-x) <= 10^(-4)
                    numCorrect(is, im, id) = numCorrect(is, im, id) + 1;
                end
                
            end % End trial
        end %d
        
        if trend99(im) == 0 && numCorrect(is, im, id) >= 0.99*numTrials
            trend99(im) = s;
        end
    end %m
end %s
\end{verbatim} 
 
     \section{Reweighted L1 Minimization}\label{app:code:RWL1}
 \singlespacing
\begin{verbatim}
%NAME: Reweighted L1-Minimization Tester
%PURPOSE: Tests sparse and noisy signals on Reweighted L1
%AUTHOR: Deanna Needell
%OUTSIDE FUNCTIONS: CVX package (Michael Grant and Stephen Boyd)

N = 256; %dimension
M = 128; %measurements
kVals = [30]; %sparsity
eepsVals = [1];
numTrials = 500;
maxIter = 9;

errorVecDecoding = zeros(length(kVals),length(eepsVals),maxIter,numTrials);
errors = zeros(numTrials,1);
for trial = 1:numTrials
   for kIndex = 1:length(kVals)
       K = kVals(kIndex);
       for eIndex = 1:length(eepsVals)
           eeps = eepsVals(eIndex);
           
           % Gaussian spikes in random locations
           x = zeros(N,1); q = randperm(N);
           x(q(1:K)) = sign(randn(K,1));
           
           % measurement matrix
           Phi = sign(randn(M,N))/sqrt(M);
           
           % observations
           err = randn(M, 1);
           sigma = 0.2*norm(Phi*x,2)/norm(err,2);
           err = sigma*err;
           y = Phi*x + err;
           errors(trial) = norm(err,2);
           
           for iter = 1:maxIter
               %Set the weights
               if iter > 1
                   
                   weights = 1./(abs(xDecoding)+eeps/(1000*iter));
               else
                   weights = 1*ones(N,1);
               end
      
               %Set noise tolerance parameter
               delta=sqrt(sigma^2*(M+2*sqrt(2*M)));
               
               %Use CVX to perform minimization
               cvx_begin
               cvx_quiet(true)
                   variable xa(N);
                   minimize( norm(diag(weights)*xa,2) );
                   subject to
                        norm(Phi*xa - y, 2) <= delta;
               cvx_end
               
               %Collect results
               xDecoding = xa;
               errorVecDecoding(kIndex,eIndex,iter,trial) = norm((x-xDecoding),2);
               
           end
       end
   end
   
end


\end{verbatim}

      \section{Randomized Kaczmarz}\label{app:code:RKcode}
 \singlespacing
\begin{verbatim}
%NAME: Randomized Kaczmarz Tester
%PURPOSE: Tests RK on noisy systems
%AUTHOR: Deanna Needell
%OUTSIDE FUNCTIONS: none

clear all
warning off all

m = 100;  %rows
n=100;     %columns

numIts = 1000;
numTrials = 100;

A = zeros(numTrials, m, n);
e = zeros(numTrials, m);
x = zeros(numTrials, n);
b = zeros(numTrials, m);
est = zeros(numTrials, numIts, n);  %estimations
initErr = zeros(numTrials);
R = zeros(numTrials, 1); %Value of R (as in paper)
mu = zeros(numTrials); %Coherence
gamma = zeros(numTrials, 1); %worst error to row norm ratio
beta = zeros(numTrials); %beta is in theorem
errorsRK = zeros(numTrials, numIts); 
errorsCG = zeros(numTrials, numIts);

for trial=1:numTrials
    if mod(trial, 1) == 0
        display(['Trial ', num2str(trial)]);
    end
    %Set matrix equation
    A(trial, :, :) = sign(randn(m,n));
    x = zeros(n, 1)';
    b(trial,:) = reshape(A(trial, :, :), m, n)*(x');
    
    %Set initial guess, ||x - x0|| = 1
    est(trial, 1, :) = randn(1, n);
    est(trial, 1, :) = est(trial, 1, :) / norm(reshape(est(trial, 1, :), 1, n),2) ;
    origest = reshape(est(trial, 1, :), 1, n)';
    
    %Add error to RHS of Ax=0
    e(trial, :) = randn(1, m)*2;
    e(trial, :) = e(trial, :) / norm(e(trial, :), 2) / 10;
    b(trial, :) = b(trial, :)+e(trial, :);  %%%%NOISY!!
    
    %Calculate stats
    initErr = norm(reshape(est(trial, 1, :), 1, n) - x,2);
    fronorm = norm(reshape(A(trial, :, :), m, n), 'fro');
    R(trial) = norm(pinv(reshape(A(trial, :, :), m, n)),2)*fronorm;
    temp = zeros(1, m);
    for i=1:m
        temp(i) = norm(reshape(A(trial, i, :), 1, n), 2);
    end
    gamma(trial) = max(abs( e(trial, :)./temp )); 
    errorsRK(trial, 1) = norm(reshape(est(trial, 1, :), 1, n)-x,2);
    errorsCG(trial, 1) = norm(reshape(est(trial, 1, :), 1, n)-x,2);

    %Run RK
    for it=2:numIts
        %Select random hyperplane
        pick = rand * fronorm^2;
        counter = 0;
        index = 1;
        while counter + norm(reshape(A(trial, index, :),1, n), 2)^2 < pick
            counter = counter + norm(reshape(A(trial, index, :),1, n), 2)^2;
            index = index + 1;
        end
        
        %Modify estimation
        est(trial, it, :) = est(trial, it-1, :) + (b(index) - dot((A(trial, index, :)),
        (est(trial, it-1, :))) )/ (norm(reshape(A(trial, index, :), 1, n),2)^2) * A(trial, index, :); 
        errorsRK(trial, it) = norm(reshape(est(trial, it, :), 1, n)-x,2);
    end
    
    %Run CG
    for it=1:numIts
        [estcg,flag] = cgs(reshape(A(trial, :, :),m, n),b(trial,:)',10^(-10), it);
        errorsCG(trial, it) = norm(estcg-x',2);
    end

    
end



\end{verbatim}
 
%
%
 
        \thispagestyle{myheadings}
        \addcontentsline{toc}{chapter}{\bibname}
 \bibliography{dissertation}

\end{document}